\theoremstyle{plain}
\newtheorem{thm}{Theorem}[section]
\newtheorem{claim}[thm]{Claim}
\newtheorem{prop}[thm]{Proposition}
\newtheorem{lemma}[thm]{Lemma}
\newtheorem{cor}[thm]{Corollary}
\newtheorem{q}[thm]{Question}
\theoremstyle{definition}
\newtheorem{defn}[thm]{Definition}
\theoremstyle{remark}
\newtheorem{rmk}[thm]{Remark}
\newtheorem{example}[thm]{Example}
\newcommand{\C}{\mathbb{C}}
\newcommand{\R}{\mathbb{R}}
\newcommand{\Z}{\mathbb{Z}}
\newcommand{\F}{\mathbb{F}}
\newcommand{\bdry}{\partial}
\newcommand{\pa}{\partial}
\newcommand{\s}{\vskip.1in}
\newcommand{\n}{\noindent}
\newcommand{\krn}{\operatorname{ker}}
\newcommand{\Ordo}{O} 
\newcommand{\M}{\mathcal{M}}
\newcommand{\MM}{\mathcal{M}}
\newcommand{\TT}{\mathcal{T}}
\newcommand{\A}{\mathcal{A}}
\newcommand{\CC}{\mathcal{C}}
\newcommand{\bb}{\mathbf{b}}
\newcommand{\ind}{\operatorname{ind}}
\newcommand{\be}{\begin{enumerate}}
\newcommand{\ee}{\end{enumerate}}
\newcommand{\Mo}{{\rm Mo}}
\newcommand{\co}{{\rm co}}
\newcommand{\op}{\operatorname}
\numberwithin{equation}{section}
\title{Legendrian knots and exact Lagrangian cobordisms}
\author{Tobias Ekholm}
\address{Uppsala Universitet, Box 480, 751 06 Uppsala, Sweden;\newline\indent
Institut Mittag-Leffler, Aurav.\,17, 182 60 Djursholm, Sweden}
\email{tobias.ekholm@math.uu.se}
\urladdr{http://katalog.uu.se/empInfo/?id=N94-2099}
\author{Ko Honda}
\address{University of Southern California, Los Angeles, CA 90089}
\email{khonda@usc.edu} \urladdr{http://www-bcf.usc.edu/\char126 khonda}
\author{Tam\'as K\'alm\'an}
\address{Tokyo Institute of Technology, 152-8551 Meguro, Tokyo, Japan}
\email{kalman@math.titech.ac.jp} \urladdr{http://math.titech.ac.jp/\char126kalman}
\date{This version: December 12, 2012.}
\keywords{contact structure, Legendrian knot, exact Lagrangian cobordism, contact homology}
\subjclass{Primary 57M50; Secondary 53C15.}
\begin{document}

\begin{abstract}
We introduce constructions of exact Lagrangian cobordisms with cylindrical Legendrian ends and study their invariants which arise from Symplectic Field Theory. A pair $(X,L)$ consisting of an exact symplectic manifold $X$ and an exact Lagrangian cobordism $L\subset X$ which agrees with cylinders over Legendrian links $\Lambda_+$ and $\Lambda_-$ at the positive and negative ends induces a differential graded algebra (DGA) map from the Legendrian contact homology DGA of $\Lambda_+$ to that of $\Lambda_-$. We give a gradient flow tree description of the DGA maps for certain pairs $(X,L)$, which in turn yields a purely combinatorial description of the cobordism map for elementary cobordisms, i.e., cobordisms that correspond to certain local modifications of Legendrian knots. As an application, we find exact Lagrangian surfaces that fill a fixed Legendrian link and are not isotopic through exact Lagrangian surfaces.
\end{abstract}

\thanks{TE supported by G\"oran Gustafsson Foundation for Research in Natural Sciences and Medicine; KH supported by NSF Grants DMS-0805352 and DMS-1105432 and a Simons Fellowship; TK supported by a Japan Society for the Promotion of Science (JSPS) Grant-in-Aid for Young Scientists B (no.\ 21740041).}

\maketitle


\section{Introduction}
\label{Sec:intr}

The goal of this paper is to introduce constructions of exact Lagrangian cobordisms with cylindrical Legendrian ends and study their invariants which arise from the Symplectic Field Theory (SFT) of Eliashberg-Givental-Hofer~\cite{EGH}.

\subsection{Preliminaries}

The \emph{standard contact structure} on $\R^{3}$ is the $2$-plane field $\xi_0=\krn(\alpha_0)$, where $\alpha_0=dz-ydx$ with respect to the standard coordinates $(x,y,z)$. A $1$-dimensional submanifold $\Lambda\subset \R^3$ is \emph{Legendrian} if it is everywhere tangent to $\xi_0$. We assume that Legendrian knots and links are closed and oriented, unless stated otherwise.

The \emph{Reeb vector field} $R_{\alpha_0}$ of $\alpha_0$ is given by $R_{\alpha_0}=\bdry_z$. A \emph{Reeb chord} of a Legendrian link $\Lambda\subset \R^3$ is an integral curve of $R_{\alpha_0}$ with initial and terminal points on $\Lambda$. We write $\CC(\Lambda)$ for the set of Reeb chords of $\Lambda$.

An {\em exact symplectic manifold} is a triple $(X,\omega,\beta)$ consisting of a $2n$-dimensional symplectic manifold $(X,\omega)$ and a $1$-form $\beta$ satisfying $d\beta=\omega$. We will often abbreviate an exact symplectic manifold as $(X,\beta)$ or $(X,d\beta)$.  An $n$-dimensional submanifold $L\subset (X,\omega,\beta)$ is {\em Lagrangian} if $\omega\big|_{L}=0$ and is {\em exact Lagrangian} if $\beta\big|_{L}$ is exact in addition.

The \emph{symplectization} of $(\R^{3},\alpha_0)$ is the exact symplectic manifold $(\R\times\R^{3},e^{t}\alpha_0)$, where $t$ is the coordinate of the first $\R$-factor. A Legendrian link $\Lambda\subset\R^{3}$ gives rise to a cylindrical Lagrangian submanifold $\R\times\Lambda\subset\R\times\R^{3}$.

\begin{defn} \label{defn: exact Lagrangian with cylindrical ends}
Let $\Lambda_+$ and $\Lambda_-$ be Legendrian links in $\R^3$, $(X,\beta)$ an exact symplectic manifold whose positive and negative ends agree with the positive and negative ends of $(\R\times \R^3,e^t\alpha_0)$, and $L\subset (X,\beta)$ an oriented exact Lagrangian submanifold. Then the pair $((X,\beta),L)$ is an {\em exact Lagrangian cobordism from $\Lambda_+$ to $\Lambda_-$ with cylindrical Legendrian ends $\mathcal{E}_\pm(L)$} if there exists $T>0$ such that the following hold:
\begin{equation}\begin{split}
\label{eq:legends}
\mathcal{E}_+(L)&= L\cap((T,\infty)\times\R^{3}) = (T,\infty)\times\Lambda_+;\\
\mathcal{E}_-(L)&= L\cap((-\infty,-T)\times\R^{3}) = (-\infty,-T)\times\Lambda_-;
\end{split}\end{equation}
and
\begin{itemize}
\item[(i)] $f$ is constant on each of $\mathcal{E}_+(L)$ and $\mathcal{E}_-(L)$ whenever $df=\beta|_L$.\footnote{It is automatic that $f$ is constant on each component of $\mathcal{E}_\pm(L)$. If $f$ is not the same constant on all the components of $\mathcal{E}_-(L)$ or on all the components of $\mathcal{E}_+(L)$, then the composition of exact Lagrangians is not necessarily exact Lagrangian and there are problems defining the DGA morphism $\Phi_{(X,L)}$ (see Section~\ref{subsection: functor}). These were communicated to the authors by Chantraine and Ghiggini; see~\cite{Cha2}.}
\item[(ii)] $L$ is compact with boundary $\Lambda_+-\Lambda_-$ after removing the cylindrical ends $\mathcal{E}_\pm(L)$.
\end{itemize}
An exact Lagrangian cobordism $((X,\beta),L)$ from $\Lambda$ to $\varnothing$ is an {\em exact Lagrangian filling of $\Lambda$}.
\end{defn}


The most general ambient exact symplectic manifold $(X,\omega)$ that we consider in this paper is the {\em completion of the cotangent bundle $T^{\ast}F$ of a surface $F$,} where $F=(\R\times[a_-,a_+]) \# \Sigma$ is the connected sum of a closed surface $\Sigma$ and a strip $\R\times[a_-,a_+]$ and $\omega|_{T^{\ast} F}$ is the canonical symplectic form. Let $\pa_{-}F=\R\times\{a_-\}$ and $\pa_{+}F=\R\times\{a_+\}$. Then $X$ is the exact symplectic manifold obtained by attaching half-symplectizations $(-\infty,0]\times\R^{3}$ and $[0,\infty)\times\R^{3}$ to $T^{\ast}F$ along $T^{\ast} F\big|_{\pa_- F}$ and $T^{\ast} F\big|_{\pa_+ F}$; for more details, see Section~\ref{sec:lagcob}.

\subsection{The Legendrian contact homology functor $\Phi$} \label{subsection: functor}

Let $\mathfrak{cob}$ be the category whose objects are ``chord generic'' Legendrian links $\Lambda\subset \R^3$ (see Definition~\ref{chord generic}) and whose morphisms $\op{Hom}(\Lambda_+,\Lambda_-)$ are exact Lagrangian cobordisms from $\Lambda_+$ to $\Lambda_-$ with cylindrical Legendrian ends.  The pair $(\R\times\R^{3},\R\times\Lambda)$ is the identity in $\op{Hom}(\Lambda,\Lambda)$. If $(X_{21},L_{21})\in \op{Hom}(\Lambda_2,\Lambda_1)$ and $(X_{10},L_{10})\in \op{Hom}(\Lambda_1,\Lambda_0)$, then their product $(X_{20},L_{20})=(X_{21},L_{21})\cdot (X_{10},L_{10})\in \op{Hom}(\Lambda_2,\Lambda_0)$ is obtained from $(X_{21},L_{21})$ and $(X_{10},L_{10})$ by concatenating the two along their common $\Lambda_1$-end.\footnote{The process of concatenation is noncanonical and some care is needed to make the concatenation associative. One way to do this is to remember the data of the cylindrical ends $\mathcal{E}_\pm(L)$.  When we concatenate we truncate the ends so that we have a fixed width $C$ that is left from each end, i.e., $(T,T+C)\times \Lambda_+$ and $(-T-C,-T)\times \Lambda_-$.}
Also let $\mathfrak{dg}$ be the category of unital differential graded algebras (DGAs) over a field $\F$.

We now describe the Legendrian contact homology functor $\Phi: \mathfrak{cob}\to \mathfrak{dg}$. The functor $\Phi$ associates a unital DGA $\A(\Lambda)$ to a generic Legendrian link $\Lambda\subset\R^{3}$ which is freely generated by the union of $\CC(\Lambda)$ and a basis for $H_1(\Lambda;\Z)$. Here both the Reeb chords and the elements of $H_1(\Lambda;\Z)$ are graded by a Maslov index. The differential of $\A(\Lambda)$ is defined by a count of punctured $J$-holomorphic disks in $\R\times\R^{3}$, where $J$ is an ``adjusted'' almost complex structure; in particular, $J$ is invariant with respect to the $t$-translation. (See Section~\ref{subsection: moduli of holomorphic disks} for more details.) The disks have boundary on $\R\times\Lambda$ and are asymptotic to strips over Reeb chords near the punctures; see the left panel of Figure~\ref{fig:sft}. Furthermore, the disks that contribute to the differential are required to be rigid up to translation in the $t$-direction. When $\Lambda=\varnothing$, then $\A(\Lambda)=\F$ with the trivial differential $\bdry=0$ and the degree of elements in $\F$ is zero.

\begin{figure}[ht]
\labellist
\small
\pinlabel $t\ll0$ at -60 0
\pinlabel $t\gg0$ at -60 330
\pinlabel $t\ll0$ at 1180 170
\pinlabel $t\gg0$ at 1180 500
\pinlabel $a$ at 240 390
\pinlabel $a$ at 840 410
\pinlabel $b$ at 140 30
\pinlabel $b$ at 770 30
\pinlabel $c$ at 270 60
\pinlabel $c$ at 890 60
\pinlabel $\Lambda$ at 370 400
\pinlabel $\Lambda$ at 370 70
\pinlabel $\Lambda_+$ at 920 450
\pinlabel $\Lambda_-$ at 990 30
\pinlabel $L$ at 770 280
\endlabellist
\centering
\includegraphics[height=1.7in]{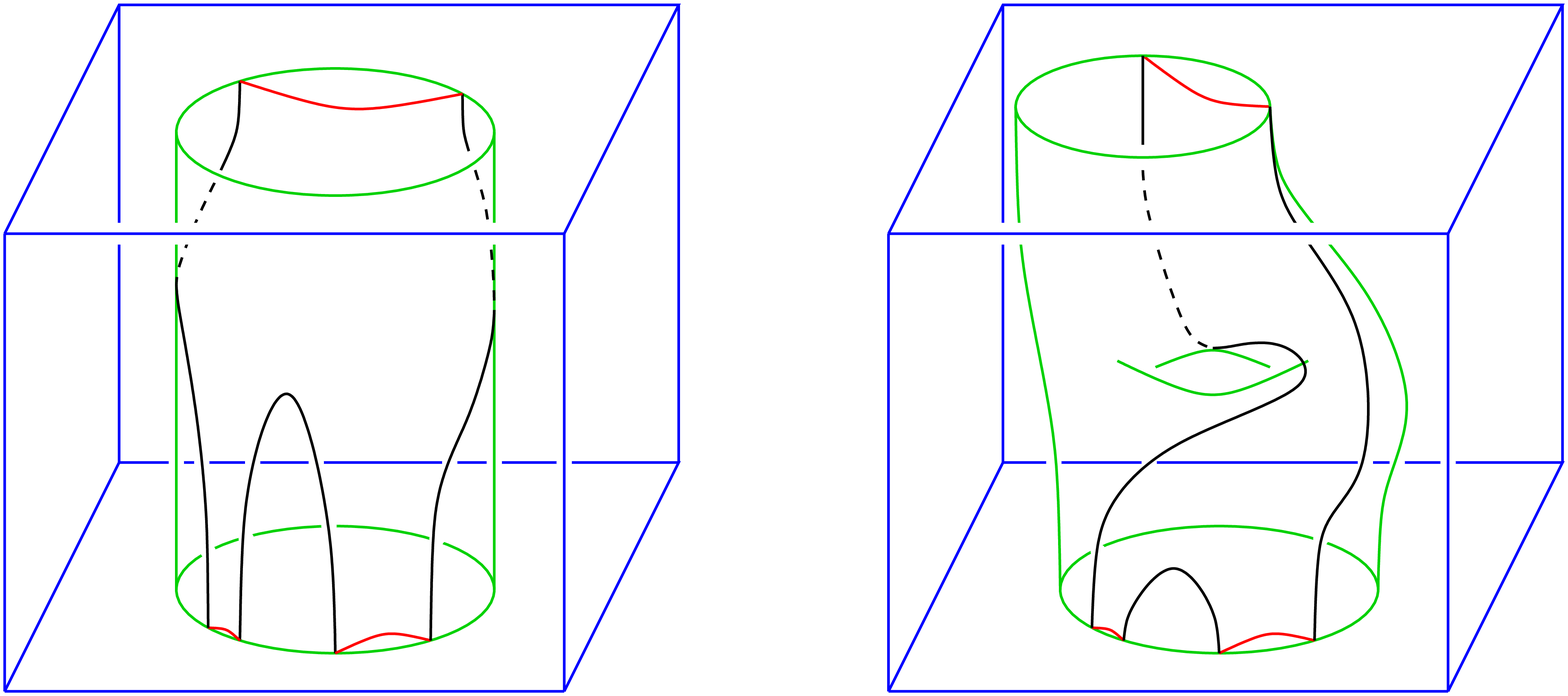}
\caption{Schematic picture of: $\R^3$ and $\R\times\R^3$ in blue; the Legendrians $\Lambda$, $\Lambda_+$, $\Lambda_-$ and Lagrangians $\R\times \Lambda$ and $L$ in green; the Reeb chords $a$, $b$, and $c$ in red; and the holomorphic disks in black, with a positive puncture at $a$ and negative punctures at $b$ and $c$.}
\label{fig:sft}
\end{figure}

The functor $\Phi$ associates to $(X,L)\in \op{Hom}(\Lambda_+,\Lambda_-)$ a DGA morphism
$$\Phi_{(X,L)}\colon\A(\Lambda_+)\to\A(\Lambda_{-}),$$
which is defined by a count of rigid punctured holomorphic disks in $X$ with boundary on $L$. (See Section~\ref{subsection: moduli of holomorphic disks} for more details.)  The right panel of Figure \ref{fig:sft} depicts a holomorphic disk that is counted in $\Phi_{(X,L)}$, noting that our statements hold not just in $\R\times\R^3$, but for more general exact Lagrangian cobordisms $(X,L)$ as well.

The usual compactness and gluing results for holomorphic disks imply that the cobordism maps satisfy the following properties (see \cite{E2} and Section \ref{sec:diffandcob}):

\begin{thm}
$\Phi:\mathfrak{cob}\to \mathfrak{dg}$ is a functor, i.e.,
\begin{enumerate}
\item If $(X,L)=id$, then $\Phi_{(X,L)}=id$.
\item If $(X_{20},L_{20})=(X_{21},L_{21})\cdot (X_{10},L_{10})$, then
\[
\Phi_{(X_{20},L_{20})}=\Phi_{(X_{10},L_{10})}\circ\Phi_{(X_{21},L_{21})}.
\]
\end{enumerate}
\end{thm}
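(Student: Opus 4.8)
The plan is to establish the two functoriality statements by the standard SFT package of compactness, transversality, and gluing for punctured $J$-holomorphic disks, applied to the relevant moduli spaces over the cobordism $X$ (or concatenation $X_{20}$). Throughout I would fix adjusted almost complex structures $J_{\pm}$ on the symplectizations of the ends that agree with the ones used to define the differentials on $\A(\Lambda_{\pm})$, and a compatible almost complex structure on $X$ restricting to the $J_{\pm}$ at the cylindrical ends; to prove associativity/composition one must further choose, on the concatenated cobordism $X_{20}$, a one-parameter family $J_\tau$ ($\tau\in[0,\infty)$) of such structures that stretches the neck along the $\Lambda_1$-slice, so that the $\tau\to\infty$ limit reduces holomorphic disks in $X_{20}$ to buildings with pieces in $X_{21}$ and $X_{10}$ (and possibly trivial strips in the symplectization of $\Lambda_1$).

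For part (1), the key point is that when $L=\R\times\Lambda$ inside $X=\R\times\R^3$ with the $t$-invariant structure $J$, the only rigid (index-zero) punctured disks counted by $\Phi_{(X,L)}$ are the trivial strips $\R\times c$ over Reeb chords $c\in\CC(\Lambda)$: any other disk comes in a one-parameter family generated by $t$-translation, hence is not rigid. I would also check that the $H_1$-generators are sent to themselves, which is immediate since the boundary of $L$ realizes the identity on $H_1(\Lambda;\Z)$. Therefore $\Phi_{(X,L)}$ is the identity on generators, hence the identity DGA map.

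For part (2), I would analyze the $0$- and $1$-dimensional components of the moduli space $\M_{X_{20}}(a;\mathbf{b})$ of rigid, respectively $1$-parameter, disks in $X_{20}$ with a positive puncture at $a\in\CC(\Lambda_2)$ and negative punctures along a word $\mathbf{b}$ in the generators of $\A(\Lambda_0)$, as the neck-stretching parameter $\tau$ varies. First I would use SFT compactness (Bourgeois–Eliashberg–Hofer–Wysocki–Zehnder) to show that the ends of these moduli spaces, and the $\tau\to\infty$ limits, are exactly two-level buildings: an upper level which is a rigid configuration of disks in $X_{21}$ with a positive puncture at $a$ and negative punctures at some word $\mathbf{c}$ in $\CC(\Lambda_1)$ and $H_1(\Lambda_1)$, glued to a lower level which is a rigid configuration of disks in $X_{10}$ with positive punctures at the letters of $\mathbf{c}$ and negative punctures at $\mathbf{b}$; broken configurations involving the symplectizations of $\Lambda_2$ or $\Lambda_0$ (which would contribute $\bdry$-terms) are the other boundary phenomena. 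Counting the boundary of the $1$-dimensional piece then yields, on the chain level, $\Phi_{(X_{20},L_{20})}\circ\bdry_{\Lambda_2} + \bdry_{\Lambda_0}\circ\Phi_{(X_{20},L_{20})} = 0$ together with the gluing identity $\Phi_{(X_{20},L_{20})} = \Phi_{(X_{10},L_{10})}\circ\Phi_{(X_{21},L_{21})}$ on generators, where the composition is read off from the two-level buildings; the exactness hypothesis in Definition~\ref{defn: exact Lagrangian with cylindrical ends}(i), and the truncation convention of the concatenation footnote, guarantee that $L_{20}$ is again exact Lagrangian with cylindrical ends and that no energy escapes, so the relevant counts are finite.

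The main obstacle is the analytic input rather than the bookkeeping: achieving transversality for all the moduli spaces (in $X_{21}$, $X_{10}$, $X_{20}$, and for the whole family $J_\tau$ simultaneously) may require working with a slightly enlarged class of (domain-dependent or abstractly perturbed) almost complex structures, and one must verify the gluing theorem that every rigid two-level building arises from a unique end of a $1$-parameter family in $X_{20}$ for $\tau$ large — i.e., that the limit map from ends of moduli spaces to broken configurations is a bijection. This is precisely the content cited from~\cite{E2}, so I would invoke those compactness and gluing results and concentrate the new work on checking that the geometric setup of our cobordisms (the connected-sum surface model $F$, the half-symplectization ends, and the exactness/constancy conditions) satisfies their hypotheses.
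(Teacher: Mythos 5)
Your overall plan matches the paper's: the theorem is reduced to the standard SFT compactness-and-gluing package, which the paper encapsulates as Corollaries~\ref{cor:bdrycob} and~\ref{cor:stretch} and the lemma of Section~\ref{sec:diffandcob}, with the analytic details cited from~\cite{E2}; and your argument for part~(1) --- in the $\R$-invariant case a non-trivial disk has a one-parameter family of reparametrizations, so only the trivial strips over Reeb chords are rigid --- is exactly the standard one.

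There is, however, a conceptual slip in how you propose to obtain the composition identity in part~(2). You set up a one-parameter family $J_\tau$ of structures stretching the neck, look at the one-dimensional moduli space over $\tau\in[0,\infty)$, and count its boundary. But the boundary of that parametrized moduli space consists of three kinds of contributions: rigid disks at $\tau=0$, two-level buildings with pieces in $X_{21}$ and $X_{10}$ as $\tau\to\infty$, and two-level breakings involving $\pa_{\Lambda_2}$ or $\pa_{\Lambda_0}$ at intermediate parameter values. Setting the mod-$2$ count to zero gives a \emph{chain homotopy} between $\Phi_{(X_{20},L_{20})}$ and $\Phi_{(X_{10},L_{10})}\circ\Phi_{(X_{21},L_{21})}$, not an equality; this is essentially the mechanism behind Lemma~\ref{lem:chainhomotopy} (isotopy invariance), not this theorem. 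The equality on the nose instead requires a \emph{bijection at a fixed sufficiently long neck}: Corollary~\ref{cor:stretch} asserts that for all sufficiently large $\rho$, compactness (rigid disks in $(X_\rho,L_\rho)$ limit to two-level buildings as $\rho\to\infty$) together with the gluing theorem (near each building there is a unique rigid disk for $\rho$ large, and every rigid disk arises this way) give a bijection between rigid disks in $(X_\rho,L_\rho)$ and two-level broken disks with pieces in $(X_{21},L_{21})$ and $(X_{10},L_{10})$. The stretching parameter is a threshold past which the moduli spaces stabilize, not an integration variable in a one-dimensional cobordism argument. One then either takes the fixed truncation width $C$ in the concatenation convention to exceed that threshold, or interprets the functor as landing in the homotopy category of DGAs, in which case the chain-homotopy version you derive suffices.
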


\begin{thm}
If $(X_t,L_t)$, $0\le t\le 1$, is an isotopy of exact Lagrangian cobordisms from $\Lambda_+$ to $\Lambda_-$ with cylindrical ends, then the DGA maps $\Phi_{(X_0,L_{0})}$ and $\Phi_{(X_1,L_{1})}$ are chain homotopic.
\end{thm}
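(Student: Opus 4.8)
The plan is to prove invariance by a standard SFT parametrized-moduli-space argument, exactly analogous to the proof that Legendrian contact homology itself is invariant, but now carried out one level up, at the level of chain maps between DGAs. First I would fix an isotopy $(X_t,L_t)$, $0\le t\le1$, with cylindrical ends over $\Lambda_+$ and $\Lambda_-$, together with a generic path $J_t$ of adjusted almost complex structures on the $X_t$ that are cylindrical at the ends; at $t=0$ and $t=1$ these give the maps $\Phi_{(X_0,L_0)}$ and $\Phi_{(X_1,L_1)}$. Because the two ends are held fixed, the DGAs $\A(\Lambda_\pm)$ do not change, and what varies is only the count of rigid holomorphic disks in $X_t$ with boundary on $L_t$. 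I would then assemble the parametrized moduli space $\mathcal{M} = \bigcup_{t\in[0,1]} \mathcal{M}_{J_t}(L_t;\ a;\ b_1,\dots,b_k)$ of holomorphic disks in $X_t$ with one positive puncture at a Reeb chord $a$ of $\Lambda_+$ and negative punctures at Reeb chords $b_i$ of $\Lambda_-$, and I would count the rigid points of this parametrized space — those occurring in dimension zero, i.e.\ where the unparametrized disk has index $-1$ so that the interval of $t$-values is isolated. This count defines a degree $+1$ map $K\colon\A(\Lambda_+)\to\A(\Lambda_-)$, which I extend to words by the Leibniz rule twisted by $\Phi_{(X_0,L_0)}$ on one side and $\Phi_{(X_1,L_1)}$ on the other, i.e.\ $K(xy)=K(x)\Phi_{(X_1,L_1)}(y)\pm\Phi_{(X_0,L_0)}(x)K(y)$.

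The core of the argument is then the analysis of the boundary of the one-dimensional part of $\mathcal{M}$, i.e.\ the compactification of the space of parametrized rigid-up-to-nothing disks in a one-parameter family. By SFT compactness (Bourgeois--Eliashberg--Hofer--Wysocki--Zehnder, as used in~\cite{E2}), the ends of this $1$-manifold come from: (a) the two endpoints $t=0$ and $t=1$, contributing $\Phi_{(X_1,L_1)}$ and $\Phi_{(X_0,L_0)}$ applied appropriately; (b) breaking off a rigid disk in the symplectization $\R\times\R^3$ over $\Lambda_+$ at the positive end, which contributes $K\circ\partial_{\Lambda_+}$; (c) breaking off a rigid disk in the symplectization over $\Lambda_-$ at the negative end, which contributes $\partial_{\Lambda_-}\circ K$; and (d) two-level splittings within $X_t$ itself where a disk counted in $\Phi_{(X_t,L_t)}$ splits off, which is what forces the twisted-Leibniz extension of $K$ and makes these contributions cancel in pairs at the word level. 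Setting the signed count of boundary points to zero yields the chain-homotopy identity $\Phi_{(X_1,L_1)}-\Phi_{(X_0,L_0)} = \partial_{\Lambda_-}\circ K + K\circ\partial_{\Lambda_+}$ on generators, and the Leibniz extensions on both sides guarantee it holds on all of $\A(\Lambda_+)$. One also needs to check gradings: a disk contributing to $\Phi$ has index $0$, so in a generic one-parameter family the relevant disks have index $-1$, and dimension counting confirms that $K$ raises degree by $1$, as required of a chain homotopy.

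The main obstacle, as usual in this circle of ideas, is the transversality and compactness package rather than the formal algebra. Specifically: (1) achieving transversality for the parametrized moduli spaces $\mathcal{M}_{J_t}(L_t;\dots)$ along the path, which in general requires either the somewhere-injectivity arguments and obstruction-bundle/abstract-perturbation technology referenced in Section~\ref{subsection: moduli of holomorphic disks}, or working in the setting where such transversality has already been established for the class of cobordisms under consideration; (2) ensuring the SFT compactness theorem applies — that no energy escapes to the ends in an uncontrolled way and that all limit buildings have the expected level structure, which relies on the cylindrical form of $J_t$ near the ends and the exactness of $(X_t,L_t)$ to rule out bubbling of closed holomorphic spheres or disks with boundary on $L_t$ without punctures; and (3) orientation and sign bookkeeping, i.e.\ coherently orienting the parametrized moduli spaces so that the boundary contributions carry the signs that produce the stated identity. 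Since the analogous statements for the differential $\partial$ and for functoriality in the previous theorem are quoted from~\cite{E2}, I would cite that reference for the analytic foundations and concentrate the written proof on identifying the boundary strata of the parametrized moduli space and extracting the chain-homotopy formula from them.
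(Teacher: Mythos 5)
Your overall strategy --- assemble the parametrized moduli space $\coprod_{t\in[0,1]}\MM^{(X_t,L_t);J_t}(a;\bb)$, analyze the boundary of its one-dimensional compactification, and extract a twisted-Leibniz chain homotopy from the index $-1$ strata while citing \cite{E2} for the analytic foundations --- is precisely what the paper does; the paper cites \cite[Lemma B.15]{E2} and then only sketches the boundary analysis. (Your ordering $K(x)\Phi_{(X_1,L_1)}(y)+\Phi_{(X_0,L_0)}(x)K(y)$ in the twisted Leibniz rule is reversed relative to the paper's $\Omega_K$, which places $\Phi_{(X_1,L_1)}$ to the left of $K$ and $\Phi_{(X_0,L_0)}$ to the right; this is just a relabeling of which end of the isotopy is $0$ and which is $1$.)

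There is one concrete subtlety the paper flags that your list of obstacles (transversality, energy escape, sign bookkeeping) does not capture, and it is precisely what justifies your item (d). The chain homotopy $\Omega_K(c_1\cdots c_m)=\sum_j\Phi_{(X_1,L_1)}(c_1\cdots c_{j-1})K(c_j)\Phi_{(X_0,L_0)}(c_{j+1}\cdots c_m)$ uses the degenerate datum exactly once per monomial. But in the naive parametrized compactification, a boundary point can be a broken building $u_{-b}\cup\cdots\cup u_a$ whose level $u_0$ in $X_{t^*}$ contains \emph{several} copies of the isolated $\op{ind}=-1$ disk $v$ --- for instance, an $\op{ind}=1$ disk $u_1$ over $\Lambda_+$ with two negative punctures, each capped off by a copy of $v$. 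Such configurations do not appear in $\Omega_K$, so setting the naive boundary count to zero would not yet give the chain-homotopy identity. The paper resolves this with a time-ordered, domain-dependent abstract perturbation: one replaces $J_t$ by a domain-dependent $\mathfrak{J}_t$ on the rectangles near the necks, with neck times $t^*_{ij}$ spaced according to Reeb-chord action, so that Gromov compactness forces the positive end of any $\op{ind}=-1$ component of a boundary building to lie near the bifurcation time $t^*$, and hence at most one component of any boundary configuration can have $\op{ind}=-1$. Without this mechanism (or a substitute), your stated boundary decomposition into (a)--(d) is not yet justified, and the ``cancellation in pairs at the word level'' you invoke does not follow.
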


For simplicity we use $\F=\Z/2$-coefficients throughout, so that $\A(\Lambda_\pm)$ are algebras over $\F$ and $\Phi_{(X,L)}$ is an $\F$-algebra morphism. In particular, all curve counts are mod 2 counts. The calculations of $\Phi_{(X,L)}$ are expected to carry over to the setting of more general coefficient rings using the orientation scheme developed in \cite{EES3}, for example.

An {\em augmentation of $\Lambda$} is a DGA morphism $\varepsilon: \mathcal{A}(\Lambda)\to \mathcal{A}(\varnothing)=\F$.
Given an exact Lagrangian filling $(X,L)$ of $\Lambda$, the DGA morphism $\Phi_{(X,L)}$ is an augmentation of $\Lambda$; such augmentations will be called {\em geometric}.

\subsection{Main results}

The goal of this paper is to give Morse-theoretic and combinatorial descriptions of DGA maps induced by exact Lagrangian cobordisms $(X,L)\in \mbox{Hom}(\Lambda_+,\Lambda_-)$, where $X$ is the completion of $T^*F$.  This is done through an intermediary called a {\em Morse cobordism}, a compact exact Lagrangian submanifold $L^{\rm Mo}$ in $T^{\ast}F$ with a Legendrian lift $\tilde L^{\rm Mo}\subset J^{1}F$, which is described in more detail in Section~\ref{sec:lagcob}.
Here $J^{1}F\simeq T^{\ast}F\times\R$ denotes the $1$-jet space of $F$ with its standard contact form $d\zeta-\theta$, where $\zeta$ is the $\R$-coordinate. We assume that $\tilde L^{\rm Mo}\cap J^{1}F\big|_{\pa F}$ agrees with the Legendrian links $\Lambda_{\pm}\subset J^{1}\pa_{\pm} F\subset J^{1}F$ and all the Reeb chords of $\tilde L^{\rm Mo}$ are contained in $J^{1}F\big|_{\pa F}$ so that they are in a natural one-to-one correspondence with the Reeb chords of $\Lambda_{\pm}$.

In Section~\ref{sec:lagcob} we associate to $L^{\rm Mo}$ a $2$-parameter family of exact Lagrangian cobordisms $(X_{\delta},L_{\delta;\sigma})\in \op{Hom}(\Lambda_+,\Lambda_-)$, $\delta,\sigma>0$, where $X_{\delta}$ is a completion of $T^{\ast}F$, and prove the following:

\begin{lemma} \label{back and forth}
Given $(X,L)\in \mbox{Hom}(\Lambda_+,\Lambda_-)$, where $X$ is the completion of $T^*F$, there exists a Morse cobordism $L^{\rm Mo}$ in $T^*F$
such that the exact Lagrangian cobordisms $(X_{\delta},L_{\delta;\sigma})$ associated to $L^{\rm Mo}$ are exact Lagrangian isotopic to $(X,L)$.
\end{lemma}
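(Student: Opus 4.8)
The plan is to start from an arbitrary exact Lagrangian cobordism $(X,L)\in\op{Hom}(\Lambda_+,\Lambda_-)$ with $X$ the completion of $T^*F$, and to deform $L$ in three stages: first make it ``standard'' near the ends, then make it a graph over the zero section away from a controlled region, and finally bring the resulting Legendrian lift into the Morse-cobordism normal form. In the first stage I would use the hypothesis~(i) of Definition~\ref{defn: exact Lagrangian with cylindrical ends} together with a Moser-type argument to arrange that near $\pa_\pm F$ the submanifold $L$ coincides with a cylinder over $\Lambda_\pm$ sitting inside $T^*F|_{\pa_\pm F}\times\R_t$ in the linear model; this uses that $\beta|_L$ is exact with locally constant primitive on the ends, so no extra wrapping occurs. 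The point of this step is that all Reeb chords can then be pushed to lie over $\pa F$, matching the requirement on $\tilde L^{\rm Mo}$.

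Next I would address the compact part. After removing the cylindrical ends, $L$ is a compact exact Lagrangian in $T^*F$ with Legendrian boundary; the $t$-coordinate in the symplectization ends is to be traded for the $\zeta$-coordinate in $J^1F$. Concretely, one wants to show that $L$, possibly after an exact Lagrangian isotopy, lifts to a Legendrian $\tilde L\subset J^1F$ which is ``Morse-like'': its front has a single large sheet approximating the zero section away from a compact piece, with the chords confined to the boundary. Here I would invoke that exact Lagrangians carry a global primitive $f$ of $\beta|_L$, use $f$ as the $\zeta$-coordinate of the lift, and then apply a Weinstein-neighborhood/graph argument: after scaling the cotangent fibers by a small $\delta>0$ (this is exactly where the parameter $\delta$ in $X_\delta$, $L_{\delta;\sigma}$ enters) the cobordism becomes $C^1$-close to the zero section in the interior and can be written as the $1$-jet of a generating function, which is the definition of a Morse cobordism. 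The second parameter $\sigma$ controls the width/speed of the gluing region interpolating between the graphical interior and the cylindrical ends.

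Finally I would check that the family $(X_\delta,L_{\delta;\sigma})$ built from this $L^{\rm Mo}$ is exact Lagrangian isotopic back to the original $(X,L)$. This is essentially a matter of reversing the deformations above: the Moser isotopy near the ends, the fiber-rescaling isotopy (rescaling by $e^{t}$-type diffeomorphisms of $T^*F$ is through exact symplectomorphisms, so it carries exact Lagrangians to exact Lagrangians), and the graphing isotopy are all exact Lagrangian isotopies with cylindrical ends, and they can be concatenated. One must also verify that the completions $X_\delta$ are all isotopic to $X$ as exact symplectic manifolds with the prescribed symplectization ends, which follows from the explicit description of the completion in Section~\ref{sec:lagcob}.

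The main obstacle, I expect, is the second stage: controlling the interior of $L$ well enough to realize it as an honest graph (generating family) over the zero section after rescaling, while simultaneously keeping the ends cylindrical and keeping all Reeb chords on $\pa F$. The tension is that rescaling fibers makes the interior graphical but can interfere with the fixed cylindrical model on the ends, so the interpolation region (governed by $\sigma$) has to be managed carefully — one needs the primitive $f$ to be genuinely controlled there, and one needs to rule out new short Reeb chords appearing in the transition zone. Everything else — the Moser argument on the ends and the final concatenation of isotopies — is routine by comparison.
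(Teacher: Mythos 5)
There is a genuine gap, and it sits exactly where you predicted the ``main obstacle'' would be: your second stage rests on a misreading of what a Morse cobordism is. Definition~\ref{defn: Morse cobordism} constrains $L^{\rm Mo}$ only near $\pa F$ (the parabolic end parametrization \eqref{eq:Morseend}); it says nothing about the interior, and in particular a Morse cobordism is \emph{not} required to be a graph over the zero section or the $1$-jet of a function --- its Legendrian lift $\tilde L^{\rm Mo}$ generally has a multi-sheeted front with cusp edges and swallowtails (this is the whole point of Section~\ref{sec:flowtrees}). Your claim that after scaling the fibers by a small parameter the cobordism ``can be written as the $1$-jet of a generating function, which is the definition of a Morse cobordism'' is false on both counts: fiber scaling makes $L$ close to the zero section only in the $C^0$ sense and does not make a non-graphical Lagrangian graphical (a genus-$g$ filling of $\Lambda_n$ can never be a graph $j^1f$, since such a graph is Hamiltonian isotopic to the zero section), and graphicality is not what ``Morse'' means here. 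The requirement that all Reeb chords of $\tilde L^{\rm Mo}$ lie over $\pa F$ also needs no pushing: since $L^{\rm Mo}$ is embedded over the interior of $F$, its lift has no interior chords, and the chords over $\pa F$ come for free from the end model \eqref{eq:Morseend}. Likewise your first stage (a Moser argument to standardize the ends) is unnecessary, because Definition~\ref{defn: exact Lagrangian with cylindrical ends} already makes the ends literally cylindrical.

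The actual argument never touches the interior of $L$. One applies the coordinate identifications $\Phi_\pm$ from Section~\ref{subsection: completion}, under which the cylindrical ends $(T,\infty)\times\Lambda_+$ and $(-\infty,-T)\times\Lambda_-$ become conical ends \eqref{eq:conicalend}, so $(X,L)$ is tautologically a long conical cobordism $\hat L^{\rm co}\subset T^*\hat F$; truncating gives a conical cobordism $L^{\rm co}\subset T^*F$, and rounding $L^{\rm co}$ near $\pa F$ (replacing the linear profile $\xi_2-a'_\pm$ by the quadratic profile $A_\pm\mp(\xi_2-a_\pm)^2$) produces the Morse cobordism $L^{\rm Mo}$. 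Re-attaching conical ends via the interpolation $h_\delta$ and applying the fiber scaling $s_\sigma$ then gives $(X_\delta,L_{\delta;\sigma})$, and the exact Lagrangian isotopy back to $\hat L^{\rm co}$ (hence to $(X,L)$) is immediate because all modifications occurred in the end region and $s_\sigma$ carries exact Lagrangians to exact Lagrangians. Note also that you have the two parameters backwards: $\sigma$ is the fiber-scaling parameter and $\delta$ governs the interpolation between the quadratic (Morse) and linear (conical) end profiles, not the other way around.
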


As in \cite{E1} we define Morse flow trees of a Morse cobordism $L^{\rm Mo}\subset T^*F$ from $\Lambda_{+}$ to $\Lambda_{-}$. If $a$ is a Reeb chord of $\Lambda_{+}$ and $\bb$ a monomial in $\A(\Lambda_-)$, then let $\TT(a;\bb)$ denote the space of Morse flow trees of $\tilde L^{\rm Mo}$ with a positive puncture at $a$ and negative punctures (as well as homotopy data) determined by $\bb$; see Section \ref{sec:flowtrees} for details.

The following main technical result, proved in Section~\ref{subsection: trees to disks}, gives a Morse-theoretic expression for the DGA morphism induced by $(X_{\delta},L_{\delta;\sigma})$.

\begin{thm}\label{thm:flowtreecomp}
For all sufficiently small $\delta,\sigma>0$, $\Phi_{(X_{\delta},L_{\delta;\sigma})}\colon \A(\Lambda_+)\to\A(\Lambda_-)$ is given as follows: if $a\in \mathcal{C}(\Lambda_+)$ then
\[
\Phi_{(X_{\delta},L_{\delta;\sigma})}(a)  \ \ =\sum_{\dim(\TT(a;\bb))=0}|\TT(a;\bb)|\bb,
\]
where $|\TT(a;\bb)|$ is the mod $2$ count of points in the compact $0$-dimensional moduli space $\TT(a;\bb)$.
\end{thm}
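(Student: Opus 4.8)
The plan is to establish a bijective correspondence, for all sufficiently small $\delta,\sigma>0$, between the rigid $J_\delta$-holomorphic disks in $X_\delta$ with boundary on $L_{\delta;\sigma}$ that contribute to $\Phi_{(X_\delta,L_{\delta;\sigma})}$ and the rigid Morse flow trees of $\tilde L^{\rm Mo}$, in such a way that the asymptotic data (the positive puncture at $a$, the negative punctures and homotopy class recorded by $\bb$) are preserved. This is the exact analogue, in the cobordism setting, of Ekholm's theorem from \cite{E1} relating holomorphic disks in $J^1F$ (or rather in $T^*(F\times\R)$) with boundary on the Legendrian lift to gradient flow trees; the main work is to adapt that machinery to the noncompact, adjusted-almost-complex-structure situation of a Lagrangian cobordism with cylindrical ends and to the specific $2$-parameter family $(X_\delta,L_{\delta;\sigma})$ built in Section~\ref{sec:lagcob}.

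First I would set up the degeneration. Recall that $L_{\delta;\sigma}$ is obtained from the Morse cobordism $L^{\rm Mo}\subset T^*F$ by a rescaling governed by $\delta$ (a ``squeezing'' of the cotangent fiber directions) together with the parameter $\sigma$ controlling the cylindrical ends; as $\delta\to 0$ the Lagrangian $L_{\delta;\sigma}$ collapses onto the zero section $F$ (union its cylindrical continuations), and a holomorphic disk with boundary on $L_{\delta;\sigma}$, suitably rescaled, converges to a gradient flow tree of the local functions defining the sheets of $\tilde L^{\rm Mo}$. Concretely, the steps are: (1) a \emph{compactness / convergence} result — for a sequence $\delta_k\to 0$, any sequence of rigid disks $u_k$ for $(X_{\delta_k},L_{\delta_k;\sigma})$ has a subsequence converging (after rescaling in the fiber directions, and with appropriate Gromov-type bubbling analysis) to a Morse flow tree in $\TT(a;\bb)$; (2) a \emph{gluing / surjectivity} result — every rigid flow tree in $\TT(a;\bb)$ is the limit of a unique (mod $2$, or in fact a unique) rigid holomorphic disk for all small $\delta,\sigma$; (3) a \emph{transversality} statement ensuring both moduli spaces are cut out transversally, are compact, and the dimension formulas match, so that $\dim\TT(a;\bb)=0$ corresponds exactly to rigidity of the disks. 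Because flow trees are themselves already established (Section~\ref{sec:flowtrees}) to compute the combinatorics correctly and $\TT(a;\bb)$ is compact $0$-dimensional, once the bijection is in place the displayed formula for $\Phi_{(X_\delta,L_{\delta;\sigma})}(a)$ follows immediately by counting.

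The key technical inputs I would import rather than reprove: Ekholm's local analysis in \cite{E1} of the three basic types of flow-tree vertices (ends/switches, $Y_0$- and $Y_1$-vertices) and the corresponding local holomorphic models, together with the elliptic estimates showing that a holomorphic disk close to the zero section is governed by the $\bar\partial$-equation for a map into $\C$ whose boundary lies on the graph of $df$ — which linearizes to the negative gradient flow equation. The adaptation requires checking that near the cylindrical ends $\mathcal{E}_\pm(L_{\delta;\sigma})$, where the almost complex structure is the adjusted (translation-invariant) one and the disks are asymptotic to Reeb chord strips, the flow-tree picture degenerates to the ``trivial'' trees over Reeb chords of $\Lambda_\pm$ — this is where hypothesis that all Reeb chords of $\tilde L^{\rm Mo}$ lie over $\partial F$ is used, matching Reeb chords of $\Lambda_\pm$ one-to-one with the punctures. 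I would also need the a priori energy bound guaranteeing no energy escapes to $t=\pm\infty$, which comes from the exactness of $L$ and the fact that $f$ is (locally) constant on the ends, per Definition~\ref{defn: exact Lagrangian with cylindrical ends}(i).

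The main obstacle I expect is the \emph{gluing step}: proving that each rigid flow tree is approximated by exactly one holomorphic disk for all sufficiently small $\delta,\sigma$. This requires a Newton-iteration / implicit-function-theorem argument on a carefully chosen weighted Sobolev space adapted to the (now two) scales $\delta$ and $\sigma$, with uniform control of the linearized operator's right inverse as $\delta,\sigma\to 0$ — the delicate point being the interaction of the fiber-collapse scale $\delta$ with the end-truncation scale $\sigma$, and ensuring the pregluing error is small in the correct norm near the flow-tree vertices of type $Y_1$ (where two sheets come together with a nontrivial holomorphic correction). Controlling the right inverse uniformly near such vertices, and near the cylindrical ends simultaneously, is the crux; everything else is a matter of assembling the local models and invoking the already-developed flow-tree combinatorics.
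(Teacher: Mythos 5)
Your overall strategy — degenerate the disks to flow trees, prove a compactness statement in one direction and a gluing/surjectivity statement in the other, import the local models of \cite{E1} — is exactly the paper's, and you correctly locate the places where the cylindrical ends force new work. Two things are worth flagging.

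First, you have the roles of $\delta$ and $\sigma$ reversed: in the paper $\sigma$ is the fiber-scaling parameter, so it is $\sigma\to 0$ that collapses $L_{\delta;\sigma}$ to the zero section and produces the flow-tree degeneration, whereas $\delta$ controls the shape of the conical transition near $\partial F$ (the size of the region where the Morse parametrization is rounded into a cone). Keeping this straight matters because the paper's Lemma~\ref{lem:disktotree} is stated ``for any small $\delta$ there exists $\sigma_0(\delta)$,'' i.e.\ the outer limit is $\delta\to 0$ (pushing the cone toward $\partial F$, Corollary~\ref{cor: canonicalpunctures}) and the inner limit is $\sigma\to 0$ (flow-tree convergence).

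Second, the delicate point you single out — pregluing near $Y_1$-vertices and uniform control of the right inverse there — is a difficulty already resolved by \cite{E1} in the compact setting and is not where the new work lies. The genuinely new analytical ingredient here is a \emph{subharmonicity estimate} (Lemma~\ref{lem:subharm}) for the function $p^2_{\delta;\sigma}=(k(t)y)^2+z^2$ on the interpolation region $\{0\le t\le K\sigma\}$ where the almost complex structure transitions from the cotangent-bundle $J$ to the adjusted cylindrical $J_0$. Monotonicity alone only gives $|y|,|z|=O(\sigma^{1/2})$ there, which is not enough; the maximum principle applied to the subharmonic $p^2_{\delta;\sigma}\circ u$ upgrades this to $O(\sigma)$, which is the bound that makes the flow-tree convergence and the Newton iteration near the ends go through. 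To even make the monotonicity and subharmonicity arguments clean, the paper also introduces a small deformation $\tilde\Lambda_\pm$ of the Legendrian ends (making the $z$-coordinate locally constant near Reeb chord endpoints without changing the Lagrangian projection, hence without changing the moduli spaces) and writes down the explicit truncated local solution near a Reeb chord end; without these preparations your proposed Newton iteration would lack the needed a priori control on the $C^0$-norm near the cylindrical ends, which is what pins down the Fourier coefficients and hence the weighted Sobolev norm in the surjectivity step.
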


By combining Lemma~\ref{back and forth} and Theorem \ref{thm:flowtreecomp}, we obtain a Morse-theoretic description of any $\Phi_{(X,L)}$, where $(X,L)\in \mbox{Hom}(\Lambda_+,\Lambda_-)$ and $X$ is the completion of $T^*F$. Although Theorem~\ref{thm:flowtreecomp} is stated for general $F$, in applications we take $F=\R\times[a,b]$. In particular we use Theorem \ref{thm:flowtreecomp} to find explicit combinatorial formulas for $\Phi_{(\R\times \R^3,L)}$ in the following special cases:
\begin{itemize}
\item[(a)] $L$ is induced by a Legendrian isotopy (see Sections \ref{sec:nomovescob} and \ref{sec:movescob});
\item[(b)] $L$ is a minimum cobordism, i.e., a disk bounding the standard unknot (see Definition \ref{def:mincob}); and
\item[(c)] $L$ is a saddle cobordism that is given by a $0$-resolution of a contractible Reeb chord in the Lagrangian projection  (see Definition \ref{def:saddlecob}).
\end{itemize}
The explicit formulas for (a) are given in Lemmas~\ref{lem:triple}, \ref{lem:death}, and \ref{lem:birth}; the formula for (b) is given in Lemma~\ref{lem:minimum}; and the formula for (c) is given in Proposition~\ref{prop:saddle}.

As an application of these formulas we prove the following result:

\begin{thm}\label{thm:appl}
Let $\Lambda_n$ be the Legendrian $(2,n)$-torus link in Figure \ref{fig:2n}, let $A_n=(2^{n+1}-(-1)^{n+1})/3$, and let $g_n=\frac{n-1}{2}$ if $n$ is odd and $g_n=\frac{n-2}{2}$ if $n$ is even. Then there are at least $A_n$ (resp.\ $A_n-1$) smoothly isotopic exact Lagrangian fillings of $\Lambda_n$ of genus $g_n$ that are pairwise non-isotopic through exact Lagrangian surfaces when $n$ is odd (resp.\ even).
\end{thm}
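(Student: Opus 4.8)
The plan is, first, to realize the claimed fillings as concatenations of elementary cobordisms and count them; second, to compute the augmentation each one induces using the combinatorial formulas already established; and third, to show that these augmentations are pairwise non-homotopic, which by the isotopy-invariance property of $\Phi$ forces the fillings to be pairwise non-isotopic through exact Lagrangian surfaces.

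\textbf{Construction and count.} I would start from the standard front of $\Lambda_n$ in Figure~\ref{fig:2n}, with its $n$ crossings arranged in a column and two cusps, realizing the maximal Thurston--Bennequin number $tb(\Lambda_n) = n-2$. Call an \emph{admissible pinching sequence} a sequence of saddle cobordisms --- each a $0$-resolution of a contractible Reeb chord in the Lagrangian projection (Definition~\ref{def:saddlecob}), interleaved with the obvious Legendrian isotopies --- which terminates in a disjoint union of standard Legendrian unknots; capping those with minimum cobordisms (Definition~\ref{def:mincob}) produces an exact Lagrangian filling $L$ of $\Lambda_n$, well defined up to exact Lagrangian isotopy by functoriality of $\Phi$ and the usual concatenation conventions. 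A saddle attaches a band (dropping $\chi$ by one) and a minimum contributes a disk; since $\beta|_L$ is exact we have $tb(\Lambda_n) = -\chi(L)$, so every \emph{connected} such $L$ automatically has genus $g_n$, and any two of them are smoothly isotopic rel $\Lambda_n$ (reduce any admissible sequence to a standard one by band slides and handle reordering). To count the admissible sequences I would note that a pinch at an ``outer'' crossing either reduces $\Lambda_k$ to $\Lambda_{k-1}$ or separates the diagram into two smaller $(2,\ast)$-torus-link pieces, the allowed moves being constrained by contractibility of the Reeb chord; the resulting recursion is $A_n = A_{n-1} + 2A_{n-2}$, whose solution with the correct small values is $A_n = (2^{n+1}-(-1)^{n+1})/3$. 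In the even case one checks that only $A_n-1$ of the resulting surfaces are connected fillings of $\Lambda_n$, the remaining one being excluded on connectivity grounds.

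\textbf{The induced augmentations.} For a filling $L$ built this way, Theorem~\ref{thm:flowtreecomp} (via Lemma~\ref{back and forth}) expresses the cobordism map as a count of Morse flow trees, and by functoriality $\varepsilon_L := \Phi_{(\R\times\R^3,L)}\colon \A(\Lambda_n)\to\F$ is the composite of the DGA maps of the elementary pieces of the chosen sequence. Substituting the explicit formulas --- Proposition~\ref{prop:saddle} for the saddles, Lemma~\ref{lem:minimum} for the minima, and Lemmas~\ref{lem:triple}, \ref{lem:death}, \ref{lem:birth} for the isotopies --- I would read off $\varepsilon_L$ on the crossing generators of $\A(\Lambda_n)$. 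The computation should show that $\varepsilon_L$ records the combinatorial type of the pinching sequence, so that the $A_n$ (resp.\ $A_n-1$) augmentations produced are pairwise distinct.

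\textbf{The main obstacle.} The hard part will be upgrading ``pairwise distinct'' to ``pairwise non-homotopic as augmentations,'' since the isotopy-invariance property of $\Phi$ only yields a chain homotopy between $\Phi_{(X_0,L_0)}$ and $\Phi_{(X_1,L_1)}$, not equality. I would handle this by exploiting the small, explicit presentation of the Chekanov--Eliashberg DGA of $\Lambda_n$: either describe the DGA-homotopy relation on its set of augmentations directly and verify that our augmentations occupy distinct classes, or --- more robustly --- extract a homotopy invariant (linearized or bilinearized contact homology, or the homotopy class in the augmentation category) that takes $A_n$ (resp.\ $A_n-1$) distinct values on the fillings. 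Granting the separation, non-homotopic augmentations cannot come from exact-Lagrangian-isotopic fillings, so the constructed surfaces are pairwise non-isotopic through exact Lagrangian surfaces and the theorem follows. Matching the combinatorial count with the number of homotopy classes of augmentations actually realized --- and accounting for the $A_n \mapsto A_n-1$ drop in the even case --- is where essentially all the work lies; the construction and the elementary-cobordism computations, though lengthy, are mechanical given the earlier sections.
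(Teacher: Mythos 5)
Your overall route — realize fillings by concatenating saddle and minimum cobordisms, compute the induced augmentations via Proposition~\ref{prop:saddle} and Lemma~\ref{lem:minimum}, and then separate fillings by separating augmentations — is exactly the paper's route (the paper reduces the theorem to Proposition~\ref{prop: torus link}). But two steps in your sketch do not hold up as written. First, the count. You claim a recursion $A_n=A_{n-1}+2A_{n-2}$ for ``admissible pinching sequences,'' but that is not what that recursion counts: the number of isotopy-equivalence classes of resolution orders is the Catalan number $C_n$ (Lemma 7.5 of the paper), which already exceeds $A_n$ at $n=5$ ($C_5=42$ vs.\ $A_5=21$), so several inequivalent sequences yield the same augmentation. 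The paper does not count sequences at all; it imports the count $A_n$ of augmentations from K\'alm\'an's earlier work and then proves (Proposition~\ref{prop: torus link}, by a Boolean-matrix induction on [Ka2, Theorem 5.3]) that every nonzero augmentation is \emph{realized} by some resolution sequence. That realizability statement is the content you would have to supply; a recursion on sequences will not produce it. Likewise, the drop to $A_n-1$ in the even case is not a connectivity issue: it is that the zero augmentation of the two-component link is not geometric, by Seidel's theorem $HC^{\varepsilon}(\Lambda)\cong H_*(L)$ (Theorem~\ref{Seidel theorem} and Remark~7.8).

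Second, the ``main obstacle'' you flag — that Theorem 1.3 only gives chain homotopy, not equality, of the augmentations of isotopic fillings — is a genuine point, and you are right to raise it, but you leave it unresolved. Here the resolution is elementary and worth saying explicitly: for $\Lambda_n$ all DGA generators have degree $0$ or $1$ ($|b_j|=0$, $|a_j|=1$, and the $H_1$ classes sit in degree $0$), so there are no elements of degree $-1$ and hence $\partial b_j=0$ for every degree-zero chord. In the chain-homotopy identity $\Phi_1+\Phi_0=\Omega_K\circ\partial_++\partial_-\circ\Omega_K$ the right-hand side therefore vanishes on each $b_j$ (and trivially on the $a_j$, which all augmentations kill by degree), so chain-homotopic augmentations of $\Lambda_n$ are literally equal. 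With that observation, plus Proposition~\ref{prop: torus link} in place of your recursion, the argument closes as you intended.
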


\begin{proof}
This is an immediate corollary of Proposition~\ref{prop: torus link}.
\end{proof}

\begin{figure}[ht]
\includegraphics[width=.7\linewidth]{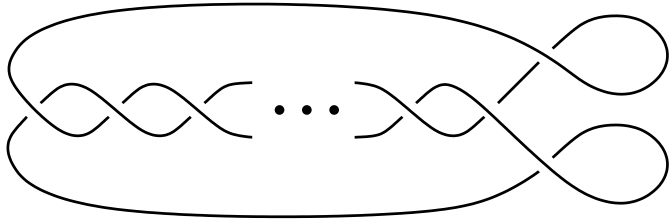}
\caption{Lagrangian diagram of a Legendrian $(2,n)$-torus knot or link, oriented as the closure of a positive braid on $2$ strands. The total number of crossings is $n+2$.}\label{fig:2n}
\end{figure}

Theorem \ref{thm:appl} is in sharp contrast to the situation of the Legendrian unknot~$U$ with rotation number $r(U)=0$ and Thurston--Bennequin invariant $tb(U)=-1$. By the isotopy uniqueness for Lagrangian planes in $\C^{2}$ that are standard at infinity \cite{EP}, there is a unique Lagrangian disk which fills $U$, up to exact Lagrangian isotopy.  (See Figure~\ref{fig:mincob} for such a disk.)

In Section~\ref{sec:Khovanov} we discuss some relationships with Khovanov homology~\cite{Kh}.

\s\n
{\em Outline of the paper.} In Section~\ref{sec:lagcob} we construct and relate the various forms of exact Lagrangian cobordisms that are used in this paper. In particular we prove Lemma~\ref{back and forth}.  We then review Legendrian contact homology in Section~\ref{section: Legendrian contact homology} and gradient flow trees from \cite{E1} in Section~\ref{sec:flowtrees}. Section~\ref{Sec:disktree} is the main technical part of this paper where we prove Theorem~\ref{thm:flowtreecomp}.  We then use Theorem~\ref{thm:flowtreecomp} to compute the DGA maps of elementary exact Lagrangian cobordisms in Section~\ref{Sec:DGAmaps}.  In Section~\ref{section: exact Lagrangian fillings} we collect some observations about exact Lagrangian fillings and augmentations. In Section~\ref{sec:examples} we give some applications including the proof of Theorem~\ref{thm:appl}. The connections with Khovanov homology are given in Section~\ref{sec:Khovanov}.

\section{Lagrangian cobordisms with Legendrian ends}
\label{sec:lagcob}

In this section we introduce the various forms of exact Lagrangian cobordisms that we use in this paper.

\subsection{Completion of $T^\ast F$} \label{subsection: completion}

As in Section \ref{Sec:intr}, let $F=(\R\times[a_-,a_+])\# \Sigma$, where $\Sigma$ is a compact surface and $a_+>a_->0$. Let $\theta$ be the canonical $1$-form on $T^{\ast}F$ and let $\omega=-d\theta$. Let $U_+=\R\times (a_+-\epsilon,a_+]$ be a neighborhood $\pa_+ F$ and let $(\xi_1,\eta_1,\xi_2,\eta_2)$ be the coordinates on $T^{\ast} F\big|_{U_{+}}$ such that $(\xi_1,\xi_2)$ are the coordinates on $U_+$ and $(\eta_1,\eta_2)$ are the dual coordinates on $\R^2$.  Then $\theta=\eta_1d\xi_1+\eta_2d\xi_2$ on $T^{\ast} F\big|_{U_{+}}$.

Consider the map
$$\Phi_{+}\colon (\log (a_+-\epsilon),\log a_+]\times\R^{3}\to T^{\ast} F\big|_{U_+},$$
$$(t,x,y,z) \mapsto \left( x , e^{t} y, e^{t}, z\right).$$
Then
\[
\Phi^{\ast}_{+}(-\theta)= -e^{t}(ydx + zdt)= e^{t}(dz-ydx)-d(e^{t}z).
\]
Hence $\Phi_{+}$ is a symplectomorphism with respect to $d(e^t(dz-ydx))$ on $(\log (a_+-\epsilon),\log a_+]\times\R^{3}$ and the canonical symplectic form $\omega=-d\theta$ on $T^{\ast} F\big|_{U_+}$.

Next, on $(\log(a_+-\epsilon),\infty)\times\R^{3}$, we consider the $1$-form
$$\alpha=e^{t}(dz-ydx)-d(\beta(t)e^{t}z),$$ where $\beta(t)=1$ for $t\in (a_+-\epsilon,a_+]$ and $\beta(t)=0$ for $t\gg a_+$. Then attaching $((\log(a_+-\epsilon),\infty)\times\R^{3},\alpha)$ to $T^{\ast}F$ using $\Phi_+$ yields an exact symplectic manifold which extends $(T^{\ast}F,-\theta)$.

Similarly, let $U_-$ be a neighborhood of $\pa_{-}F$ with coordinates $(\xi_1,\xi_2)\in \R\times [a_-,a_-+\epsilon)$.  We attach $(-\infty,\log(a+\epsilon))\times\R^{3}$ to $T^{\ast}F\big|_{U_-}$ using the map
\[
\Phi_{-}(t,x,y,z) = \left( x , e^{t} y, e^{t}, z\right).
\]
The result of these two attachments is an exact symplectic manifold $X$ with cylindrical ends, called the {\em completion of $T^\ast F$}.

\subsection{Conical cobordisms}

We now explain how to translate the notion of a cylindrical Lagrangian cobordism to the notion of a {\em conical Lagrangian cobordism} on $T^\ast F$.  Conical Lagrangian cobordisms serve as the intermediary between cylindrical Lagrangian cobordisms and Morse cobordisms, where the latter is particularly suited for holomorphic disk counting.

Consider the ends
\begin{align*}
\mathcal{E}_+(L)&=(\log(a_+-\epsilon),\infty)\times\Lambda_{+}\subset [\log(a_+-\epsilon),\infty)\times\R^{3},\\
\mathcal{E}_-(L)&=(-\infty,\log(a_-+\epsilon)]\times\Lambda_{-}\subset (-\infty,\log(a_-+\epsilon)]\times\R^{3},
\end{align*}
of a cylindrical Lagrangian cobordism $L$, where $\Lambda_{\pm}$ are Legendrian links with parametrizations
\begin{equation}\label{eq:legparam}
(x_{\pm}(s),y_{\pm}(s),z_{\pm}(s)),\quad s\in S_{\pm}
\end{equation}
and $S_{\pm}$ are closed $1$-manifolds. Then $\Phi_{\pm}(\mathcal{E}_\pm(L))\cap T^{\ast}F\big|_{U_\pm}$ are parametrized by
$$(x_{\pm}(s),\xi_2y_{\pm}(s),\xi_2,z_{\pm}(s)),$$
where $(s,\xi_2)\in S_{+}\times(a_+-\epsilon,a_+]$ and $(s,\xi_2)\in S_{-}\times [a_-,a_-+\epsilon)$, respectively.

\begin{defn}
If $\Lambda_+$ and $\Lambda_-$ are Legendrian links, then a Lagrangian submanifold $L^{\co}\subset T^{\ast}F$ is a {\em conical Lagrangian cobordism} from $\Lambda_+$ to $\Lambda_-$ if $L^{\co}\cap U_{\pm}$ admits a parametrization
\begin{equation} \label{eq:conicalend}
(x_{\pm}(s),(\xi_2-a_\pm') y_{\pm}(s),\xi_2,z_{\pm}(s)),
\end{equation}
where $a_-<a_+$ and $a_\pm'<a_\pm$. We do not require $a_->0$.
\end{defn}

Let $\hat F$ be the surface obtained from $F$ by gluing $E_-=\R\times (a_-',a_-]$ to $\pa_{-}F$ and $E_+=\R\times[a_+,\infty)$ to $\pa_{+}F$ so that $X\simeq T^*\hat F$.

\begin{defn}
Given a conical Lagrangian cobordism $L^{\co}\subset T^{\ast}F$, the {\em long conical Lagrangian cobordism $\hat L^{\co}$ corresponding to $L^{\co}$} is a Lagrangian submanifold of $T^{\ast}\hat F$ such that $\hat L^{\co}\cap T^{\ast}\hat F\big|_{E_-\cup E_+ }$ admits a parametrization as in \eqref{eq:conicalend}, where $(s,\xi_2)\in S_{+}\times[a_+,\infty)$ and $(s,\xi_2)\in S_{-}\times(a_-',a_-]$, respectively.
\end{defn}

\subsection{Morse cobordisms}

We now define {\em Morse cobordisms.} Let $\Lambda_{\pm}\subset\R^{3}$ be Legendrian links with parametrizations as in \eqref{eq:legparam}.

\begin{defn} \label{defn: Morse cobordism}
A Lagrangian submanifold $L^{\Mo}$ in $T^{\ast}F$ is a {\em Morse cobordism from $\Lambda_+$ to $\Lambda_-$} if
$L^{\rm Mo}\cap T^\ast F \big|_{U_+\cup U_-}$ admits parametrizations
\begin{equation}\label{eq:Morseend}
(x_\pm(s),\left(A_\pm \mp(\xi_2-a_\pm)^{2}\right)y_\pm(s),\xi_2, \mp 2(\xi_2-a_\pm)z_\pm(s)),
\end{equation}
where $(s,\xi_2)\in S_{+}\times (a_+-\epsilon,a_+]$ and $(s,\xi_2)\in S_{-}\times [a_-,a_-+\epsilon)$, respectively, and $A_\pm>0$.
\end{defn}

A Morse cobordism $L^{\Mo}$ has a Legendrian lift~$\tilde L^{\Mo}\subset J^{1}F$ which is unique up to $\R$-translation and the Reeb chords of $\tilde L^{\Mo}$ over $\pa_{+} F$ and $\pa_{-}F$ are canonically identified with the Reeb chords of $\Lambda_{+}$ and $\Lambda_{-}$, respectively.

\subsection{From Morse to conical} \label{subsubsection: morse to conical}

We explain how to pass from a Morse cobordism to a conical cobordism.

Let $h_{\delta}\colon [0,\epsilon)\to\R$ be a family of increasing functions parametrized by $\delta\in(0,\frac{1}{5}\epsilon)$ such that
\[
h_{\delta}(u)=
\begin{cases}
\delta u-\tfrac12\delta^{2} &\text{for }u\in [0,\frac12\delta],\\
u^{2} &\text{for } u\in [\delta,\epsilon].
\end{cases}
\]
Let $L^{\Mo}$ be a Morse cobordism and let $L'_{\delta}$ be an exact Lagrangian on $T^*F$ which agrees with $L^{\Mo}$ on $T^{\ast} F\big|_{F-(U_+\cup U_-)}$ and is parametrized by
\begin{equation}\label{eq:Morseend2}
\begin{split}
\left( x_+(s), \left(A_+ - h_{\delta}(a_+-\xi_2)\right)y_+(s),\xi_2,h_{\delta}'(a_+-\xi_2) z_+(s)\right)\\
\left( x_-(s), \left(A_- + h_{\delta}(\xi_2-a_-)\right)y_-(s),\xi_2,h_{\delta}'(\xi_2-a_-)z_-(s)\right)
\end{split}
\end{equation}
on $T^{\ast} F\big|_{U_{\pm}}$.

By adding ends $\R\times[a_-'(\delta),a_-]$ (for an appropriate $a_-'(\delta)$) and $\R\times[a_+,\infty)$ to $\pa_{\pm} F$ we obtain a surface $F_{\delta}$ and by adding conical ends to $L'_\delta$ we obtain a long conical Lagrangian $\hat L^{\co}_\delta\subset X_\delta:=T^{\ast} F_{\delta}$.

Given $\sigma>0$, let $s_\sigma$ be the {\em fiber scaling map}:
$$s_\sigma : T^*M \to T^*M,\quad   s_\sigma(q,p) = (q,\sigma p),$$
where $q\in M$ and $p\in T^*_qM$. Taking $M=F_\delta$, we define $L_{\delta;\sigma}:=s_\sigma(\hat L^{\co}_\delta)$. We refer to $(X_\delta,L_{\delta;\sigma})$ as the {\em conical exact Lagrangian cobordisms associated to $L^{\Mo}$}.

\begin{proof}[Proof of Lemma~\ref{back and forth}]
Given an exact Lagrangian cobordism $L$ with cylindrical Legendrian ends, we apply $\Phi_\pm$ to transform it into a long conical Lagrangian cobordism $\hat L^{\co}$. By truncating the ends of $\hat L^{\co}$ we obtain a conical Lagrangian cobordism $L^{\co}$.  Rounding $L^{\co}$ near its boundary yields a Morse cobordism $L^{\Mo}$ (details left to the reader).  Now applying the above procedure gives conical exact Lagrangian cobordisms $L_{\delta;\sigma}$.  One easily verifies that $\hat L^{\co}$ is exact Lagrangian isotopic to $L_{\delta;\sigma}$.
\end{proof}

\section{Legendrian contact homology}
\label{section: Legendrian contact homology}

In this section we review the Legendrian contact homology of links in standard contact $\R^{3}$, first from the SFT perspective of \cite{EGH} and then from the combinatorial perspective of \cite{Ch}.

\subsection{The graded algebra of Reeb chords} \label{subsection: graded algebra of Reeb}

We use the notation from Section \ref{Sec:intr}. Let $\Lambda$ be a Legendrian link in $(\R^3,\xi_0)$, $\Phi^t=\Phi_{R_{\alpha_0}}^{t}$ the time-$t$ flow of the Reeb vector field $R_{\alpha_0}=\pa_{z}$, and $\CC(\Lambda)$ the set of Reeb chords of $\Lambda$.

The $\alpha_0$-{\em action} of $c\in \CC(\Lambda)$ is given by
$$\mathfrak{A}_{\alpha_0}(c)=\int_c \alpha_0.$$
If $c^\pm$ are the endpoints of $c$, then
$$\mathfrak{A}_{\alpha_0}(c)=z(c^+)-z(c^-)>0.$$
Hence we also refer to $\mathfrak{A}_{\alpha_0}(c)$ as the {\em length of $c$}.

\begin{defn}[Chord genericity] \label{chord generic}
A Legendrian link $\Lambda\subset (\R^3,\xi_0)$ is {\em chord generic} if, for any $c\in \CC(\Lambda)$, $d\Phi^{\ell}(T_{c^{-}}\Lambda)$ is transverse to $T_{c^{+}}\Lambda$ in the contact plane $(\xi_0)_{c^{+}}$ at $c^{+}$ and $\ell=\mathfrak{A}_{\alpha_0}(c)$.
\end{defn}

We assume that $\Lambda$ is chord generic, since chord genericity can be attained by perturbing $\Lambda$ in its isotopy class. In particular, $\CC(\Lambda)$ is a finite set and is in one-to-one correspondence with the double points of the restriction of the Lagrangian projection $$\Pi_{\C}\colon \R^{3}\to\R^{2},\quad (x,y,z)\mapsto(x,y),$$
to $\Lambda$.

Let $\Lambda=\Lambda_1\cup\dots\cup\Lambda_k$, where $\Lambda_j$, $j=1,\dots,k$, are the connected components of $\Lambda$. Fix a reference point $p_j\in\Lambda_j$ for each $j$.  For each pair $i\ne j$, pick a path~$\delta_{ij}$ in $\R^{3}$ that connects $p_i$ to $p_j$ and a path of lines in the contact planes along $\delta_{ij}$ that connects $T_{p_i}\Lambda_i$ to $T_{p_j}\Lambda_j$.  When $i=j$, we let $\delta_{ii}$ be the constant path at $p_i$. For each Reeb chord $c\in\CC(\Lambda)$ with $c^{\pm}\in \Lambda_{i^{\pm}}$, $i^{\pm}\in\{1,\dots,k\}$, pick capping paths $\gamma^{\pm}_{c}$ in $\Lambda_{i^{\pm}}$ that connect $p_{i^{\pm}}$ to $c^{\pm}$.
We use the complex trivialization ${\frak T}$ of $\xi_0$ that is induced from the Lagrangian projection $\Pi_{\C}\colon\R^{3}\to\R^{2}$ followed by the standard identification $\R^{2}\simeq\C$. We also write homology classes in $H_1(\Lambda_j)\simeq \Z$ multiplicatively: the class which equals $m$ times the generator is denoted by $\tau^{m}_j$ with $\tau_j^{0}=1_{j}$.

Let $\Gamma_{\Lambda_j}$ be a loop of tangent lines of $\Lambda_j$ induced by a loop which traverses $\Lambda_j$ once in the direction determined by $\tau_j$. Then the {\em degree of $\tau_j$} is given as follows:
\[
|\tau_j|=\mu(\Gamma_{\Lambda_j}),
\]
where the Maslov index $\mu$ is measured with respect to ${\frak T}$. 

Let $c\in\CC(\Lambda)$ be a Reeb chord of length $\ell$. Let $\hat\Gamma_c$ denote the loop of tangent lines in $\xi_0$ which is the concatenation of the following four paths:
\begin{itemize}
\item[(i)]  the chosen path of tangent lines along $-\delta_{i^-i^+}$,
\item[(ii)]  the path of tangent lines of $\Lambda$ along $\gamma_{c}^{-}$,
\item[(iii)]  the smallest positive rotation in the contact plane $\xi_{c^{+}}$ that takes $d\Phi^{\ell}(T_{c^{-}}\Lambda)$ to $T_{c^{+}}\Lambda$, and
\item[(iv)]  the path of tangent lines of $\Lambda$ along $-\gamma_{c}^{+}$.
\end{itemize}
Here $-\gamma_c^+$ is $\gamma_c^+$ with the opposite orientation. Then define the {\em degree of $c$} as:
\[
|c|=\mu(\hat\Gamma_c)-1,
\]
where the Maslov index $\mu$ is measured with respect to ${\frak T}$.

The graded algebra $\A(\Lambda)$ underlying the Legendrian DGA of $\Lambda$ is the unital algebra over $\F$ which is freely generated by the Reeb chords in $\CC(\Lambda)$ and the homology classes $\tau_j^{\pm 1}\in H_1(\Lambda_j)$, $j=1,\dots,k$, with the grading given above.

\begin{rmk}\label{rmk:Morsegrading}
If $L^{\Mo}$ is a Morse cobordism in $T^{\ast} F$ from $\Lambda_{+}$ to $\Lambda_{-}$, then
the Reeb chords of $\tilde L^{\Mo}\subset J^{1}F=T^{\ast}F\times\R$ over $\pa_{-}F$ (resp.\ $\pa_+ F$) are in natural one-to-one correspondence with the Reeb chords of $\Lambda_-$ (resp.\ $\Lambda_+$). By abuse of notation, the Reeb chords of $\mathcal{C}(\tilde L^{\Mo})$ corresponding to $c_\pm \in \mathcal{C}(\Lambda_\pm)$ will be denoted by $c_\pm$.  Then $|c_-|_{\tilde L^{\Mo}}=|c_-|_{\Lambda_-}$ and $|c_+|_{\tilde L^{\Mo}}=|c_+|_{\Lambda_+}+1$, where $|\cdot|_{\tilde L^{\Mo}}$ refers to the grading of $\A(\tilde L^{\Mo})$ and $|\cdot|_{\Lambda_+}$ refers to the grading of $\A(\Lambda_{\pm})$.
\end{rmk}

\subsection{Moduli spaces of holomorphic disks} \label{subsection: moduli of holomorphic disks}

Consider an exact Lagrangian cobordism $(X,L)$ from $\Lambda_{+}$ to $\Lambda_{-}$ with cylindrical Legendrian ends. For cobordisms $(X,L)$, we use the natural maps $H_{1}(\Lambda_{\pm})\to H_{1}(L;\Z)$ and consider $\A(\Lambda_{\pm})$ as algebras generated by Reeb chords and a basis of $H_{1}(L;\Z)$. When we want to make the ``coefficient ring'' $R$ explicit we write $\A(\Lambda_\pm; R)$.

We describe $\A(\Lambda_+;\F[H_1(L;\Z)])$ more precisely: Pick a reference point $q_i$ in each component of $L$ and for each $p_j\in \Lambda_{+,j}$ a path $\delta_{q_ip_j}$ in $L$ from some $q_i$ to $p_j$. Given $c\in \mathcal{C}(\Lambda_+)$, we define $\overline{\gamma}_c^\pm$ as the concatenation of $\gamma_c^\pm$ with the appropriate $\delta_{p_iq_j}$. Then we repeat the construction of $\A(\Lambda_+)$ from Section~\ref{subsection: graded algebra of Reeb} with $\gamma_c^\pm$ replaced by $\overline{\gamma}_c^\pm$ and $H_1(\Lambda_+;\Z)$ replaced by $H_1(L;\Z)$. This gives us $\A(\Lambda_+;\F[H_1(L;\Z)])$; the construction of $\A(\Lambda_-;\F[H_1(L;\Z)])$ is similar.

Let $a\in\CC(\Lambda_{+})$ and $b_1,\dots,b_m\in\CC(\Lambda_-)$ be Reeb chords, let $\tau_0,\dots,\tau_m$ be elements of $H_1(L;\Z)$, and let
\[
\bb=\tau_0 b_1\tau_1 b_2\tau_2\dots \tau_{m-1} b_m\tau_{m},
\]
where $b_s^{-}$ and $b_{s+1}^{+}$ both lie on $\Lambda_{j_s}$ for $s\ge 1$; $a^{+}$ and $b_{1}^{+}$ both lie on $\Lambda_{j_0}$; and $b_m^{-}$ and $a^{-}$ both lie on $\Lambda_{j_m}$. Fix an almost complex structure~$J$ on $X$ which is {\em adjusted to the symplectic form on $X$}, i.e.,
\begin{itemize}
\item $J$ is compatible with the symplectic form;
\item $J$ is $\R$-invariant on the symplectization ends;
\item $J(\xi)=\xi$ and $J(\bdry_t)= R_{\alpha_0}$ on the symplectization ends.
\end{itemize}
If $c$ is a Reeb chord of $\Lambda_{\pm}$, then the product $\R_{\pm}\times c\subset \R_{\pm}\times\R^{3}$ of $c$ and a half-line $\R_{\pm}$ in the $\R$-direction in the symplectization ends is $J$-holomorphic. We call $\R_{\pm}\times c$ a {\em strip over the Reeb chord $c$.}

Define $\M^{(X,L);J}(a;\bb)$ as the moduli space of $J$-holomorphic disks \[u\colon (D_{m+1},\pa D_{m+1})\to (X,L),\] with the following properties:
\begin{enumerate}
\item $D_{m+1}$ is the closed unit disk with $m+1$ boundary points $\zeta_0,\dots,\zeta_m$ removed. Here $\zeta_0,\dots,\zeta_m$ are arranged in counterclockwise order around the boundary of the disk.
\item $u$ has a positive puncture at $\zeta_0$ where it is asymptotic to the strip over the Reeb chord $a$ at $+\infty$.
\item $u$ has a negative puncture at $\zeta_j$, $j>0$, where it is asymptotic to the strip over the Reeb chord $b_j$ at $-\infty$.
\item The loop in $L$ obtained by concatenating the capping path $\overline{\gamma}_{b_s}^{+}$, the image of the boundary segment from $\zeta_s$ to $\zeta_{s-1}$ under $u$, and the capping path $-\overline{\gamma}_{b_{s-1}}^{-}$ represents the class $\tau_s$. (Here we use the convention $b_0=b_{m+1}=a$.)
\end{enumerate}
The $J$ in the notation of the moduli space will often be suppressed. We will also refer to $u\in \M^{(X,L);J}(a;\bb)$ as a {\em $J$-holomorphic disk in $(X,L)$ from $a$ to $\bb$.}

Similarly we define $\mathcal{M}^{(X,L);J}(a_1,\dots,a_k;\bb_1,\dots,\bb_k)$ as the moduli space of $J$-holomorphic disks in $(X,L)$ with positive punctures at $a_1,\dots,a_k$ and negative punctures given by $\bb_1,\dots,\bb_k$ such that $a_1,\bb_1,a_2,\bb_2,\dots,a_k,\bb_k$ are in counterclockwise order on the boundary of the domain.

\begin{defn}
An {\em $(a+b+1)$-level broken disk} is a union $u_{-b}\cup \dots \cup u_a$, arranged from bottom to top, such that $u_j$, $j\not=0$, maps to $X_0=\R\times\R^3$, $u_0$ maps to $X$, each component of $u_j$, $j=-b,\dots,a$, has one positive puncture and zero or more negative punctures, and the positive punctures of $u_j$, $j=-b,\dots,a-1$, are in one-to-one correspondence with the negative punctures of $u_{j+1}$.
\end{defn}

\subsection{Energy}

Let $((X,d\beta),L)$ be an exact Lagrangian cobordism from $\Lambda_+$ to $\Lambda_-$ with cylindrical Legendrian ends. Let
$$X_+=[T,\infty)\times \R^3,\quad X_-=(-\infty,-T]\times \R^3$$ be the positive and negative ends of $X$ and let
$$X_0=X-int(X_+\cup X_-).$$
We write $\beta_0= \beta|_{X_0}$, $\beta_\pm= \beta|_{\bdry X_\pm}$, and $\omega_0$ for the exact $2$-form on $X$ which agrees with $d\beta_0$ on $X_0$ and with $d\beta_\pm$ on $X_\pm$.

\begin{defn}
If $u\in \M^{(X,L);J}(a;\bb)$, then we define its {\em $\omega_0$-energy} $E_{\omega_0}(u)$ and the {\em $\beta_\pm$-energy} $E_{\beta_\pm}(u)$ as follows:
$$E_{\omega_0}(u)= \int_u \omega_0,\quad E_{\beta_\pm}(u)=\sup_{\phi_-}\int_{u|_{X_\pm}} d( \phi_\pm(t)) \wedge\beta_\pm,$$
where $\phi_+: [T,\infty)\to \R^{\geq 0}$ (resp.\ $\phi_-:(-\infty,-T]\to \R^{\geq 0}$) is a smooth nondecreasing function such that $\phi_+(T)=0$ and $\displaystyle\lim_{t\to \infty} \phi_+(t)=1$ (resp.\ $\displaystyle\lim_{t\to-\infty}\phi_-(t)=0$ and $\phi_-(-T)=0$).
\end{defn}

The following lemma is immediate (cf.\ \cite[Lemma B.3]{E2} for details):

\begin{lemma}\label{lemma: energy bound}
If $u\in \M^{(X,L);J}(a;\bb)$, then $E_{\omega_0}(u)\leq \mathfrak{A}(a)$ and $E_{\beta_\pm}(u)\leq \mathfrak{A}(a)$.
\end{lemma}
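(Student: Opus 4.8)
\textbf{Proof proposal for Lemma~\ref{lemma: energy bound}.}

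The plan is to track the exactness of $\beta$ and Stokes' theorem, exactly as in the standard energy estimates for holomorphic curves with Lagrangian boundary in an exact setting. First I would observe that $\beta|_L$ is exact, say $df=\beta|_L$, and that by condition (i) of Definition~\ref{defn: exact Lagrangian with cylindrical ends} the primitive $f$ takes a single constant value $f_+$ on $\mathcal{E}_+(L)$ and a single constant value $f_-$ on $\mathcal{E}_-(L)$. The key quantitative input is that for a strip over a Reeb chord $c$ of $\Lambda_\pm$ the relevant ``symplectic area'' is governed by $\mathfrak{A}_{\alpha_0}(c)$: integrating $\alpha_0$ (or rather its pullback under the asymptotic model) over the Reeb chord reproduces the action, and the pairing of a puncture of $u$ asymptotic to $c$ with the form $\omega_0$ contributes $\pm\mathfrak{A}(c)$ with the sign dictated by whether the puncture is positive or negative.

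The key steps, in order: (1) Use $d\omega_0=0$ away from the gluing regions and Stokes' theorem to rewrite $E_{\omega_0}(u)=\int_u\omega_0$ as a sum of boundary integrals and contributions at the punctures; the boundary integral $\int_{\partial u}\beta$ telescopes using $\beta|_L=df$ and the capping-path/homology bookkeeping from the definition of $\M^{(X,L);J}(a;\bb)$, leaving only the constants $f_\pm$ and, crucially, the contribution of $f$ along the Reeb-chord-strip ends. (2) Evaluate the puncture contributions: at the positive puncture $\zeta_0$ the curve is asymptotic to the strip over $a$, contributing $+\mathfrak{A}(a)$; at each negative puncture $\zeta_j$ it is asymptotic to the strip over $b_j$, contributing $-\mathfrak{A}(b_j)\le 0$. (3) Conclude $E_{\omega_0}(u)=\mathfrak{A}(a)-\sum_j \mathfrak{A}(b_j)\le \mathfrak{A}(a)$, where the sign of each $\mathfrak{A}(b_j)$ is guaranteed positive because Reeb chord actions are positive (as noted, $\mathfrak{A}_{\alpha_0}(c)=z(c^+)-z(c^-)>0$). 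For the $\beta_\pm$-energy, I would argue that on the cylindrical ends $\beta_\pm=e^t\alpha_0$ (pulled back appropriately), so $d(\phi_\pm(t))\wedge\beta_\pm = \phi_\pm'(t)\,dt\wedge e^t\alpha_0$; the restriction of $u$ to the end is, up to the asymptotic analysis, a union of half-strips over Reeb chords plus an exact error, and integrating $\phi_\pm'(t)e^t\,dt\wedge\alpha_0$ over such a half-strip is bounded by $\sup(\phi_\pm)\cdot$ (sum of the actions of the chords at that end) $\le \mathfrak{A}(a)$, since by step (3) the total action at either end is at most $\mathfrak{A}(a)$. One has to be slightly careful that $\phi_\pm$ is nondecreasing with supremum $1$, so the weight $\phi_\pm'(t)e^t$ does not blow up the estimate; the monotone-choice-of-$\phi_\pm$ trick (taking the supremum over such $\phi_\pm$, as in the definition) is exactly what keeps the bound uniform.

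The main obstacle I anticipate is the bookkeeping at the punctures and along $\partial u$: one must verify that the boundary integral of $\beta$ over $\partial u$, when combined with the chosen capping paths $\overline\gamma_{b_s}^\pm$ used to define the homology classes $\tau_s$, really does collapse to the difference of the constants $f_\pm$ plus the Reeb-chord action terms, with no leftover contributions from the homology classes $\tau_0,\dots,\tau_m$ (these should cancel because $\beta|_L$ is exact, so its integral over any loop in $L$ vanishes — this is precisely where exactness of the Lagrangian, not just closedness, is used). Making the asymptotic convergence near the punctures rigorous — i.e., that the $\omega_0$-integral over a neighborhood of a puncture asymptotic to a strip over $c$ equals $\pm\mathfrak{A}(c)$ up to terms that vanish as the neighborhood shrinks — is the standard but nontrivial input; fortunately this is exactly the content of \cite[Lemma B.3]{E2} cited in the statement, so I would invoke it rather than reprove it, and the proof reduces to assembling these pieces.
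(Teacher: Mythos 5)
Your overall strategy is exactly the one the paper has in mind: the paper's "proof" is just the remark that the statement is immediate, deferring to \cite[Lemma B.3]{E2}, and what that reference does is precisely your Stokes/exactness computation — $\int_{\partial u}\beta$ telescopes via $\beta|_L=df$ and Definition~\ref{defn: exact Lagrangian with cylindrical ends}(i) (exactness also kills any contribution of the classes $\tau_i$), the punctures contribute $+\mathfrak{A}(a)$ and $-\mathfrak{A}(b_j)$ with $\mathfrak{A}(b_j)>0$, giving the $\omega_0$-bound, and the $\beta_\pm$-energy is then controlled by the total action at each end. So in structure your argument is the paper's argument.

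One detail in your $E_{\beta_\pm}$ step is wrong as written, though the fix is immediate. You identify $\beta_\pm$ with $e^t\alpha_0$ and write $d\phi_\pm\wedge\beta_\pm=\phi_\pm'(t)\,dt\wedge e^t\alpha_0$; with that weight the half-strip integral is $\bigl(\int\phi_\pm'(t)e^t\,dt\bigr)\cdot\mathfrak{A}(c)$, which is \emph{not} bounded by $\sup\phi_\pm\cdot\mathfrak{A}(c)$ — taking $\phi_+$ to increase only for very large $t$ makes it arbitrarily large, so the "monotone choice of $\phi_\pm$" remark does not save the estimate. The point is that $\beta_\pm$ in the definition is the restriction $\beta|_{\partial X_\pm}$, extended $t$-independently over the end (this is the usual SFT convention, and is what makes $E_{\beta_\pm}$ finite at all). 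With the correct, $t$-invariant $\beta_\pm$ the weight is just $\phi_\pm'(t)$ with $\int\phi_\pm'\le 1$, and the clean route is: $u^*(d\beta_\pm)\ge 0$ for adjusted $J$, so $\int_{u|_{X_\pm}}d\phi_\pm\wedge\beta_\pm\le\int_{u|_{X_\pm}}d(\phi_\pm\beta_\pm)$, and Stokes gives only the asymptotic chord contributions, since $\beta_\pm$ vanishes on the cylindrical Lagrangian ends ($\Lambda_\pm$ Legendrian) and $\phi_\pm$ vanishes on the slice $\{t=\pm T\}$; the result is bounded by the total action at that end, hence by $\mathfrak{A}(a)$ via the $\omega_0$-estimate, as you say. (The same normalization issue means your claimed equality $E_{\omega_0}(u)=\mathfrak{A}(a)-\sum_j\mathfrak{A}(b_j)$ should be read with the end contributions measured against $\beta_\pm$ rather than $e^t\alpha_0$; the inequality of the lemma is unaffected.)
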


\subsection{Fredholm theory and compactness}

The {\em Fredholm index} $\op{ind}(u)$ of $u\in \mathcal{M}^{(X,L);J}(a_1,\dots,a_k;\bb_1,\dots,\bb_k)$ is the full expected dimension count which includes conformal variations.  If $(X,L)$ is $\R$-invariant, then $u$ is {\em rigid} if $\op{ind}(u)=1$; otherwise $u$ is {\em rigid} if $\op{ind}(u)=0$.

\begin{lemma} \label{fredholm index}
If $u\in \mathcal{M}^{(X,L);J}(a_1,\dots,a_k;\bb_1,\dots,\bb_k)$, then
\begin{equation} \label{eqn: fredholm index}
\op{ind}(u)=\sum_{i=1}^k (|a_i|-|\bb_i|)-1+k.
\end{equation}
\end{lemma}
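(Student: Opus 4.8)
\textbf{Proof proposal for Lemma~\ref{fredholm index}.}

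The plan is to compute the Fredholm index of $u\in\mathcal{M}^{(X,L);J}(a_1,\dots,a_k;\bb_1,\dots,\bb_k)$ by the standard linear-gluing/additivity argument for the $\bar\partial$-operator on punctured disks, reducing the statement to the known index formula in the one-positive-puncture case and to the grading conventions fixed in Section~\ref{subsection: graded algebra of Reeb}. First I would recall that for a holomorphic disk with boundary on a Lagrangian and punctures asymptotic to Reeb chords, the linearized $\bar\partial$-operator is Fredholm once one chooses exponential weights at the punctures, and its index is computed by a Riemann--Roch-type formula: it is the sum of a Euler-characteristic/boundary-Maslov term for the disk together with local contributions (Conley--Zehnder indices) at each puncture. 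The key normalization is that this local data has been packaged precisely so that a positive puncture at a Reeb chord $c$ contributes (up to the universal shift) $|c|$ and a negative puncture at $c$ contributes $-|c|$; this is exactly the content of the definition $|c|=\mu(\hat\Gamma_c)-1$ and of the capping-path conventions, which were set up so that the differential (the $k=1$, $\op{ind}=1$ case of a disk with one positive and the rest negative punctures in the $\R$-invariant setting) has the expected degree $-1$.

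The concrete steps: (1) Establish the formula for a single disk component with one positive puncture $a$ and negative punctures $b_1,\dots,b_m$ (so $\bb=\tau_0b_1\cdots\tau_m$), namely $\op{ind}(u)=|a|-|\bb|$, where $|\bb|=\sum_s|b_s|+\sum_s|\tau_s|$ and the homology terms $|\tau_s|$ enter through the Maslov index of the boundary loop they represent; this is the case treated in \cite{E1,E2,EES3} and I would cite it, noting that the additive constant is fixed by the requirement that $\op{ind}=1$ disks in $\R\times\R^3$ be the rigid ones contributing to $\partial$. (2) Pass to a general domain $D$ with $k$ positive punctures at $a_1,\dots,a_k$ interspersed with negative-puncture groups $\bb_1,\dots,\bb_k$: decompose $D$ (after a degeneration, or directly via the additivity of the index under linear gluing at strip-like ends) into $k$ pieces each carrying one positive puncture, glue the index contributions, and track the change in the conformal/automorphism part of the index. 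Gluing $k$ one-positive-puncture pieces along $k$ strips increases the count of interior moduli by exactly the number of glued ends, which accounts for the ``$-1+k$'' correction: a single disk has a $\dim PGL$-type contribution giving the lone $-1$, and each additional positive puncture that is glued in contributes $+1$. (3) Sum: $\op{ind}(u)=\sum_{i=1}^k\big(|a_i|-|\bb_i|\big) + (k-1)\cdot 1 - 0$, i.e.\ $\sum_i(|a_i|-|\bb_i|)-1+k$, matching \eqref{eqn: fredholm index}.

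The main obstacle I anticipate is purely bookkeeping rather than analytic: making sure the universal additive constant and the contributions of the $H_1(L;\Z)$-classes $\tau_s$ are handled consistently with the grading conventions of Section~\ref{subsection: graded algebra of Reeb} (capping paths $\overline\gamma_c^\pm$, the trivialization $\frak T$, the $-1$ shift in $|c|$, and Remark~\ref{rmk:Morsegrading}'s degree shifts), and that the ``$+k$'' is attributed to the right geometric source (the positive punctures, where the asymptotic operator is surjective in the relevant weight, versus the negative ones). The cleanest way to pin the constant is to check the formula against a known example --- e.g.\ $k=1$, a single disk with one positive and one negative puncture at the \emph{same} Reeb chord giving the trivial strip with $\op{ind}=0$ in the non-$\R$-invariant case, or $\op{ind}=1$ in the $\R$-invariant case, consistent with $|a|-|a|-1+1=0$ --- after which the general statement follows by additivity. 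Since all the hard transversality and Fredholm analysis is already established in the cited references, the proof is essentially a citation plus this additivity-of-index computation.
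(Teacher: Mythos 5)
Your formula and overall strategy are correct, but your route differs from the paper's: the paper disposes of Lemma~\ref{fredholm index} with a one-line citation to the general index theorem of Cieliebak--Ekholm--Latschev (\cite[Theorem A.1]{CEL}), which directly computes the Fredholm index of punctured disks with any number of positive punctures, whereas you reduce to the one-positive-puncture case $\op{ind}(u)=|a|-|\bb|$ and recover the extra $k-1$ by an additivity-under-gluing argument. Both rest on the same Riemann--Roch machinery, so your approach is legitimate, but two points in your step (2) need to be stated more carefully if it is to stand on its own. First, you cannot manufacture a disk with $k$ positive punctures by gluing one-positive-puncture disks at their strip-like ends over Reeb chords --- such gluings never increase the number of positive punctures; the correct decomposition is to cut the domain along $k-1$ boundary-to-boundary arcs (equivalently, degenerate to a configuration of $k$ one-positive-puncture disks meeting at boundary nodes on $L$), and the ``$+1$ per extra positive puncture'' is then the smoothing parameter at each boundary node: with $n=2$ the matching condition at a node costs $2$, the two boundary evaluation points contribute $+1$ each, and smoothing contributes $+1$, so each node adds exactly $+1$, giving $\sum_i(|a_i|-|\bb_i|)+(k-1)$ as required. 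Second, your normalization check via the trivial strip should read: the formula gives $\op{ind}=0$ for the strip over a Reeb chord, and this is consistent because the trivial strip is the standard exceptional $\R$-invariant curve (it is not an $\op{ind}=1$ rigid-up-to-translation disk); as stated, the ``$\op{ind}=1$ in the $\R$-invariant case'' clause is misleading. What the paper's citation buys is that none of this bookkeeping is needed --- the CEL theorem already covers arbitrary puncture configurations with the grading conventions of Section~\ref{subsection: graded algebra of Reeb} built in --- while your argument buys a self-contained explanation of where the ``$-1+k$'' comes from, at the cost of having to justify the boundary-gluing index additivity.
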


\begin{proof}
Follows from \cite[Theorem A.1]{CEL}.
\end{proof}

\begin{lemma}\label{lem:dim+tv}
For generic $J$, the moduli space $\M^{(X,L);J}(a;\bb)$ is a transversely cut out manifold of dimension $\op{ind}=|a|-|\bb|$
and admits a natural compactification by multiple-level broken disks.
\end{lemma}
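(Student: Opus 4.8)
The plan is to reduce everything to the standard Fredholm package for punctured pseudoholomorphic disks in exact symplectic cobordisms, so that this lemma becomes a bookkeeping consequence of Lemma~\ref{fredholm index} together with the SFT compactness theorem. First I would record the dimension statement: for a single positive puncture the index formula \eqref{eqn: fredholm index} with $k=1$, $a_1=a$, $\bb_1=\bb$ gives $\op{ind}(u)=|a|-|\bb|$, and transversality for generic adjusted $J$ is the content of the standard regularity theory for somewhere-injective curves (the relevant disks here are simple, or one passes to the underlying simple curve), so $\M^{(X,L);J}(a;\bb)$ is a manifold of that dimension. This is the routine part and I would cite \cite{CEL} and the transversality results quoted there rather than reproving them.

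The substantive step is compactness. I would invoke the SFT/Gromov compactness theorem for punctured holomorphic curves in the cylindrical-end setting (Bourgeois--Eliashberg--Hofer--Wysocki--Zehnder, as adapted to Lagrangian cobordisms in \cite{E2}). The key analytic inputs are exactly the two energy bounds from Lemma~\ref{lemma: energy bound}: $E_{\omega_0}(u)\le \mathfrak A(a)$ controls the $\omega_0$-energy and hence rules out bubbling off of closed spheres and caps off gradient-type escaping, while $E_{\beta_\pm}(u)\le \mathfrak A(a)$ (the ``Hofer energy'' on the ends) controls behavior in the symplectization ends and prevents energy from escaping to $t=\pm\infty$ other than by the formation of new Reeb-chord asymptotics. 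Given these uniform bounds, a sequence in $\M^{(X,L);J}(a;\bb)$ converges (after passing to a subsequence) to a holomorphic building; exactness of $(X,\beta)$ and of $L$, together with the action filtration $\mathfrak A_{\alpha_0}(c)>0$ on Reeb chords, forces the building to have no closed components, no components without punctures, and exactly one positive puncture on each component, i.e.\ it is precisely a multiple-level broken disk in the sense of the displayed definition. The finiteness of $\CC(\Lambda_\pm)$ and the action bound also guarantee that only finitely many combinatorial types of limits occur.

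Finally I would note that the broken disks arising this way have the expected codimension: an $(a+b+1)$-level broken disk lies in a stratum of codimension $a+b$ (one for each level break past the first), which follows from additivity of the Fredholm index under gluing together with Lemma~\ref{fredholm index} applied level by level. Hence the closure $\overline{\M}^{(X,L);J}(a;\bb)$ is stratified with top stratum the open moduli space and lower strata the broken configurations, giving the asserted compactification. The main obstacle — and the place where I would lean hardest on the cited literature — is the transversality claim: to get a genuine manifold one needs the adjusted almost complex structures to be regular for all the relevant curves simultaneously (including the strips over Reeb chords that appear as levels), and since the trivial strips are never cut out transversally in the naive sense one works modulo the $\R$-action on symplectization levels; I would phrase this carefully, citing the somewhere-injectivity arguments and the regularity statements of \cite{CEL} and \cite{E2}, rather than attempting a self-contained proof.
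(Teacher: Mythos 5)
Your outline of the compactness half is essentially the paper's: the energy bounds of Lemma~\ref{lemma: energy bound} plus the SFT compactness machinery of \cite{BEHWZ} (and its Lagrangian-cobordism adaptation in \cite{E2}) produce limits that are holomorphic buildings, and exactness and positivity of Reeb-chord action rule out closed components, yielding multiple-level broken disks. The dimension statement is also handled the same way, as a special case $k=1$ of Lemma~\ref{fredholm index}.

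Where you diverge from the paper, and where I think you have a genuine gap, is the transversality claim. You write that ``the relevant disks here are simple, or one passes to the underlying simple curve.'' Neither alternative is reliable in this setting. The disks counted in Legendrian contact homology are in general \emph{not} somewhere injective, and the usual ``pass to the underlying simple curve'' reduction is not available: for a punctured disk with boundary on a Lagrangian, factoring through a multiple cover changes the asymptotics and the boundary behavior, so transversality of the underlying simple curve does not feed back to give transversality (or even the correct index) of the original map. The paper instead achieves transversality by perturbing $J$ only in the contact planes in a neighborhood of the positive puncture, citing \cite[Lemma 4.5(1)]{EES4}. The reason this works is that the asymptotic expansion at a puncture converging to a strip over a Reeb chord forces the map to be an embedding (in particular injective with injective differential) on a half-strip near that puncture, regardless of whether the whole disk is simple; so a perturbation supported there sees ``fresh'' points of the curve. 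This is the key mechanism your proof would need in place of somewhere-injectivity. Your closing caveat about trivial strips and the $\R$-action is not really the issue either: $\R$-invariant trivial strips are transversally cut out of index $1$ in the symplectization and simply become rigid after dividing by $\R$; the delicate point is multiple covers of nontrivial disks, and that is what the positive-puncture perturbation handles.
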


\begin{proof}
The transversality can be achieved by perturbing $J$ in the contact planes near the positive puncture as in \cite[Lemma 4.5(1)]{EES4}. The compactness statement is a consequence of \cite[Section~11.3]{BEHWZ}; see also \cite[Lemma B.4]{E2}.
\end{proof}

In general, there are transversality problems when one counts holomorphic disks with more than one positive puncture, since such disks may be multiply-covered.

In the special case of a cylindrical cobordism $L=\R\times\Lambda$ in the symplectization~$X_{0}=\R\times\R^{3}$, we write $\M^J(a;\bb)=\M^{(X_0,\R\times\Lambda);J}(a;\bb)$. Since $J$ is $\R$-invariant, $\R$ acts on $\M^J(a;\bb)$ and we write $\widehat{\M}^J(a;\bb)=\M^J(a;\bb)/\R$ for the reduced moduli space.

\begin{cor}\label{cor:bdrysympl}
For generic $J$, a reduced moduli space $\widehat{\M}^J(a;\bb)$ of dimension $0$ is a compact $0$-dimensional manifold and the boundary of a compactified reduced moduli space of dimension~$1$ is either empty or consists of two-level broken disks $u_0\cup u_1$ with $\op{ind}(u_0)=\op{ind}(u_1)=1$.
\end{cor}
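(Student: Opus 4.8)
The plan is to deduce the corollary directly from Lemma~\ref{lem:dim+tv} (applied in the $\R$-invariant case $X=X_0=\R\times\R^3$, $L=\R\times\Lambda$) together with the dimension bookkeeping coming from the Fredholm index formula of Lemma~\ref{fredholm index} and the energy bound of Lemma~\ref{lemma: energy bound}. First I would invoke Lemma~\ref{lem:dim+tv}: for generic adjusted $J$, the moduli space $\M^J(a;\bb)$ is a transversely cut out manifold with $\dim\M^J(a;\bb)=|a|-|\bb|$, and it admits a compactification by multiple-level broken disks. Since $J$ is $\R$-invariant, $\R$ acts freely on $\M^J(a;\bb)$ whenever this space is nonempty (a disk cannot be $t$-translation invariant because it has a single positive puncture and finite $\omega_0$-energy by Lemma~\ref{lemma: energy bound}), so the reduced moduli space $\widehat\M^J(a;\bb)=\M^J(a;\bb)/\R$ is a manifold of dimension $|a|-|\bb|-1$.

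Next I would treat the $0$-dimensional case: if $\dim\widehat\M^J(a;\bb)=0$, then $\dim\M^J(a;\bb)=1$, and I must rule out sequences escaping to a broken configuration. By the SFT compactness of \cite{BEHWZ} (as cited in Lemma~\ref{lem:dim+tv}), any such sequence converges to a multi-level broken disk $u_{-b}\cup\dots\cup u_a$; additivity of the Fredholm index under gluing gives $\sum_j \op{ind}(u_j)=\op{ind}(u)=1$ after accounting for the matching of punctures, while transversality forces $\op{ind}(u_j)\ge 1$ for every nonconstant rigid-or-higher level in the $\R$-invariant setting (each level has a positive puncture, hence is non-translation-invariant, hence sits in a reduced moduli space of nonnegative dimension, i.e.\ $\op{ind}\ge1$). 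A count of levels then forces a single level, contradicting the assumption that the sequence escapes; hence $\widehat\M^J(a;\bb)$ is already compact, and being a compact $0$-manifold it is finite.

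For the $1$-dimensional case I would argue similarly: if $\dim\widehat\M^J(a;\bb)=1$, the compactification adds boundary points corresponding to broken disks $u_{-b}\cup\dots\cup u_a$ with $\sum_j\op{ind}(u_j)=2$; since each level has $\op{ind}(u_j)\ge1$, exactly two levels occur, each with $\op{ind}=1$, and the gluing theorem of \cite{BEHWZ} (together with transversality) shows each such two-level broken disk is a genuine boundary point and that gluing is a local homeomorphism onto a half-neighborhood. After quotienting by $\R$ the bottom level contributes $\op{ind}=1$ (one-dimensional before reduction, rigid after) and likewise for the top, matching the stated form $u_0\cup u_1$ with $\op{ind}(u_0)=\op{ind}(u_1)=1$. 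The main obstacle is the transversality/gluing input ensuring that no level of a limiting broken configuration has negative index and that two-level configurations are exactly the boundary strata; this is precisely what is packaged in Lemma~\ref{lem:dim+tv} and the references \cite{EES4,BEHWZ,E2}, so the corollary follows by assembling these ingredients with the index arithmetic of Lemma~\ref{fredholm index}.
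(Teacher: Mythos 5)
Your argument is correct and follows essentially the same route as the paper, which simply cites Lemma~\ref{lem:dim+tv}; you have filled in the standard index arithmetic (each level contributes $\op{ind}\ge 1$ by $\R$-invariance and transversality, and the total index is additive under SFT breaking) that the paper leaves implicit.
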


\begin{proof}
Follows from Lemma \ref{lem:dim+tv}.
\end{proof}

When $(X,L)$ is a nontrivial cobordism, we have the following description of the low-dimensional moduli spaces.

\begin{cor}\label{cor:bdrycob}
For generic $J$, the moduli spaces $\M^{(X,L);J}(a;\bb)$ of dimension $0$ are compact $0$-dimensional manifolds and the boundary of a compactified moduli space of dimension~$1$ is either empty or consists of two-level broken disks $u_0\cup u_1$ or $u_{-1}\cup u_0$, where $u_0\in \M^{(X,L);J}(a;\bb)$ with $\op{ind}(u_0)=0$ and $u_{\pm 1}\in\M^J(a',\bb')$ with $\op{ind}(u_{\pm 1})=1$.
\end{cor}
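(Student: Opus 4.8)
The plan is to derive the statement from Lemma~\ref{lem:dim+tv} by the same dimension/index bookkeeping used for Corollary~\ref{cor:bdrysympl}, with the one new feature that buildings now carry a single non-$\R$-invariant level. First consider the case $|a|-|\bb|=0$. By Lemma~\ref{lem:dim+tv} the space $\M^{(X,L);J}(a;\bb)$ is a transversely cut out $0$-manifold with a compactification by multiple-level broken disks, so it suffices to show that no broken configurations are actually added. A genuine breaking would exhibit the limit as a building with at least one symplectization level; since a nonconstant holomorphic disk in $\R\times\R^3$ is rigid only modulo the $\R$-action and hence has Fredholm index at least $1$, deleting such a level would leave a configuration in $X$ of strictly negative expected dimension, impossible for generic $J$. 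Hence the compactification is trivial and $\M^{(X,L);J}(a;\bb)$ is already compact.

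Next take $|a|-|\bb|=1$, so that $\overline\M:=\overline{\M^{(X,L);J}(a;\bb)}$ is a compact $1$-manifold with boundary by Lemma~\ref{lem:dim+tv}. The first task is to describe $\partial\overline\M$. By the SFT compactness theorem (\cite[Section~11.3]{BEHWZ}; see also \cite{E2}), every point of $\partial\overline\M$ is a holomorphic building with exactly one level $u_0$ mapping to $X$ and all other levels mapping to the symplectization $X_0=\R\times\R^3$, sitting in the positive or negative cylindrical end. Exactness of $L$ and of $\omega$ rules out both sphere bubbles and boundaryless disk bubbles — a nonconstant disk with boundary on $L$ would have positive symplectic area equal to the variation of a primitive of $\beta|_L$ around a loop, which vanishes — so the only source of non-compactness is the splitting off of strips over Reeb chords at the ends, which is precisely what produces the symplectization levels.

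It remains to see that only one symplectization level, of index $1$ and with a single positive puncture, can occur. Here I would invoke the gluing theorem for holomorphic buildings: near a building with $k$ symplectization levels, $\overline\M$ is modeled on the product of the building's deformation space with $[0,\infty)^k$, where the deformation space is the fibered product of $\M^{(X,L);J}$ for the $X$-level $u_0$ with the reduced symplectization moduli spaces of the remaining levels $v_1,\dots,v_k$, of dimension $\op{ind}(u_0)+\sum_{i=1}^k(\op{ind}(v_i)-1)$. For generic $J$ every factor is transversely cut out, so $\op{ind}(u_0)\ge0$ and, each $v_i$ being nonconstant, $\op{ind}(v_i)\ge1$. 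Matching with $\dim\overline\M=1$ gives $\op{ind}(u_0)+\sum_{i=1}^k\op{ind}(v_i)=1$, which forces $k\le1$; a boundary point has $k\ge1$, hence $k=1$, and then $\op{ind}(u_0)=0$ and $\op{ind}(v_1)=1$. Since every level of a building degenerating a punctured disk is again a connected punctured disk, $v_1$ has exactly one positive puncture, so $v_1\in\M^J(a',\bb')$ for suitable $a',\bb'$; according to whether $v_1$ lies in the positive or the negative end, the building is $u_0\cup u_1$ or $u_{-1}\cup u_0$. The one genuinely technical ingredient in all of this is the gluing/dimension statement for buildings (including the correct accounting for the $\R$-actions on symplectization levels and the exclusion of trivial-strip components); granting that, the corollary is a formal consequence of Lemma~\ref{lem:dim+tv} and transversality.
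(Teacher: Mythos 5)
Your argument is correct and is exactly the standard index/transversality bookkeeping that the paper's one-line proof ("Follows from Lemma~\ref{lem:dim+tv}") leaves implicit: generic transversality forces each nontrivial symplectization level to have index at least $1$, additivity of the index then caps the number of such levels, and exactness rules out bubbling. The only small imprecision is the claim that every level is a connected punctured disk (a lower level may contain several components, the extra ones being trivial strips), but you already flag the trivial-strip accounting, so this does not affect the conclusion.
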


\begin{proof}
Follows from Lemma \ref{lem:dim+tv}.
\end{proof}

If $(X'',L'')$ and $(X',L')$ are exact Lagrangian cobordisms with cylindrical Legendrian ends, where the positive end of $(X',L')$ is the same as the negative end of $(X'',L'')$, then these cobordisms can be concatenated by truncating the ends $\{t\geq\rho\}\subset X'$ and $\{t\leq -\rho\}\subset X''$ for $\rho\gg 0$ and gluing the remaining pieces together. The resulting Lagrangian cobordism is denoted by $(X_{\rho},L_{\rho})$.

\begin{cor}\label{cor:stretch}
For all sufficiently large $\rho$, there is a natural one-to-one correspondence between rigid holomorphic disks in $(X_{\rho},L_{\rho})$ and two-level broken disks $u_0\cup u_1$, where $u_0$ is a rigid disk in $(X',L')$ and $u_1$ is a rigid disk in $(X'',L'')$.
\end{cor}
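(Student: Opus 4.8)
The plan is to apply a standard SFT neck-stretching (gluing plus compactness) argument along the hypersurface where the two cobordisms are glued. First I would set up the neck-stretching family: fix adjusted almost complex structures $J''$ on $(X'',L'')$ and $J'$ on $(X',L')$ that agree on the common cylindrical end over $\Lambda_1$, and let $J_\rho$ be the induced almost complex structure on the glued cobordism $(X_\rho,L_\rho)$ obtained by inserting a neck of length $\rho$ over $\R\times\Lambda_1$. By Lemma~\ref{lem:dim+tv} the relevant moduli spaces $\M^{(X',L');J'}$, $\M^{(X'',L'');J''}$, $\M^{(X_\rho,L_\rho);J_\rho}$, and $\M^{J_1}$ in the symplectization of $(\R^3,\alpha_0)$ over $\Lambda_1$ are all transversely cut out for generic choices, and the energy bound of Lemma~\ref{lemma: energy bound} provides a uniform a priori bound on the $\omega_0$- and $\beta_\pm$-energies of all disks in play (depending only on the actions of the positive asymptotics), which is what makes the compactness argument work uniformly in $\rho$.

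Next I would establish the two directions of the bijection. For the \emph{compactness/limit} direction: take a sequence $\rho_k\to\infty$ and a rigid disk $u_k\in\M^{(X_{\rho_k},L_{\rho_k});J_{\rho_k}}(a;\bb)$ with $\op{ind}(u_k)=0$. By the SFT compactness theorem of \cite[Section~11.3]{BEHWZ} (as invoked in Lemma~\ref{lem:dim+tv}), after passing to a subsequence $u_k$ converges to a holomorphic building whose levels are a disk $v_0$ in $(X',L')$, a disk $v_1''$ in $(X'',L'')$, and possibly intermediate levels living in the symplectization $\R\times\R^3$ over $\Lambda_1$. An index count using Lemma~\ref{fredholm index} shows $\op{ind}(v_0)+\op{ind}(v_1'')+\sum\op{ind}(\text{middle levels})=0$ after accounting for the $\R$-reductions; since each rigid disk in $(X',L')$ or $(X'',L'')$ has index $0$, each non-trivial reduced symplectization level has index $1$, and trivial strips have index $0$, the only way to reach total $0$ is to have no non-trivial middle levels, no bubbling, and no breaking of $v_0$ or $v_1''$ — so the limit is exactly a two-level broken disk $u_0\cup u_1$ with $u_0$ rigid in $(X',L')$ and $u_1$ rigid in $(X'',L'')$, and moreover the negative asymptotics of $u_1$ match the positive asymptotics of $u_0$. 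For the \emph{gluing} direction: given such a pair $u_0\cup u_1$, the standard gluing theorem (pregluing along the matching Reeb chords over $\Lambda_1$, followed by a Newton–Picard iteration using the uniform transversality of $u_0$ and $u_1$) produces, for all sufficiently large $\rho$, a unique rigid disk in $(X_\rho,L_\rho)$ near the preglued configuration. Combining the two shows that for $\rho$ large the gluing map is a bijection onto $\M^{(X_\rho,L_\rho);J_\rho}(a;\bb)$ for every $(a;\bb)$, with finitely many relevant $(a;\bb)$ because of the action bound; taking the maximum of the resulting thresholds over this finite set gives the uniform $\rho$.

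The main obstacle I expect is bookkeeping the index contributions of the intermediate symplectization levels and ruling out configurations with non-trivial such levels or with disk/sphere bubbling: one must check that no component can have negative index (this uses the transversality from Lemma~\ref{lem:dim+tv}, i.e.\ that all somewhere-injective curves in the limit are cut out transversely) and that bubbling off constant or non-constant spheres is excluded by exactness of $(X,L)$ together with the asymptotics; since all the curves here have exactly one positive puncture, the multiply-covered phenomenon flagged after Lemma~\ref{lem:dim+tv} does not arise, which is what makes the clean index argument possible. The gluing estimates themselves are by now routine in this setting (see \cite{E2}), so the real content is the compactness side: verifying that the only index-$0$ buildings are the expected two-level ones.
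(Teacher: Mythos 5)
Your argument is essentially the paper's: the proof given there simply invokes Lemma~\ref{lem:dim+tv} together with the standard neck-stretching/gluing argument of \cite[Section 11.3]{BEHWZ} and \cite[Lemma 3.13]{E2}, which is exactly what you spell out (SFT compactness plus index bookkeeping to exclude extra symplectization levels, and Newton-iteration gluing for the converse). Your attention to the one-positive-puncture condition avoiding multiple covers and to the uniform action/energy bounds matches the setting the paper relies on, so there is no gap to flag.
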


\begin{proof}
Follows from Lemma \ref{lem:dim+tv} and a stretching argument; see \cite[Section 11.3]{BEHWZ} or \cite[Lemma 3.13]{E2}.
\end{proof}

\subsection{Differentials and cobordism maps}
\label{sec:diffandcob}

The differential in the Legendrian DGA is defined by counting disks in $0$-dimensional reduced moduli spaces in trivial cobordisms. More precisely, if $\Lambda\subset\R^{3}$ is a Legendrian link, then the differential of $\A(\Lambda)$ is the $\F$-linear map $\pa\colon\A(\Lambda)\to\A(\Lambda)$ which satisfies the Leibniz rule, maps homology generators (i.e., monomials of the group ring $\F[H_1(\Lambda;\Z)]$) to $0$, and is defined as follows on the Reeb chord generators:
\[
\pa a =\sum_{\dim(\widehat{\M}(a;\bb))=0}|\widehat{\M}(a;\bb)|\bb.
\]

\begin{lemma}
The map $\pa\colon\A(\Lambda)\to\A(\Lambda)$ is a differential (i.e., $\pa^{2}=0$) of degree~$-1$.
\end{lemma}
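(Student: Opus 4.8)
The plan is to establish the three properties -- $\pa^2=0$, that $\pa$ respects the grading with a shift of $-1$, and that $\pa$ is well-defined (i.e., the sums are finite) -- essentially as formal consequences of the Fredholm and compactness results already collected in Section~\ref{section: Legendrian contact homology}. First I would check that $\pa$ is well-defined: by Lemma~\ref{lemma: energy bound} any disk $u\in\widehat{\M}(a;\bb)$ has $\omega_0$-energy bounded by $\mathfrak{A}(a)$, which (together with the action-additivity $\mathfrak{A}(a)=\sum_j\mathfrak{A}(b_j)+(\text{positive symplectic area})$ built into the energy formalism) forces the total length of $\bb$ to be bounded; since $\CC(\Lambda)$ is finite and there are only finitely many words in the generators of bounded length, only finitely many $\bb$ can occur. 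Combined with Corollary~\ref{cor:bdrysympl}, which says each $0$-dimensional reduced moduli space is a compact $0$-manifold, the coefficient $|\widehat{\M}(a;\bb)|\in\F$ makes sense and the sum defining $\pa a$ is a finite $\F$-linear combination of monomials.

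Next I would address the degree statement. By Lemma~\ref{lem:dim+tv} (specialized to $X_0=\R\times\R^3$) a moduli space $\M(a;\bb)$ has dimension $\op{ind}=|a|-|\bb|$, where $|\bb|$ is the sum of degrees of the factors; after quotienting by the $\R$-action, $\dim\widehat{\M}(a;\bb)=|a|-|\bb|-1$. Hence the condition $\dim(\widehat{\M}(a;\bb))=0$ picks out exactly those $\bb$ with $|\bb|=|a|-1$, so every monomial appearing in $\pa a$ has degree $|a|-1$; extending by the Leibniz rule (which is additive on degrees, since $\A(\Lambda)$ is graded) and sending the homology generators $\tau_j^{\pm1}$ to $0$ shows $\pa$ lowers degree by $1$ on all of $\A(\Lambda)$. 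One must also note that the image actually lands in $\A(\Lambda)$, which is clear since each $b_j\in\CC(\Lambda)$ and the interstitial classes $\tau_s\in H_1(\Lambda;\Z)$ are among the algebra generators.

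For $\pa^2=0$ I would compute $\pa^2 a$ by the Leibniz rule as a sum over pairs: a monomial in $\pa^2 a$ arises from a $0$-dimensional $\widehat{\M}(a;\bb)$ followed by applying $\pa$ to one Reeb-chord factor $b_j$ of $\bb$ via a $0$-dimensional $\widehat{\M}(b_j;\bb')$. Such configurations are exactly the two-level broken disks $u_0\cup u_1$ with $\op{ind}(u_0)=\op{ind}(u_1)=1$ that, by Corollary~\ref{cor:bdrysympl}, constitute the boundary of the compactified $1$-dimensional reduced moduli spaces $\widehat{\M}(a;\bb'')$ of dimension $1$. Since a compact $1$-manifold has an even number of boundary points, each monomial $\bb''$ (with $|\bb''|=|a|-2$, so appearing in $\pa^2 a$) occurs with coefficient $0$ in $\F=\Z/2$; therefore $\pa^2 a=0$, and $\pa^2=0$ on $\A(\Lambda)$ by the Leibniz rule. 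The bookkeeping that the count of broken configurations contributing to the $\bb''$-coefficient of $\pa^2 a$ is precisely the boundary count of $\bigsqcup_{\bb''}\widehat{\M}(a;\bb'')$ -- matching up which negative puncture of the top level is capped by the bottom level, and checking that no breaking is missed or double-counted -- is the main technical point; everything else is formal. (Working over $\Z/2$ removes all sign subtleties, which is why this step is short here; over a general ring one would need the orientation scheme of \cite{EES3}.)
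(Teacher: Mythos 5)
Your proposal is correct and follows the same route as the paper, which simply cites Corollary~\ref{cor:bdrysympl}: the degree and finiteness statements come from the index formula and energy/compactness, and $\pa^2=0$ comes from identifying the coefficient of each monomial in $\pa^2 a$ with the mod $2$ count of boundary points of the compactified $1$-dimensional reduced moduli spaces. The only point worth making explicit is that this identification also uses the gluing direction (every two-level rigid broken configuration arises exactly once as such a boundary point), which is implicit in the paper's appeal to the standard compactness and gluing results.
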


\begin{proof}
This is a direct consequence of Corollary \ref{cor:bdrysympl}.
\end{proof}

Similarly, if $(X,L)$ is an exact Lagrangian cobordism from $\Lambda_+$ to $\Lambda_-$, then it induces a DGA map
$$\Phi_{(X,L)}\colon \A(\Lambda_+;\F[H_1(L;\Z)])\to\A(\Lambda_-;\F[H_1(L;\Z)]),$$
defined as follows: on the homology generators $\Phi_{(X,L)}=id$ and on the Reeb chord generators $a$,
\[
\Phi_{(X,L)}(a)=\sum_{\dim(\M^{(X,L)}(a;\bb))=0}|\M^{(X,L)}(a;\bb)|\bb.
\]
We write $\Phi_{(X,L)}^{A}$ when we want to emphasize the ``coefficient ring'' $A$.

\begin{lemma}
The map $\Phi_{(X,L)}$ is a chain map (i.e.~$\pa_-\circ\Phi_{(X,L)}=\Phi_{(X,L)}\circ\pa_+$, where $\pa_{\pm}$ is the differential on $\A(\Lambda_{\pm})$) of degree $0$. Furthermore, if $(X,L)$ is obtained from concatenating $(X',L')$ and $(X'',L'')$, then $$\Phi_{(X,L)}^{\F[H_1(L;\Z)]}=\Phi_{(X'',L'')}^{\F[H_1(L;\Z)]}\circ\Phi_{(X',L')}^{\F[H_1(L;\Z)]}.$$
\end{lemma}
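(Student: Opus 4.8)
The plan is to prove the three claims in order, all of them standard consequences of the compactness and gluing theory already quoted. First, for the degree statement: since $\Phi_{(X,L)}$ counts disks $u\in\M^{(X,L)}(a;\bb)$ lying in a $0$-dimensional moduli space, Lemma~\ref{lem:dim+tv} gives $\op{ind}(u)=|a|-|\bb|=0$, so each monomial $\bb$ appearing in $\Phi_{(X,L)}(a)$ has $|\bb|=|a|$; since $\Phi_{(X,L)}$ is the identity on homology generators (which have well-defined degree) and multiplicative, it preserves degree. Hence $\Phi_{(X,L)}$ is a degree-$0$ map of graded algebras.

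Next, for the chain-map property, I would argue exactly as in the classical proof that $\pa^2=0$. Fix a Reeb chord $a\in\CC(\Lambda_+)$ and a monomial $\bb$ in $\A(\Lambda_-;\F[H_1(L;\Z)])$ with $|\bb|=|a|-1$, and consider the compactified $1$-dimensional moduli space $\overline{\M}^{(X,L)}(a;\bb)$. By Corollary~\ref{cor:bdrycob} its boundary consists precisely of two-level broken disks of the form $u_0\cup u_1$ (a rigid disk in $(X,L)$ on top of a rigid disk in the symplectization $X_-$ of $\Lambda_-$) or $u_{-1}\cup u_0$ (a rigid disk in the symplectization $X_+$ of $\Lambda_+$ below a rigid disk in $(X,L)$). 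Counting these two types of boundary points mod $2$: the first type contributes the coefficient of $\bb$ in $\pa_-\circ\Phi_{(X,L)}(a)$, and the second type contributes the coefficient of $\bb$ in $\Phi_{(X,L)}\circ\pa_+(a)$, where one uses the multiplicativity of both $\pa_-$ and $\Phi_{(X,L)}$ together with the observation that the $\R$-invariant pieces break off Reeb chords in a way that matches the Leibniz-rule expansion. Since the total number of boundary points of a compact $1$-manifold is even, the two coefficients are equal mod $2$; as this holds for every $a$ and every $\bb$, and since both $\pa_\pm\circ\Phi$ and $\Phi\circ\pa_\pm$ vanish on homology generators and respect the Leibniz rule, we get $\pa_-\circ\Phi_{(X,L)}=\Phi_{(X,L)}\circ\pa_+$.

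Finally, for the composition property under concatenation, I would invoke Corollary~\ref{cor:stretch}. Given $(X,L)$ obtained by concatenating $(X',L')$ and $(X'',L'')$ and truncating at large $\rho$, that corollary gives, for $\rho\gg0$, a natural bijection between rigid disks in $(X_\rho,L_\rho)$ and two-level broken configurations $u'\cup u''$ with $u'$ a rigid disk in $(X',L')$ and $u''$ a rigid disk in $(X'',L'')$; since the $\Phi$'s are invariant under the exact Lagrangian isotopy that changes $\rho$ (equivalently, one works with the stretched picture directly), counting the left side gives the coefficient in $\Phi_{(X,L)}(a)$ and counting the right side, using multiplicativity of $\Phi_{(X'',L'')}$ to distribute it over the negative punctures of $u'$, gives the coefficient in $\Phi_{(X'',L'')}\circ\Phi_{(X',L')}(a)$. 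Both sides agree on homology generators (all maps are the identity there) and the identity extends multiplicatively, yielding the claimed equality of DGA maps.

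The one point that requires genuine care rather than bookkeeping is the bijective correspondence between broken disks and actual disks in the glued/truncated cobordism — both the ``$\partial=$ boundary count'' steps and the concatenation step rest on gluing theorems that provide, for each rigid broken configuration, a unique nearby honest rigid disk, and conversely that every end of a $1$-dimensional moduli space is of this form with no ``hidden'' boundary (e.g. from bubbling of disks with several positive punctures, or from breaking at the wrong level). These are exactly the content of Corollaries~\ref{cor:bdrysympl}, \ref{cor:bdrycob}, and \ref{cor:stretch}, which in turn rest on \cite{BEHWZ} and the transversality of Lemma~\ref{lem:dim+tv}; so the main obstacle is already packaged into those cited results, and the proof here is the combinatorial extraction of the algebraic identities from them.
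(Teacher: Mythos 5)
Your proposal is correct and follows exactly the route of the paper, which proves this lemma by citing Corollaries~\ref{cor:bdrycob} and \ref{cor:stretch}: the chain-map identity comes from the boundary of the compactified $1$-dimensional moduli spaces $\M^{(X,L)}(a;\bb)$, and the composition law from the stretching correspondence for concatenated cobordisms. You have merely written out the standard bookkeeping (degree count via Lemma~\ref{lem:dim+tv}, Leibniz-rule matching, mod $2$ parity of boundary points of a compact $1$-manifold) that the paper leaves implicit, so there is nothing to correct.
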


\begin{proof}
This is a direct consequence of Corollaries \ref{cor:bdrycob} and \ref{cor:stretch}.
\end{proof}

Next consider a $1$-parameter family of cobordisms $(X_t,L_t)$, $t\in[0,1]$, together with almost complex structures $J_{t}$. Assume that moduli spaces determined by $(X_0,L_0)$ and $(X_1,L_1)$ are transversely cut out so that the cobordism maps $\Phi_{(X_0,L_0)}$ and $\Phi_{(X_1,L_1)}$ are well-defined.

\begin{lemma}\label{lem:chainhomotopy}
The DGA maps $\Phi_{(X_0,L_0)}$ and $\Phi_{(X_1,L_1)}$ are chain homotopic, i.e., there exists a degree $+1$ map $K\colon\A(\Lambda_{+})\to\A(\Lambda_-)$ such that
\begin{equation}\label{eq:chhomtpy}
\Phi_{(X_1,L_1)}+\Phi_{(X_0,L_0)} =\Omega_K\circ\pa_+ + \pa_-\circ\Omega_K,
\end{equation}
where $\Omega_{K}$ is $\F$-linear and is defined as follows on monomials:
\[
\Omega_{K}(c_1\dots c_m)=\sum_{j=1}^{m}\Phi_{(X_1,L_1)}(c_1\dots c_{j-1})K(c_j)\Phi_{(X_0,L_0)}(c_{j+1}\dots c_m).
\]
\end{lemma}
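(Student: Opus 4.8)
The plan is to run the standard SFT ``parametrized moduli space'' argument, adapted to the cobordism setting. First I would set up the moduli space of the $1$-parameter family: for each Reeb chord $a\in\CC(\Lambda_+)$ and monomial $\bb$ in $\A(\Lambda_-)$, consider
\[
\M^{[0,1]}(a;\bb)=\bigsqcup_{t\in[0,1]}\{t\}\times\M^{(X_t,L_t);J_t}(a;\bb).
\]
For a generic path $(J_t)$ joining the two transversely cut out endpoints, this parametrized moduli space is itself a transversely cut out manifold of dimension $\op{ind}(u)+1=|a|-|\bb|+1$; the endpoints $t=0,1$ are already transverse by hypothesis, so genericity is only needed on $(0,1)$. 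I would then restrict attention to the case $|a|-|\bb|=0$, so that $\M^{[0,1]}(a;\bb)$ is a compact $1$-manifold, and define $K(a)=\sum_{\dim\M^{[0,1]}(a;\bb)=1}|\partial\text{-contributions}|\,\bb$ — more precisely, $K$ counts the isolated disks that appear in the interior of the interval at the (finitely many) parameter values where $\op{ind}=-1$ disks become rigid, i.e.\ $K(a)=\sum_{|a|-|\bb|=-1}\left(\#\bigsqcup_t \M^{(X_t,L_t);J_t}(a;\bb)\right)\bb$, where the count is of the zero-dimensional total space $\bigsqcup_{t\in[0,1]}\M^{(X_t,L_t);J_t}(a;\bb)$ of $\op{ind}=-1$ disks.

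Next I would analyze the boundary of the compactified $1$-manifold $\overline{\M^{[0,1]}}(a;\bb)$ when $|a|-|\bb|=0$. Its boundary consists of three types of points. First, the fiber endpoints at $t=0$ and $t=1$: these contribute exactly the coefficient of $\bb$ in $\Phi_{(X_1,L_1)}(a)+\Phi_{(X_0,L_0)}(a)$. Second, two-level broken configurations $u_{-1}\cup u_0$ or $u_0\cup u_1$ at interior parameter values (as in Corollary~\ref{cor:bdrycob}), where one level is a rigid disk in a symplectization $\R\times\R^3$ over $\Lambda_\pm$ and the other is a $t$-dependent disk; a rigid symplectization disk over $\Lambda_-$ glued on top of a $\op{ind}=-1$ cobordism disk contributes the $\bb$-coefficient of $\partial_-\circ K(a)$, and a rigid symplectization disk over $\Lambda_+$ glued below a $\op{ind}=-1$ cobordism disk contributes the $\bb$-coefficient of $K\circ\partial_+(a)$. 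Third, breakings internal to the cobordism level, i.e.\ configurations where two $t$-dependent cobordism disks meet: one rigid cobordism disk sits at a node of another. Organizing these last contributions multiplicatively over the boundary punctures of the bottom disk is exactly what produces the terms $\Phi_{(X_1,L_1)}(c_1\cdots c_{j-1})\,K(c_j)\,\Phi_{(X_0,L_0)}(c_{j+1}\cdots c_m)$ when one extends $K$ to monomials via $\Omega_K$ and feeds in $\partial_+ a=\sum c_1\cdots c_m$. Since the total boundary of a compact $1$-manifold has an even number of points, summing all contributions mod $2$ gives equation~\eqref{eq:chhomtpy}. The degree count is immediate from Lemma~\ref{fredholm index}: $K$ raises degree by $1$ because it counts $\op{ind}=-1$ disks in a $1$-parameter family.

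The main obstacle is the compactness and gluing bookkeeping at interior parameter values: one must verify that no other degenerations occur (no new levels in the symplectization beyond the single rigid one, no bubbling of the conformal structure beyond the listed broken configurations, no disks escaping to $t$-infinity — controlled by the energy bounds of Lemma~\ref{lemma: energy bound} which hold uniformly in $t$), and that each listed boundary stratum is genuinely a manifold boundary point with the correct local model (one gluing parameter). This is the parametrized analogue of Corollary~\ref{cor:bdrycob} together with Corollary~\ref{cor:stretch}, and I would invoke \cite[Section~11.3]{BEHWZ} for the SFT compactness and the standard gluing theorem for the interior breakings, exactly as in the proof that $\Phi_{(X,L)}$ is a chain map; the only genuinely new point is allowing $t$ to vary, which upgrades ``$\op{ind}=0$ rigid'' to ``$\op{ind}=-1$ rigid at isolated $t$'' and thereby produces $K$. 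I would also note that because we work over $\F=\Z/2$ all signs are moot, so no orientation analysis of the parametrized moduli space is needed.
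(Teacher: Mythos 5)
Your overall strategy (parametrized moduli space, defining $K$ by the count of $\op{ind}=-1$ disks appearing at isolated parameter values) is the same starting point as the paper's proof, but the proposal misses the two points that make this lemma delicate, and the step where you produce the $\Omega_K$-structure is wrong as stated. The terms $\Phi_{(X_1,L_1)}(c_1\cdots c_{j-1})\,K(c_j)\,\Phi_{(X_0,L_0)}(c_{j+1}\cdots c_m)$ do not come from ``two $t$-dependent cobordism disks meeting at a node'' (no such nodal degeneration occurs in codimension one here). They must come from the two-level breakings you list as your second type: an $\op{ind}=1$ disk in $\R\times\R^3$ over $\Lambda_+$, contributing a word $c_1\cdots c_m$ of $\pa_+a$, sitting above a \emph{multi-component} cobordism level, with one component the $\op{ind}=-1$ disk $v$ and the remaining components $\op{ind}=0$ cobordism disks. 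In the naive parametrized setup all components of that level are $J_{t^*}$-holomorphic at the \emph{same} critical time $t^*$, so the flanking factors are counted by the disk count at time $t^*$, not by $\Phi_{(X_1,L_1)}$ and $\Phi_{(X_0,L_0)}$. The asymmetric formula in the statement therefore does not follow from the boundary of your $\M^{[0,1]}(a;\bb)$ as you describe it; in the paper it is produced by a time-ordered, domain-dependent perturbation that evaluates the different components at staggered parameter values (your $K\circ\pa_+$ notation also only makes sense after this multiplicative extension, since $K$ is defined on generators).

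The second, more serious gap is your claim that ``the only genuinely new point is allowing $t$ to vary'' and that compactness/gluing works exactly as in the chain-map proof. Because the \emph{same} disk $v$ exists at the single time $t^*$, broken configurations in which $v$ appears as two or more components of the cobordism level (below a symplectization disk over $\Lambda_+$ of index $\geq 2$) are not excluded by genericity of the path $(J_t)$ — this is not a codimension-two coincidence of two independent walls, it is one wall used twice. At such configurations the fiberwise linearized operators have total cokernel of dimension $\geq 2$ while there is only the single parameter $t$ available, so the compactified parametrized moduli space is not a manifold with boundary there and the ``boundary points of a compact $1$-manifold cancel mod $2$'' argument breaks down; naively one would also generate terms quadratic in $K$, which do not appear in \eqref{eq:chhomtpy}. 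This is exactly the subtlety the paper's proof (following \cite[Lemma B.15]{E2}) addresses: the spacing of the perturbation times is chosen so that any broken disk in the boundary of the perturbed moduli space uses an $\op{ind}=-1$ curve at most once, after which the boundary identification \eqref{moomin} yields \eqref{eq:chhomtpy}. Without some such perturbation (or an equivalent algebraic repair), your argument does not close.
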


\begin{proof}
The lemma follows from \cite[Lemma B.15]{E2}. Since the terminology of \cite[Lemma B.15]{E2} is slightly different from that of this paper, we sketch the argument, referring the reader to \cite[Section B.6]{E2} for details.

Consider the moduli space \[\MM(a;\bb)=\coprod_{t\in[0,1]}\MM^{(X_t,L_t);J_t}(a;\bb),\]
where $(X_t,L_t,J_t)$, $t\in[0,1]$, is generic. If $\op{ind}(a;\bb)=-1$, then:
\begin{enumerate}
\item $\MM^{(X_t,L_t);J_t}(a;\bb)\not=\varnothing$ if and only if $t=t_j$ for some $t_j$, $j=1,\dots,\ell$, where $0<t_1<t_2<\dots<t_\ell<1$;
\item $\#\MM^{(X_{t_j},L_{t_j});J_{t_j}}(a;\bb)=1$, where $\#$ denotes the cardinality.
\end{enumerate}
For simplicity we assume that $\coprod_{\op{ind}(a;\bb)=-1}\MM(a;\bb)=\{v\}$ and $v$ occurs at time $t^*$.

We then use $v$ to construct the chain homotopy. A subtlety is that we need to treat the gluing of a broken disk $u_{-b}\cup\dots\cup u_a$, where $u_0$ contains several copies of $v$. This is done using a {\em time-ordered, domain-dependent} abstract perturbation scheme, which we describe now.

Let $u:D_{m+1}\to X$ be a map that is close to breaking into $u_{-b}\cup\dots\cup u_a$.  More precisely,
\begin{itemize}
\item let $c_{i1},\dots,c_{ij_i}$, $i=-b,\dots,a$, be the Reeb chords of the negative ends of $u_i$, arranged in counterclockwise order around the boundary of the disk obtained by pre-gluing $u_i\cup\dots \cup u_a$;
\item let $A_{ij}\subset D_{m+1}$, $i=-b+1,\dots,a$, $j=1,\dots,j_i$, be rectangles biholomorphic to $[0,1]\times[\tau_{ij},\tau_{ij}']$ for some $\tau_{ij}<\tau_{ij}'$ such that $\{0,1\}\times [\tau_{ij},\tau_{ij}']\subset \bdry D_{m+1}$;
\item let $A_{-bj}\subset D_{m+1}$, $j=1,\dots,j_{-b}$, be half-infinite strips biholomorphic to $[0,1]\times(-\infty,\tau_{-bj}']$ for some $\tau_{-bj}'$ such that $\{0,1\}\times(-\infty,\tau_{-bj}']\subset \bdry D_{m+1}$; similarly, let $A_{a+1,1}\subset D_{m+1}$ be the half-infinite strip corresponding to the positive end of $u_a$;
\item the $A_{ij}$ are disjoint and $u|_{A_{ij}}$ is close to a strip over $c_{ij}$; and
\item let $D_{m+1}-\cup_{i,j} A_{ij}=B_{-b}\sqcup \dots \sqcup B_{a}$ such that $u|_{B_i}$, $i=-b,\dots,a$, is close to $u_i$ with ends truncated.
\end{itemize}
We will refer to the subscript $ij$ in $c_{ij}$ or $A_{ij}$ as {\em a subscript at a negative end of $u_i$}.

Next choose $\epsilon>0$ small and $N>0$ large and let
$$\sigma: \CC(\Lambda_+)\cup \CC(\Lambda_-)\to (0,\epsilon)$$
be a map such that ${\sigma(c)\over \sigma(c')}> N$ whenever $\mathfrak{A}(c)>\mathfrak{A}(c')$. We then inductively construct the vector
$$(t_{ij}^*),\quad i=-b,\dots,a+1,\quad j=1,\dots,j_i$$
as follows: First set $t_{a+1,1}^*=0$. Suppose we have constructed $t_{i1}^*<\dots< t_{ij_i}^*$. Then $t_{i-1,j}^*$, $j=1,\dots,j_{i-1}$, is given by $t^*_{\tau(i-1,j)}+\sigma(c_{\tau(i-1,j)})(p_{i-1,j}-1)$, where $\tau(i-1,j)$ is the subscript at the positive end of the component $\widetilde u$ of $u_{i-1}$ which has $(i-1,j)$ as a subscript at the negative end, and $(i-1,j)$ is the $p_{i-1,j}$th negative end of $\widetilde u$, arranged in counterclockwise order.

We use a {\em domain-dependent} almost complex structure $\mathfrak{J}_t$ to define the $\overline\bdry$-operator, i.e., $\mathfrak{J}_t$ depends smoothly on $x\in D_{m+1}$: we set $\mathfrak{J}_t=J_{t+t_{ij}^*}$ on $A_{ij}$ and we extend $\mathfrak{J}_t$ smoothly to $B_i$ so that $\mathfrak{J}_t(x)=J_{t+t(x)}$, $x\in B_i$, and $t^*_{i+1,b}\leq t(x)\leq  t^*_{ia}$, where $(i+1,b)$ is the subscript at the positive end of the component $\widetilde u$ corresponding to $x\in B_i$ and $(i,a)$ is the subscript of the last negative end of $\widetilde u$.

Let $\MM'(a;\bb)$ be the ``perturbation of $\MM(a;\bb)$'', obtained using the $1$-parameter family $\mathfrak{J}_t$, $t\in[\epsilon,1-\epsilon]$.
Observe that new $\op{ind}=-1$ disks might get created when the perturbation is turned on. For example, an $\op{ind}=-1$ disk might be created when gluing $u_1$ with $\op{ind}(u_1)=-1$ and two negative ends to $u_0$ consisting of two copies of $v$. The spacing for $(t_{ij}^*)$ is chosen to ensure that a broken disk of $\bdry\MM'(a;\bb)$ will use an $\op{ind}=-1$ curve at most once. This is a consequence of the fact that if $\op{ind}(\widetilde{u})=-1$, then the domain-dependent $\mathfrak{J}_t$ for $\widetilde{u}$ is close to $J_{t^*}$ at the positive end of $\widetilde{u}$ by Gromov compactness.  Hence we obtain
\begin{equation} \label{moomin}
\bdry\MM'(a;\bb)\simeq \MM^{(X_0,L_0);J_0}(a;\bb)\sqcup \MM^{(X_1,L_1);J_1}(a;\bb)\sqcup \mathcal{M}'',
\end{equation}
where $\mathcal{M}''$ is the set of broken disks of the form $u_0\cup u_1$ or $u_{-1}\cup u_0$ with $\op{ind}(u_0)=-1$ and $\op{ind}(u_{-1})=\op{ind}(u_1)=0$.

Finally, if we define $K(c)$, $c\in \Lambda_+$, as follows:
\[
K(c)=\sum_{\op{ind}(c;\bb)=-1}\left|\MM'(c;\bb)\right|\bb,
\]
then \eqref{eq:chhomtpy} holds in view of \eqref{moomin}.
\end{proof}

\subsection{Combinatorial description of the Legendrian DGA}

Let $\Lambda\subset (\R^{3},\xi_0)$ be a Legendrian link. Consider the Lagrangian projection $\Pi_{\C}\colon\R^{3}\to\R^{2}$, $(x,y,z)\mapsto (x,y)$.  Let $J_0$ be the adjusted almost complex structure on $\R\times \R^3$ which is induced from the complex structure on $\R^{2}\simeq\C$ via $\Pi_\C$, i.e., $J_0$ maps:
\begin{equation}\label{e:translinvJ}
\pa_x\mapsto \pa_y +y\pa_t,\quad \pa_y \mapsto -\pa_x - y \pa_z,\quad \pa_t \mapsto \pa_z,\quad \pa_z\mapsto -\pa_t.
\end{equation}
Then a $J_0$-holomorphic disk in $(\R\times\R^3,\R\times\Lambda)$ from $a$ to $\bb$ projects to a holomorphic disk in $\R^{2}$ whose boundary maps to the Legendrian knot diagram $\Pi_{\C}(\Lambda)$ and whose punctures ``map to $a$ and $\bb$''. On the other hand, by \cite[Section 2.7]{EES2}, each such disk in $\R^{2}$ lifts to a unique $\R$-invariant family of disks in $\R\times\R^{3}$.

Thus we are led to the following combinatorial description of Legendrian contact homology, which is the version of the theory originally defined by Chekanov \cite{Ch}: The set of Reeb chords $\mathcal{C}(\Lambda)$ is in one-to-one correspondence with the set of double points of $\Pi_{\C}(\Lambda)$. The Maslov index used to define the grading on $\A(\Lambda)$ is the Maslov index (i.e., twice the rotation number) of $\Pi_{\C}(\Lambda)$. The rigid holomorphic disks correspond to immersed polygons with convex corners at double points of $\Pi_{\C}(\Lambda)$. The sign of a puncture is positive (resp.\ negative) if the boundary orientation of the disk points towards (resp.\ away from) the double point along the lower strand and points away from (resp.\ towards) the double point along the upper strand. Consequently, the computation of the differential is reduced to the combinatorial problem of finding all the immersed polygons with convex corners, boundary on $\Pi_\C(\Lambda)$, and exactly one positive puncture.

\section{Gradient flow trees of Morse cobordisms}
\label{sec:flowtrees}

\subsection{Gradient flow trees}

We briefly summarize the definitions and notation for {\em gradient flow trees} (or {\em flow trees} for short) from \cite[Sections 2.2 and 3.1]{E1}. The reader is warned that some of the terminology is different in this paper. We will mainly describe the case of a $2$-dimensional Legendrian submanifold $\tilde L\subset J^{1}F$, leaving the simpler case of a $1$-dimensional Legendrian link $\Lambda\subset\R^{3}=J^{1}\R$ to Remark~\ref{leg link}.

Let
$$\Pi_{J^0F}: J^{1}F\to J^{0}F=F\times\R,$$
$$\Pi_{T^*F}: J^1 F\to T^*F,\quad \Pi_{F}: J^{1}F\to F,$$
be the front projection, the Lagrangian projection, and the projection to the base. Let $\tilde L\subset J^{1} F$ be a $2$-dimensional Legendrian submanifold. For generic $\tilde L$ in its isotopy class, the singular set $\Sigma\subset\tilde L$ of $\Pi_{F}$ consists of cusp edges and swallowtails; see \cite[Equation (2-1) and Remark 2.5]{E1}. In particular, for generic $\tilde L$, $\Pi_F(\Sigma)$ admits a stratification:
\[
\Pi_F(\Sigma)=\Sigma_1\supset(\Sigma_2^{\rm dbl}\cup\Sigma_2^{\rm sw}),
\]
where $\Sigma_1$ consists of all the critical values of $\Pi_F$ and has codimension $1$ in $F$, $\Sigma_2^{\rm dbl}$ is the set of transverse double points of $\Sigma_1$, $\Sigma_2^{\rm sw}$ is the set of swallowtail points, and both $\Sigma_2^{\rm dbl}$ and $\Sigma_2^{\rm sw}$ have codimension $2$ in $F$. We write
$$\Sigma_1^{\circ}=\Sigma_1\setminus(\Sigma_2^{\rm dbl}\cup\Sigma_2^{\rm sw}),\quad \Sigma_0^{\circ}=\Pi_F(\tilde L)\setminus\Sigma_1.$$

On a small neighborhood $U_q$ of a point $q\in\tilde L-\Sigma$, $\tilde L|_{U_q}$ is given as the $1$-jet of a {\em height function} $f\colon \Pi_F(U_q)\to\R$.

\begin{defn}
Fix a Riemannian metric on $F$ which agrees with the standard flat metric near the boundary and let $\nabla$ denote the corresponding gradient operator. Let $I$ be a compact interval or a half-line $[0,\infty)$.
\begin{enumerate}
\item A {\em flow line of $\tilde L$ in $F$} is a curve $\gamma\colon I\to F$, together with {\em $1$-jet lifts}
$$\gamma_i: I\to \tilde L\subset J^1 F,\quad \gamma=\Pi_F\circ \gamma_i,\quad i=1,2,$$ such that:
\begin{itemize}
\item for each $t_0\in int(I)$ there is a neighborhood $N_i(t_0)$ of $\gamma_i(t_0)\subset \tilde L$ which is given by a height function $f_i:\Pi_F(N_i(t_0))\to \R$; and
\item on $\Pi_F(N_1(t_0)\cap N_2(t_0))$, $\gamma$ satisfies the downward gradient equation
$$\dot\gamma(t) = -\nabla(f_1-f_2)(\gamma(t)).$$
\end{itemize}
\item The {\em cotangent lifts} of a flow line $(\gamma,\gamma_1,\gamma_2)$ are maps
$$\Pi_{T^*F}\circ\gamma_i:I\to L\subset T^\ast F,\quad i=1,2.$$
\item The {\em flow orientation} is the choice of orientation on the $1$-jet lifts $\gamma_1$, $\gamma_2$ such that locally $\Pi_F\circ\gamma_1$ is oriented by $-\nabla(f_1-f_2)$ and $\Pi_F\circ \gamma_2$ by $-\nabla(f_2-f_1)$.
\end{enumerate}
\end{defn}

\begin{rmk} \label{exception}
A flow line is an immersion except when it is a constant map to a critical point of $f_1-f_2$.
\end{rmk}

\begin{defn}
A \emph{source tree} $\Gamma$ is a metric space which is either a copy of $\R$ or a tree with edges that are compact intervals or half-lines $[0,\infty)$. The endpoints of the edges are the {\em vertices} and the points at infinity of the half-lines or lines are the {\em $1$-valent punctures}.  We also designate certain vertices as the {\em interior punctures}.   Furthermore, at any vertex of the tree there is a cyclic ordering of adjacent edges. Let $E(\Gamma), V(\Gamma), P(\Gamma)$ be the sets of edges, vertices, and punctures (both $1$-valent and interior) of $\Gamma$.
\end{defn}

\begin{defn}[Gradient flow tree] \label{def:flowtree}
Let $\Gamma$ be a source tree.  A {\em gradient flow tree} of $\tilde L$ is a map $\gamma\colon\Gamma\to F$ (called a {\em flow tree map}), together with $1$-jet lifts $\gamma^e_1,\gamma^e_2$ for each $e\in E(\Gamma)$, such that:
\begin{enumerate}
\item for $e\in E(\Gamma)$, $(\gamma|_e,\gamma^e_1,\gamma^e_2)$ is a flow line of $\tilde L$;
\item a neighborhood of each $1$-valent puncture maps to a flow line into or out of a critical point of some height function difference $f_i-f_j$, i.e., a Reeb chord;
\item the {\em cotangent lift $C(\gamma)$ of $\gamma$}, i.e., the union of the closures of the cotangent lifts $\Pi_{T^*F}\circ \gamma_i^e$, $i=1,2$, over all $e\in P(\Gamma)$, is a closed oriented curve;
\item a vertex $v\in V(\Gamma)$ is an interior puncture if and only if there are adjacent cotangent lifts $\Pi_{T^*F}\circ \gamma_{i_1}^{e_1}$ and $\Pi_{T^*F}\circ \gamma_{i_2}^{e_2}$ of $C(\gamma)$ at $v$ such that $\gamma_{i_1}^{e_1}(v)\not=\gamma_{i_2}^{e_2}(v)$ and are connected by a Reeb chord.
\end{enumerate}
\end{defn}

The restriction of a flow tree map to a half-infinite edge may be constant by Remark~\ref{exception}.  See Figure~\ref{punctures} for examples of $1$-valent punctures and interior punctures.

The cotangent and $1$-jet lifts are oriented using the flow orientation. Near a puncture $p$, one of the $1$-jet lifts of the edge $e$ adjacent to $p$ is incoming (i.e., oriented towards the critical point), and the other is outgoing (i.e., oriented away from the critical point).

\begin{defn} \label{def: sign of puncture}
A puncture of $\gamma$ is {\em positive} if the height function of the incoming $1$-jet lift is smaller than that of the outgoing; otherwise it is {\em negative}.
\end{defn}

The $1$-jet lift $\tilde C(\gamma)$ of a flow tree $\gamma$ is a union of paths which connect the endpoints of Reeb chords, just like the boundary of a holomorphic disk. In particular, $\tilde C(\gamma)$ determines $(a_1,\dots,a_k;\bb_1,\dots,\bb_k)$, where $a_1,\dots,a_k\in \mathcal{C}(\tilde L)$ correspond to the positive punctures and $\bb_1,\dots,\bb_k\in \mathcal{A}(\tilde L)$ correspond to the negative punctures.

\begin{defn}
Two flow trees
$$(\gamma:\Gamma\to F;\gamma_i^e, i=1,2, e\in E(\Gamma)),~(\gamma':\Gamma'\to F; (\gamma')_i^e,i=1,2,e\in E(\Gamma'))$$
are {\em equivalent} if there is an isometry $\phi:\Gamma\stackrel\sim\to \Gamma'$ such that $\gamma=\gamma'\circ\phi$, $\gamma_i^e=(\gamma')_i^{\phi(e)}\circ\phi|_e$ for all $e\in E(\Gamma)$, and the cyclic orders around the vertices are preserved.
\end{defn}

\begin{defn}
A {\em partial flow tree} $\gamma:\Gamma\to F$ satisfies Definition~\ref{def:flowtree} with the exception of Condition~(2). A $1$-valent vertex of $\Gamma$ where the cotangent lift of $\gamma$ is non-closed is called a {\em special vertex}. We write $V(\Gamma)$ for the set of vertices of $\Gamma$ and $SV(\Gamma)\subset V(\Gamma)$ for the subset of special vertices. The sign of a special vertex is defined in the same way as Definition~\ref{def: sign of puncture}.
\end{defn}

For example, if we cut a flow tree in two along an edge, then we obtain two partial flow trees, each with one special puncture.

\begin{rmk} \label{leg link}
The definition of a flow tree of a $1$-dimensional Legendrian link is exactly the same as above. Here the situation is simpler in that $\Sigma_1=\Sigma_1^{\circ}$, i.e., generically the only singularities of the front projection are isolated cusps.
\end{rmk}


\subsection{Formal dimension}

Let $\TT_{\tilde L}(a_1,\dots,a_k;\bb_1,\dots,\bb_k)$ be the space of flow trees from $a_1,\dots,a_k$ to $\bb_1,\dots,\bb_k$ modulo equivalence. The formal dimension of $\gamma\in \TT_{\tilde L}:=\TT_{\tilde L}(a_1,\dots,a_k;\bb_1,\dots,\bb_k)$ is given by
\begin{equation} \label{eqn: dim of tree}
\dim(\gamma)=\dim(\TT_{\tilde L})=\sum_{i=1}^k(|a_i|_{\tilde L}-|\bb_i|_{\tilde L})+k-2.
\end{equation}
This formal dimension agrees with the Fredholm index of disks from $a_1,\dots,a_k$ to $\bb_1,\dots,\bb_k$ in the cotangent bundle $T^*F$, which is one less than the Fredholm index in the symplectization $\R\times J^1F$, given by Equation~\eqref{eqn: fredholm index}. This is due to the extra $\R$-translation.

We rewrite Equation~\eqref{eqn: dim of tree} in terms of Morse-theoretic data as in \cite[Definition~3.4]{E1}: Let $n$ be the dimension of the Legendrian submanifold (which in our applications will be $1$ or $2$).  Let $P_\pm(\gamma)$ be the set of positive/negative punctures of $\gamma$ and let $R(\gamma)$ be the set of vertices of $\gamma$ that are not punctures.

\begin{defn}[Morse index $I(p)$]
If $p\in P_\pm(\gamma)$, then let $c$ be the Reeb chord corresponding to $p$ and let $f^+$, $f^-$ be the height functions for the two sheets containing $c^\pm$. Then the {\em Morse index} $I(p)$ is the Morse index of the height function difference $f^+-f^-$ at $p$.
\end{defn}

\begin{defn}[Maslov content $\mu(r)$]
Let $r\in R(\gamma)$. If $x\in \Sigma$ is a cusp point over $r$ which lies in the $1$-jet lift $\tilde C(\gamma)$, then let $\tilde \mu(x)=1$ (resp.\ $\tilde \mu(x)=-1$) if the incoming arc of $\tilde C(\gamma)$ at $x$ lies on the upper (resp.\ lower) sheet and the outgoing arc of $\tilde C(\gamma)$ lies on the lower (resp.\ upper) sheet.  The {\em Maslov content of $r$} is $$\mu(r)=\sum_x \tilde \mu (x),$$ where the sum is over all cusp points $x\in\Sigma$ over $r$.
\end{defn}

Then we have the following:
\begin{equation} \label{eqn: dim of tree alt}
\dim(\gamma)=(n-3) +\sum_{p\in P_+(\gamma)} (I(p)-(n-1)) -\sum_{q\in P_-(\gamma)}(I(q)-1) +\sum_{r\in R(\gamma)}\mu(r).
\end{equation}

\subsection{Generic flow trees on $\tilde L$}

Suppose $\tilde L$ is generic. Then by \cite[Theorem 1.1]{E1} $\TT_{\tilde L}:=\TT_{\tilde L}(a;\bb)$ is a stratified space with strata that are manifolds and its top-dimensional stratum has dimension $\dim(\TT_{\tilde L})$. In particular, $\dim(\TT_{\tilde L})<0$ implies $\TT_{\tilde L}=\varnothing$ and $\dim(\TT_{\tilde L})=0$ implies that $\TT_{\tilde L}$ is a finite collection of flow trees that are transversely cut out. (A flow tree is transverse if it satisfies the {\em preliminary transversality conditions} from \cite[Section 3.1.1]{E1} and the conditions indicated in the proof of \cite[Proposition 3.14]{E1}.)

A flow tree $\gamma:\Gamma\to F$ in $\TT_{\tilde L}$ of dimension zero only has vertices of valency $\le 3$ whose neighborhoods are given as follows:
\begin{enumerate}
\item $1$-valent punctures;
\item $2$-valent interior punctures;
\item ends;
\item $Y_0$-vertices;
\item $Y_1$-vertices; and
\item switches.
\end{enumerate}
See Figures~\ref{punctures}, \ref{vertices}, and \ref{switch} for some examples. Observe that a flow line limits to a cusp edge corresponding to an end in finite time; cf.\ \cite[Lemma~2.8]{E1}.

\begin{figure}[ht]
\begin{center}
\psfragscanon
\psfrag{A}{$-,{\pi\over 2}$}
\psfrag{B}{$+,{\pi\over 2}$}
\psfrag{C}{$-,{3\pi\over 2}$}
\psfrag{D}{$+,{3\pi\over 2}$}
\psfrag{E}{$-,{\pi\over 2}$}
\psfrag{F}{$+,{\pi\over 2}$}
\includegraphics[width=1.0\linewidth]{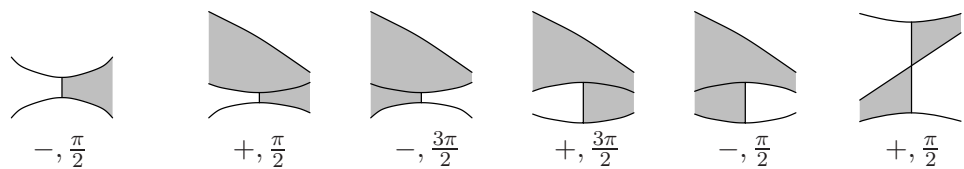}
\end{center}
\caption{A $1$-valent puncture (on the far left) and several $2$-valent interior punctures, drawn in the front projection. The sign of the puncture and the angle made in the Lagrangian projection are given.}
\label{punctures}
\end{figure}

\begin{figure}[ht]
\begin{center}
\includegraphics[width=0.6\linewidth]{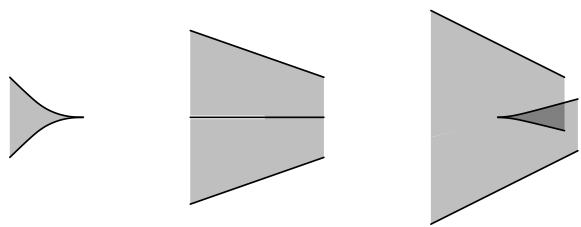}
\end{center}
\caption{From left to right, an end, a $Y_0$-vertex, and a $Y_1$-vertex, drawn in the front projection.}
\label{vertices}
\end{figure}

\begin{figure}[ht]
\begin{center}
\includegraphics[width=0.45\linewidth]{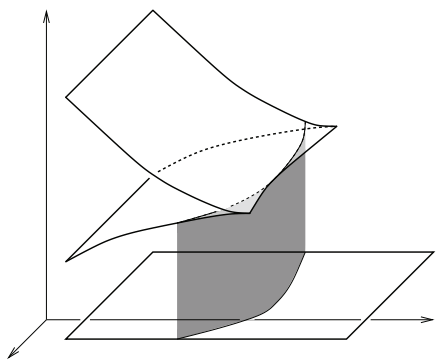}
\end{center}
\caption{A switch, drawn in the front projection.}
\label{switch}
\end{figure}

\subsection{Flow trees in Morse cobordisms}

Let $L^{\Mo}$ be a Morse cobordism in $T^*F$ from $\Lambda_+$ to $\Lambda_-$.

\begin{lemma}\label{lem:bdrytrees}
The $1$-jet lift of a flow tree $\gamma:\Gamma\to F$ of $\tilde L^{\Mo}\subset J^1F$ is contained in $J^{1}F\big|_{\pa F}$ if and only if $\gamma(x)\in \bdry F$ for some $x\in \Gamma$. Furthermore, flow trees of $\tilde L^{\Mo}$ that are contained in $J^{1}F\big|_{\pa_{+}F}$ and $J^{1}F\big|_{\pa_{-}F}$ are in natural one-to-one correspondence with flow trees of $\Lambda_-$ and $\Lambda_+$, respectively.
\end{lemma}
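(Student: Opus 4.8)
The plan is to analyze the behavior of flow trees near $\bdry F$ using the explicit parametrization \eqref{eq:Morseend} of the Morse cobordism $L^{\Mo}$ over the collar $U_+\cup U_-$. First I would work out, from \eqref{eq:Morseend}, the front projection of $\tilde L^{\Mo}$ over a neighborhood of $\pa_+F$ (and symmetrically $\pa_-F$): writing $\xi_2=a_+$ on the boundary, the two sheets meeting at the Reeb chord $c_+$ have heights whose difference is, up to the positive factor $2(a_+-\xi_2)$, essentially the height difference of the corresponding sheets of $\Lambda_+$, while the $\xi_1$-direction ($=x$) behaves exactly as in $\R^3=J^1\R$. The key computation is that the $\xi_2$-component of the gradient $-\nabla(f_1-f_2)$ on any edge of a flow line, evaluated along $\pa_+F$, vanishes — because the height functions coming from \eqref{eq:Morseend} are, after the substitution, independent of $\xi_2$ to first order at $\xi_2=a_+$ (the $(\xi_2-a_\pm)$ and $(\xi_2-a_\pm)^2$ factors and the metric being the standard flat one near $\bdry F$ force $\pa_{\xi_2}(f_1-f_2)=0$ on $\pa_\pm F$). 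Hence $\pa_\pm F$ is invariant under the gradient flow, i.e.\ a flow line that touches $\bdry F$ stays in $\bdry F$; and the $1$-jet lift then lies in $J^1F|_{\pa F}$. This gives the ``only if'' direction together with the forward implication of the ``if and only if''. For the converse of the biconditional: if $\gamma(x)\in\bdry F$ for some $x$, then since each edge is a gradient flow line and $\bdry F$ is flow-invariant, the edge through $x$ lies in $\bdry F$; propagating through vertices (where adjacent edges share a point, again forced onto $\bdry F$) and using that $\Gamma$ is connected shows the whole $1$-jet lift lies in $J^1F|_{\pa F}$.

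Next I would establish the correspondence with flow trees of $\Lambda_\pm$. Over $\pa_+F\cong\R\times\{a_+\}$, restricting \eqref{eq:Morseend} to $\xi_2=a_+$ recovers (after dropping the constant $\xi_2=a_+$ coordinate and the identically-zero $z$-coordinate, or rather: the front $J^0(\pa_+F)=\pa_+F\times\R$ with the induced height function) precisely the $1$-jet data of the Legendrian link $\Lambda_+\subset J^1\R=\R^3$, with the Reeb-chord identification of Remark~\ref{rmk:Morsegrading}. Concretely: the restriction of the front $\Pi_{J^0F}(\tilde L^{\Mo})$ to $\pa_+F$ is the front of $\Lambda_+$, and the gradient flow equation $\dot\gamma=-\nabla(f_1-f_2)$ restricted to the $\xi_1$-direction along $\bdry F$ is exactly the flow equation defining flow trees of the $1$-dimensional Legendrian $\Lambda_+$ (Remark~\ref{leg link}), using that the metric is the standard flat product metric near $\bdry F$. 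Thus a flow tree of $\tilde L^{\Mo}$ contained in $J^1F|_{\pa_+F}$ restricts to a flow tree of $\Lambda_+$; conversely any flow tree of $\Lambda_+$ is the restriction of a unique such flow tree (the source tree $\Gamma$, the map, and the $1$-jet lifts are the same data). The orientation conventions on cotangent and $1$-jet lifts, and the notions of positive/negative puncture and interior puncture, all match under this identification by inspection of the definitions. One small point to check: no vertex type forces the tree off $\bdry F$ — switches, $Y_0$- and $Y_1$-vertices, ends, and interior punctures of $\tilde L^{\Mo}$ lying over $\pa F$ all descend to the corresponding local models for $\Lambda_\pm$; this is where the codimension-1/codimension-2 stratification $\Sigma_1\supset(\Sigma_2^{\rm dbl}\cup\Sigma_2^{\rm sw})$ over the surface collapses to the codimension-0/codimension-1 stratification of Remark~\ref{leg link}.

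The main obstacle I anticipate is the gradient-invariance computation: one must be careful that it is genuinely the geometry — the quadratic/linear vanishing in $(\xi_2-a_\pm)$ built into \eqref{eq:Morseend} \emph{together with} the stipulation that the Riemannian metric agrees with the standard flat metric near $\bdry F$ — that makes $\pa_\pm F$ flow-invariant, rather than this being an artifact of a coordinate choice. In particular one should verify that the two local height functions $f_1,f_2$ (defined on small pieces of $F$, not just on $\bdry F$) really do satisfy $\pa_{\xi_2}(f_1-f_2)|_{\xi_2=a_\pm}=0$; this follows because differentiating the $\eta$-coordinates $\left(A_\pm\mp(\xi_2-a_\pm)^2\right)y_\pm(s)$ and the $z$-coordinate $\mp2(\xi_2-a_\pm)z_\pm(s)$ in \eqref{eq:Morseend} and evaluating at $\xi_2=a_\pm$ kills the $\xi_2$-derivative of the generating function difference. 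A secondary, more bookkeeping-heavy issue is checking that \emph{every} local model for vertices of a rigid flow tree (items (1)--(6) in the list preceding Lemma~\ref{lem:bdrytrees}) is compatible with lying in $J^1F|_{\pa F}$ and restricts correctly; but since each such model is local and the restriction to $\bdry F$ is explicitly the $1$-dimensional model, this reduces to a finite check against the figures.
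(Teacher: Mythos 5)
Your proposal is correct and is essentially the paper's own argument in expanded form: the paper's proof is the single line ``This follows from Definition~\ref{defn: Morse cobordism}'', and your computation — the $\eta_2$-component of \eqref{eq:Morseend} vanishes linearly in $(\xi_2-a_\pm)$, so with the flat metric $\pa_{\xi_2}(f_1-f_2)=0$ along $\pa_\pm F$, making the boundary flow-invariant, while the restriction to $\xi_2=a_\pm$ reproduces the flow data of $\Lambda_\pm$ (up to the constant factor $A_\pm$, which only rescales the source trees) — is exactly the content hidden in that line. One remark: your pairing (trees over $\pa_+F$ correspond to trees of $\Lambda_+$, and over $\pa_-F$ to $\Lambda_-$) is the geometrically correct one and matches how the paper uses the lemma in the Regularity discussion immediately afterwards; the ``respectively'' in the printed statement appears to have the labels transposed.
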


\begin{proof}
This follows from Definition~\ref{defn: Morse cobordism}.
\end{proof}

\n {\em Regularity.} If we view a flow tree in $\TT_{\Lambda_+}(a;\bb)$ as a flow tree $\gamma: \Gamma\to \bdry_+F$ in $\TT_{\tilde L^{\Mo}}(a;\bb)$, then
\begin{equation} \label{eqn: dim of tree part 2}
\dim \TT_{\tilde L^{\Mo}}(a;\bb)=\dim \TT_{\Lambda_+}(a;\bb)+1-\#\bb,
\end{equation}
where $\# \bb$ is the number of ends of $\bb$. If $\# \bb>1$ and $\dim \TT_{\Lambda_+}(a;\bb)=0$, then there exist flow trees $\gamma:\Gamma\to\bdry_+F$ of $\tilde L^{\Mo}\big|_{\pa_{+}F}$ that are not generic as flow trees of $\tilde L^{\Mo}$. This indicates that generically the Reeb chords of $\tilde L^{\Mo}$ in $J^{1}F\big|_{\pa_{+}F}$ would not be ``aligned'', i.e., would not all lie above $\bdry_+F$.

However, for the purposes of this paper, there is no need to perturb out of this situation: A priori we may have a broken flow tree $\gamma_0\cup\gamma_1$ (i.e., a flow tree with more than one level) from $a\in\mathcal{C}(\Lambda_+)$ to $\bb\in \mathcal{A}(\Lambda_-)$, where $\gamma_1$ is a tree from $a$ to $\bb'$ which is contained in $\tilde L^{\Mo}\big|_{\pa_{+}F}$ and $\gamma_0$ is the union of trees from $\bb'$ to $\bb$.  Since $\dim\TT_{\Lambda_+}(a;\bb')\geq 0$ and each of the $\#\bb'$ components of $\gamma_0$ has $\dim\geq 0$,
\begin{align*}
\dim (\gamma_0\cup\gamma_1) &\geq  \dim \TT_{\tilde L^{\Mo}}(a;\bb')+\#\bb'\\
&\geq  \dim\TT_{\Lambda_+}(a;\bb')+\#\bb'\geq 1.
\end{align*}
Therefore, by perturbing outside a neighborhood of $J^{1}F\big|_{\pa F}$, the flow trees of $\dim(\TT_{\tilde L}(a;\bb))=0$ that are not entirely contained in $J^{1}F\big|_{\pa_{+}F}$ are transversely cut out.

\section{Holomorphic disks and flow trees}
\label{Sec:disktree}

The goal of this section is to prove Theorem~\ref{thm:flowtreecomp}, i.e., the correspondence between rigid holomorphic disks and rigid Morse flow trees. The proof is an extension of the corresponding results in the compact case from \cite{E1} to immersed exact Lagrangian submanifolds with cylindrical or conical ends.

This section is organized as follows: In Section~\ref{subsection: morse vs conical} we show that certain rigid flow trees of a Morse cobordism $L^{\rm Mo}$ are in bijection with the rigid long conical flow trees (see Definition~\ref{defn:LC}) of the associated long conical cobordisms $L_{\delta;\sigma}$. In Section~\ref{subsection: ac and Leg links} we make a small perturbation of $L_{\delta;\sigma}$ as in \cite[Section 4]{E1} that allows us to better control holomorphic disks under scaling and introduce an almost complex structure $J_{\delta;\sigma}$ on $X_\delta$ which agrees with the almost complex structure on $T^*F$ induced by a Riemannian metric on $F$ in the sense of \cite[Section 4.4]{E1} and is adjusted to the symplectization of $(\R^3,\alpha_0)$ at the ends.  We then establish a key subharmonic estimate in Section~\ref{subsection: subharm} which allows us to gain control of holomorphic disks under rescaling and thereby prove the convergence of rigid holomorphic disks to rigid flow trees as $\sigma\to 0$ in Section~\ref{subsection: disks to trees}. In Section~\ref{subsection: trees to disks} we construct rigid holomorphic disks near rigid flow trees and show that the construction captures all rigid disks.

\subsection{Morse cobordisms, conical cobordisms, and flow trees} \label{subsection: morse vs conical}

We use the notation from Section \ref{sec:lagcob}. Consider a Morse cobordism $L^{\Mo}\subset T^{\ast}F$. In Section~\ref{subsubsection: morse to conical} we associated a family of long conical cobordisms $(X_\delta,L_{\delta;\sigma})$ to $L^{\Mo}$, parametrized by $\delta>0$ and $\sigma>0$ small.

A {\em Reeb chord flow line} is a flow line $\{x\}\times [a_+,\infty)\subset F_\delta$ or $\{x\}\times (a_-'(\delta),a_-]\subset F_\delta$ which corresponds to a cylinder over a Reeb chord in $\mathcal{C}(\Lambda_\pm)$.

\begin{defn} \label{defn:LC}
A {\em long conical (LC) flow tree $\gamma:\Gamma\to F_\delta$ of $L_{\delta;\sigma}$} satisfies the conditions of Definition \ref{def:flowtree} with (2) and (3) replaced by:
\begin{enumerate}
\item[(2')] a neighborhood of each puncture $p\in P(\Gamma)$ (i.e., a half-line) either limits to a critical point of some height function difference $f_i-f_j$ or is asymptotic to a Reeb chord flow line; and
\item[(3')] if we compactify $T^*F_\delta$ by attaching $\R^3\times\{a_-'(\delta),\infty\}$ and compactify the cotangent lift of $\gamma$ to $\overline\gamma$ by attaching suitable $c\times\{a_-'(\delta)\}$ or $c\times\{\infty\}$, where $c\in \mathcal{C}(\Lambda_\pm)$, then $\overline{\gamma}$ is a closed oriented curve.
\end{enumerate}
\end{defn}

\begin{lemma}\label{lem:long_Morse}
There is a bijection between rigid flow trees of $L^{\Mo}$ that are not contained in $T^{\ast} F\big|_{\pa F}$ and rigid LC flow trees of $L_{\delta;\sigma}$.
\end{lemma}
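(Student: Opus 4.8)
The plan is to set up an explicit correspondence by comparing the defining differential equations and the asymptotic/boundary data near $\pa F$, where $L_{\delta;\sigma}$ differs from $L^{\Mo}$ only by the interpolation $h_\delta$ and the fiber scaling $s_\sigma$. First I would observe that on $T^{\ast}F\big|_{F-(U_+\cup U_-)}$, the Lagrangian $L'_\delta$ (and hence $L_{\delta;\sigma}$, after undoing the uniform fiber scaling which does not affect gradient flow lines of height function differences up to reparametrization) agrees with $L^{\Mo}$; so in the interior the two flow-tree equations literally coincide. Thus the content is entirely in the collar neighborhoods $U_\pm$ and the added ends $E_\pm$. There, using the parametrizations \eqref{eq:Morseend} and \eqref{eq:Morseend2} one computes the relevant sheets and height functions explicitly: for the Morse model the $z$-coordinate (height) on the two sheets over $U_+$ is $\mp2(\xi_2-a_+)z_+(s)$, so the height function difference restricted to $U_+$ is (up to the $A_+\mp(\xi_2-a_+)^2$ factor governing the base projection) a function whose downward gradient in the $\xi_2$-direction drives $\xi_2\to a_+$, i.e.\ pushes flow lines toward $\pa_+ F$ and produces a Reeb chord of $\Lambda_+$ exactly on $\pa_+ F$; for the long conical model, \eqref{eq:conicalend} shows the two sheets have heights $\pm(\xi_2-a_\pm')z_\pm(s)$ along the conical ends $E_\pm$, so the height difference is linear in $\xi_2$ and a flow line in $E_+$ is precisely a Reeb chord flow line $\{x\}\times[a_+,\infty)$, limiting to a Reeb chord of $\Lambda_+$ ``at infinity''. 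This is the translation between the ``puncture at a cusp/critical point on $\pa F$'' picture and the ``asymptotic to a Reeb chord flow line'' picture, i.e.\ between condition (2) and condition (2')/(3') of Definition~\ref{defn:LC}.

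Concretely, I would build the bijection as follows. Given a rigid flow tree $\gamma$ of $L^{\Mo}$ not contained in $T^{\ast}F\big|_{\pa F}$, by Lemma~\ref{lem:bdrytrees} each of its $1$-valent punctures lying on $\pa_\pm F$ corresponds to a Reeb chord of $\Lambda_\pm$, and near such a puncture $\gamma$ enters $U_\pm$ transversally to $\pa F$ (by the explicit form of the height difference above). I replace the germ of $\gamma$ inside $U_\pm$ by the corresponding half-infinite Reeb chord flow line in $E_\pm$ of $F_\delta$, reparametrizing, and leave $\gamma$ unchanged on $F-(U_+\cup U_-)$; matching at $\pa_\pm F$ is automatic since both pieces limit to the same Reeb chord of $\Lambda_\pm$. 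Conversely, given a rigid LC flow tree of $L_{\delta;\sigma}$, each puncture that is asymptotic to a Reeb chord flow line lives in an end $E_\pm$, and I cut it off at $\pa_\pm F$ and glue in the germ of the corresponding Morse flow line in $U_\pm$ (which exists and is unique by the gradient equation for the explicit height difference, since the dynamics in $U_\pm$ is a gradient flow with a unique trajectory through $\pa F$ at each slice $\xi_2=$const). One checks this is well-defined on equivalence classes (the isometry type of the source tree and cyclic orderings are preserved since we only modify half-infinite edges), that the $1$-jet lifts and flow orientations match, and that signs of punctures are preserved (the incoming/outgoing height comparison is the same for both models, by the explicit sign of the linear height difference in $\xi_2$). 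Finally I would check that rigidity is preserved: by \eqref{eqn: dim of tree}, both formal dimensions are computed from the same $(|a_i|,|\bb_i|)$-data and the number of punctures, and near the modified ends there is no moduli (the Reeb chord flow line and the Morse collar trajectory are each unique), so $\dim\gamma=0$ on one side iff $\dim\gamma=0$ on the other. Compactness of the $0$-dimensional moduli spaces then matches by the same argument — no sequence can escape into the collar or the conical end since the flow there is ``one-dimensional'' and rigid.

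The main obstacle I expect is the rigorous analysis in the collar $U_\pm$: one must verify that \emph{every} flow line of $\tilde L^{\Mo}$ that meets $U_\pm$ but is not contained in $\pa F$ behaves as claimed — in particular that it cannot oscillate, re-enter, or develop a switch/$Y$-vertex inside $U_\pm$ — and symmetrically that LC flow lines meeting $E_\pm$ are exactly the Reeb chord flow lines plus their continuations. This is where the precise quadratic form of \eqref{eq:Morseend} and the monotone interpolation $h_\delta$ matter: the height function difference in $U_\pm$ has the shape $\big(\text{positive factor}\big)\cdot(\text{function of }z_\pm(s))$ times an essentially linear-in-$\xi_2$ term coming from $h_\delta'$, so the $\xi_2$-component of the gradient has a definite sign and flow lines are monotone in $\xi_2$; combined with the fact that the restriction of $\tilde L^{\Mo}$ to $U_\pm$ is a product family over $S_\pm$, one rules out switches and $Y$-vertices there. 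I would phrase this as a short lemma ``there are no $Y$-vertices, switches, or interior punctures of a rigid flow tree inside $U_\pm$ or $E_\pm$, and each such tree meets $\pa_\pm F$ transversally at its end-punctures'' and reduce everything to it; its proof is the genuinely local computation using \eqref{eq:Morseend}, \eqref{eq:conicalend}, and the explicit $h_\delta$, and is the part I would not skip.
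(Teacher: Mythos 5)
Your reduction to the collar region is the right instinct (outside a $\delta$-collar of $\pa F$ the two Lagrangians literally agree, since $h_\delta(u)=u^2$ there, and fiber scaling does not change flow lines), but the surgery you propose does not produce actual flow trees, and this is a genuine gap rather than a technicality. First, you cannot cut and glue at $\pa_\pm F$: for the Morse model the height difference near $\pa_+F$ has the form $(1-t^2)f(s)$, whose gradient has vanishing normal component at $t=0$, so $\pa F$ is invariant and a tree not contained in $T^*F|_{\pa F}$ never crosses $\pa F$ at a finite point --- its edges only converge to the critical points on $\pa F$ in infinite time. Second, ``replacing the germ of $\gamma$ inside $U_\pm$ by the corresponding Reeb chord flow line'' does not yield a solution of the LC gradient equation: in the conical end the gradient is $(\tau\nabla f(s), f(s))$, so a generic edge drifts in the $s$-direction and is only \emph{asymptotic} to the vertical line $\{s_0\}\times[a_+,\infty)$; the vertical ray itself is a trajectory only when $s\equiv s_0$. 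What your ``matching is automatic'' step is really asserting is a correspondence of stable manifolds / asymptotics between two different collar dynamics (quadratic profile $(1-t^2)f$ versus linear profile $\tau f$), and that is exactly the content of the lemma, not something that follows from uniqueness of trajectories through a point. Finally, the auxiliary lemma you propose (no $Y$-vertices, switches or interior punctures anywhere in the fixed-size collar $U_\pm$) is both stronger than needed and doubtful: nothing prevents, say, a $Y_0$-vertex of a rigid tree from lying in $U_\pm$, so your reduction would rest on a statement that may simply be false.

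The paper's proof avoids all of this with one clean device: it constructs a diffeomorphism $\phi_\delta\colon F-\pa F\to F_\delta$ that conjugates the gradient \emph{direction fields} of $L^{\Mo}$ and $L_{\delta;\sigma}$ (flow lines up to reparametrization). In the model computation the two direction fields are $\bigl(\nabla f(s),\tfrac{2t}{t^2-1}f(s)\bigr)$ and $\bigl(\nabla f(s),\tfrac{1}{\tau}f(s)\bigr)$, and the conjugating map $(s,t)\mapsto(s,g(t))$ with $g(t)=\sqrt{t^2/2-\log|t|+c}$ is \emph{independent of the height difference} $f$; this independence is the key point, because a single base diffeomorphism then intertwines the flows for every pair of sheets simultaneously, and therefore carries entire flow trees --- including any vertices, jet lifts, orientations, signs and the dimension/rigidity data --- bijectively to LC flow trees, with punctures at boundary critical points going over to ends asymptotic to Reeb chord flow lines. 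If you want to salvage your approach, the missing ingredient is precisely such a conjugacy (or an equivalent stable-manifold matching) on the region where the profiles differ; without it, the cut-and-paste construction and the rigidity transfer do not go through.
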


\begin{proof}
The lemma follows from constructing a diffeomorphism $\phi_\delta: F-\bdry F\stackrel\sim\to F_\delta$ which maps the gradient flow lines of $L^{\rm Mo}$ to the gradient flow lines of $L_{\delta;\sigma}$ after reparametrizations. (Note that fiber scaling does not alter gradient flow lines.)

The following model calculation can be generalized to give $\phi_\delta$: Let $I\subset \R$ be an interval and let $f=f_1-f_2: I\to \R$ be a height function difference.  Consider the height function difference $(1-t^2)f(s)$ on $I\times(-\varepsilon,0)$ with coordinates $(s,t)$ and the height function difference $\tau f(s)$ on $I\times (0,\infty)$ with coordinates $(s,\tau)$. Their gradients are
$$((1-t^2)\nabla f(s), -2tf(s)), \quad (\tau\nabla f(s),f(s)),$$ which are directed by
$$X_1:=\left(\nabla f(s), {2t\over t^2-1} f(s)\right),\quad X_2:=\left(\nabla f(s),{1\over \tau} f(s)\right).$$
We are looking for a function $\tau=g(t)$ such that
$$\phi: I\times (-\varepsilon,0)\to I\times (0,\infty),$$
$$(s,t)\mapsto (s,g(t)),$$
is a diffeomorphism and satisfies $\phi_* X_1=X_2$. Solving the differential equation ${2t\over t^2-1} {d\tau\over dt}= {1\over \tau}$ gives $g(t)=\sqrt{t^2/2 -\log |t| +c}$, and for an appropriate $c$ the desired condition holds. Finally observe that $\phi_0$ is independent of the height function difference $f=f_1-f_2$.
\end{proof}

Given $\epsilon>0$, let $N_{\epsilon,\pm}'$ denote an $\epsilon$-neighborhood of the Reeb chord endpoints in $\Lambda_\pm$ and let $N_{\epsilon}=(N'_{\epsilon,+}\times [a_+,\infty)) \cup (N'_{\epsilon,-}\times(a'_-(\delta),a_-]$.

\begin{cor} \label{cor: canonicalpunctures}
If $\delta>0$ is sufficiently small, then the restriction of any rigid LC flow tree of $L_{\delta;\sigma}$ to $X_\delta- T^*F$ is contained in $N_{C\delta}$ for some $C>0$.
\end{cor}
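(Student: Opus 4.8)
The plan is to analyze the behavior of a rigid LC flow tree $\gamma:\Gamma\to F_\delta$ in the conical region $E_\pm$ and show that, for $\delta$ small, it cannot stray far (distance $O(\delta)$) from the Reeb chord endpoints of $\Lambda_\pm$ in the $\xi_1$-direction (the $\R$-direction of $F$), while the $\xi_2$-direction is controlled automatically by Definition~\ref{defn:LC}(2')--(3') since the conical ends are flow-invariant cylinders over $\Lambda_\pm$. First I would recall the explicit parametrization of $L_{\delta;\sigma}$ in the conical ends: after fiber scaling, the relevant sheet is described on $E_+=\R\times[a_+,\infty)$ by $\xi_2\mapsto (x_+(s),\sigma(A_+-h_\delta(a_+-\xi_2))y_+(s),\xi_2,\sigma h_\delta'(a_+-\xi_2)z_+(s))$, which for $\xi_2\in(a_+-\tfrac12\delta,a_+]$ is \emph{linear} in $\xi_2$ with slope $\delta$ (since $h_\delta(u)=\delta u-\tfrac12\delta^2$ there). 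Over the genuinely conical part $\xi_2\ge a_+$ the cobordism is literally a cone (i.e.\ $\hat L^{\co}_\delta$), so the height function differences $f_i-f_j$ in the base variable $\xi_1$ are \emph{independent of $\xi_2$} up to an overall positive $\xi_2$-dependent rescaling; hence the $\xi_1$-components of the gradient flow lines in that region are reparametrized copies of gradient flow lines of the fixed $1$-dimensional height-function differences of $\Lambda_+$ (or $\Lambda_-$), which are exactly the flow lines of $\Lambda_\pm$ used to compute $\A(\Lambda_\pm)$.

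Next I would exploit the structure of a \emph{rigid} LC flow tree. By Lemma~\ref{lem:long_Morse} such a tree corresponds to a rigid flow tree of $L^{\Mo}$ not contained in $T^*F|_{\pa F}$; together with the regularity discussion at the end of Section~\ref{sec:flowtrees} (in particular the dimension count \eqref{eqn: dim of tree part 2} and the broken-tree estimate showing $\dim(\gamma_0\cup\gamma_1)\ge 1$), a rigid tree cannot have an entire nontrivial subtree living over $\pa F$. Consequently the only portions of $\gamma$ that enter the conical region $X_\delta-T^*F$ are the half-infinite edges that are asymptotic to Reeb chord flow lines $\{x\}\times[a_+,\infty)$ or $\{x\}\times(a_-'(\delta),a_-]$, plus possibly short excursions of interior edges. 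The key point is that the $\xi_1$-coordinate $x$ of such an asymptotic Reeb chord flow line must be a critical point of the $1$-dimensional height-function difference of $\Lambda_+$, i.e.\ exactly a Reeb chord endpoint of $\Lambda_+$; so as $\xi_2\to\infty$ the tree is pinned to those points.

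The main obstacle — and the step I would spend the most care on — is controlling how far the tree can drift in $\xi_1$ over the \emph{transition zone} $\xi_2\in(a_+-\tfrac12\delta,a_+]$ (and the symmetric negative piece), where $L_{\delta;\sigma}$ interpolates between the Morse/rounded region and the honest cone, and where the $\xi_2$-coordinate is allowed to range over an interval of length $\sim\tfrac12\delta$. Here I would estimate the $\xi_1$-speed of a flow line: on this zone the height function differences $(f_i-f_j)$ in the $\xi_1$-variable are, by \eqref{eq:Morseend2}, of size $O(\sigma)$ with $\xi_1$-derivative $O(\sigma)$, and the $\xi_2$-equation forces the flow parameter spent in this zone to be $O(\delta/\sigma)$ (matching the gradient in the $\xi_2$-direction, which is $\sim\sigma\delta$ over a fiber-length $\delta$); multiplying, the total $\xi_1$-displacement inside the transition zone is $O(\delta)$. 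Combining this with the fact that outside the transition zone (i.e.\ for $\xi_2\ge a_+$) the flow is a genuine cone and its $\xi_1$-projection is a reparametrized $\Lambda_+$-flow line that can only converge to — never leave a neighborhood of — a Reeb chord endpoint, we conclude: the $\xi_1$-coordinate of the tree in $X_\delta-T^*F$ stays within $C\delta$ of the Reeb chord endpoints of $\Lambda_\pm$ for a uniform constant $C$. Together with the automatic $\xi_2$-behavior this gives containment in $N_{C\delta}$, as claimed. I would also remark that finiteness of the set of rigid trees (Section~\ref{sec:flowtrees}, $\dim\TT_{\tilde L}=0$) lets us choose $C$ uniformly over all of them, and $\delta$ small enough that $C\delta<\epsilon$ for whatever fixed $\epsilon$ is needed downstream.
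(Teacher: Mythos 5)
Your setup (passing through Lemma~\ref{lem:long_Morse}, noting that fiber scaling does not affect flow lines, and that the ends are conical over $\Lambda_\pm$) matches the paper's, but the quantitative core of your argument has a genuine gap. First, the transition-zone estimate is wrong: in the zone of $\xi_2$-width $\sim\delta$ the $\xi_2$-speed of a flow line is $\sim\sigma\delta\,|G|$ (where $G=f_i-f_j$ is the $1$-dimensional height difference of $\Lambda_+$), so the crossing time is $\sim 1/\sigma$, not $O(\delta/\sigma)$; multiplied by the $\xi_1$-speed $\sim\sigma A_+|G'|$ this gives only an $O(1)$ drift bound, not $O(\delta)$. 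Equivalently, the trajectory equation in the zone is $\tfrac{d\xi_1}{d\xi_2}\approx \tfrac{A_+G'(\xi_1)}{\delta\,G(\xi_1)}$, of size $1/\delta$ away from the chord, so all one gets is multiplicative (Gronwall-type) control: the $\xi_1$-distance to the chord endpoint changes by a bounded factor across the zone, \emph{provided it is already small when the tree enters the collar} --- and that provision is exactly what you never establish. Second, your claim that over the genuine cone the $\xi_1$-projection ``can only converge to --- never leave a neighborhood of --- a Reeb chord endpoint'' is false: the asymptotic condition at $\xi_2=\infty$ does not pin the trajectory near the chord at the junction $\xi_2=a_+$; depending on the sign of $G''$ at the chord there is a whole $1$-parameter family of trajectories asymptotic to the Reeb chord flow line whose positions at $\xi_2=a_+$ are not a priori small, and the reparametrization factor $\kappa_\delta/\kappa_\delta'\sim A_+/\delta$ makes the $\xi_1$-motion per unit $\xi_2$ large away from the chord.

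The paper closes precisely this gap in two steps that your proposal is missing. First it proves the statement with $N_\epsilon$ in place of $N_{C\delta}$ by a compactness/rigidity argument: a rigid flow tree of $L^{\Mo}$ not contained in $T^*F|_{\partial F}$ meets $\partial F$ only at its boundary punctures, and if the $N_\epsilon$-statement failed one would produce (in the limit of shrinking collars) a broken tree with a level in $T^*F|_{\partial F}$, which has dimension at least one, contradicting rigidity; this already controls your ``short excursions of interior edges,'' which you also leave unestimated. Second, it upgrades $\epsilon$ to $C\delta$ by analyzing the gradient of the Morse height difference of $L^{\Mo}$ near the nondegenerate boundary critical point and following the continuation of the fixed rigid tree $\gamma|_{\check F^\delta}$ into the width-$\delta$ collar (where $\gamma$ and the LC tree agree outside the collar). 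Your argument contains no substitute for either step: rigidity is used only qualitatively (to exclude boundary subtrees), and the quantitative pinning near the chord endpoints at the entrance of the collar --- the actual content of the corollary --- is asserted rather than proved.
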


\begin{proof}
As $\delta\to 0$, the diffeomorphism $\phi_\delta^{-1}$ maps $F_\delta -F$ to smaller and smaller neighborhoods of $\bdry F$.

We first prove the corollary with $N_\epsilon$ instead of $N_{C\delta}$ for any sufficiently small $\epsilon>0$. If this weaker version of the corollary does not hold, then there is a sequence of rigid flow trees in $L^{\rm Mo}$ which converges to a broken flow tree with some level in $T^*F|_{\bdry F}$, which necessarily has dimension at least one. This is a contradiction, which proves the corollary with $N_\epsilon$ instead of $N_{C\delta}$.

Now fix $\varepsilon>0$. If $\gamma$ is a rigid flow tree of $L^{\rm Mo}$ and $\hat \gamma$ is the corresponding rigid LC flow tree of $L_{\delta;\sigma}$, then $\gamma$ and $\hat\gamma$ agree on
$$\check F^\delta:=F- \R\times([a_-,a_-+\delta]\cup[a_+-\delta,a_+]).$$
Moreover the endpoints of $\gamma$ and $\hat\gamma$ on $\bdry \check F^\delta$ are contained in $N_{\epsilon,\pm}'$ for $\delta>0$ sufficiently small. The corollary then follows by looking at the gradient of a height function difference of $L^{\rm Mo}$ near a critical point and the continuation of $\gamma|_{\check F^\delta}$ as it approaches the critical point.
\end{proof}

\subsection{Almost complex structures and deformed Legendrian links} \label{subsection: ac and Leg links}

In this subsection we introduce small deformations of $\Lambda_{\pm}$ and $L_{\delta;\sigma}$ and construct a family of almost complex structures on $X_{\delta}$.

\subsubsection{Deforming Legendrian links} \label{subsubsection: deforming legendrian links}

By a preliminary small Legendrian isotopy, we may assume that the Lagrangian projection of $\Lambda_{\pm}$ consists of straight lines near its double points. We then perturb $\Lambda_{\pm}$ slightly in a neighborhood of all its Reeb chords by making the $z$-coordinates of $\Lambda_\pm$ constant in a small neighborhood of the Reeb chord endpoints without changing the Lagrangian projection. The resulting link $\tilde\Lambda_\pm$ is not necessarily Legendrian.  Similarly, we perturb the ends of $L_{\delta;\sigma}$ so that the resulting totally real submanifold $\tilde L_{\delta;\sigma}$ has conical ends over $\tilde\Lambda_\pm$.

\begin{lemma} \label{lemma: nearby moduli spaces}
Let $J_0$ be the almost complex structure on $\R\times \R^3$ given by Equation~\eqref{e:translinvJ}. If $\Lambda_\pm$ are generic, then for $\tilde \Lambda_\pm$ sufficiently close to $\Lambda_\pm$:
\begin{enumerate}
\item There is a diffeomorphism
$$\mathcal{M}^{(\R\times \R^3,\R\times\Lambda_\pm);J_0}(a;\mathbf{b})\simeq \mathcal{M}^{(\R\times \R^3,\R\times\tilde\Lambda_\pm);J_0}(a;\mathbf{b}).$$
\item Consider DGAs associated to $\tilde\Lambda_{\pm}$ in direct analogy with the DGAs associated to $\Lambda_{\pm}$. Then $(1)$ implies that the DGA of $\tilde\Lambda_{\pm}$ is canonically isomorphic to that of $\Lambda_{\pm}$. There are totally real cobordisms from $\tilde\Lambda_\pm$ to $\Lambda_\pm$ (and vice versa) which induce the identity map on DGAs.
\end{enumerate}
\end{lemma}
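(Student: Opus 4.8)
\textbf{Proof plan for Lemma~\ref{lemma: nearby moduli spaces}.}

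The plan is to prove both statements by exhibiting the perturbation $\Lambda_\pm\rightsquigarrow\tilde\Lambda_\pm$ as an explicit, arbitrarily $C^\infty$-small deformation supported in a union of small balls around the Reeb chord endpoints, and to track holomorphic disks through this deformation. First I would set up coordinates: near each Reeb chord endpoint we may, after the preliminary Legendrian isotopy, assume $\Pi_\C(\Lambda_\pm)$ consists of two straight segments and the front is two sheets $z=c_1$, $z=c_2$ with $c_1>c_2$ constant along those segments \emph{except} for the slope dictated by the Legendrian condition $dz=y\,dx$. The deformation $\tilde\Lambda_\pm$ is obtained by flattening $z$ to be genuinely constant in a smaller ball, interpolating in the annular region, while keeping $\Pi_\C$ unchanged. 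Since the Lagrangian projection is unaffected and $J_0$ is by Equation~\eqref{e:translinvJ} pulled back from the complex structure on $\R^2\simeq\C$ via $\Pi_\C$, a $J_0$-holomorphic disk in $\R\times\R^3$ with boundary on $\R\times\Lambda_\pm$ (resp.\ $\R\times\tilde\Lambda_\pm$) projects to the \emph{same} kind of holomorphic polygon in $\R^2$ with boundary on $\Pi_\C(\Lambda_\pm)$; by \cite[Section~2.7]{EES2} such a polygon lifts uniquely (up to $\R$-translation) in each case. This gives the bijection on moduli spaces of (1) essentially for free, since the projected problem is literally identical; one must only check that the $z$-asymptotics at the punctures and the homology/capping-path data match up, which they do because the deformation is supported away from the Reeb chords' interiors after shrinking the balls and the relevant capping paths can be pushed off the support.

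Next, for (2), the canonical DGA isomorphism is immediate from (1): the generators $\CC(\tilde\Lambda_\pm)\leftrightarrow\CC(\Lambda_\pm)$ and $H_1$-generators are identified, the gradings agree (the Maslov index is computed from $\Pi_\C$, which is unchanged), and (1) identifies the differentials term by term. For the totally real cobordisms inducing the identity map, I would interpolate: take $\tilde\Lambda_\pm^{(s)}$, $s\in[0,1]$, a path from $\Lambda_\pm$ to $\tilde\Lambda_\pm$ through the same kind of small deformations, and build the trace cobordism $L=\bigcup_s \{s\}\times\tilde\Lambda_\pm^{(s)}$ (suitably made cylindrical at the ends and totally real in $\R\times\R^3$, not exact Lagrangian — which is why the statement says \emph{totally real}). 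Because the whole family has fixed $\Pi_\C$, the projection argument shows the only rigid disks in $(\R\times\R^3,L)$ counted by $\Phi_{(\R\times\R^3,L)}$ are the trivial strips over Reeb chords, so $\Phi_{(\R\times\R^3,L)}=\mathrm{id}$; the same works in the reverse direction. One should remark that although $\Phi$ was defined in the excerpt for exact Lagrangian cobordisms, the construction of $\Phi_{(X,L)}$ and its properties only use that $L$ is totally real with the correct cylindrical ends and that the relevant moduli spaces are cut out transversally and compact — all of which hold here by the projection reduction to $\R^2$.

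The main obstacle I expect is \emph{transversality and compactness for the deformed, non-Legendrian $\tilde\Lambda_\pm$}: once $\tilde L_{\delta;\sigma}$ is merely totally real, one loses the automatic energy bound of Lemma~\ref{lemma: energy bound} and the clean index formula, so one must argue that for $\tilde\Lambda_\pm$ sufficiently $C^\infty$-close to $\Lambda_\pm$ no new holomorphic disks appear — i.e.\ the correspondence in (1) is not just injective but surjective. This is where I would use Gromov compactness together with the subharmonicity/open mapping properties of $\Pi_\C\circ u$: any sequence of disks for $\tilde\Lambda_\pm^{(s_k)}$ with $s_k\to 0$ (or energy staying bounded by $\mathfrak{A}(a)$ as in Lemma~\ref{lemma: energy bound}, which survives the small perturbation up to an arbitrarily small error) converges to a (possibly broken) disk for $\Lambda_\pm$, and a broken limit is excluded on index grounds for the rigid strata; hence for $\tilde\Lambda_\pm$ close enough the moduli spaces have the same count. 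The delicate point is making ``sufficiently close'' uniform over the finitely many Reeb chords and over the one-parameter family defining the trace cobordism, but since everything is compactly supported and the projected problem is rigidly controlled by planar complex analysis, this is a routine (if slightly tedious) continuity argument rather than a conceptual difficulty.
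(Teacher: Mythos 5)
Your core argument---that $J_0$-holomorphic disks in $(\R\times\R^3,\R\times\Lambda)$ are determined up to $\R$-translation by their projections to $\C$, and that the deformation $\Lambda_\pm\rightsquigarrow\tilde\Lambda_\pm$ fixes $\Pi_\C$ so the projected problem is literally unchanged (and likewise for the trace cobordism, giving only trivial strips)---is exactly the paper's proof, which states it in three sentences.

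The final ``main obstacle'' paragraph, however, is a red herring and reflects a small misconception. Because $\Pi_\C$ is \emph{identical} for $\Lambda_\pm$, for $\tilde\Lambda_\pm$, and for the entire interpolating family, the projected holomorphic polygons form the same set, and the unique-lift statement from \cite[Section~2.7]{EES2} applies verbatim. The bijection on moduli spaces is therefore exact for any $z$-only deformation; there is no need for Gromov compactness, no possibility of ``new disks appearing,'' and no continuity argument to uniformize over Reeb chords. In particular the energy bound and index computations transfer trivially, since area and boundary data are read off from the unchanged planar projection. The ``sufficiently close'' hypothesis in the lemma is there only to keep $\tilde\Lambda_\pm$ embedded (and is reused later, e.g.\ in Lemma~\ref{energy bound}, for the $E_{\beta_\pm}$ estimate on the conical Lagrangian); it plays no role in establishing the bijection here, and spending effort on a compactness argument would be misdirected.
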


\begin{proof}
(1) is immediate from the fact that in both cases the holomorphic disks are determined up to translation by their projections to $\C$. (2) is similar. Note that deforming only the $z$-coordinate in the cobordisms commutes with the projection to $\C$. Hence the only rigid disks of the cobordism are trivial strips.
\end{proof}

\subsubsection{The almost complex structure $J$}

Let $g$ be a Riemannian metric on $F$ which is flat near $\bdry F$ and let $g_\delta$ be its extension to $F_\delta$ which restricts to the standard flat metric on $\R\times(a'_-(\delta),a_-]$ and $\R\times [a_+,\infty)$. Let $J$ be the almost complex structure on $X_\delta$ which is compatible with $g_\delta$ in the sense of \cite[Section 4.4]{E1}.
Although the precise definition of $J$ is not important here, $J$ is given by
\[
\pa_{\xi_j}\mapsto\pa_{\eta_j},\quad \pa_{\eta_j}\mapsto-\pa_{\xi_j},\quad j=1,2.
\]
on the flat parts $T^*(\R\times(a'_-(\delta),a_-])$ and $T^*(\R\times [a_+,\infty))$.

We now apply the coordinate change
$$\Phi_+: [0,\infty)\times \R^3 \to T^*(\R\times [1,\infty)),$$
$$(t,x,y,z)\mapsto (x,e^{t}y,e^{t},z)$$
from Section~\ref{subsection: completion}. (After composing with a $\xi_2$-translation, we may assume that $a_+=1$.) At the positive symplectization end the almost complex structure $J$ can be written as follows:
$$ \pa_x\mapsto e^{-t}\pa_y, \quad \pa_y\mapsto -e^{t}\pa_x,\quad \pa_t\mapsto e^{t}\pa_z - e^{t} y\pa_x,\quad \pa_z\mapsto -e^{-t}\pa_t + e^{-t}y\pa_y.$$
The situation for the negative symplectic end is similar.

\subsubsection{The family of almost complex structures $J_{\delta;\sigma}$}

Next we define a family of almost complex structures $J_{\delta; \sigma}$ on $X_{\delta}$ that interpolates between $J$ and the almost complex structure $J_0$ given by Equation~\eqref{e:translinvJ}.

First consider the positive symplectization end which we take to be $[0,\infty)\times \R^3$ without loss of generality. Fix a sufficiently large constant $K\gg 1$.  For $0\le t\le \tfrac12K \sigma$ and $|y|\leq 1$, $J_{\delta;\sigma}$ is defined as follows:
\begin{equation}\label{e:defJfirst}
\begin{split}
\pa_x\mapsto k(t)^{-1}\pa_y,&\quad
\pa_y\mapsto -k(t)\pa_x,\quad
\pa_t\mapsto k(t)\pa_z - k(t)b(t)y\pa_x,\\
&\pa_z\mapsto -k(t)^{-1}\pa_t + k(t)^{-1} b(t)y \pa_y,
\end{split}
\end{equation}
where $k(t)=e^{t}$ near $t=0$ and $k(t)=1$ near $t=\frac12 K\sigma$, and $b(t)=1$ near $t=0$ and $b(t)=0$ near $t=\tfrac12 K\sigma$. Here $k(t)$ and $b(t)$ (as well as $a(t)$ below) depend on $\sigma>0$. Hence $J_{\delta;\sigma}$ is given as follows near $t=\frac12 K\sigma$:
\begin{equation}\label{e:defJsecond}
\pa_x\mapsto \pa_y,\quad
\pa_y\mapsto -\pa_x,\quad
\pa_t\mapsto \pa_z,\quad
\pa_z\mapsto-\pa_t.
\end{equation}
For $\tfrac12 K\sigma\le t\le K\sigma$ and $|y|\leq 1$, $J_{\delta;\sigma}$ is defined as follows:
\begin{equation}\label{e:defJthird}
\pa_x\mapsto\pa_y+a(t)y\pa_t,\quad
\pa_y\mapsto-\pa_x-a(t)y\pa_z,\quad
\pa_t\mapsto\pa_z,\quad
\pa_z\mapsto-\pa_t,
\end{equation}
where $a(t)=0$ near $t=\tfrac12 K\sigma$ and $a(t)=1$ near $t= K\sigma$.

\begin{lemma}
On the region $\{0\leq t\leq K\sigma, |y|\leq 1\}$, $J_{\delta;\sigma}$ is tamed by the canonical symplectic form $\omega$ on $T^*F_\delta$
\end{lemma}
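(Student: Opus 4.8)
The plan is to verify the taming inequality $\omega(v,J_{\delta;\sigma}v)>0$ for every nonzero tangent vector $v$ by a direct pointwise computation in the coordinates $(x,y,z,t)$ in which $J_{\delta;\sigma}$ is given by \eqref{e:defJfirst}--\eqref{e:defJthird}. First I would record $\omega$ in these coordinates: under the symplectomorphism $\Phi_+$ of Section~\ref{subsection: completion} the canonical form $\omega=-d\theta$ pulls back to $d\big(e^{t}(dz-ydx)\big)=e^{t}\big(dt\wedge dz-y\,dt\wedge dx-dy\wedge dx\big)$ on $[0,\infty)\times\R^{3}$, so that for $v=X\partial_x+Y\partial_y+T\partial_t+Z\partial_z$ and $w=w_x\partial_x+w_y\partial_y+w_t\partial_t+w_z\partial_z$,
$$e^{-t}\,\omega(v,w)=(Tw_z-Zw_t)-y(Tw_x-Xw_t)-(Yw_x-Xw_y).$$
Since the three defining formulas agree on the overlaps of their domains (all reducing to \eqref{e:defJsecond} near $t=\tfrac12K\sigma$), it suffices to treat \eqref{e:defJfirst} on $\{0\le t\le\tfrac12K\sigma,\ |y|\le1\}$ and \eqref{e:defJthird} on $\{\tfrac12K\sigma\le t\le K\sigma,\ |y|\le1\}$; note that on this region $J_{\delta;\sigma}$ in fact does not depend on $\delta$.

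For \eqref{e:defJfirst} I would substitute $w=J_{\delta;\sigma}v$, whose components are $w_x=-k(t)(Y+b(t)Ty)$, $w_y=k(t)^{-1}(X+b(t)Zy)$, $w_t=-k(t)^{-1}Z$, $w_z=k(t)T$, into the formula above; the cross terms combine and collecting gives, with $k=k(t)$, $b=b(t)$,
$$e^{-t}\,\omega(v,J_{\delta;\sigma}v)=k^{-1}\big(X^{2}-y(1-b)XZ+Z^{2}\big)+k\big(Y^{2}+y(1+b)TY+(1+by^{2})T^{2}\big).$$
Here $k>0$, and for $|y|\le1$ and $b\in[0,1]$ both quadratic forms are positive definite: the first has discriminant $y^{2}(1-b)^{2}-4\le-3$, and the determinant of the second equals $(1+by^{2})-\tfrac14y^{2}(1+b)^{2}=\tfrac14\big((4-y^{2})+by^{2}(2-b)\big)>0$. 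Hence the right-hand side vanishes only when $X=Z=0$ and $Y=T=0$, i.e.\ only for $v=0$. The computation for \eqref{e:defJthird} is entirely parallel: substituting $w_x=-Y$, $w_y=X$, $w_t=a(t)Xy-Z$, $w_z=T-a(t)Yy$ yields, with $a=a(t)$,
$$e^{-t}\,\omega(v,J_{\delta;\sigma}v)=\big(Y^{2}+y(1-a)TY+T^{2}\big)+\big((1+ay^{2})X^{2}-y(1+a)XZ+Z^{2}\big),$$
again a sum of two quadratic forms that are positive definite for $|y|\le1$, $a\in[0,1]$ by the same discriminant and determinant estimates. Combining the two subregions gives the lemma, and the negative symplectization end is handled by the analogous computation.

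I do not expect a serious obstacle here: this is a direct, if mildly tedious, linear-algebra check. The two points needing care are (i) getting the coordinate expression for $\omega$ correct under the identification $\Phi_+$, and (ii) observing that the hypothesis $|y|\le1$ is precisely what forces the relevant discriminants and determinants to have the right sign, so that the statement genuinely requires this restriction. One could instead organize the argument around the fact that the set of $\omega$-tame almost complex structures is an open convex cone and check taming only at the ``corners'' of the families \eqref{e:defJfirst}--\eqref{e:defJthird}, but the direct computation is short enough that this repackaging seems unnecessary.
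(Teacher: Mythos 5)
Your proposal is correct and follows essentially the same approach as the paper: express $\omega$ in the $(t,x,y,z)$-coordinates via $\Phi_+$ and verify taming by a direct pointwise quadratic-form computation on the interpolation region. The paper only displays the computation for $0\le t\le\tfrac12 K\sigma$ (remarking the other subregion is similar), and your grouping into two explicitly positive-definite forms with discriminant/determinant estimates is slightly cleaner than the paper's, which drops a nonnegative $by^2a_3^2$ term and informally asserts positivity of the remainder.
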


\begin{proof}
We verify the lemma for $0\leq t\leq {1\over 2}K\sigma, |y|\leq 1$. By the identification $\Phi_\pm$ from Section~\ref{subsection: completion} we may assume that $\omega=e^t(dt\wedge (dz-ydx)+dx\wedge dy)$. We compute that:
\begin{align*}
& e^{-t}\omega(a_1\bdry_x +a_2\bdry_y +a_3\bdry_t+a_4\bdry_z, J_{\delta;\sigma}(a_1\bdry_x +a_2\bdry_y +a_3\bdry_t+a_4\bdry_z))\\
=&(dx\wedge dy+dt\wedge (dz-ydx))(a_1\bdry_x +a_2\bdry_y +a_3\bdry_t+a_4\bdry_z,a_1 k(t)^{-1}\bdry_y-a_2 k(t)\bdry_x \\
& + a_3(k(t) \bdry_z-k(t)b(t)y\bdry_x) + a_4(-k(t)^{-1}\bdry_t +k(t)^{-1}b(t)y \bdry_y))\\
=& a_1^2 k(t)^{-1} + a_1 a_4 k(t)^{-1} b(t) y + a_2^2 k(t) + a_2a_3k(t)b(t)y\\
& + a_3^2k(t) + a_2 a_3 k(t)y + a_3^2 k(t) b(t)y^2 + a_4^2k(t)^{-1}-a_1a_4k(t)^{-1}y.
\end{align*}
Since $|y|\leq 1$ and we may take $|b(t)-1|\leq 1$, the term $a_1^2+(b(t)-1)y a_1a_4 +a_4^2$ is positive. Similarly, since we may take $|b(t)+1|\leq 2$, the term $a_2^2+(b(t)+1)y a_2a_3 +a_3^2$ is also positive.  The positivity of the whole expression follows.
\end{proof}

For $0\leq t\leq K\sigma$ and $|y|\geq 1$, we arbitrarily extend $J_{\delta;\sigma}$ such that $J_{\delta;\sigma}$ is tamed by the canonical symplectic form $\omega$ on $T^*F_\delta$.  The precise form of $J_{\delta;\sigma}$ on this region is not important since the holomorphic curves that we consider do not enter this region by Sections~\ref{subsection: monotonicity} and \ref{subsection: subharm}.

Finally, $J_{\delta;\sigma}$ is $t$-translation invariant for $t\geq  K\sigma$:
\begin{equation}\label{e:defJfourth}
\pa_x\mapsto\pa_y+y\pa_t,\quad
\pa_y\mapsto-\pa_x-y\pa_z,\quad
\pa_t\mapsto\pa_z,\quad
\pa_z\mapsto-\pa_t.
\end{equation}

Similarly, we define $J_{\delta;\sigma}$ on the negative end using a completely analogous interpolation in a slice of width $K\sigma$.

\subsection{A priori energy bounds and monotonicity} \label{subsection: monotonicity}

\begin{lemma} \label{energy bound}
If $\tilde L_{\delta;\sigma}$ is sufficiently close to $L_{\delta;\sigma}$, then there exists $C>1$ such that
$$E_{\omega_0}(u)\leq \mathfrak{A}(a),\quad E_{\beta_\pm}(u)\leq C \mathfrak{A}(a)$$
for any $u\in \mathcal{M}^{(X_{\delta},\tilde L_{\delta;\sigma});J_{\delta;\sigma}}(a;{\bf b})$.
\end{lemma}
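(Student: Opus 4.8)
The plan is to reduce the statement to the general energy bound of Lemma~\ref{lemma: energy bound} together with the fact that the deformation from $L_{\delta;\sigma}$ (with its conical/cylindrical ends) to $\tilde L_{\delta;\sigma}$ only affects the $z$-coordinate in a small neighborhood of the Reeb chord endpoints, and likewise the almost complex structure $J_{\delta;\sigma}$ only differs from an adjusted structure in the slice of width $K\sigma$ near the ends. First I would recall that $E_{\omega_0}(u)=\int_u\omega_0$ and that by Stokes' theorem this integral is controlled by the $\beta_\pm$-periods of the boundary arcs together with the actions of the asymptotic Reeb chords; the argument is identical to that of \cite[Lemma B.3]{E2}, which is cited just above Lemma~\ref{lemma: energy bound}. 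The only point to check is that the slight non-Legendrian perturbation $\tilde\Lambda_\pm$ of $\Lambda_\pm$ (and the corresponding $\tilde L_{\delta;\sigma}$) changes the relevant primitive $f$ with $df=\beta|_{\tilde L}$ by an amount that tends to $0$ as $\tilde\Lambda_\pm\to\Lambda_\pm$, so that the resulting correction to the action is bounded by a small multiple of $\mathfrak{A}(a)$; hence $E_{\omega_0}(u)\leq\mathfrak{A}(a)$ continues to hold (possibly after absorbing the small error, or, if one wishes to be safe, with $\mathfrak{A}(a)$ replaced by $(1+o(1))\mathfrak{A}(a)$, which is still $\leq C\mathfrak{A}(a)$).

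For the $\beta_\pm$-energy the plan is as follows. On the genuinely cylindrical part $t\geq K\sigma$ of the end, $\tilde L_{\delta;\sigma}$ coincides with a cylinder over $\tilde\Lambda_\pm$ and $J_{\delta;\sigma}$ is the $t$-invariant adjusted structure of \eqref{e:defJfourth}, so there the estimate $E_{\beta_\pm}(u|_{\{t\geq K\sigma\}})\leq\mathfrak{A}(a)$ follows verbatim from the proof of Lemma~\ref{lemma: energy bound}: $d(\phi_\pm(t))\wedge\beta_\pm$ is a nonnegative multiple of the standard area form along the disk and its integral is bounded by the action of the top chord. On the remaining slab $0\leq t\leq K\sigma$ the form $d(\phi_\pm(t))\wedge\beta_\pm$ is no longer automatically nonnegative on $J_{\delta;\sigma}$-complex lines, but here I would use the taming lemmas established just above (the two ``$J_{\delta;\sigma}$ is tamed by $\omega$'' statements): on this compact slab the $J_{\delta;\sigma}$-area of $u$ is comparable, up to a constant depending only on the interpolation functions $k(t),b(t),a(t)$ and the bound $|y|\leq 1$, to $\int_u\omega_0$, and hence to $\mathfrak{A}(a)$ by the first part of the lemma. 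Choosing $\phi_\pm$ monotone with $|\phi_\pm'|$ bounded, the contribution of this slab to $E_{\beta_\pm}(u)$ is then at most a constant times $\mathfrak{A}(a)$. Summing the two contributions gives $E_{\beta_\pm}(u)\leq C\,\mathfrak{A}(a)$ for a constant $C>1$ depending only on $K$, $\sigma$, and the metric, as claimed.

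The main obstacle is the middle slab $0\leq t\leq K\sigma$: there one does not have $\R$-invariance of $J_{\delta;\sigma}$, so the clean positivity argument that gives the sharp bound $\mathfrak{A}(a)$ in Lemma~\ref{lemma: energy bound} breaks down, and one must instead trade the $\beta_\pm$-energy against the $\omega_0$-energy using the taming estimates, at the cost of the constant $C$. I would need to be a little careful that the comparison constant between the $d(\phi_\pm)\wedge\beta_\pm$-integrand and the $\omega_0$-integrand on this slab is uniform — it is, because $k(t)$, $b(t)$, $a(t)$ range over fixed compact sets and $|y|\leq 1$ there (the latter being guaranteed, as noted in the text, by the monotonicity arguments of Sections~\ref{subsection: monotonicity} and \ref{subsection: subharm} which confine the relevant holomorphic curves to $|y|\leq 1$). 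A secondary, more routine point is to make sure the perturbation $\tilde L_{\delta;\sigma}$ is taken close enough to $L_{\delta;\sigma}$ that the change in the primitives $f$, and hence in all the action computations, is negligible; this is exactly the kind of ``sufficiently close'' hypothesis already built into the statement and into Lemma~\ref{lemma: nearby moduli spaces}.
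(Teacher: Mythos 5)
There is a genuine gap in your treatment of the $\beta_\pm$-energy, and it occurs at both of the places where you try to argue directly on the perturbed cobordism. First, on the interpolation slab $\{0\le t\le K\sigma\}$ the comparison you want — that the contribution to $E_{\beta_\pm}(u)$ is bounded by a constant times $\int_u\omega_0$ via taming — is false pointwise and false globally: on the ends $\omega_0=d\beta_\pm=dx\wedge dy$ sees only the Lagrangian projection, whereas $d(\phi_\pm(t))\wedge\beta_\pm$ also sees the $(t,z)$-directions; a (piece of a) trivial strip over a Reeb chord crossing the slab has $\omega_0$-energy zero but strictly positive $\int d\phi_\pm\wedge\beta_\pm$ for suitable $\phi_\pm$ (and the supremum in the definition of $E_{\beta_\pm}$ is over \emph{all} nondecreasing $\phi_\pm$, so you may not restrict to $\phi_\pm$ with bounded derivative). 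The taming statements compare $J_{\delta;\sigma}$ with the full symplectic form $\omega\sim e^t(dt\wedge\alpha_0+d\alpha_0)$, not with $\omega_0$, so they cannot be traded for the bound you need. Second, on the region $\{t\ge K\sigma\}$ the argument is not ``verbatim'' the proof of Lemma~\ref{lemma: energy bound}: the boundary condition there is the cylinder/cone over $\tilde\Lambda_\pm$, which is only totally real ($\alpha_0|_{\tilde\Lambda_\pm}=-y\,dx\neq 0$ on the perturbed arcs), so the Stokes argument acquires boundary terms $\int_{\partial u}\phi_\pm\,\alpha_0$ that do not vanish and are not a priori controlled (the boundary of $u$ may pass through the perturbed region many times). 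There is also a circularity in your appeal to the confinement $|y|\le 1$: the monotonicity bounds of Lemma~\ref{lemma: bounds} are proved \emph{using} the energy bound of Lemma~\ref{energy bound}, so they cannot be used as an input here.

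The paper's proof sidesteps both problems: it first records the sharp bounds $E_{\omega_0},E_{\beta_\pm}\le\mathfrak{A}(a)$ for disks with boundary on the genuine exact Lagrangian $L_{\delta;\sigma}$ (Lemma~\ref{lemma: energy bound}), notes that $E_{\omega_0}(u)\le\mathfrak{A}(a)$ persists for $\tilde L_{\delta;\sigma}$ because the perturbation changes only the $z$-coordinate and hence not the Lagrangian projection (so the $\omega_0$-energy is unchanged — in particular no constant is needed here, contrary to your hedge), and then transfers the $\beta_\pm$-bound by the diffeomorphism of moduli spaces from Lemma~\ref{lemma: nearby moduli spaces}: each disk with boundary on $\tilde L_{\delta;\sigma}$ is $C^1$-close to a disk with boundary on $L_{\delta;\sigma}$, whence $E_{\beta_\pm}(u)\le C\,\mathfrak{A}(a)$. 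If you want to argue intrinsically on $\tilde L_{\delta;\sigma}$ you would have to control the extra boundary terms coming from the non-Legendrian perturbation, which amounts to redoing the comparison with the unperturbed disks anyway.
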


\begin{proof}
If $v\in \mathcal{M}^{(X_{\delta},L_{\delta;\sigma});J_{\delta;\sigma}}(a;{\bf b})$, then $E_{\omega_0}(v)\leq \mathfrak{A}(a)$ and $E_{\beta_\pm}(v)\leq \mathfrak{A}(a)$ by Lemma~\ref{lemma: energy bound}. Let $u\in \mathcal{M}^{(X_{\delta},\tilde L_{\delta;\sigma});J_{\delta;\sigma}}(a;{\bf b})$. The first inequality $E_{\omega_0}(u)\leq \mathfrak{A}(a)$ is immediate from the definition of $\tilde L_{\delta;\sigma}$ in Section~\ref{subsubsection: deforming legendrian links}.  By the proof of Lemma~\ref{lemma: nearby moduli spaces}, there is a diffeomorphism
$$\phi:\mathcal{M}^{(X_{\delta},L_{\delta;\sigma});J_{\delta;\sigma}}(a;{\bf b})\stackrel\sim\to\mathcal{M}^{(X_{\delta},\tilde L_{\delta;\sigma});J_{\delta;\sigma}}(a;{\bf b})$$
such that $\phi(u)$ is $C^1$-close to $u$. This implies that $E_{\beta_\pm}(u)\leq C \mathfrak{A}(a)$.
\end{proof}

Next we recall the monotonicity lemma for holomorphic curves; see for example \cite[Proposition 4.3.1]{Si}. Let $h_\delta$ be a metric which agrees with the standard Euclidean metric on the ends $[0,\infty)\times \R^3$ and $(-\infty,0]\times \R^3$ and is commensurate with the metric on $T^*F$ induced by $g$. All the distances with be measured with respect to $h_\delta$. In particular, $B(p,r)$ is the ball of radius $r$ around $p\in X_{\delta}$ with respect to $h_\delta$.

\begin{lemma}[Monotonicity lemma] \label{lemma: monotonicity}
There exist $r_0>0$ and $C>0$ such that
\begin{equation}\label{e:monint}
E_{\omega_0}(u)+E_{\beta_+}(u)+E_{\beta_-}(u) \ge C\cdot r^2
\end{equation}
for any $0<r<r_0$, $J_{\delta,\sigma}$-holomorphic map $u\colon M\to X_{\delta}$ with $u(0)=p$ and $u(\pa M)\subset \pa B(p,r)$.
\end{lemma}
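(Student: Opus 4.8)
The plan is to reduce the statement to the standard monotonicity inequality for holomorphic curves with respect to a fixed metric and a tame almost complex structure, as in \cite[Proposition 4.3.1]{Si}, and then to check that our setup satisfies the hypotheses of that statement uniformly in $\delta$ and $\sigma$. First I would recall the classical fact: if $(X,\omega,J)$ is a symplectic manifold with a tame almost complex structure and a compatible metric, and if $p\in X$, then there exist $r_0>0$ and $C>0$ (depending only on bounds for the geometry near $p$, namely an upper bound on the injectivity radius, a bound on $\|J\|$, and the taming constant) such that any $J$-holomorphic curve $u\colon M\to X$ with $u(0)=p$ and $u(\partial M)\subset \partial B(p,r)$ for $r<r_0$ satisfies $\int_u \omega \ge C r^2$. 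The point is that these constants can be chosen uniformly over $X_\delta$: on the interior $T^*F$ the metric $h_\delta$ is commensurate with the fixed metric induced by $g$, and on the ends $(-\infty,0]\times\R^3$ and $[0,\infty)\times\R^3$ the metric $h_\delta$ is the standard Euclidean metric. In the transition region $0\le t\le K\sigma$ (and the symmetric negative-end slice) the almost complex structure $J_{\delta;\sigma}$ is tamed by $\omega$ by the Lemma preceding this statement, with a taming constant that is bounded below independently of $\sigma$ (since $|y|\le 1$ there and the quadratic forms appearing in that computation are uniformly positive definite), and $\|J_{\delta;\sigma}\|$ is bounded above because the coefficient functions $k(t),b(t),a(t)$ take values in bounded intervals.

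The second step is to translate $\int_u\omega$ into the combination $E_{\omega_0}(u)+E_{\beta_+}(u)+E_{\beta_-}(u)$. On the compact part $X_0$ the form $\omega_0$ agrees with $d\beta_0$, so $\int_{u|_{X_0}}\omega_0=\int_{u|_{X_0}}\omega$; on the ends one uses the standard Hofer-energy bookkeeping from SFT: for a curve with image in a symplectization end, the symplectic area integrates against $d(\phi_\pm(t))\wedge\beta_\pm$ up to the contribution of the Reeb-chord asymptotics, which is exactly captured by the $E_{\beta_\pm}$ term (this is the same splitting used in the definition of energy above and in \cite[Appendix B]{E2}). Thus, possibly after shrinking $r_0$ so that $B(p,r)$ is contained either in a single end or in a neighborhood of $X_0$ in which the relevant comparison holds, the local area $\int_u\omega$ over $B(p,r)$ is bounded above by $E_{\omega_0}(u)+E_{\beta_+}(u)+E_{\beta_-}(u)$, and combining this with the classical inequality yields \eqref{e:monint}.

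The main obstacle I anticipate is the uniformity of the constants $r_0$ and $C$ as $\sigma\to 0$: the width of the transition region $[0,K\sigma]$ collapses, and one must make sure that no curvature or taming degeneracy develops there. This is handled by observing that the only feature of $J_{\delta;\sigma}$ that changes with $\sigma$ is a rescaling in the $t$-direction, and monotonicity is scale-covariant in a way that is compensated by the simultaneous rescaling of the metric $h_\delta$; concretely, one works in the rescaled coordinate $\tau=t/\sigma$ in which $J_{\delta;\sigma}$ and the relevant metric become $\sigma$-independent, proves monotonicity there with fixed constants, and then checks that a ball of radius $r$ in the original metric, for $r$ small compared to $\sigma$, is comparable to a ball in the rescaled metric; for $r$ not small compared to $\sigma$ the ball $B(p,r)$ already reaches into the $t$-translation-invariant region where $J_{\delta;\sigma}=J_0$ and the standard estimate applies directly. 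A second, more routine point to be careful about is that $u$ may be only a totally real (not holomorphic for an integrable $J$) map and $X_\delta$ is noncompact with cylindrical ends, but this causes no difficulty: monotonicity is a purely local statement, and the hypotheses $u(0)=p$, $u(\partial M)\subset\partial B(p,r)$ confine the relevant portion of the curve to a compact region on which all geometric bounds are uniform.
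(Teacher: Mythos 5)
The paper does not actually prove this lemma: it is recorded as the standard monotonicity statement for tame almost complex structures, with a pointer to \cite[Proposition 4.3.1]{Si}, after fixing the metric $h_\delta$. Your proposal follows that same standard route (local monotonicity for a tame $J$, uniformity of the constants in $\delta,\sigma$, and comparison of the local symplectic area with the total energy), and in outline it is fine; two steps, however, need repair.

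First, the claim that in the rescaled coordinate $\tau=t/\sigma$ the structure $J_{\delta;\sigma}$ and the metric become $\sigma$-independent is not correct: pushing $J_{\delta;\sigma}$ forward under the anisotropic map $(t,x,y,z)\mapsto(t/\sigma,x,y,z)$ produces matrix entries of size $\sigma^{\pm1}$ (for instance $\partial_\tau$ is sent to $\sigma$ times a bounded vector), so this rescaling does not normalize the geometry. Fortunately it is also unnecessary: the constants in the monotonicity lemma require only the metric $h_\delta$ (Euclidean on the ends, fixed on $T^*F$), a $C^0$ bound on the taming form relative to $h_\delta$, and a uniform taming constant, and all of these are independent of $\sigma$ because $k(t)$, $b(t)$, $a(t)$ and $|y|$ are uniformly bounded on the relevant region; the $O(\sigma^{-1})$ derivatives of the interpolation functions never enter the estimate. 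Second, on the cylindrical ends you cannot feed $\omega_0$ itself into the classical monotonicity statement, since there $\omega_0=d\beta_\pm$ is degenerate on the plane spanned by $\partial_t$ and the Reeb direction; the standard fix is to tame $J_{\delta;\sigma}$ by $\omega_0+d\phi\wedge\beta_\pm$ for one fixed nondecreasing $\phi$ with $\phi'$ bounded below on the $t$-interval of width $O(1)$ containing $B(p,r)$, and then the local $\left(\omega_0+d\phi\wedge\beta_\pm\right)$-area of $u$ is bounded above by $E_{\omega_0}(u)+E_{\beta_\pm}(u)$ directly from the definition of the energies. This is what your ``Hofer-energy bookkeeping'' should say; it has nothing to do with Reeb-chord asymptotics, since the lemma imposes no asymptotic conditions on $u$.
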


We use the monotonicity lemma to obtain the following a priori $\Ordo(\sigma^{1/2})$ bounds on $|y|$ and $|z|$, where $|y|$ and $|z|$ are measured with respect to the standard Euclidean metric on $\R^3$.  In Section~\ref{subsection: subharm} we improve the $O(\sigma^{1/2})$ bound that comes from monotonicity to an $O(\sigma)$ bound that comes from the maximum principle for harmonic functions.

\begin{lemma} \label{lemma: bounds}
If $u\in \mathcal{M}^{(X_{\delta},\tilde L_{\delta;\sigma}); J_{\delta;\sigma}}(a;\mathbf{b})$, then $|y\circ u|=\Ordo(\sigma^{1/2})$ and $|z\circ u|=\Ordo(\sigma^{1/2})$ on the region $\{0\leq t\leq K\sigma\}$.
\end{lemma}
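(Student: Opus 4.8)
The plan is to combine the $\omega_0$- and $\beta_\pm$-energy bounds of Lemma~\ref{energy bound} with the monotonicity Lemma~\ref{lemma: monotonicity} to constrain the image of $u$ in the width-$K\sigma$ slab at each end. Work at the positive end, which we take to be $[0,\infty)\times\R^3$ with coordinates $(t,x,y,z)$; the negative end is identical. Suppose, for contradiction, that at some point $p=u(\zeta)$ with $0\le t(p)\le K\sigma$ one has $|y(p)|\ge M\sigma^{1/2}$ (or $|z(p)|\ge M\sigma^{1/2}$) for a constant $M$ to be chosen large. Since the $y$- and $z$-directions are ``transverse'' to the $t$-slab in the Euclidean metric $h_\delta$ and the boundary $\tilde L_{\delta;\sigma}$ lies in the region $|y|\le 1$ (indeed in the conical/cylindrical locus, whose $y$-size over $t\le K\sigma$ is controlled), the portion of $u$ near $\zeta$ that stays inside the slab $\{0\le t\le K\sigma\}$ cannot reach $\tilde L_{\delta;\sigma}$ without first traveling a distance of order $\sigma^{1/2}$ in the $(y,z)$- (or really $(x,y,z)$-)directions. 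Concretely, one finds a radius $r$ with $c\,\sigma^{1/2}\le r< r_0$ such that the ball $B(p,r)$ meets neither $\tilde L_{\delta;\sigma}$ nor the walls $\{t=0\}$, $\{t=K\sigma\}$ of the slab, so that the component of $u^{-1}(B(p,r))$ containing $\zeta$ is a holomorphic map $M'\to X_\delta$ with $u(\partial M')\subset\partial B(p,r)$.

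The second step is to apply Lemma~\ref{lemma: monotonicity} to this restricted map: it gives
\[
E_{\omega_0}(u)+E_{\beta_+}(u)+E_{\beta_-}(u)\ \ge\ E_{\omega_0}(u|_{M'})+E_{\beta_+}(u|_{M'})+E_{\beta_-}(u|_{M'})\ \ge\ C\,r^2\ \ge\ C\,c^2\,M^2\,\sigma,
\]
where the first inequality uses that all three energies are non-negative (monotone under restriction to a subdomain). On the other hand, Lemma~\ref{energy bound} bounds the left-hand side by a constant times $\mathfrak{A}(a)$, independent of $\sigma$ and of $M$. Choosing $M$ large enough that $C c^2 M^2\sigma>(2+C')\mathfrak{A}(a)$ — which is possible once $\sigma$ is small, or rather: for any fixed large $M$ the inequality fails once $\sigma$ is small enough, hence the hypothesis $|y(p)|\ge M\sigma^{1/2}$ cannot hold — yields the contradiction. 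Thus $|y\circ u|=O(\sigma^{1/2})$ and $|z\circ u|=O(\sigma^{1/2})$ on $\{0\le t\le K\sigma\}$, with the implied constant depending only on $\mathfrak{A}(a)$, the monotonicity constants $r_0,C$, and the geometry of $\tilde L_{\delta;\sigma}$ near the ends (which is uniform in $\sigma$ after the deformation of Section~\ref{subsubsection: deforming legendrian links}).

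The step I expect to be the main obstacle is the first one: setting up the ball $B(p,r)$ of radius comparable to $\sigma^{1/2}$ that is genuinely disjoint from both $\tilde L_{\delta;\sigma}$ and the two walls $t=0$, $t=K\sigma$ of the slab, so that monotonicity applies on the nose. One must check that near the ends the Lagrangian $\tilde L_{\delta;\sigma}$ stays within $|y|\le C'$ and that its $(y,z)$-extent over the thin slab $t\le K\sigma$ is itself $O(\sigma)$ (this is where the explicit conical parametrization \eqref{eq:conicalend} and the fiber scaling $s_\sigma$ enter), so that a point with $|y|\gtrsim\sigma^{1/2}\gg\sigma$ is indeed far from $\tilde L_{\delta;\sigma}$; and one must observe that a point $p$ with $t(p)$ near $0$ or near $K\sigma$ can still be handled because the distance from $p$ to the relevant wall does not constrain things — one only needs a ball avoiding $\tilde L_{\delta;\sigma}$, using that the holomorphic curve can only fail to stay inside $B(p,r)$ by hitting the boundary $\tilde L_{\delta;\sigma}$, not by crossing a wall of the slab (the curve may legitimately exit the slab, but then it exits into the region where it will be estimated by the adjacent piece, and the energy only counts once). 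A careful bookkeeping of which energy ($\omega_0$ versus $\beta_\pm$) dominates in which sub-slab — matching the three-piece definition \eqref{e:defJfirst}, \eqref{e:defJsecond}, \eqref{e:defJthird} of $J_{\delta;\sigma}$ — is the remaining technical point, but it is routine once the monotonicity ball is in place.
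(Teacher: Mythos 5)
Your strategy — combine the a priori energy bound (Lemma~\ref{energy bound}) with the monotonicity lemma (Lemma~\ref{lemma: monotonicity}) and argue by contradiction — is exactly the paper's approach, and the geometric setup of the monotonicity ball is correct: the paper chooses $B(q_i,\tfrac{c_i}{3}\sigma_i^{1/2})$ centered at a suitable point of the image, disjoint from $\tilde L_{\delta;\sigma}$ because the Lagrangian lies within $O(\sigma)$ of the zero section (and $\sigma\ll\sigma^{1/2}$). You correctly anticipate that no care is needed about the ``walls'' $t=0$, $t=K\sigma$, since monotonicity only requires the ball to be disjoint from the Lagrangian boundary condition, not contained in the slab.

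However, the final step of your argument has a genuine gap, and you seem half-aware of it: you write that the energy bound from Lemma~\ref{energy bound} is ``a constant times $\mathfrak{A}(a)$, independent of $\sigma$,'' and then produce a sentence about choosing $M$ that contradicts itself (``which is possible once $\sigma$ is small, or rather: for any fixed large $M$ the inequality fails once $\sigma$ is small enough''). If $\mathfrak{A}(a)$ were indeed independent of $\sigma$, the monotonicity-plus-energy argument would only give $|y\circ u|=O(1)$, not $O(\sigma^{1/2})$: an $O(1)$ energy budget permits a monotonicity ball of $O(1)$ radius. The crucial input you are missing is that $L_{\delta;\sigma}=s_\sigma(\hat L^{\co}_\delta)$ is fiber-scaled by $\sigma$, so the Legendrian at the positive end (whose Reeb chord is $a$) has been scaled by $\sigma$ in the $(y,z)$-directions, and hence $\mathfrak{A}(a)=O(\sigma)$. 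With this, the total energy is $O(\sigma)$; a point at distance $r$ from $\tilde L_{\delta;\sigma}$ then forces, by monotonicity, energy $\geq Cr^2$, which combined with the $O(\sigma)$ upper bound yields $r=O(\sigma^{1/2})$. This is precisely the scaling relation that produces the exponent $1/2$, and without it the contradiction simply does not materialize. Once this is supplied the rest of your argument is fine; the bookkeeping of which energy dominates in which sub-slab that you worry about at the end is unnecessary, since both bounds concern the total $E_{\omega_0}+E_{\beta_+}+E_{\beta_-}$.
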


\begin{proof}
Arguing by contradiction, suppose there are sequences $\sigma_i\to 0$, $c_i\to\infty$, $u_i\in \mathcal{M}^{(X_{\delta},\tilde L_{\delta;\sigma_i}); J_{\delta;\sigma_i}}(a;\mathbf{b})$, and $p_i$ in the domain of $u_i$, such that $|y\circ u_i|(p_i)> c_i\sigma_i^{1/2}$ and $u_i(p_i)\in \{0\leq t\leq K\sigma_i\}$. Then there is a ball $B(q_i,{c_i\over 3}\sigma_i^{1/2})$ such that:
\begin{enumerate}
\item[(i)] $u_i$ passes through $q_i$, where $q_i\in \{0\leq t\leq K\sigma_i\}$;
\item[(ii)] $B(q_i,{c_i\over 3}\sigma^{1/2})\cap \tilde L_{\delta;\sigma_i}=\varnothing$;
\end{enumerate}
Here (ii) follows from the fact that the distance between $\tilde L_{\delta;\sigma}$ and the $0$-section is in $O(\sigma)$. By the Monotonicity Lemma~\ref{lemma: monotonicity},
$$E_{\omega_0}(u)+E_{\beta_+}(u)+E_{\beta_-}(u) \geq C c_i^2 \sigma/9.$$ On the other hand, by Lemma~\ref{energy bound}, $E_{\omega_0}(u)+E_{\beta_+}(u)+E_{\beta_-}(u)$ is bounded above by a constant times $\sigma$, a contradiction.  This proves the lemma for $|y\circ u|$. The argument for $|z\circ u|$ is the same.
\end{proof}

{\em In what follows we will work with $\tilde\Lambda_\pm$ and $\tilde L_{\delta;\sigma}$ and omit the tildes in the notation.}

\subsection{Subharmonicity} \label{subsection: subharm}

Define a function $p^{2}_{\delta;\sigma}\colon X_{\delta}\to\R$ as follows:
\[
p^{2}_{\delta;\sigma}(q)=
\begin{cases}
\eta^{2}_1+\eta_{2}^{2} &\text{ for }q\in T^{\ast} F,\\
(k(t)y)^{2}+z^{2} &\text{ for }q\notin T^{\ast} F,
\end{cases}
\]
where $k(t)=k_\sigma(t)$ is the interpolation function used in the definition of $J_{\delta;\sigma}$. Here $k(t)$ was defined on $[0,{1\over 2}K\sigma]$ and is extended by setting $k(t)=1$ on $t\geq {1\over 2}K\sigma$.

\begin{lemma}\label{lem:subharm}
There exist $K\gg 1$, $\sigma_0>0$, and $\delta_0>0$ such that, for any $0<\sigma<\sigma_0$, $0<\delta<\delta_0$, and $u\in \mathcal{M}^{(X_{\delta},L_{\delta;\sigma}); J_{\delta;\sigma}}(a;\mathbf{b})$, the function $p^2_{\delta;\sigma}\circ u$ is subharmonic with respect to the metric $g_\delta$. In particular, $p^2_{\delta;\sigma}\circ u$ achieves its maximum on $\bdry D$.
\end{lemma}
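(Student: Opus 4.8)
The plan is to establish subharmonicity of $p^2_{\delta;\sigma}\circ u$ by a direct pointwise computation, using the $J_{\delta;\sigma}$-holomorphicity of $u$ together with the explicit formulas \eqref{e:defJfirst}--\eqref{e:defJfourth} for $J_{\delta;\sigma}$. The function $p^2_{\delta;\sigma}$ is glued from two pieces: $\eta_1^2+\eta_2^2$ on $T^*F$ (the squared length of the cotangent fiber coordinate) and $(k(t)y)^2+z^2$ on the symplectization ends. First I would handle each region separately, then check compatibility on the overlap $\{0\le t\le \tfrac12 K\sigma\}$ where $k(t)=e^t$ and the coordinate change $\Phi_+$ identifies the two descriptions. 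On the flat part of $T^*F$ near $\bdry F$, where $J$ sends $\partial_{\xi_j}\mapsto\partial_{\eta_j}$, the function $\eta_1^2+\eta_2^2$ pulled back by a holomorphic map is classically subharmonic (this is the standard fact that $|p|^2$ is plurisubharmonic for the canonical complex structure on a cotangent bundle); away from the boundary one invokes that $J$ is compatible with $g_\delta$ in the sense of \cite[Section 4.4]{E1}, so the relevant plurisubharmonicity statement from \cite{E1} applies, at least on the compact part.

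The core of the argument is the symplectization end. There I would write $u=(t,x,y,z)$ in coordinates and compute $\Delta(p^2_{\delta;\sigma}\circ u)$ where $\Delta$ is the Laplacian of the domain conformal structure. Using the Cauchy--Riemann equations $\partial_s u + J_{\delta;\sigma}(u)\partial_\tau u = 0$ one expands $\Delta\big((k(t)y)^2+z^2\big)$ in terms of $|\partial u|^2$ and lower-order terms involving $k'$, $k''$, $b$, $a$. The nonnegativity of the principal (quadratic-in-derivatives) part should be a computation analogous to the taming verification already carried out in the lemma preceding this one; the cross terms and the terms carrying $k'(t)$, $b(t)$, $a(t)$ are $O(1/\sigma)$-type contributions, but because the a priori bounds of Lemma~\ref{lemma: bounds} give $|y\circ u|, |z\circ u| = O(\sigma^{1/2})$ on $\{0\le t\le K\sigma\}$, these error terms are controlled once $K$ is chosen large enough. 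This is exactly where the hypotheses ``$K\gg 1$, $\sigma<\sigma_0$, $\delta<\delta_0$'' enter: one first fixes $K$ so that the good quadratic term dominates the structural error terms coming from the interpolation, then shrinks $\sigma_0$ and $\delta_0$ so that the remaining perturbative terms (from $\tilde\Lambda_\pm$ versus $\Lambda_\pm$, and from the $O(\sigma^{1/2})$ bounds) are absorbed.

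The main obstacle I expect is the bookkeeping on the transition regions $\{t=\tfrac12 K\sigma\}$ and $\{\tfrac12 K\sigma\le t\le K\sigma\}$, where $J_{\delta;\sigma}$ interpolates through \eqref{e:defJsecond} and \eqref{e:defJthird}: there $p^2_{\delta;\sigma}$ must be chosen (and $k$ extended, as the statement does by setting $k\equiv 1$ for $t\ge\tfrac12 K\sigma$) so that the computation does not acquire a negative distributional term at the gluing loci, and so that the function is genuinely $C^1$ (or at least that the subharmonicity survives across the seam). One should verify that on $\tfrac12 K\sigma\le t\le K\sigma$, where $p^2_{\delta;\sigma}=y^2+z^2$ and $J_{\delta;\sigma}$ is given by \eqref{e:defJthird}, the Laplacian computation still yields a nonnegative leading term, with the $a(t)y$-terms absorbed by the $O(\sigma^{1/2})$ bound on $|y|$. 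Once subharmonicity holds on all of $X_\delta$ — noting $p^2_{\delta;\sigma}\circ u$ is continuous, bounded (by Lemma~\ref{lemma: bounds} and the energy bound), and subharmonic away from a measure-zero seam where it is at worst of the right one-sided type — the maximum principle for subharmonic functions on the punctured disk, combined with the asymptotic behavior of $u$ at the Reeb-chord punctures (where $p^2_{\delta;\sigma}\circ u\to 0$), forces the maximum to be attained on $\bdry D$, completing the proof.
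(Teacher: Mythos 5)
Your proposal is essentially the paper's proof: split into the cotangent-bundle part (handled by the plurisubharmonicity statement from \cite{E1}), the translation-invariant end (where $y$ and $z$ are harmonic), and the interpolation collar, and on the collar compute the Laplacian via the Cauchy--Riemann equations and use $|y|,|z|=O(\sigma^{1/2})$ to control the errors.

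Two small points are worth making explicit. First, the derivatives $k'$, $(\log k)''$, $b'$, $a'$ are not just ``$O(1/\sigma)$-type'': the collar is deliberately given width comparable to $K\sigma$ so that these derivatives are $O\big((K\sigma)^{-1}\big)$, and it is this explicit $K^{-1}$ --- not the size of $K$ relative to a fixed error --- that lets $2|\nabla(k(t)y)|^2+2|\nabla z|^2$ dominate the cross term; after inserting $|y|=O(\sigma^{1/2})$ twice the cross term is $O(K^{-1})\,(|\nabla y|^2+|\nabla z|^2)$, whereas a genuinely $\sigma^{-1}$-sized coefficient would give an $O(1)$ term that taking $K$ large cannot beat. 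Second, $p^2_{\delta;\sigma}\circ u$ does not tend to $0$ at the punctures: on a trivial strip over a Reeb chord it is $O(\sigma^2)$ but nonzero. The ``maximum on $\bdry D$'' conclusion still holds because on such a strip $p^2_{\delta;\sigma}=(k(t)y_0)^2+z^2$ already attains its supremum on the two boundary arcs (where $z$ equals one of the chord endpoint values), so the asymptotic limit at a puncture cannot exceed the nearby boundary values.
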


\begin{proof}
On the region $t\geq K\sigma$, $k(t)=1$ is constant. By a straightforward calculation the $y$- and $z$-coordinates are harmonic. This in turn implies that $y^2$ and $z^2$ are both subharmonic. On the other hand, $\eta_1^2+\eta_2^2$ is subharmonic on $T^{\ast}F$ by \cite[Section 5.1.2 and Lemma 5.5]{E1}. It remains to verify the subharmonicity on the interpolation region $0\leq t\leq K\sigma$.

Consider the region $0\le t\le\frac12K\sigma$. The Cauchy--Riemann equations
$$\pa_{\tau_1} u + J_{\delta;\sigma}\pa_{\tau_2} u =0$$
imply that:
\begin{align}\label{library}
\pa_{\tau_1}(k(t)y)&=k(t)\pa_{\tau_1} y+ yk'(t)\pa_{\tau_1} t\\
\notag &=-k(t)\left(k(t)^{-1}\left(\pa_{\tau_2} x+b(t)y \pa_{\tau_2} z\right)\right) -yk'(t)\left(-k(t)^{-1}\pa_{\tau_2} z\right)\\
\notag &=-\pa_{\tau_2} x-b(t)y \pa_{\tau_2} z+ yk'(t)k(t)^{-1} \pa_{\tau_2} z,\\
\label{library2}\pa_{\tau_2} (k(t)y) &= \pa_{\tau_1} x +b(t)y \pa_{\tau_1} z-yk'(t)k(t)^{-1} \pa_{\tau_1} z.
\end{align}

Next we claim that
\begin{equation}\label{subharmonic 1}
\pa^{2}_{\tau_1}(k(t)y)+\pa^{2}_{\tau_2}(k(t)y)= Q_1(t,y)\left(\pa_{\tau_1} y,\pa_{\tau_2} y,\pa_{\tau_1} z,\pa_{\tau_2} z\right),
\end{equation}
where $Q_{1}(t,y)$ is a quadratic form whose coefficients satisfy an $\Ordo(K^{-1}\sigma^{-1/2})$-bound, where $K\gg1$ is the constant in the definition of $J_{\delta;\sigma}$.  By differentiating Equations~\eqref{library} and \eqref{library2} and using the Cauchy--Riemann equations to express $\pa_{\tau_1} t$ and $\pa_{\tau_2} t$ in terms of $\pa_{\tau_1} z$ and $\pa_{\tau_2} z$, we obtain
\begin{align*}
\pa^{2}_{\tau_1}(k(t)y)+\pa^{2}_{\tau_2}(k(t)y) =& y(-b'(t) +(\log k(t))'') k(t)^{-1}((\bdry_{\tau_1}z)^2+(\bdry_{\tau_2}z)^2)\\
& + (-b(t)+(\log k(t))')(\bdry_{\tau_1}y\bdry_{\tau_2} z -\bdry_{\tau_1}z \bdry_{\tau_2}y).
\end{align*}
To see the bound on coefficients, note that we can choose $b(t)$ and $k(t)$ so that $b'(t)=\Ordo(K^{-1}\sigma^{-1})$, $c_0<k(t)<c_1$ where $c_0,c_1>0$ are independent of $\sigma$, $(\log k(t))'=1$ near $t=0$ and $(\log k(t))'=0$ near $t={1\over 2} K\sigma$, and $(\log k(t))''=\Ordo(K^{-1}\sigma^{-1})$. Since $|y|=\Ordo(\sigma^{1/2})$ by Lemma~\ref{lemma: bounds}, the claim follows.

A similar but easier calculation gives
\begin{equation} \label{subharmonic 2}
\pa^{2}_{\tau_1} z+\pa^{2}_{\tau_2} z= 0.
\end{equation}

Using Equations~\eqref{subharmonic 1} and \eqref{subharmonic 2} we compute:
\begin{align*}
\left(\bdry_{\tau_1}^2+\bdry_{\tau_2}^2\right)\left((k(t)y)^{2}+z^{2}\right)=& 2\left(|\nabla(k(t)y)|^{2}+|\nabla z|^{2}\right.\\
&\left.+k(t)y\;Q_1(t,y)\left(\pa_{\tau_1} y,\pa_{\tau_2} y,\pa_{\tau_1} z,\pa_{\tau_2} z\right) \right),
\end{align*}
where $\nabla=(\bdry_{\tau_1},\bdry_{\tau_2})$.
The bounds on the quadratic forms imply that there is a constant $C>0$ such that
\[
Q_j(t,y)\left(\pa_{\tau_1} y,\pa_{\tau_2} y,\pa_{\tau_1} z,\pa_{\tau_2} z\right)\le CK^{-1}\sigma^{-1/2}(|\nabla y|^{2}+|\nabla z|^{2}),
\]
for $j=1,2$. By Lemma~\ref{lemma: bounds}, $|y|=\Ordo(\sigma^{1/2})$ and $|z|=\Ordo(\sigma^{1/2})$. Finally, since
$$\nabla (k(t)y)=k(t)\nabla y + (\log k(t))'(\bdry_\sigma z, -\bdry_\tau z) y,$$
the dominant term is $k(t)\nabla y$ and we conclude that
\[
(\bdry_{\tau_1}^2+\bdry_{\tau_2}^2)\left((k(t)y)^{2}+z^{2}\right)\ge 0,
\]
provided $K\gg1$ is sufficiently large.

Similarly, $p^2_{\delta;\sigma}\circ u$ is subharmonic on the region $\tfrac12K\sigma\le t\le K\sigma$. The lemma follows.
\end{proof}

\begin{lemma} \label{lemma: bound}
If $u\in \mathcal{M}^{(X_\delta,L_{\delta;\sigma});J_{\delta;\sigma}}(a;\mathbf{b})$, then
$$|p_{\delta;\sigma}\circ u|=\sqrt{p^2_{\delta;\sigma}\circ u}=\Ordo(\sigma),$$
for all sufficiently small $\sigma>0$.
\end{lemma}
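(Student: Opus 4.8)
The plan is to deduce the bound directly from Lemma~\ref{lem:subharm} together with a coordinate computation on $L_{\delta;\sigma}$. By Lemma~\ref{lem:subharm}, for $K\gg1$ and $\sigma,\delta$ sufficiently small the function $p^2_{\delta;\sigma}\circ u$ is subharmonic on the domain of $u$ and attains its maximum on the boundary. Hence it suffices to bound $p^2_{\delta;\sigma}$ by $\Ordo(\sigma^2)$ on the boundary of $u$, which maps to $L_{\delta;\sigma}$, together with the strips over the Reeb chords of $L_{\delta;\sigma}$ to which $u$ is asymptotic near its punctures.

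The key observation is that the fiber scaling $s_\sigma$ multiplies every coordinate entering the definition of $p_{\delta;\sigma}$ by $\sigma$: on $T^{\ast}F$ it scales $\eta_1,\eta_2$ by $\sigma$, while in the symplectization coordinates $(t,x,y,z)$ on the attached ends it acts by $(t,x,y,z)\mapsto(t,x,\sigma y,\sigma z)$, leaving $t$ and hence $k(t)$ untouched (recall $c_0<k(t)<c_1$ with $c_0,c_1>0$ independent of $\sigma$, as in the proof of Lemma~\ref{lem:subharm}). Since $L_{\delta;\sigma}=s_\sigma(\hat L^{\co}_\delta)$, the estimate on $L_{\delta;\sigma}$ therefore reduces to checking that the fiber coordinates of $\hat L^{\co}_\delta$ on $T^{\ast}F$, and the $(y,z)$-coordinates of $\hat L^{\co}_\delta$ on its conical ends, are bounded by a constant independent of $\delta$ for $\delta$ small. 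I would read this off the construction: on $F-(U_+\cup U_-)$ the underlying Morse cobordism is fixed; on $U_\pm$ the parametrization~\eqref{eq:Morseend2} only involves $h_\delta$ and $h_\delta'$, which are uniformly bounded on $[0,\epsilon]$; and on the conical ends the parametrization~\eqref{eq:conicalend} of $\hat L^{\co}_\delta$ becomes, in the symplectization coordinates, $y=(1-a'_\pm e^{-t})\,y_\pm(s)$ and $z=z_\pm(s)$, with $y_\pm,z_\pm$ parametrizing the fixed links $\Lambda_\pm$ and $a'_\pm$ bounded. The same $\Ordo(\sigma)$ bound holds on the Reeb-chord strips, since the chords of $L_{\delta;\sigma}$ lie over $\sigma\Lambda_\pm$.

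Combining the two steps yields $p^2_{\delta;\sigma}\circ u=\Ordo(\sigma^2)$, i.e.\ $|p_{\delta;\sigma}\circ u|=\Ordo(\sigma)$, with constants uniform in $\delta$. This is a short argument given Lemma~\ref{lem:subharm}; the only point I would be careful about is that ``attains its maximum on $\bdry D$'' must be interpreted correctly on the punctured domain, so that the supremum is not lost at a puncture. That is handled by the standard exponential convergence of a finite-energy curve to its asymptotic Reeb-chord strip near each puncture: remove small half-disk neighborhoods of the punctures on which $p^2_{\delta;\sigma}\circ u$ is within $\varepsilon$ of the (already $\Ordo(\sigma^2)$) values on the asymptotic strips, apply the maximum principle of Lemma~\ref{lem:subharm} on the complement, and let $\varepsilon\to0$.
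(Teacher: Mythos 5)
Your proof is correct and takes essentially the same route as the paper: the maximum principle of Lemma~\ref{lem:subharm} reduces the estimate to boundary values, and the fiber-scaling structure of $L_{\delta;\sigma}$ (with $k(t)$ uniformly bounded) gives $|y|,|z|=\Ordo(\sigma)$ there, hence $p^2_{\delta;\sigma}\circ u=\Ordo(\sigma^2)$. Your additional care at the punctures, using the asymptotic Reeb-chord strips which also satisfy the $\Ordo(\sigma)$ bound, is a detail the paper leaves implicit but does not change the argument.
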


\begin{proof}
This is immediate from the second assertion of Lemma \ref{lem:subharm} by observing the following: when restricted to $L_{\delta;\sigma}\cap (\{0\leq t<\infty\}\cup \{-\infty< t\leq 0\})$, the function $p^2_{\delta;\sigma}=(k_\sigma(t)y)^2+z^2$ is bounded above by $C y^2+z^2$ for a positive constant $C$ which is independent of $\sigma$ and $|y|,|z|=\Ordo(\sigma)$.
\end{proof}

\subsection{From rigid disks to rigid trees} \label{subsection: disks to trees}


Consider a sequence
$$u_{\delta;\sigma}\colon (D_{m+1},\pa D_{m+1})\to (X_{\delta},L_{\delta;\sigma}),\quad \delta,\sigma\to 0,$$
of rigid disks in $\mathcal{M}^{(X_\delta,L_{\delta;\sigma});J_{\delta;\sigma}}(a;\mathbf{b})$.

\begin{lemma}\label{lem:disktotree}
After passing to a subsequence, there exists $\delta_0>0$ such that for any $\delta\in (0,\delta_0)$ there exists $\sigma_0=\sigma_0(\delta)>0$ such that for any $\sigma\in(0,\sigma_0)$, there exists a rigid LC flow tree $\hat\gamma_\delta$ of $L_{\delta;\sigma}$ such that the following hold:
\begin{enumerate}
\item $u_{\delta;\sigma}(D_{m+1})\cap T^*F$ lies in an $\Ordo(\sigma\log(\sigma^{-1}))$-neighborhood of the cotangent lift of $\hat \gamma_\delta$;
\item $\hat\gamma_\delta$ lies in a $\phi(\delta)$-neighborhood of the Reeb chord flow lines over $a$ and ${\bf b}$ and $u_{\delta;\sigma}(D_{m+1})\cap (X_{\delta}-T^{\ast}F)$ lies in a $\phi(\delta)$-neighborhood of the strips over $a$ and ${\bf b}$, where $\displaystyle\lim_{\delta\to 0}\phi(\delta)=0$.
\end{enumerate}
\end{lemma}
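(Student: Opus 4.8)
The plan is to adapt Ekholm's disk-to-flow-tree convergence machinery from \cite{E1}, which is designed for closed Lagrangian submanifolds, to the present setting of immersed exact Lagrangians with conical/symplectization ends, using the a priori bounds established above together with SFT compactness near the ends. I would treat the two limits in the stated nested order: fix a small $\delta>0$, pass to a subsequence, and analyze $\sigma\to 0$; only afterwards do I let $\delta\to 0$. First I would record the a priori confinement. For fixed $\delta\in(0,\delta_0)$, Lemma~\ref{energy bound} bounds the $\omega_0$- and $\beta_\pm$-energy of $u_{\delta;\sigma}$ by a $\sigma$-independent multiple of $\mathfrak A(a)$, and Lemma~\ref{lemma: bound} confines $u_{\delta;\sigma}$ to an $\Ordo(\sigma)$-neighborhood of $\{p_{\delta;\sigma}=0\}$, that is, of the zero section of $T^\ast F_\delta$; by Lemma~\ref{lemma: bounds} the interpolation region $\{0\le t\le K\sigma\}$ is entered only within $\Ordo(\sigma)$ of the strips over Reeb chords. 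Thus for the purposes of the analysis the ambient almost complex structure is effectively either $J$ on $T^\ast F$ or the $t$-invariant $J_0$ on the ends, and the genuinely interpolated $J_{\delta;\sigma}$ never plays an essential role.

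The heart of the argument is a rescaling and bubbling analysis. After projecting $u_{\delta;\sigma}$ to the base $F_\delta$ and rescaling the fiber direction by $\sigma^{-1}$, I would run the scheme of \cite{E1}: the domain $D_{m+1}$ decomposes into \emph{flow regions}, on which $\Pi_F\circ u_{\delta;\sigma}$ converges, after reparametrization, to gradient flow lines of height function differences of $L_{\delta;\sigma}$, together with finitely many \emph{vertex and puncture regions} near the critical values $\Sigma_1$ of $\Pi_F$ and near the punctures, on which $u_{\delta;\sigma}$ is modeled on a fixed rescaled local holomorphic disk ($Y_0$- and $Y_1$-vertices, switches, ends, $1$- and $2$-valent punctures, exactly the list of Section~\ref{sec:flowtrees}). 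Over $T^\ast F$ this is \cite{E1} essentially verbatim. On the conical ends $\{t\ge K\sigma\}$, where $J_{\delta;\sigma}=J_0$ is $t$-invariant and $L_{\delta;\sigma}$ is a $\sigma$-scaled cone over $\Lambda_\pm$, I would instead invoke standard SFT compactness (Lemma~\ref{lem:dim+tv}) to see that $u_{\delta;\sigma}$ limits to a holomorphic building whose ends are strips over Reeb chords of $\Lambda_\pm$, and then match this with the projected flow lines on $\check F^\delta:=F-\R\times([a_-,a_-+\delta]\cup[a_+-\delta,a_+])$ using the conical structure; this produces the asymptotic condition (2') and the closed-curve condition (3') of Definition~\ref{defn:LC}. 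Exactness together with the energy bounds of Lemma~\ref{energy bound} rules out disk and sphere bubbles, and the logarithmic speed of the reparametrization of a flow line approaching a cusp edge is what yields the $\Ordo(\sigma\log(\sigma^{-1}))$-neighborhood in conclusion~(1).

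It then remains to argue rigidity and extract conclusion~(2). The limit $\hat\gamma_\delta$ is an LC flow tree of $L_{\delta;\sigma}$ from $a$ to $\mathbf b$, and by \eqref{eqn: dim of tree} its formal dimension equals $|a|-|\mathbf b|$, which is $0$ since $u_{\delta;\sigma}$ is rigid; hence $\hat\gamma_\delta$ is rigid, so by the genericity of $\tilde L^{\Mo}$ together with Lemma~\ref{lem:long_Morse} it lies in a finite, transversely cut out set, which rules out a half-line degenerating into a nonrigid stratum and forces the limit to be a single unbroken tree. This gives the existence of $\hat\gamma_\delta$ and conclusion~(1) for all $\sigma<\sigma_0(\delta)$. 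Finally, for conclusion~(2) I would let $\delta\to 0$: Corollary~\ref{cor: canonicalpunctures} puts $\hat\gamma_\delta\cap(X_\delta-T^\ast F)$ inside $N_{C\delta}$, hence within $\phi(\delta):=C\delta$ of the Reeb chord flow lines over $a$ and $\mathbf b$; and since $u_{\delta;\sigma}$ is $\Ordo(\sigma\log(\sigma^{-1}))$-close to the cotangent lift of $\hat\gamma_\delta$ by (1) and is asymptotic to the strips over $a$ at $+\infty$ and over $\mathbf b$ at $-\infty$, the confinement recorded above together with the $N_{C\delta}$-bound forces $u_{\delta;\sigma}\cap(X_\delta-T^\ast F)$ into a $\phi(\delta)$-neighborhood of the strips over $a$ and $\mathbf b$.

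The hard part will be the rescaling analysis near the ends. Ekholm's flow-tree limit theory is written for closed Lagrangians, so one must extend each local model and each gluing/compactness estimate across the symplectization and conical ends with constants uniform in $\sigma$ for fixed $\delta$, and then track the dependence on $\delta$. The sharp $\Ordo(\sigma)$ bound of Lemma~\ref{lem:subharm} --- in place of the $\Ordo(\sigma^{1/2})$ monotonicity bound --- is exactly what makes the local pictures near $t=0$ degenerate to the same ones as in the compact interior, and the delicate point is to verify that no new holomorphic building appears in the neck as $\sigma\to0$.
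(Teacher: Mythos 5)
Your treatment of the part of $u_{\delta;\sigma}$ lying over $T^*F$ — the $\Ordo(\sigma)$ confinement from Lemma~\ref{lemma: bound} feeding into Ekholm's local convergence scheme to get the $\Ordo(\sigma\log(\sigma^{-1}))$ rate, punctures added where the derivative blows up on the $\sigma$-scale, and Corollary~\ref{cor: canonicalpunctures} for the position of the limit tree near the ends — is essentially the paper's argument. The gap is in the second half of conclusion (2), i.e.\ the control of $u_{\delta;\sigma}$ itself on $X_\delta-T^*F$. You invoke Lemma~\ref{lem:dim+tv} (``standard SFT compactness'') on $\{t\ge K\sigma\}$, but that lemma concerns a fixed pair $(X,L)$ with fixed $J$; in the limit $\sigma\to 0$ the boundary condition itself degenerates, since on the symplectization end $L_{\delta;\sigma}$ is the cylinder over a copy of $\Lambda_\pm$ whose $y$- and $z$-coordinates are scaled by $\sigma$, so all Reeb chord actions tend to $0$ and the correct limit object is not a holomorphic building asymptotic to strips over chords of $\Lambda_\pm$ but, as in \cite{E1}, a gradient flow tree of the one-dimensional Legendrian $\Lambda_\pm$. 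Moreover, the subharmonic confinement only bounds the fiber coordinates $|y\circ u|$ and $|z\circ u|$; it says nothing about drift of the curve in the $x$-direction inside the end, so ``confinement plus the $N_{C\delta}$-bound plus the asymptotics at the punctures'' does not force $u_{\delta;\sigma}(D_{m+1})\cap(X_\delta-T^*F)$ into a $\phi(\delta)$-neighborhood of the strips over $a$ and $\mathbf{b}$.

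What is missing is the mechanism excluding such drift, and this is where the paper does genuine work: first, the derivative estimate of \cite[Lemma 5.7]{E1} gives $\Ordo(\sigma)$ bounds on $du_{\delta;\sigma}$ over the interpolation region $\{0\le t\le K\sigma\}$, so the curve stays near the Reeb chord strips there; second, on $\{t\ge K\sigma\}$ one argues by contradiction that if a sequence $u_{\delta_i;\sigma_i}$ had points escaping a fixed $\varepsilon$-neighborhood of the strips, then flow-tree convergence applied to the one-dimensional Legendrian $\Lambda_\pm$ would produce a nonconstant flow tree of $\Lambda_\pm$, which generically has dimension $\ge 0$ and therefore forces $\op{ind}(u_{\delta_i;\sigma_i})\ge 1$, contradicting rigidity. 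You flag exactly this point (``no new holomorphic building appears in the neck'') as delicate but offer no argument; the rigidity of $u_{\delta;\sigma}$ combined with the dimension formula for trees of $\Lambda_\pm$ is the ingredient that closes it, and without it conclusion (2) is unproven.
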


\begin{proof}
The lemma is almost a consequence of \cite[Theorem 1.2]{E1}.

\s\n
(1) First observe that $|p_{\delta;\sigma}\circ u_{\delta;\sigma}|=\Ordo(\sigma)$ by Lemma~\ref{lemma: bound}. Then we obtain local (= away from points where the derivative blows up) flow tree convergence at a rate of $\Ordo(\sigma\log(\sigma^{-1}))$ as in \cite[Sections 5.2--5.3]{E1}. As in the usual proof of Gromov compactness, we add punctures at points where the derivative blows up on the $\sigma$-scale, while noting that the number of such points is controlled by the average linking number (cf.\ \cite[Section 5.4]{E1}). This implies the flow tree convergence when restricted to $T^{\ast}F$.

\s\n
(2) By Corollary~\ref{cor: canonicalpunctures}, the restriction of $\hat\gamma_\delta$ to $X_\delta- T^*F$ is contained in $N_{\phi(\delta)}$ for some $\phi$. The argument in \cite[Lemma 5.7]{E1} gives $\Ordo(\sigma)$ bounds on the derivative of $u_{\delta;\sigma}$ on $\{0\leq t\leq K\sigma\}$. Hence, for $\sigma>0$ sufficiently small, the restriction of $u_{\delta;\sigma}$ to $\{0\leq t\leq K\sigma\}$ is close to cylinders over Reeb chords.

Finally we claim that the restriction of $u_{\delta;\sigma}$ to $\{t\geq K\sigma\}$ is close to cylinders over Reeb chords for $\delta,\sigma>0$ sufficiently small. Arguing by contradiction, there is a sequence $u_{\delta_i;\sigma_i}$ such that some point of $u_{\delta_i;\sigma_i}|_{\{t\geq K\sigma_i\}}$ maps outside a fixed $\varepsilon$-neighborhood of cylinders over Reeb chords. Recalling that $\{t\geq K\sigma\}$ coincides with the symplectization of $\R^3$, the flow tree convergence applied the $1$-dimensional Legendrian submanifold $\Lambda_\pm$ yields a flow tree of $\Lambda_\pm$ of dimension $\geq 0$. This implies that $\ind(u_{\delta_i;\sigma_i})\geq 1$, which is a contradiction.
\end{proof}

\subsection{From rigid trees to rigid disks}
\label{subsection: trees to disks}

In this subsection we construct holomorphic disks near rigid LC flow trees as in \cite[Section 6]{E1}.

The construction can be summarized as follows: As we take the limit $\sigma\to 0$, the Lagrangian boundary condition $L_{\delta;\sigma}$ degenerates to the $0$-section and the domains $S_\sigma$ of rigid $u_\sigma\in \mathcal{M}^{(X_\delta,L_{\delta;\sigma});J_{\delta;\sigma}}(a;\mathbf{b})$ converge to a ``broken domain'' at the boundary of the space of conformal structures; see \cite[Remark 5.35]{E1}. After subdividing $S_\sigma$ for $\sigma>0$ small, we find explicit local solutions in a neighborhood of the rigid LC flow tree $\hat\gamma$, except in small shrinking regions where the neighboring local solutions are patched together. This yields a disk in $(X_\delta,L_{\delta;\sigma})$ which is close to being $J_{\delta;\sigma}$-holomorphic as in \cite[Section 6.2]{E1}. Finally we use Newton iteration in order to produce actual solutions as in \cite[Section 6.4]{E1}.

In order to extend the argument to the case of a Lagrangian with cylindrical ends, we first construct explicit local solutions in the cylindrical ends near Reeb chords.

\subsubsection{Construction of a truncated local solution} \label{subsubsection: truncated}

Let $J_0$ be the almost complex structure on $\R\times \R^3$ given by Equation~\eqref{e:translinvJ}.  In this subsection we will write down an explicit local $J_0$-holomorphic map, i.e., a ``truncated local solution'',
$$u=(t,x,y,z)\colon [0,\infty)\times[0,1]\to\R\times\R^{3},$$
where $[0,\infty)\times[0,1]$ is a half-strip with coordinates $\zeta=\tau_1+i\tau_2$ and $u$ is asymptotic to the Reeb chord $c\in\mathcal{C}(\Lambda_+)$ at the positive end. The situation at the negative end is analogous.

We may assume that there exist $-{\pi\over 2}<\alpha_+<\alpha_-<{\pi\over 2}$, $\alpha_\pm=\alpha_\pm(\sigma)$, such that the Lagrangian projections $\Pi_{\C}$ of the two branches of $\Lambda=\Lambda_+$ near $c$ are contained in $e^{i\alpha_+}\R$ and $e^{i\alpha_-}\R$ and that $\Pi_\C$ of the positive end of $u$ sweeps out the (small) sector from $\alpha_+$ to $\alpha_-$. As $\sigma\to 0$, we have $|\alpha_+|,|\alpha_-|=\Ordo(\sigma)$. Moreover, by the deformation of $\Lambda$ from Section~\ref{subsubsection: deforming legendrian links}, the $z$-coordinates of the two branches are constant, i.e., $z=h_+$ and $z=h_-$, where $h_\pm=h_\pm(\sigma)$ and $|h_\pm|=\Ordo(\sigma)$ as $\sigma\to 0$. Here the subscript $+$ (resp.\ $-$) refers to the upper (resp.\ lower) strand.

The map $u$ satisfies the Cauchy--Riemann equations if and only if $v=x+iy$ and $w=t+iz$ satisfy
\begin{align}\label{eqn: holomorphic}
\overline\bdry v&=\tfrac{1}{2} (\bdry_{\tau_1}+ i \bdry_{\tau_2}) v =0,\\
\label{eq:holinsympl}
\overline\bdry w&=\tfrac{1}{2} (\bdry_{\tau_1}+ i \bdry_{\tau_2}) w  = -\tfrac{1}{2} y \bdry_{\tau_2}\overline{v}.
\end{align}

If we choose the solution
\begin{equation} \label{eqn: v}
v(\zeta)=e^{i\alpha_-}e^{(\alpha_+-\alpha_-)\zeta},
\end{equation}
for Equation~\eqref{eqn: holomorphic} and abbreviate $\theta=\alpha_+-\alpha_-$, then Equation~\eqref{eq:holinsympl} becomes:
\begin{equation}\label{eq:explholsympl}
\bar\pa w = \tfrac{\theta}{4}e^{2\theta {\tau_1}}\left(1-e^{-2i(\alpha_-+\theta\tau_2)}\right)=\tfrac{\theta}{4} e^{\theta(\zeta+\overline\zeta)} -\tfrac{\theta}{4} e^{-2i\alpha_+}e^{2\theta\overline\zeta}.
\end{equation}
In order to solve for $w$, first let
\begin{equation}
w_0(\zeta)=(h_+-h_-)\zeta+ih_-+c_0
\end{equation}
be a holomorphic map from $[0,\infty)\times[0,1]$ to the strip over the Reeb chord $c$, postcomposed with the projection to the $(t,z)$-coordinates. Here $c_0$ is a constant.  Next observe that
\begin{equation}
w_1(\zeta)=\tfrac{1}{4} e^{\theta(\zeta+\overline\zeta)}-\tfrac{1}{8}e^{-2i\alpha_+}e^{2\theta\overline\zeta}+ \tfrac{1}{8}e^{2i\alpha_+} e^{2\theta\zeta}
\end{equation}
solves Equation~\eqref{eq:explholsympl} and satisfies real boundary conditions, and $\displaystyle\lim_{\tau_1\to\infty}w_1(\zeta)=0$. Then we take $w=w_0+w_1$.

The truncated local solution is then given by:
\begin{equation} \label{local solution}
u(\zeta)=(v(\zeta), w(\zeta)).
\end{equation}

\subsubsection{Gluing}

\begin{lemma}\label{lem:treetodisk}
If $\delta>0$ is sufficiently small, then there exists $\sigma_0>0$ such that for any $0<\sigma<\sigma_0$ and any rigid LC flow tree $\hat\gamma$ of $L_{\delta;\sigma}$, there is a unique rigid $J_{\delta;\sigma}$-holomorphic disk with boundary on $L_{\delta;\sigma}$ in a neighborhood of $\hat\gamma$.
\end{lemma}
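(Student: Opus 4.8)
The plan is to follow the gluing scheme of \cite[Section 6]{E1}, adapting it to exact Lagrangians with cylindrical ends. Fix a rigid LC flow tree $\hat\gamma$ of $L_{\delta;\sigma}$. For $\sigma>0$ small the conformal structure on the expected source $S_\sigma$ is determined by $\hat\gamma$ together with a vector of gluing parameters, one for each internal edge, vertex, and puncture, as in \cite[Section 5.5 and Remark 5.35]{E1}. The idea is to build an approximately $J_{\delta;\sigma}$-holomorphic map $w_\sigma\colon S_\sigma\to X_\delta$ by gluing together explicit local solutions associated to the pieces of $\hat\gamma$, and then to correct $w_\sigma$ to a genuine solution via Newton iteration.

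First I would decompose $S_\sigma$ into \emph{edge pieces} (long thin strips), \emph{vertex pieces} (neighborhoods of $Y_0$- and $Y_1$-vertices, switches, ends, and $1$-valent and $2$-valent interior punctures), and \emph{Reeb-chord pieces} (half-strips at the cylindrical ends). On the edge and vertex pieces contained in $T^\ast F$ I would import the local solutions of \cite[Sections 6.1--6.2]{E1} attached to each vertex type. The new ingredient is the Reeb-chord pieces lying in $X_\delta-T^\ast F$: there I use the truncated local solution $u=(v,w)$ of \eqref{local solution} from Section~\ref{subsubsection: truncated}, which is exactly $J_0$-holomorphic, is asymptotic to the strip over the relevant Reeb chord of $\Lambda_\pm$, and whose $y$- and $z$-components are $\Ordo(\sigma)$. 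Since $J_{\delta;\sigma}=J_0$ on $\{t\geq K\sigma\}$ and is a controlled interpolation toward the metric-compatible $J$ on $\{0\leq t\leq K\sigma\}$, these Reeb-chord solutions splice to the incoming edge pieces along shrinking transition regions exactly as in the compact case. Patching with cutoff functions yields $w_\sigma$ with $\bar\partial_{J_{\delta;\sigma}}w_\sigma$ supported in the transition regions and of size $o(1)$ in the relevant weighted norm, as in \cite[Section 6.2]{E1}.

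Next I would set up weighted Sobolev (or H\"older) spaces on $S_\sigma$ with small exponential weights at the punctures and study the linearized operator $D_{w_\sigma}$ of $\bar\partial_{J_{\delta;\sigma}}$ at $w_\sigma$. The key analytic point is a uniform bound on a right inverse $Q_\sigma$ of $D_{w_\sigma}$, independent of small $\delta,\sigma$. On the part of $S_\sigma$ inside $T^\ast F$ this is \cite[Section 6.3]{E1}, using that $\hat\gamma$ is transversely cut out; on the Reeb-chord pieces one uses that the linearization at a strip over a nondegenerate Reeb chord is uniformly invertible on spaces with small weights, the surjectivity coming from the perturbation of $J$ near the positive puncture as in Lemma~\ref{lem:dim+tv}. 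The confinement estimates of Lemma~\ref{lemma: bounds} and Lemma~\ref{lemma: bound} ensure $w_\sigma$ stays in the region where $J_{\delta;\sigma}$ has the explicit form above, so the interpolation contributes only lower-order terms to $D_{w_\sigma}$. Gluing the local inverses by the usual cutoff argument gives $Q_\sigma$ with $\|Q_\sigma\|\leq C$ uniformly. Combined with the quadratic estimate for $\bar\partial_{J_{\delta;\sigma}}$ near $w_\sigma$ (a standard computation using the $C^1$-bounds on the local solutions and on disks near $\hat\gamma$ from Lemma~\ref{lem:disktotree}(1)), a Newton iteration (Floer's Picard lemma) produces a unique genuine $J_{\delta;\sigma}$-holomorphic disk $u_{\hat\gamma}$ in a $\|Q_\sigma\|$-controlled neighborhood of $w_\sigma$, hence near $\hat\gamma$; Lemma~\ref{fredholm index} together with \eqref{eqn: dim of tree} shows it is rigid. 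Uniqueness of the rigid disk near $\hat\gamma$ follows because, by Lemma~\ref{lem:disktotree}, any sequence of rigid disks near $\hat\gamma$ must $C^0$-converge to $\hat\gamma$ and hence eventually lie in the contraction-mapping neighborhood of $w_\sigma$, where the solution is unique.

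The main obstacle is the uniform invertibility of the linearized operator across the interpolation slice $\{0\leq t\leq K\sigma\}$: although $J_{\delta;\sigma}$ fails to be $\R$-invariant there, one must show the resulting perturbation of the Cauchy--Riemann operator is small in operator norm on the weighted spaces. This is exactly where the $\Ordo(\sigma)$-confinement of Lemma~\ref{lem:subharm} and Lemma~\ref{lem:disktotree}, together with the choice of interpolation functions $k(t),b(t),a(t)$ having $\Ordo(K^{-1}\sigma^{-1})$ derivatives, are essential, and one needs $K$ large and $\sigma$ small to absorb these terms; everything else is a lengthy but routine adaptation of the bookkeeping in \cite[Section 6]{E1}, in particular the matching of gluing parameters so that $w_\sigma$ has a well-defined domain with boundary exactly on $L_{\delta;\sigma}$.
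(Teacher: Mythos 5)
Your proposal follows essentially the same route as the paper: local solutions from \cite[Section~6]{E1} inside $T^*F$, the truncated local solution \eqref{local solution} on the cylindrical ends, gluing with exponentially weighted Sobolev spaces, uniform invertibility of the linearization and a quadratic estimate feeding Newton iteration, and uniqueness via the $C^0$-control of Lemma~\ref{lem:disktotree}. The only detail the paper makes more explicit is how the $C^0$-bound near the ends is promoted to a weighted Sobolev bound in the interpolation region $\{0\le t\le K\sigma\}$ — namely by controlling the Fourier coefficients of the truncated local solution — which is exactly the ``surjectivity of the construction'' step you gesture at with the contraction-mapping neighborhood argument.
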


\begin{proof}
This follows from the proof of \cite[Theorem 1.3]{E1}, where the only new ingredient is the presence of the cylindrical end. The local solutions are constructed as in \cite[Section 6.1]{E1}, where the solutions near the $1$-valent punctures are replaced by the truncated local solutions $u$ from Section~\ref{subsubsection: truncated}. We then construct the approximate holomorphic disks by gluing as in \cite[Section 6.2]{E1}. The weight functions on the domains are still as in \cite[Section 6.3.1]{E1}. In particular, we impose small positive exponential weights near the  punctures and the weight function is equal to $1$ on the strip regions of length $\Ordo(K\sigma)$ and width $\Ordo(\sigma)$ that map into the interpolation region $\{0\leq t\leq K\sigma\}$.\footnote{The interpolation region interpolates between the local solution on the symplectization part and a local solution on $T^*F$ which is ``approximated'' by a flow line.}



The necessary conditions for applying Newton iteration are still satisfied after these modifications are made. The uniform invertibility of the differential is argued as in \cite[Proposition 6.21]{E1}, the quadratic estimate is argued as in \cite[Proof of Theorem 1.3, p.\ 1216]{E1}. The surjectivity of the construction, i.e., the fact that Newton iteration captures all rigid holomorphic disks, follows from an analog of the calculation in \cite[p.\ 1218]{E1}, as follows. Let $u$ be the truncated local solution from Section~\ref{subsubsection: truncated}.  It is not hard to check that the $C^0$-norm near the ends control the Fourier coefficients of $u$ and the Fourier coefficients control the weighted Sobolev norm in the interpolation region $\{0\leq t\leq K\sigma\}$ via the $C^{0}$-norm. The $C^0$-norm is controlled by Lemma \ref{lem:disktotree}.
\end{proof}

\begin{proof}[Proof of Theorem \ref{thm:flowtreecomp}]
Let $L^{\Mo}$ be a Morse cobordism in $T^{\ast}F$ and let $(X_{\delta},L_{\delta;\sigma})$ be the associated conical exact Lagrangian cobordisms parametrized by $\delta,\sigma>0$. For $\delta,\sigma>0$ sufficiently small, any $u\in\mathcal{M}^{(X_\delta,L_{\delta;\sigma});J_{\delta;\sigma}}(a;\mathbf{b})$ is ``approximated by'' a rigid LC flow tree $\hat \gamma_\delta$ in the sense of Lemma~\ref{lem:disktotree}.

On the other hand, by Lemma~\ref{lem:treetodisk}, for $\sigma,\delta>0$ sufficiently small, there is a unique rigid holomorphic disk in $\mathcal{M}^{(X_\delta,L_{\delta;\sigma});J_{\delta;\sigma}}(a;\mathbf{b})$ which ``approximates'' the rigid LC flow tree $\hat\gamma_\delta$. The theorem now follows from Lemma~\ref{lem:long_Morse}, which gives a bijection between rigid flow trees of $L^{\Mo}$ and rigid LC flow trees of $L_{\delta;\sigma}$.
\end{proof}

\section{Elementary exact Lagrangian cobordisms and their DGA maps}
\label{Sec:DGAmaps}

In this section we introduce the elementary exact Lagrangian cobordisms and compute the induced DGA maps. An {\em elementary exact Lagrangian cobordism $L$} is one of the following:
\begin{enumerate}
\item a cobordism induced by a $\Pi_\C$-simple Legendrian isotopy;
\item a cobordism induced by a Legendrian Reidemeister move;
\item a minimum cobordism; or
\item a saddle cobordism.
\end{enumerate}
The cobordisms will be discussed in Sections~\ref{sec:nomovescob}, \ref{sec:movescob}, \ref{subsub:minimum}, and \ref{subsub:saddle}, respectively. An exact Lagrangian cobordism $L$ is {\em decomposable} if it is exact Lagrangian isotopic to a concatenation of elementary exact Lagrangian cobordisms.

In this section the coefficient ring of the DGAs is $\F$, unless stated otherwise.

\subsection{Lagrangian cobordisms from Legendrian isotopies}

Let $\Lambda_\tau\subset \R^3$, $\tau\in[0,1]$, be a $1$-parameter family of Legendrian links from $\Lambda_0$ to $\Lambda_1$. Choose a parametrization $\gamma_{\tau}\colon S\to\R^{3}$ of $\Lambda_\tau$, where $S$ is a (not necessarily connected) closed $1$-manifold. We reparametrize the $\tau$-parameter via a map $f\colon\R\to [0,1]$ with small derivative so that $f(t)=1$ for $t\ge t_0\gg 0$ and $f(t)=0$ for $t\le -t_0$. Consider the trace
$$\Gamma\colon\R\times S\to \R\times\R^{3},$$
$$(t,s)\mapsto (t, \gamma_{f(t)}(s)),$$
of the isotopy $\gamma_{f(t)}$. Then
\begin{align*}
\Gamma([t_0,\infty)\times S)& =[t_0,\infty)\times \Lambda_1,\\
\Gamma((-\infty,-t_{0}]\times S)&=(-\infty,-t_{0}]\times\Lambda_0.
\end{align*}

The following lemma is a version of a standard result; compare e.g.~\cite[Lemma A.1]{E2}.

\begin{lemma}\label{lemma:isotop-cobord}
For any $\epsilon>0$, there exists $\delta>0$ such that if $\Gamma$ satisfies $\left|{\bdry \Gamma\over \bdry t}\right|_{C^0}<\delta$, then there is a cylindrical exact Lagrangian cobordism $L$ from $\Lambda_1$ to $\Lambda_0$ which is $\epsilon$-close (in the $C^0$-metric) to the image of $\Gamma$.
\end{lemma}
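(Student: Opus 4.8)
\emph{Proof proposal.} The plan is to keep the (already cylindrical, hence Lagrangian) ends of the trace $\Gamma$ untouched and to perturb $\Gamma$ only in the $z$-direction, by an explicit correction of size $O(\delta)$, so that the result becomes an honest exact Lagrangian. First I would record the failure of $\Gamma$ to be Lagrangian. Write $\Lambda_\tau$ as $(x_\tau(s),y_\tau(s),z_\tau(s))$, so $\Gamma(t,s)=(t,\gamma_{f(t)}(s))$. Pulling back $\alpha_0=dz-y\,dx$, the $ds$-component vanishes because each $\gamma_\tau$ is Legendrian ($\partial_s z_\tau=y_\tau\partial_s x_\tau$), leaving $\Gamma^{\ast}\alpha_0=g(t,s)\,dt$ with $g(t,s)=f'(t)\big(\partial_\tau z_\tau-y_\tau\,\partial_\tau x_\tau\big)\big|_{\tau=f(t)}$. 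In particular $\|g\|_{C^0}\le C_1\delta$, where $C_1$ depends only on the fixed isotopy, and $g\equiv 0$ on $\{|t|\ge t_0\}$ since $f'\equiv 0$ there. Also note that $\Gamma$ is automatically an embedding: $t$ is recovered from the first coordinate and then $s$ from the embedding $\gamma_{f(t)}$.

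Next I would set $L$ to be the image of $\tilde\Gamma(t,s)=\big(t,\ x_{f(t)}(s),\ y_{f(t)}(s),\ z_{f(t)}(s)+g(t,s)\big)$. A short computation, using the Legendrian condition exactly as above, gives $\tilde\Gamma^{\ast}\alpha_0=dg+g\,dt$ and hence $\tilde\Gamma^{\ast}(e^t\alpha_0)=e^t\,dg+e^tg\,dt=d(e^tg)$. Therefore $\beta|_L=(e^t\alpha_0)|_L$ is exact, with primitive $e^tg$; being exact it is closed, so $L$ is Lagrangian. Since $g\equiv 0$ near the ends, there $L$ coincides with $\{t\ge t_0\}\times\Lambda_1$ and $\{t\le -t_0\}\times\Lambda_0$, and the primitive $e^tg$ vanishes identically on $\mathcal{E}_\pm(L)$ — in particular it is the same constant on every component — so conditions (i) and (ii) of Definition~\ref{defn: exact Lagrangian with cylindrical ends} hold and $L$ is a cylindrical exact Lagrangian cobordism from $\Lambda_1$ to $\Lambda_0$, once we check that it is embedded.

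For embeddedness, slices of $L$ at distinct values of $t$ are disjoint, so it suffices that for each $t$ the map $s\mapsto\big(x_{f(t)}(s),y_{f(t)}(s),z_{f(t)}(s)+g(t,s)\big)$ is injective. Injectivity can fail only at a Reeb chord of $\Lambda_{f(t)}$, i.e.\ when $\Pi_{\C}\gamma_{f(t)}(s)=\Pi_{\C}\gamma_{f(t)}(s')$ for some $s\ne s'$, and then $|z_{f(t)}(s)-z_{f(t)}(s')|$ is the chord length. I would show these chord lengths are bounded below by a uniform $\ell_0>0$ over $\tau\in[0,1]$: the self-coincidence set $\{(\tau,s,s'):s\ne s',\ \Pi_{\C}\gamma_\tau(s)=\Pi_{\C}\gamma_\tau(s')\}$ has compact closure in $[0,1]\times S\times S$ disjoint from the diagonal, since a point of the closure with $s=s'$ would, via a difference-quotient argument, force $\partial_s\Pi_{\C}\gamma_{\tau_0}$ to vanish somewhere, contradicting the fact that the Lagrangian projection of a Legendrian is an immersion; hence $|z_\tau(s)-z_\tau(s')|$ is a positive continuous function on a compact set. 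Now choose $\delta$ small enough that $2C_1\delta<\ell_0$: then $|g(t,s)|+|g(t,s')|<\ell_0\le|z_{f(t)}(s)-z_{f(t)}(s')|$, so the corrected $z$-values stay distinct and $L$ is embedded. Shrinking $\delta$ further so that $C_1\delta<\epsilon$ makes $L$ lie in the $\epsilon$-neighborhood of $\Gamma$, since the two differ only in the $z$-coordinate, by at most $|g|\le C_1\delta$.

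The only steps requiring genuine verification are the identity $\tilde\Gamma^{\ast}(e^t\alpha_0)=d(e^tg)$ and the uniform lower bound $\ell_0$ on Reeb chord lengths; the rest is bookkeeping. I expect the (mild) subtlety to be the observation that a single first-order correction already makes the pulled-back form \emph{exactly} closed, so that no Moser-type iteration is needed — this is exactly what the displayed identity provides, and it simultaneously yields a primitive that is manifestly constant on the ends, matching Definition~\ref{defn: exact Lagrangian with cylindrical ends}(i). Alternatively one could produce $L$ from the ambient contact Hamiltonian generating the isotopy by a Lagrangian suspension, but the direct perturbation above is more elementary and makes the $C^0$-estimate transparent.
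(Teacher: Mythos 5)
Your proof is correct and follows essentially the same approach as the paper's: both perturb the trace only in the $z$-direction by the quantity $g(t,s)=\alpha_0(\partial_t\Gamma)$ (called $H$ in the paper) and verify the identity $\tilde\Gamma^{\ast}(e^t\alpha_0)=d(e^t g)$, which makes the perturbed surface exact Lagrangian with a primitive that vanishes on the cylindrical ends. The only real difference is that you spell out the embeddedness estimate via a uniform lower bound on Reeb-chord lengths along the isotopy, whereas the paper just asserts embeddedness for $H$ sufficiently small — your version is a reasonable filling-in of that step, not a change of method.
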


\begin{proof}
Let $H(t,s)= \alpha_0\left({\bdry\Gamma\over \bdry t}(t,s)\right)$, where $\alpha_0=dz-ydx$. We then write
\[
\Gamma(t,s)=\left(t,x(t,s),y(t,s),z(t,s)\right),
\]
and consider the deformed map
\begin{equation}\label{Eq:Gammamodif}
\Gamma'(t,s)=\left(t,x(t,s),y(t,s),z(t,s)+H(t,s)\right).
\end{equation}
We calculate
\begin{eqnarray*}
(\Gamma')^*(e^t\alpha_0)&=& (\Gamma')^*(e^t (dz-ydx))\\
&=& e^t\left( {\bdry z\over \bdry t}dt+{\bdry z\over \bdry s}ds -y\left({\bdry x\over \bdry t}dt +{\bdry x\over \bdry s}d s\right)+{\bdry H\over \bdry t}dt +{\bdry H\over \bdry s}ds\right)\\
&=& e^t \left( \left( {\bdry z\over \bdry t}-y{\bdry x\over \bdry t}\right)d t + {\bdry H\over \bdry t}d t +{\bdry H\over \bdry
s}ds\right)\\
&=& e^t \left(H(t,s)dt+ {\bdry H\over \bdry t}dt +{\bdry H\over\bdry s}ds\right)\\
&=& d(e^t H(t,s)).
\end{eqnarray*}
Hence $\Gamma'$ is exact Lagrangian. Since $\Gamma\colon\R\times S\rightarrow \R\times M$ is an embedding, $\Gamma'$ is also an embedding, provided the modification $H(t,s)$ is sufficiently small. Finally, Condition (i) in Definition~\ref{defn: exact Lagrangian with cylindrical ends} is satisfied since $H(t,s)=0$ at the ends of $\Gamma'$. The lemma follows.
\end{proof}

\subsection{Simple Legendrian isotopies}
\label{sec:nomovescob}

Let $\Lambda_\tau\subset\R^{3}$, $\tau\in[0,1]$, be a $1$-parameter family of Legendrian links from $\Lambda_0$ to $\Lambda_1$. After a small perturbation, $\Lambda_\tau$ is chord generic except at isolated instances $0<\tau_1<\dots<\tau_k<1$, where the Lagrangian projection $\Pi_\C(\Lambda_{\tau_i})$ has a self-tangency or a triple point and passing from $\Pi_\C(\Lambda_{\tau_i-\epsilon})$ to $\Pi_\C(\Lambda_{\tau_i+\epsilon})$, $\epsilon>0$ small, corresponds to a Legendrian Reidemeister move; see \cite[Figure 6]{Ka1}.

\begin{defn}
A Legendrian isotopy $\Lambda_{\tau}$, $\tau\in[0,1]$, is \emph{$\Pi_\C$-simple} if the Lagrangian projection of $\Lambda_{\tau}$ has only transverse double points for all $\tau\in[0,1]$, i.e., there are no Reidemeister moves during the isotopy.
\end{defn}

In particular, if $\Lambda_\tau$, $\tau\in[0,1]$, is $\Pi_\C$-simple, then there is a natural identification $\phi_{\tau,\tau^*}:\mathcal{C}(\Lambda_\tau)\stackrel\sim\to \mathcal{C}(\Lambda_{\tau^*})$ for all $\tau,\tau^*\in[0,1]$.

We now use Lemma \ref{lemma:isotop-cobord} to construct cobordisms of $\Pi_\C$-simple isotopies and compute the corresponding cobordism maps.

\begin{lemma}\label{l:noRM->id}
Let $\Lambda_\tau$, $\tau\in[0,1]$, be a $\Pi_\C$-simple Legendrian isotopy. Then there exist $\epsilon>0$ and a subdivision of $[0,1]$ into intervals $[a,a+\epsilon]$ of length $\epsilon$ such that the Lagrangian cobordisms that correspond to $\Lambda_\tau$, $\tau\in[a,a+\epsilon]$, all induce the DGA isomorphisms
$$(\A(\Lambda_a),\bdry_a)\stackrel\sim\to (\A(\Lambda_{a+\epsilon}),\bdry_{a+\epsilon})$$
which map $c\in \mathcal{C}(\Lambda_a)$ to the corresponding chord $\phi_{a,a+\epsilon}(c)\in \mathcal{C}(\Lambda_{a+\epsilon})$.
\end{lemma}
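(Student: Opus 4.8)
The plan is to realize each short piece of the isotopy as a cobordism, pass to a Morse cobordism, read off the induced DGA map from its rigid flow trees via Theorem~\ref{thm:flowtreecomp}, and check that for a short enough subinterval the only rigid flow trees are the tautological ones.

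First I would fix $\epsilon>0$ small and subdivide $[0,1]$ into intervals $[a,a+\epsilon]$. Applying Lemma~\ref{lemma:isotop-cobord} to $\{\Lambda_\tau\}_{\tau\in[a,a+\epsilon]}$ (reparametrized so that the positive end is $\Lambda_a$, with $\left|\tfrac{\bdry\Gamma}{\bdry t}\right|_{C^0}$ as small as we wish), I obtain a cylindrical exact Lagrangian cobordism $L$ from $\Lambda_a$ to $\Lambda_{a+\epsilon}$. By Lemma~\ref{back and forth} together with Theorem~\ref{thm:flowtreecomp}, for a suitable representative $L=(X_\delta,L_{\delta;\sigma})$ coming from a Morse cobordism $L^{\Mo}\subset T^*F$ with $F=\R\times[a,a+\epsilon]$ we have $\Phi_L(c)=\sum_{\dim\TT(c;\bb)=0}|\TT(c;\bb)|\,\bb$ for $c\in\CC(\Lambda_a)$, any other representative of the subisotopy inducing a chain-homotopic map. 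The point I want to exploit is that since $\left|\tfrac{\bdry\Gamma}{\bdry t}\right|_{C^0}\to 0$ as $\epsilon\to 0$, the Legendrian lift $\tilde L^{\Mo}$ may be taken $C^1$-close, away from the collars $U_\pm$, to the product Morse cobordism of the single Legendrian $\Lambda_a$, while over $U_\pm$ it has the parabolic normal form of Definition~\ref{defn: Morse cobordism}.

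Next I would exhibit the tautological trees. As the isotopy is $\Pi_\C$-simple, the double points of $\Pi_\C(\Lambda_\tau)$ sweep out disjoint arcs $\tau\mapsto c(\tau)$ that define the identification $\phi_{a,a+\epsilon}$, and for each $c\in\CC(\Lambda_a)$ there is a flow tree $\gamma_c$ of $\tilde L^{\Mo}$ consisting of a single flow line of the relevant height-function difference from the positive puncture over $c\in\CC(\Lambda_a)\subset J^1\bdry_+F$, through the interior, to the negative puncture over $\phi_{a,a+\epsilon}(c)\in\CC(\Lambda_{a+\epsilon})\subset J^1\bdry_-F$, tracking the arc $c(\tau)$. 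A $\Pi_\C$-simple isotopy preserves rotation numbers, so $|c|_{\Lambda_a}=|\phi_{a,a+\epsilon}(c)|_{\Lambda_{a+\epsilon}}$; hence by the grading shift of Remark~\ref{rmk:Morsegrading} and the dimension formula \eqref{eqn: dim of tree} one gets $\dim\TT(c;\phi_{a,a+\epsilon}(c))=|c|_{\Lambda_a}-|\phi_{a,a+\epsilon}(c)|_{\Lambda_{a+\epsilon}}=0$, and transversality of $\gamma_c$ for $\epsilon$ small follows from nondegeneracy of the Hessians along the parabolic collars. Thus each $\gamma_c$ contributes the term $\phi_{a,a+\epsilon}(c)$ to $\Phi_L(c)$.

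The remaining task---and I expect the main obstacle---is to rule out all other rigid flow trees for $\epsilon$ small. Here I would argue by contradiction: given $\epsilon_i\to 0$ and rigid flow trees $\gamma_i$ of $\tilde L^{\Mo}_{\epsilon_i}$ distinct from the $\gamma_c$, observe that inside $U_\pm$ near a Reeb chord there is no third sheet and no vertical tangency, so no branching, switch, or end can occur there; hence $\gamma_i$ must make an $\Ordo(1)$ excursion in the noncompact $\R$-direction of $F$. Collapsing the shrinking $[a,a+\epsilon_i]$-factor of $F$ and invoking the compactness results of \cite{E1}, the resulting ``shadows'' on $\Lambda_a$ subconverge to a (possibly broken) flow tree of $\Lambda_a$ with positive puncture $c$ and negative punctures $\bb$. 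But rigidity of $\gamma_i$ forces $|c|=|\bb|$, so this limiting flow tree of $\Lambda_a$ has formal dimension $|c|-|\bb|-1=-1$, which is empty for generic $\Lambda_a$ by \cite[Theorem~1.1]{E1}; the only configuration escaping this is the collapse of some $\gamma_c$ to the constant tree at $c$. Therefore, for $\epsilon$ small, $\Phi_L(c)=\phi_{a,a+\epsilon}(c)$ for every $c\in\CC(\Lambda_a)$, and since $\Phi_L$ is a priori a DGA map and $\{\phi_{a,a+\epsilon}(c)\}$ is a free generating set, $\Phi_L$ is the asserted DGA isomorphism. The two places I expect to need care are (i) making $\tilde L^{\Mo}$ genuinely $C^1$-close to a product while retaining the standard collars, and (ii) the collapsing step, in particular excluding trees that wander far in the $\R$-direction before returning; the latter should be controllable by the monotonicity and subharmonicity estimates of Section~\ref{Sec:disktree} together with the energy bound of Lemma~\ref{lemma: energy bound}.
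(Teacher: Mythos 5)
Your argument takes a genuinely different route from the paper's. You pass through the Morse cobordism/flow tree machinery (Lemma~\ref{back and forth}, Theorem~\ref{thm:flowtreecomp}), extract the tautological trees $\gamma_c$, and then rule out extras by collapsing the shrinking Morse cobordism to flow trees of $\Lambda_a$ and invoking \cite[Theorem~1.1]{E1}. The paper's proof never leaves the holomorphic-disk picture: it supposes there are $\op{ind}=0$ disks in the cobordisms over $[a_i,a_i+\epsilon_i]$ that are not close to trivial strips, passes to a subsequence with $a_i\to a$, applies Gromov compactness to extract a nontrivial $\op{ind}=0$ disk in the \emph{trivial} cylinder $\R\times\Lambda_a$, and observes that the projection $\Pi_\C$ of such a disk would have Fredholm index $-1$. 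The key input is then the \emph{argument principle}: for any holomorphic disk $u$ in $\C$ with boundary on $\Pi_\C(\Lambda_a)$ and convex corners, $\op{ind}(u)=\mu(\bdry u)-2\ge 0$, so no such disk exists. This is both shorter and, importantly, unconditional---it needs only transverse double points of $\Pi_\C(\Lambda_a)$, i.e.\ chord genericity, which you have by the $\Pi_\C$-simplicity hypothesis.

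Two points in your argument deserve care. First, your appeal to \cite[Theorem~1.1]{E1} at the limit $\Lambda_a$ requires $\Lambda_a$ to be generic in the sense needed for flow-tree transversality, and a subsequential limit of the $a_i$ is not automatically such a point; the paper's argument-principle step sidesteps this entirely, since it holds for every chord-generic diagram, and you could make the same repair by replacing \cite[Theorem~1.1]{E1} with the observation that dimension-$(-1)$ flow trees of a $1$-dimensional Legendrian correspond to index-$(-1)$ polygons in $\C$, which the argument principle excludes. Second, your ``collapsing step'' asks for compactness of flow trees on a family of Morse cobordisms degenerating as $\epsilon_i\to 0$, which is not the fixed-Legendrian compactness statement of \cite{E1}; it is plausible but would need a separate argument, and the monotonicity/subharmonicity estimates you cite are analytic tools for controlling holomorphic \emph{disks} under rescaling, not the ODE flows that make up flow trees, so they do not supply the missing compactness here. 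On balance your approach buys a pleasing uniformity with the rest of the paper's flow-tree computations, at the cost of a heavier dependence chain and the two issues above; the paper's route is the more elementary and self-contained one for this lemma.
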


\begin{proof}
Arguing by contradiction, suppose there exist $\epsilon_i\to 0$ and intervals $[a_i,a_i+\epsilon_i]$ such that the concordance corresponding to $\Lambda_\tau$, $\tau\in[a_i,a_i+\epsilon_i]$, contains an $\op{ind}=0$ holomorphic disk which is not close to a trivial strip. By passing to a subsequence we may assume that $a_i\to a$. By Gromov compactness, the trivial cylinder over $\Lambda_a$ has an $\op{ind}=0$ disk which is not a trivial strip. Such a disk projects to a nontrivial disk of Fredholm index $-1$ in $\C$ with boundary on $\Pi_\C(\Lambda_a)$.

On the other hand, no such disk exists by the argument principle: If $u: D_m \to \C$ is a holomorphic disk whose boundary maps to $\Pi_\C(\Lambda_a)$, then an easy calculation shows that the Fredholm index $\op{ind}(u)$ of $u$ is given by $\mu(\bdry u) -2$, where $\mu(\bdry u)$ is the Maslov index along $\bdry u$ with positive ${\pi\over 2}$-rotations at the corners.  By the argument principle $\mu(\bdry u)\geq 2$.  Hence $\op{ind}(u)\geq 0$ and the lemma follows.
\end{proof}

\subsection{Cobordisms corresponding to Reidemeister moves}
\label{sec:movescob}

Following \cite{EK}, we consider three Reidemeister moves:
\begin{itemize}
\item[(L1)] a triple point move;
\item[(L2)] pair cancellation of Reeb chords;
\item[(L3)] pair creation of Reeb chords.
\end{itemize}
The (L1)-, (L2)-, and (L3)-moves are depicted in Figures~\ref{fig:L1a}, ~\ref{fig:L1b}, and ~\ref{fig:L2}. The Morse cobordisms $L^{\Mo}\subset T^*F$, $F=\R\times[0,1]$, corresponding to the (L1)-, (L2)-, and (L3)-moves are referred to as the {\em triple point}, {\em death}, and {\em birth} cobordisms. Note that Morse cobordisms are called ``Legendrian submanifolds with standard ends'' in \cite[Section 3.2]{EK}.

We remark that the exact Lagrangian cobordisms corresponding to the triple point, death, and birth Morse cobordisms, as well as the saddle and minimum cobordisms from Sections~\ref{subsub:minimum} and \ref{subsub:saddle}, satisfy Definition~\ref{defn: exact Lagrangian with cylindrical ends}(i) since we may assume that the modifications occur inside a small region and outside this region the isotopy is trivial.

\subsubsection{Abstract and geometric perturbations}

We would like to apply Theorem~\ref{thm:flowtreecomp} to compute the DGA morphisms corresponding the above Morse cobordisms $L^{\Mo}$ from $\Lambda_+$ to $\Lambda_-$. For simplicity we will be using $\F$-coefficients.

Let $L\subset T^*F$ be an exact Lagrangian cobordism which satisfies the following:
\begin{itemize}
\item $L$ is exact Lagrangian isotopic to $L^{\Mo}$ relative to $\Lambda_-\times\{0\}$ and $\Lambda_+\times\{1\}$; and
\item $L$ restricts to $\Lambda_-\times[0,\epsilon]$ and $\Lambda_+\times [1-\epsilon,1]$ over $\R\times[0,\epsilon]$ and $\R\times[1-\epsilon,1] \subset F$.
\end{itemize}

A {\em geometric perturbation} of $L$ is a perturbation of the exact Lagrangian $L$ relative to $\bdry L$ together with a perturbation of the Riemannian metric on $F$ which is used to define gradients. We assume additionally that the perturbation of $L$ is a Morse cobordism.  For a generic geometric perturbation, the formal dimension $\leq 1$ moduli spaces of flow trees with one positive puncture on $\Lambda_+$ and arbitrarily many negative punctures on $\Lambda_-$ are transversely cut out by \cite[Theorem 1.1]{E1}.   A count of rigid flow trees from $a$ to $\mathbf{b}$ with respect to a generic geometric perturbation immediately gives $|\mathcal{T}(a;\mathbf{b})|$ in Theorem~\ref{thm:flowtreecomp}.

However, it is easier to compute the DGA morphisms using Morse-Bott type considerations since $L$ is close to being Morse-Bott degenerate. In order to rigorously treat Morse-Bott theory for flow trees, we use an {\em abstract perturbation} scheme which counts {\em perturbed Morse-Bott flow tree cascades}.  They are defined in \cite[Section 3.4]{EK} under the name ``perturbed generalized flow trees''. At this point the reader is encouraged to review Sections 3.3 and 3.4 of \cite{EK}, including the notions of a {\em Morse-Bott flow tree cascade} (=``generalized flow tree''), a {\em slice tree}, a {\em connector}, and the {\em level} of a cascade. In this paper we will simply refer to a ``Morse-Bott flow tree cascade'' as a ``cascade''. For a generic abstract perturbation, the formal dimension $\leq 1$ moduli spaces of perturbed cascades with one positive puncture and arbitrarily many negative punctures are transversely cut out by \cite[Lemma~3.9]{EK}. In \cite[Lemmas~6.6, 6.7, and 6.8]{EK}, for each of the moves, all the rigid perturbed cascades were determined for a certain generic abstract perturbation of $L$.

Let
$$\Phi_{\rm a},\Phi_{\rm g}:\A(\Lambda_{+})\to\A(\Lambda_{-})$$
be the maps defined by counting rigid perturbed cascades or rigid flow trees of $\tilde L$ using an abstract perturbation and a geometric perturbation, respectively.

\begin{lemma}\label{lem:geomvsabstr}
The maps $\Phi_{\rm a}$ and $\Phi_{\rm g}$ are DGA morphisms and are chain homotopic, i.e., there is a degree $+1$ map $K$ which takes generators of $\A(\Lambda_{+})$ to $\A(\Lambda_{-})$ so that
\[
\Phi_{\rm a}-\Phi_{\rm g}=\Omega_{K}\circ\pa_{+}+\pa_{-}\circ\Omega_{K}.
\]
Here $\Omega_K$ is as in Lemma \ref{lem:chainhomotopy}.
\end{lemma}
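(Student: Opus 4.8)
The plan is to verify first that each of $\Phi_{\rm a}$ and $\Phi_{\rm g}$ is a DGA morphism, and then to produce the chain homotopy $K$ by interpolating between the geometric and the abstract perturbation and analyzing the resulting parametrized moduli space, in direct analogy with Lemma~\ref{lem:chainhomotopy} (equivalently \cite[Lemma~B.15]{E2}).

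For $\Phi_{\rm g}$: for a generic geometric perturbation the formal--dimension $\le 1$ moduli spaces of flow trees of $\tilde L$ with one positive and arbitrarily many negative punctures are transversely cut out by \cite[Theorem~1.1]{E1}, and they are compactified by two--level broken trees $\gamma_0\cup\gamma_1$ in which $\gamma_0$ is a rigid flow tree of $\tilde L$ and $\gamma_1$ is a union of rigid flow trees in the symplectization end --- the flow-tree analogue of Corollary~\ref{cor:bdrycob}. Counting the boundary points of the $1$-dimensional strata yields $\pa_-\circ\Phi_{\rm g}=\Phi_{\rm g}\circ\pa_+$, and the Leibniz identity holds because a flow tree splits along an interior edge into trees each carrying a single positive puncture. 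In fact, by Theorem~\ref{thm:flowtreecomp}, $\Phi_{\rm g}=\Phi_{(X_\delta,L_{\delta;\sigma})}$ for all sufficiently small $\delta,\sigma$, so this is already contained in Section~\ref{sec:diffandcob}. The identical argument, applied to the moduli spaces of perturbed cascades --- transversely cut out for a generic abstract perturbation by \cite[Lemma~3.9]{EK} and compactified by two--level broken cascades as in \cite[Section~3.4]{EK} --- shows that $\Phi_{\rm a}$ is a DGA morphism.

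Next I would construct $K$. Choose a generic $1$-parameter family of perturbation data $\mathfrak P_s$, $s\in[0,1]$, with $\mathfrak P_0$ the abstract perturbation and $\mathfrak P_1$ the geometric one, and form the parametrized moduli space $\MM(a;\bb)=\coprod_{s\in[0,1]}\MM^{\mathfrak P_s}(a;\bb)$. When $\op{ind}(a;\bb)=-1$ this is generically a finite set of points occurring at isolated values of $s$; set
\[
K(c)=\sum_{\op{ind}(c;\bb)=-1}\bigl|\MM(c;\bb)\bigr|\,\bb,\qquad c\in\CC(\Lambda_+).
\]
The compactified $1$-dimensional parametrized moduli space (formal--dimension $0$ configurations over $[0,1]$) then has boundary of three kinds: rigid configurations for $\mathfrak P_0$, rigid configurations for $\mathfrak P_1$, and broken configurations $u_0\cup u_1$ or $u_{-1}\cup u_0$ with exactly one exceptional ($\op{ind}=-1$) factor. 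Reading off this boundary --- and recalling that $\F=\Z/2$ --- gives precisely
\[
\Phi_{\rm a}-\Phi_{\rm g}=\Omega_K\circ\pa_+ + \pa_-\circ\Omega_K,
\]
with $\Omega_K$ as in Lemma~\ref{lem:chainhomotopy}. A convenient simplification is to interpolate in two stages: first from $\mathfrak P_0$ to a geometric perturbation that is $C^0$-close to the Morse--Bott degenerate $L$ (so that \cite[Lemmas~6.6, 6.7, and 6.8]{EK} and the cascade/tree correspondence of \cite[Section~3.4]{EK} apply), and then to invoke Lemma~\ref{lem:chainhomotopy} to move between any two geometric perturbations.

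The hard part will be the same subtlety that appears in the proof of Lemma~\ref{lem:chainhomotopy}: turning on the interpolation can create new exceptional ($\op{ind}=-1$) configurations --- for instance when gluing a formal--dimension $-1$ cascade with two negative ends to lower levels --- and one must guarantee that in the compactification each exceptional configuration is used at most once in any broken configuration. As there, this is forced by a time-ordered, domain-dependent perturbation scheme governed by a function $\sigma\colon\CC(\Lambda_+)\cup\CC(\Lambda_-)\to(0,\epsilon)$ with $\sigma(c)/\sigma(c')>N$ whenever $\mathfrak A(c)>\mathfrak A(c')$; this prescribes the ordering of the gluing and shift parameters so that, by Gromov compactness, any $\op{ind}=-1$ factor $\widetilde u$ meets a perturbation $C^0$-close to $\mathfrak P_{s^*}$ at its positive end. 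The only genuinely new bookkeeping beyond \cite{E2} and \cite{EK} is that one must simultaneously track the connectors of a cascade and the scaling parameters $\delta,\sigma$ of Section~\ref{Sec:disktree}; this does not affect the structure of the argument.
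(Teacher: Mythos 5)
Your argument is correct in outline, but it takes a genuinely different route to the chain homotopy than the paper does. You run the direct parametrized argument: interpolate through a generic family $\mathfrak P_s$ of perturbations, define $K$ by counting $\op{ind}=-1$ configurations in the parametrized moduli space, and read the identity off the boundary of the $1$-dimensional parametrized spaces, using the time-ordered, domain-dependent scheme of Lemma~\ref{lem:chainhomotopy} to ensure each exceptional configuration appears at most once in a broken limit. The paper instead avoids parametrized cascade analysis altogether: it forms the suspension $F_I=F\times[0,1]/\!\sim$ and the Legendrian $L_I=L\times[0,1]/\!\sim\subset J^1(F_I)$, chooses a Morse function on the bigon $D$ with one saddle $h$, one maximum $e_+$ and one minimum $e_-$, so that $\CC(L_I)=\CC(\Lambda_-)\cup\hat\CC(\Lambda_+)\cup\tilde\CC(\Lambda_+)$ with $|\hat c|=|c|+1$, $|\tilde c|=|c|+2$, and the two gradient trajectories from $h$ to $e_-$ carry the two perturbations $\mathfrak P_0,\mathfrak P_1$; it then computes $\Delta\hat c=\Phi_{\rm a}(c)+\Phi_{\rm g}(c)+\hat{\mathbf O}(1)$ and $\Delta\tilde c=\hat c+K(\hat c)+Q(\pa_+c)+\mathbf O(2)$ (the $Q$-term being pinned down by exactly the time-ordered perturbation you describe, applied to slice trees over $e_+$), and extracts the homotopy relation from $\Delta^{2}=0$. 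What the paper's route buys is that all compactness, gluing and ``exceptional-configuration-used-once'' bookkeeping is absorbed into the already-established finite-dimensional transversality and $\Delta^2=0$ for perturbed cascades of $L_I$ (\cite[Lemma 3.9, Lemma 3.13]{EK}), so no new $1$-parameter gluing theory for abstractly perturbed cascades is needed; your route is more direct and mirrors \cite[Lemma B.15]{E2}, but it requires you to actually establish the boundary description of $1$-dimensional parametrized moduli spaces of perturbed cascades (including the creation of new $\op{ind}=-1$ configurations when the perturbation is switched on), which is precisely the analysis the suspension trick is designed to sidestep. Your treatment of the DGA-morphism claim and your use of Theorem~\ref{thm:flowtreecomp} to identify $\Phi_{\rm g}$ with a cobordism map agree with the paper.
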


\begin{proof}
The fact that $\Phi_{\rm a}$ and $\Phi_{\rm g}$ are DGA morphisms follows from the compactness and transversality properties of perturbed cascades in $J^{1}F$. The proofs require only standard finite-dimensional arguments given in \cite[Lemma 3.9]{EK}.

The proof of the chain homotopy is similar to that of \cite[Lemma 3.13]{EK} and is the usual chain homotopy argument in disguise.  We make the following simplifying assumptions:
\begin{enumerate}
\item[(i)] the abstract and geometric perturbations are close and are connected by a $1$-parameter family of abstract perturbations $\mathfrak{P}_t$, $t\in[0,1]$;
\item[(ii)] there is a single $\op{ind}=0$ disk in the $1$-parameter family of moduli spaces of rigid disks $\bigsqcup_{t\in[0,1]} \mathcal{M}_t$, where $\mathcal{M}_t$ is with respect to $\mathfrak{P}_t$.\footnote{The reason we consider $\op{ind}=0$ disks is that $\Phi_{\rm a}$ and $\Phi_{\rm g}$ count $\op{ind}=1$ disks in $\R\times J^1 F$.}
\end{enumerate}
Note that we can view a geometric perturbation as an instance of an abstract perturbation. 

Let $F_I= F\times[0,1]/\sim$, where $(x,t)\sim(x,t')$ for all $x\in \bdry F$ and $t,t'\in[0,1]$, and let $L_I=L\times [0,1]/\sim$ be a Legendrian submanifold of $J^1(F_I)$. Also let $D=[0,1]\times[0,1]/\sim$, where $(s,t)\sim (s,t')$ for $s\in\{0,1\}$ and $t,t'\in[0,1]$, and let $\pi: T^*F_I\to D$ be the corresponding projection.  We assume that $L_I\cap \pi^{-1}(D_\pm)$ is a trivial cobordism over $\Lambda_\pm$, where $$D_-=\{0\leq s\leq \tfrac{1}{3}\},\quad D_+= \{\tfrac{2}{3}\leq s\leq 1\}$$
are subsets of $D$.

We think of $D$ and $F_I$ as smooth surfaces in the obvious way and choose a Morse function $f$ on $D$ such that:
\begin{itemize}
\item $f$ has one saddle point $h=\{s=1\}$, one maximum $e_+$ in the interior of $D_+$, one minimum $e_-=\{s=0\}$, and no other critical points;
\item $-\nabla f=-\bdry_s$ on $D-D_+-D_-$; and
\item $t=0$ and $t=1$ are gradient trajectories from $h$ to $e_-$.
\end{itemize}
The set $\mathcal{C}(L_I)$ of Reeb chords of $L_I$ is given by
\[
\CC(L_I)=\CC(\Lambda_-)\cup \hat\CC(\Lambda_+)\cup\tilde \CC(\Lambda_+),
\]
where $\CC(\Lambda_-)$, $\hat\CC(\Lambda_+)$, and $\tilde\CC(\Lambda_+)$ are the sets of Reeb chords that lie above $e_-$, $h$, and $e_+$, respectively.  Each of $\hat\CC(\Lambda_+)$ and $\tilde\CC(\Lambda_+)$ is in one-to-one correspondence with $\CC(\Lambda_+)$, except that $|\hat c|=|c|+1$ and $|\tilde c|=|c|+2$. Here $c\in \CC(\Lambda_+)$ and $\hat c$ and $\tilde c$ are the corresponding elements in $\hat\CC(\Lambda_+)$ and $\tilde \CC(\Lambda_+)$; also let $d\in \CC(\Lambda_-)$. Let $\hat{\mathcal{A}}(\Lambda_+)$ and $\tilde {\mathcal{A}}(\Lambda_+)$ be the algebras generated by $\hat\CC(\Lambda_+)$ and $\tilde\CC(\Lambda_+)$.

Assume that the cobordism $L$ with the perturbations $\frak{P}_0$ and $\frak{P}_1$ correspond to the two flow lines connecting $h$ to $e_-$. As in the proof of \cite[Lemma 3.13]{EK}, we claim that the differential $\Delta$ of $\mathcal{A}(L_I)$ is given as follows:
\begin{align}
\label{eq1} \Delta d &= \pa_- d,\\
\label{eq2} \Delta \hat c &= \Phi_{\rm a}(c)+\Phi_{\rm g}(c)+\hat{\mathbf{O}}(1),\\
\label{eq3} \Delta \tilde c &= \hat c + K(\hat{c}) + Q (\pa_+ c)+\mathbf{O}(2).
\end{align}
Here:
\begin{itemize}
\item $\hat{\mathbf{O}}(n)$ (resp.\ $\mathbf{O}(n)$) is a sum of words, each of which has at least $n$ letters in $\hat\CC(\Lambda_+)$ (resp.\ $\hat\CC(\Lambda_+)\cup\tilde\CC(\Lambda_+)$);
\item $K:\hat{\CC}(\Lambda_+)\to \mathcal{A}(\Lambda_-)$ is the chain homotopy corresponding to the $\op{ind}=0$ disk; and
\item  $Q (b_1\dots b_m) = \tilde b_1 \Phi_{\rm g}(b_2\dots b_m) + \Phi_{\rm a}(b_1)\tilde b_2 \Phi_{\rm g}(b_3\dots b_m)$
$$\qquad \quad + \dots +\Phi_{\rm a}(b_1\dots b_{m-1})\tilde b_m.$$
\end{itemize}

We briefly indicate how the terms of Equation~\eqref{eq3} are obtained.  We first enumerate the rigid unperturbed cascades which contribute to $\mathbf{O}(0)$ and $\mathbf{O}(1)$. In the case of $\mathbf{O}(0)$, the chain homotopy term $K$ is obtained by viewing $f$ as a perturbation of a Morse function $\tilde f$ such that:
\begin{itemize}
\item $\tilde f$ has one maximum $e_+=\{s=1\}$ and one minimum $e_-=\{s=0\}$ and no critical points in $int(D)$; and
\item $t=0$ and $t=1$ are gradient trajectories from $e_+$ to $e_-$.
\end{itemize}
In the case of $\mathbf{O}(1)$, a rigid cascade $\Gamma$ can have at most one level by an index computation.  If the level of $\Gamma$ is zero, then it is a connector from $\tilde c$ to $\hat c$.  If the level of $\Gamma$ is one, then it consists of a slice tree $\Gamma_{e_+}$ from $\tilde c$ to $\tilde b_1,\dots,\tilde b_m$, together with $m-1$ rigid trees $\gamma_i$, $i\not=i_0$, from $\tilde b_i$ to some component $\mathbf{a}_i=a_{i1}\dots a_{ij_i}$ of $\Phi_{\rm a}(b_i)$ or $\Phi_{\rm g}(b_i)$. (In the case where $L_I$ is not a product $\Lambda\times D$, we substitute rigid trees for connectors.)

An abstract perturbation analogous to the time-ordered, domain-dependent abstract perturbation from the proof of Lemma~\ref{lem:chainhomotopy} then gives the term $Q(b_1\dots b_m)$, as follows: Let $N(e_+)\subset D$ be a small neighborhood of $e_+$, $q\in N(e_+)$, $\Gamma_q$ a parallel copy of the slice tree $\Gamma_{e_+}$ over $q$, and $c(q)$ the Reeb chord over $q$ corresponding to $c\in\mathcal{C}(\Lambda_+)$. Suppose that
\begin{enumerate}
\item[(*)] the $\op{ind}=0$ disk that contributes to $K$ lies over the gradient trajectory $t={1\over 2}$ of $f$.
\end{enumerate}
For $q\in N(e_+)$, we define the {\em perturbation function} (cf.\ \cite[Section 3.4]{EK})
$$v(\Gamma_q): C(\Gamma_q)\to T_q D\simeq \R^2,$$ where $C(\Gamma_q)$ is the cotangent lift of $\Gamma_q$, so that the following holds for all rigid slice trees $\Gamma_q$:
\begin{enumerate}
\item $v(\Gamma_q)$ is independent of $q$;
\item $v(\Gamma_q)$ is zero near $c(q)$ and takes distinct constant values $v_i\in \R^2$ near $b_i(q)$;
\end{enumerate}
We specify $v_i$ further:
\begin{enumerate}
\item[(3)] Let $\overline{v}_i=(s_i,-\varepsilon)$, where $\varepsilon>0$ is small and $s_1<\dots < s_m$.  Then $v_i$ is a generic point which is $\delta$-close to $\overline{v}_i$, where $0<\delta\ll \varepsilon$.
\end{enumerate}
We will write $\Gamma_q+ v(\Gamma_q)$ for $\Gamma_q$ shifted in the $D$-direction via $v(\Gamma_q)$.
A rigid perturbed cascade $\tilde \Gamma$ that corresponds to $\Gamma$ consists of the following:
\begin{itemize}
\item[(a)] a connector from $\tilde c$ to $c(q)$ for some point $q\in N(e_+)$;
\item[(b)] a perturbed flow tree $\Gamma_q+v(\Gamma_q)$, where $b_i(q+v_i)$ is the perturbed negative puncture corresponding to $b_i(q)$; and
\item[(c)] for each $i\not=i_0$, a flow tree starting at $b_i(q+v_i)$.
\end{itemize}
In order for such a cascade to be rigid and contribute to a term that is linear in the $\tilde{\CC}(\Lambda_+)$-variables, we must have $q+v_{i}=e_+$ when $i=i_0$; this uniquely determines $q$. 
By the choice (3) of the $v_i$, together with (*), there is exactly one perturbed cascade $\tilde\Gamma$ corresponding to $\Gamma$ and it contributes to $\Phi_{\rm a}(b_1\dots b_{i_0-1})\tilde b_{i_0} \Phi_{\rm g}(b_{i_0+1}\dots b_m)$.  

Applying $\Delta$ to Equation~\eqref{eq3} and using $\Delta^{2}=0$ we find that
\[
\Phi_{\rm g}(c)+\Phi_{\rm a}(c)=\pa_-(K(\hat c))+\Omega_K(\pa_+ c).
\]
This follows from restricting to the terms without letters in $\hat\CC(\Lambda_+)$ and $\tilde\CC(\Lambda_+)$. This proves the lemma.
\end{proof}

\begin{rmk}
In \cite{EK}, the full DGA differentials of the Morse cobordisms corresponding to (L1)--(L3) were computed using abstractly perturbed flow trees. The calculation needed here is simpler: we only need to consider trees with one positive puncture at the maximum and all other punctures at the minimum. In particular, the calculations for (L1) and (L2) can be carried out with only geometric perturbations.
\end{rmk}

\subsubsection{Triple point cobordisms}

Let $L^{\Mo}_{\rm tr}$ be a triple point cobordism from $\Lambda_{+}$ to $\Lambda_{-}$ and let $(X_{\rm tr},L_{\rm tr})$ be the corresponding exact Lagrangian cobordism with cylindrical ends. Then there is a canonical identification $\CC(\Lambda_{+})\simeq\CC(\Lambda_{-})$. There are two types of (L1)-moves, denoted by (L1a) and (L1b); see Figures~\ref{fig:L1a} and \ref{fig:L1b}.

\begin{figure}[ht]
\begin{center}
\psfragscanon
\psfrag{a}{\tiny $a$}
\psfrag{b}{\tiny $b$}
\psfrag{c}{\tiny $c$}
\includegraphics[width=0.7\linewidth]{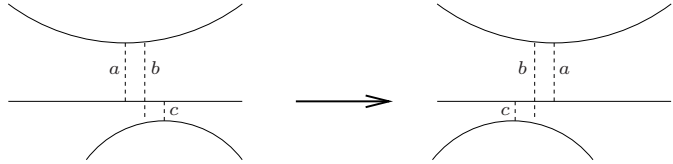}
\end{center}
\caption{An (L1a)-isotopy.}\label{fig:L1a}
\end{figure}

\begin{figure}[ht]
\begin{center}
\psfragscanon
\psfrag{a}{\tiny $a$}
\psfrag{b}{\tiny $b$}
\psfrag{c}{\tiny $c$}
\includegraphics[width=0.7\linewidth]{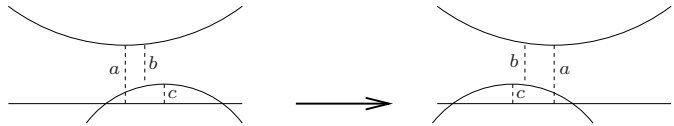}
\end{center}
\caption{An (L1b)-isotopy.}\label{fig:L1b}
\end{figure}

We define two DGA morphisms
$$\phi_{(\rm L1a)}, \phi_{(\rm L1b)}: \A(\Lambda_+)\to \A(\Lambda_-)$$ as follows:  $\phi_{(\rm L1a)}$ maps $x\in \CC(\Lambda_+)$ to the corresponding $x\in\CC(\Lambda_-)$ and $\phi_{(\rm L1b)}$ maps $a\mapsto a+bc$ and all other $x\in \CC(\Lambda_+)$ to the corresponding $x\in \CC(\Lambda_-)$.

\begin{lemma}\label{lem:triple}
The cobordism map $\Phi_{(X_{\rm tr},L_{\rm tr})}\colon\A(\Lambda_{+})\to\A(\Lambda_{-})$ is given by $\phi_{(\mathrm{L1a})}$ or $\phi_{(\mathrm{L1b})}$, as appropriate.
\end{lemma}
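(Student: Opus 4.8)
The plan is to compute $\Phi_{(X_{\rm tr},L_{\rm tr})}$ by means of the flow tree description in Theorem~\ref{thm:flowtreecomp} and then to read off the answer from the enumeration of rigid flow trees for the (L1)-move already carried out in \cite{EK}. First I would note that $\phi_{(\rm L1a)}$ and $\phi_{(\rm L1b)}$ are both genuine DGA morphisms: each is an $\F$-algebra map by definition, and a direct check (as in \cite{EK}) shows it commutes with $\bdry_{\pm}$; since $\Phi_{(X_{\rm tr},L_{\rm tr})}$ is likewise an algebra map, it suffices to verify the claimed formula on a single Reeb chord generator $a\in\CC(\Lambda_{+})$. By Lemma~\ref{back and forth} I may take $(X_{\rm tr},L_{\rm tr})$ to be the conical exact Lagrangian cobordism $(X_{\delta},L_{\delta;\sigma})$ associated to the triple point Morse cobordism $L^{\Mo}_{\rm tr}$, with $\delta,\sigma>0$ small; after a further generic geometric perturbation I may also assume that $\tilde L^{\Mo}_{\rm tr}$ is generic, so that the flow-tree moduli spaces with one positive puncture are transversely cut out by \cite[Theorem~1.1]{E1}. (By the remark at the end of Section~\ref{sec:movescob}, the (L1)-move only requires a geometric perturbation, so Lemma~\ref{lem:geomvsabstr} is not needed here.)

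Next I would apply Theorem~\ref{thm:flowtreecomp}, which gives
\[
\Phi_{(X_{\delta},L_{\delta;\sigma})}(a)=\sum_{\dim(\TT(a;\bb))=0}|\TT(a;\bb)|\,\bb .
\]
A short bookkeeping step records which $\bb$ can occur: by Remark~\ref{rmk:Morsegrading} and the dimension formula \eqref{eqn: dim of tree} with $k=1$, a tree $\gamma\in\TT(a;\bb)$ has $\dim(\gamma)=|a|_{\tilde L^{\Mo}}-|\bb|_{\tilde L^{\Mo}}-1=|a|_{\Lambda_{+}}-|\bb|_{\Lambda_{-}}$, so the trees that contribute are exactly those with $|a|_{\Lambda_{+}}=|\bb|_{\Lambda_{-}}$; in particular $\Phi_{(X_{\delta},L_{\delta;\sigma})}$ automatically has degree $0$, as it must.

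The core of the argument is then the enumeration of the rigid flow trees of $\tilde L^{\Mo}_{\rm tr}$ with one positive puncture. Away from a small neighborhood $N$ of the triple point, $L^{\Mo}_{\rm tr}$ is the trivial product cobordism over the (unchanged) Reeb chords, and a rigid tree disjoint from $N$ is forced to be a single flow line that is constant in the cobordism direction, joining a chord of $\Lambda_{+}$ to the corresponding chord of $\Lambda_{-}$; these account for the ``diagonal'' part $x\mapsto x$ of the map. For trees that enter $N$ I would pass to the standard local model of the (L1)-move, which is precisely the situation analyzed in \cite[Lemma~6.6]{EK}: in the (L1a) case no additional rigid trees appear, so $\Phi_{(X_{\rm tr},L_{\rm tr})}=\phi_{(\rm L1a)}$; in the (L1b) case there is exactly one additional rigid tree, with positive puncture at $a$ and negative punctures at $b$ and $c$, contributing the monomial $bc$ to $\Phi_{(X_{\rm tr},L_{\rm tr})}(a)$ with mod $2$ coefficient $1$, so $\Phi_{(X_{\rm tr},L_{\rm tr})}=\phi_{(\rm L1b)}$.

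The step I expect to be the main obstacle is the local flow-tree count near the triple point: writing out the local model for $\tilde L^{\Mo}_{\rm tr}$, identifying the relevant height-function differences together with their critical points, and verifying that the $0$-dimensional moduli space consists of exactly the diagonal trees plus (for (L1b)) the single trivalent tree above, each counted once mod $2$. This is essentially the content of \cite[Lemma~6.6]{EK}, so in practice the task reduces to carefully matching conventions — a ``Morse cobordism'' here being a ``Legendrian submanifold with standard ends'' there, and bookkeeping the grading shift of Remark~\ref{rmk:Morsegrading} — rather than to a genuinely new computation.
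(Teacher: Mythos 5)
Your proposal is correct and follows the same basic route as the paper: reduce via Theorem~\ref{thm:flowtreecomp} to a count of rigid flow trees of the triple point Morse cobordism, and identify that count with the enumeration carried out in \cite[Lemma~6.6]{EK}. The one place you diverge is in dropping Lemma~\ref{lem:geomvsabstr}: the paper's proof cites it precisely because \cite[Lemma~6.6]{EK} enumerates rigid \emph{abstractly perturbed} cascades, whereas Theorem~\ref{thm:flowtreecomp} counts flow trees for a geometric perturbation, so quoting the EK lemma as a black box requires that bridge (and then yields the formula a priori only up to DGA homotopy). Your alternative---invoking the paper's remark that for (L1) the relevant trees with a single positive puncture can be handled by a geometric perturbation alone, and then redoing the enumeration directly (diagonal strips away from the triple point, plus the single local tree from $a$ to $bc$ in the (L1b) model)---is legitimate, and it has the small advantage of giving the stated formula on the nose rather than up to chain homotopy; the price is that the local count near the triple point becomes your responsibility rather than a citation, exactly as you flag in your last paragraph. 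Either way the mathematical content is the same, and your degree bookkeeping via Remark~\ref{rmk:Morsegrading} is a harmless addition.
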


\begin{proof}
Follows from \cite[Lemma 6.6]{EK} in combination with Lemma~\ref{lem:geomvsabstr} and Theorem \ref{thm:flowtreecomp}.
\end{proof}

\subsubsection{Death cobordisms}

Let $L^{\Mo}_{\rm de}$ be a death cobordism from $\Lambda_{+}$ to $\Lambda_{-}$ and let $(X_{\rm de},L_{\rm de})$ be the corresponding exact Lagrangian cobordism with cylindrical ends. Then there is a canonical identification $\CC(\Lambda_{+})\simeq\CC(\Lambda_{-})\cup\{a,b\}$, where $a$ and $b$ are the canceling Reeb chords; see Figure~\ref{fig:L2}.

We define the DGA morphism
$$\phi_{(\rm L2)}: \A(\Lambda_+)\to \A(\Lambda_-)$$ as follows: Suppose $\bdry_+ a= b+v$, where $\bdry_+$ is the differential for $\mathcal{A}(\Lambda_+)$. Observe that $v$ has no terms that contain $a$ or $b$.  Then $\phi_{(\rm L2)}$ maps $a\mapsto 0$, $b\mapsto v$, and all other $x\in \CC(\Lambda_+)$ to the corresponding $x\in \CC(\Lambda_-)$.

\begin{figure}[ht]
\begin{center}
\psfragscanon
\psfrag{a}{\tiny $a$}
\psfrag{b}{\tiny $b$}
\psfrag{c}{\tiny $c$}
\includegraphics[width=0.7\linewidth]{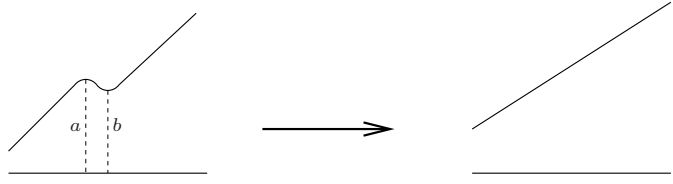}
\end{center}
\caption{An (L2)-isotopy from left to right. The (L3)-isotopy is from right to left.}\label{fig:L2}
\end{figure}

\begin{lemma}\label{lem:death}
The cobordism map $\Phi_{(X_{\rm de},L_{\rm de})}\colon\A(\Lambda_{+})\to\A(\Lambda_{-})$ is given by $\phi_{(\mathrm{L2})}$.
\end{lemma}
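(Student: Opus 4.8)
The plan is to mimic the proof of Lemma~\ref{lem:triple}: reduce the computation of $\Phi_{(X_{\rm de},L_{\rm de})}$ to a count of rigid Morse flow trees of the death cobordism, and then quote the enumeration of those trees carried out in \cite[Section~6]{EK}. Concretely, by Lemma~\ref{back and forth} the cobordism $(X_{\rm de},L_{\rm de})$ is exact Lagrangian isotopic to a conical cobordism $(X_\delta,L_{\delta;\sigma})$ associated to the death Morse cobordism $L^{\Mo}_{\rm de}\subset T^*F$, $F=\R\times[0,1]$; by Lemma~\ref{lem:chainhomotopy} the two induced DGA maps are chain homotopic, so it suffices to identify $\Phi_{(X_\delta,L_{\delta;\sigma})}$ for small $\delta,\sigma$. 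Theorem~\ref{thm:flowtreecomp} expresses $\Phi_{(X_\delta,L_{\delta;\sigma})}$ as the count $\sum_{\dim\TT(a;\bb)=0}|\TT(a;\bb)|\,\bb$ of rigid flow trees of $\tilde L^{\Mo}_{\rm de}$ with respect to a generic geometric perturbation, i.e.\ as the map $\Phi_{\rm g}$ of Lemma~\ref{lem:geomvsabstr}. As noted in the remark following that lemma, for the (L2)-move one may work with geometric perturbations directly; viewing such a perturbation as an instance of an abstract one, $\Phi_{\rm g}=\Phi_{\rm a}$ equals the count of rigid perturbed cascades determined in \cite{EK}.

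Next I would invoke \cite[Lemma~6.7]{EK}, which lists all rigid (perturbed) flow trees of the death cobordism. Recalling from Remark~\ref{rmk:Morsegrading} that the chords of $\tilde L^{\Mo}_{\rm de}$ over $\bdry_+F$ have degree shifted up by $1$, the enumeration reads: the ``long'' chord $a$ admits no rigid tree, so $a\mapsto 0$; its canceling partner $b$ admits exactly the rigid trees reproducing the nonconstant part of $\bdry_+ a$, so $b\mapsto v$ where $\bdry_+ a=b+v$; and every remaining chord $x\in\CC(\Lambda_+)$ admits only the trivial strip, so $x\mapsto x$. Here one uses the structural feature of the (L2)-move, itself part of the local normal form in \cite{EK} and forced by $|a|=|b|+1$ together with an action argument, that $\bdry_+ a$ has the form $b+v$ with $v$ involving neither $a$ nor $b$. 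Putting this together with the previous paragraph yields $\Phi_{(X_{\rm de},L_{\rm de})}=\phi_{(\rm L2)}$.

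The main point to be checked is therefore the flow tree enumeration itself: that the rigid trees out of $b$ are in bijection with the disks contributing to $\bdry_+ a$ and that the pair $(a,b)$ produces no further rigid trees, in particular that $a$ admits no rigid tree (an index/dimension argument in the local model). This is exactly the content of \cite[Lemma~6.7]{EK}; the work here is to confirm that our normalization of $L^{\Mo}_{\rm de}$ — trivial outside a small region, standard cylindrical ends, $\Lambda_\pm$ chord generic — matches its hypotheses, that the grading conventions of Remark~\ref{rmk:Morsegrading} line up so that ``rigid'' ($\dim\TT=0$) means the same on both sides, and that for (L2) the geometric and abstract counts genuinely coincide, so that one obtains an equality of DGA maps rather than merely a chain homotopy.
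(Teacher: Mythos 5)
Your proposal follows essentially the same route as the paper: reduce via Lemma~\ref{back and forth} and Theorem~\ref{thm:flowtreecomp} to a count of rigid flow trees of the death Morse cobordism, compare geometric and abstract perturbation counts via Lemma~\ref{lem:geomvsabstr}, and quote the enumeration of rigid (perturbed) trees in \cite[Lemma~6.7]{EK} to read off $\phi_{(\mathrm{L2})}$. The extra details you supply (grading shift from Remark~\ref{rmk:Morsegrading}, the form $\bdry_+a=b+v$, and the caveat about equality versus chain homotopy of DGA maps) are consistent with the paper's intended argument.
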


\begin{proof}
Follows from \cite[Lemma 6.7]{EK} in combination with  Lemma~\ref{lem:geomvsabstr} and Theorem~\ref{thm:flowtreecomp}.
\end{proof}

\subsubsection{Birth cobordisms}

Let $L^{\Mo}_{\rm bi}$ be a birth cobordism from $\Lambda_{+}$ to $\Lambda_{-}$ and let $(X_{\rm bi},L_{\rm bi})$ be the corresponding exact Lagrangian cobordism with cylindrical ends. Then there is a canonical identification $\CC(\Lambda_{+})\cup\{a,b\}\simeq\CC(\Lambda_{-})$, where $a$ and $b$ are the newly created Reeb chords.

We define the DGA morphism
$$\phi_{(\rm L3)}: \A(\Lambda_+)\to \A(\Lambda_-)$$
inductively as follows (cf.\ \cite[Remark 3.4]{Ka1}): Suppose $\bdry_- a=b+v$, where $\bdry_-$ is the differential for $\mathcal{A}(\Lambda_-)$, and
$$\mathcal{C}(\Lambda_-)=\{b_1,\dots,b_m,b,a,a_1,\dots,a_l\},$$
arranged in action-nondecreasing order.  We first set $\phi_{(\rm L3)}(b_i)=b_i$.  Suppose that
$$\bdry_- a_1= \sum B_1 bB_2b \dots B_k b A,$$
where $B_i$ is a monomial in $b_1,\dots,b_m$, $A$ is a monomial in $b_1,\dots,b_m,b,a$, and every $b$ in $A$ is preceded by an $a$ in $A$. Then
\begin{align*}
\phi_{(\rm L3)}(a_1) = & a_1 + \sum \left(B_1 a B_2b \dots B_k bA + B_1 v B_2 a B_3b \dots B_k b A\right.\\
& + B_1 v B_2 v B_3 a B_4 b \dots B_k b A + \dots + \left.B_1 v B_2 v\dots B_k a A \right).
\end{align*}
Next suppose that
$$\bdry_- a_i= \sum B_1 b B_2 b\dots B_k b A,$$
where $B_i$ is a monomial in $b_1,\dots,b_m,a_1,\dots,a_{i-1}$, $A$ is a monomial in $b_1,\dots,b_m,$ $b,a,a_1,\dots,a_{i-1}$, and every $b$ in $A$ is preceded by an $a$ in $A$.  Then
\begin{align*}
\phi_{(\rm L3)}(a_i) = & a_i + \sum \left(\overline{B}_1 a B_2b \dots B_k bA + \overline{B}_1 v \overline{B}_2 a B_3b \dots B_k b A\right.\\
& + \overline{B}_1 v \overline{B}_2 v \overline{B}_3 a B_4 b \dots B_k b A + \dots + \left. \overline{B}_1 v \overline{B}_2 v\dots \overline{B}_k a A \right),
\end{align*}
where $\overline{B}_j$ is obtained from $B_j$ by replacing each occurrence of $a_1,\dots,a_{i-1}$ by $\phi_{(\rm L3)}(a_1),\dots, \phi_{(\rm L3)}(a_{i-1})$.

\begin{lemma}\label{lem:birth}
The cobordism map $\Phi_{(X_{\rm bi},L_{\rm bi})}\colon\A(\Lambda_{+})\to\A(\Lambda_{-})$ is given by $\phi_{(\mathrm{L3})}$.
\end{lemma}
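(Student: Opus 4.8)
The plan is to follow the template of Lemmas~\ref{lem:triple} and~\ref{lem:death}: represent $(X_{\rm bi},L_{\rm bi})$ by the birth Morse cobordism $L^{\Mo}_{\rm bi}\subset T^{\ast}F$ with $F=\R\times[0,1]$, use Theorem~\ref{thm:flowtreecomp} to write $\Phi_{(X_{\rm bi},L_{\rm bi})}(c)=\sum_{\dim\TT(c;\bb)=0}|\TT(c;\bb)|\,\bb$ as the count $\Phi_{\rm g}$ of rigid flow trees of $\tilde L^{\Mo}_{\rm bi}$, invoke Lemma~\ref{lem:geomvsabstr} to identify $\Phi_{\rm g}$ with the count $\Phi_{\rm a}$ of rigid perturbed Morse--Bott flow tree cascades, and then apply \cite[Lemma 6.8]{EK}, which enumerates those cascades and shows $\Phi_{\rm a}=\phi_{(\rm L3)}$. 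Unlike the (L1)- and (L2)-moves, the birth move genuinely uses the abstract perturbation, and it is in this last step that the recursive shape of $\phi_{(\rm L3)}$ is produced.

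Concretely, I would recast the cascade analysis of \cite[Lemma 6.8]{EK} (which in turn builds on \cite{E1}) in the present language. Since $L^{\Mo}_{\rm bi}$ is a product over a neighborhood of $\partial_+F$, each surviving chord $b_i$ satisfies $\Phi_{(X_{\rm bi},L_{\rm bi})}(b_i)=b_i$, and by action positivity of non-trivial flow trees $\Phi_{(X_{\rm bi},L_{\rm bi})}(c)=c+(\text{terms of strictly larger action})$; in particular the map is determined by induction on the action order of $a_1,\dots,a_l$, which is exactly the induction appearing in the statement. The new pair satisfies $\partial_-a=b+v$. A rigid cascade out of $a_i$ is either a connector, contributing the leading $a_i$, or is assembled from a slice tree at the maximum together with lower-level pieces; the only way $b$ can occur in $\partial_-a_i$ is immediately after an $a$-labelled strand, which is the geometric origin of the ``every $b$ in $A$ is preceded by an $a$'' hypothesis. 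This forces the remaining combinatorics: a contributing cascade corresponds to a term $B_1bB_2b\cdots B_kbA$ of $\partial_-a_i$ together with a choice of one of the $k$ displayed $b$'s, the chosen $b$ becoming $a$, the $b$'s before it becoming $v$, the $B_j$'s up to that point being hit by the already-determined $\Phi_{(X_{\rm bi},L_{\rm bi})}$ (the $\overline B_j$'s), and the rest of the word left unchanged. Summing over all such terms and all resolution choices reproduces the displayed formula for $\phi_{(\rm L3)}(a_i)$.

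I expect the main obstacle to be the bookkeeping in this enumeration --- justifying that a rigid cascade resolves exactly one $b$, that only the $B_j$'s strictly to the left are modified while the tail $A$ is inert, and that the action induction is self-consistent --- which is the substance of \cite[Lemma 6.8]{EK}; the task is to transport it faithfully across Lemma~\ref{lem:geomvsabstr} and Theorem~\ref{thm:flowtreecomp}. A secondary point I would need to address is that, because the (L3)-computation cannot be carried out with a purely geometric perturbation, the above a priori only gives $\Phi_{(X_{\rm bi},L_{\rm bi})}=\phi_{(\rm L3)}$ up to chain homotopy; to promote this to an equality I would use functoriality, noting that the concatenation of $(X_{\rm bi},L_{\rm bi})$ with the death cobordism $(X_{\rm de},L_{\rm de})$ realizing the reverse move is exact Lagrangian isotopic to the trivial cylinder $\R\times\Lambda_+$, so that $\phi_{(\rm L2)}\circ\Phi_{(X_{\rm bi},L_{\rm bi})}$ is chain homotopic to the identity, combined with the observation that $\phi_{(\rm L3)}$ is the unique DGA morphism of the filtered form $c\mapsto c+(\text{higher action})$ fixing the $b_i$ that is a one-sided inverse to $\phi_{(\rm L2)}$, a property that $\Phi_{(X_{\rm bi},L_{\rm bi})}$ inherits from Theorem~\ref{thm:flowtreecomp} and action positivity. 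The delicate part of this last step is the algebraic uniqueness of such a filtered section.
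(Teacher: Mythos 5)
Your proposal follows essentially the same route as the paper: the paper's proof of Lemma~\ref{lem:birth} is exactly the combination of Theorem~\ref{thm:flowtreecomp} (rigid disks $=$ rigid flow trees, giving $\Phi_{\rm g}$), Lemma~\ref{lem:geomvsabstr} (comparing $\Phi_{\rm g}$ with the abstractly perturbed count $\Phi_{\rm a}$), and the enumeration of rigid perturbed cascades in \cite[Lemma 6.8]{EK}, which is where the recursive formula for $\phi_{(\rm L3)}$ comes from; your first two paragraphs reproduce this.

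The one place where you go beyond the paper is the final ``promotion to equality'' step, and that part is both unnecessary and, as sketched, not quite sound. The cobordism map $\Phi_{(X_{\rm bi},L_{\rm bi})}$ depends on choices (almost complex structure, perturbation) and is only well defined up to chain homotopy, which is precisely the level at which Lemma~\ref{lem:geomvsabstr} and \cite[Lemma 6.8]{EK} identify it with $\phi_{(\rm L3)}$; the paper asserts nothing stronger, and indeed in the later saddle-cobordism discussion it is explicit that such identifications are up to chain homotopy (Lemma~\ref{lem:chainhomotopy}). Your proposed fix would not achieve more anyway: functoriality plus isotopy invariance only gives that $\phi_{(\rm L2)}\circ\Phi_{(X_{\rm bi},L_{\rm bi})}$ is \emph{chain homotopic} to the identity, not equal to it, so even granting your (unproven, and to my mind doubtful) claim that $\phi_{(\rm L3)}$ is the unique action-filtered DGA morphism that is a strict one-sided inverse of $\phi_{(\rm L2)}$, the uniqueness would not apply to $\Phi_{(X_{\rm bi},L_{\rm bi})}$. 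Dropping that last step leaves you with exactly the paper's argument.
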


\begin{proof}
Follows from \cite[Lemma 6.8]{EK} in combination with Lemma~\ref{lem:geomvsabstr} and Theorem~\ref{thm:flowtreecomp}.
\end{proof}

\begin{rmk}
The chain maps $\phi_{(\mathrm{L1a})}$, $\phi_{(\mathrm{L1b})}$, $\phi_{(\mathrm{L2})}$, and $\phi_{(\mathrm{L3})}$ above are precisely the chain maps used by Chekanov \cite{Ch} to prove the invariance of Legendrian contact homology under the Legendrian Reidemeister moves. The above exact Lagrangian cobordisms can be interpreted as providing a geometric context where these maps arise naturally.
\end{rmk}

\subsection{Minimum cobordisms} \label{subsub:minimum}

In this subsection and the next we consider two types of exact Lagrangian cobordisms that correspond to single Morse modifications of a Legendrian link.

Let $\Lambda_{+}\subset\R^{3}$ be a Legendrian link, one of whose components is the standard Legendrian unknot $U$ such that $\Pi_\C(U)$ is contained in a disk which is disjoint from $\Pi_\C(\Lambda_+-U)$. Let $\Lambda_{-}=\Lambda_{+}-U$.

\begin{defn}\label{def:mincob}
A \emph{minimum cobordism} from $\Lambda_{+}$ to $\Lambda_{-}$ is a Morse cobordism which is the union of a trivial Morse cobordism from $\Lambda_{+}-U$ to $\Lambda_{-}$ and a disk with boundary~$U$ in $J^{1}(\R\times[0,1])$ with front in $J^{0}(\R\times[0,1])$ as shown in Figure \ref{fig:mincob}.
\end{defn}

\begin{figure}[ht]
\labellist
\small
\pinlabel $a$ at 530 310
\pinlabel $\xi_1$ at -14 20 
\pinlabel $\xi_2$ at 720 60
\pinlabel $z$ at 50 530
\endlabellist
\centering
\includegraphics[width=0.5\linewidth]{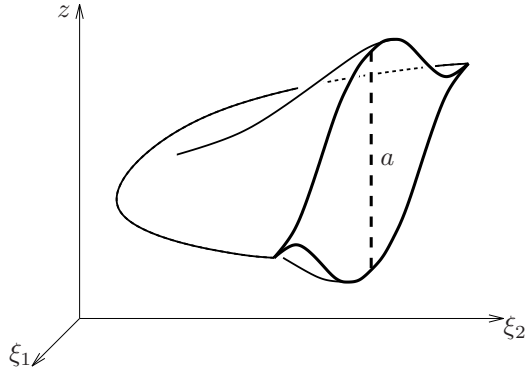} 
\caption{A minimum cobordism.}\label{fig:mincob}
\end{figure}

Let $L_{\rm mi}^{\Mo}$ be a minimum cobordism from $\Lambda_{+}$ to $\Lambda_{-}$ and let $(X_{\rm mi}, L_{\rm mi})$ be the corresponding exact Lagrangian cobordism with cylindrical ends. By assumption there are no Reeb chords of $\Lambda_{+}$ that connect $U$ to any other component. Let $a$ denote the unique Reeb chord from $U$ to itself. Then $\CC(\Lambda_{+})=\CC(\Lambda_{-})\cup\{a\}$, where the identification is induced by the trivial cobordisms on $\Lambda_{+}-U=\Lambda_{-}$.

\begin{lemma} \label{lem:minimum}
The cobordism map $\Phi_{(X_{\rm mi},L_{\rm mi})}\colon\A(\Lambda_{+})\to\A(\Lambda_{-})$ is given by
\[
\Phi_{(X_{\rm mi},L_{\rm mi})}(c)=
\begin{cases}
0 &\text{if } c=a,\\
c &\text{if } c\ne a,
\end{cases}
\]
where $c\in\CC(\Lambda_{+})$.
\end{lemma}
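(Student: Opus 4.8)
The plan is to apply Theorem~\ref{thm:flowtreecomp} to the minimum cobordism $L_{\rm mi}^{\Mo}$, which reduces the computation of $\Phi_{(X_{\rm mi},L_{\rm mi})}$ to a count of rigid Morse flow trees of $\tilde L_{\rm mi}^{\Mo}$ with one positive puncture at a Reeb chord $c \in \CC(\Lambda_+)$ and negative punctures assembling into a monomial $\bb \in \A(\Lambda_-)$. Since the minimum cobordism is the disjoint union of a trivial Morse cobordism over $\Lambda_+ - U = \Lambda_-$ and an isolated disk filling $U$ (whose front is the standard cusp-sphere of Figure~\ref{fig:mincob}, disjoint from the rest), any flow tree decomposes according to which piece it lives on. First I would observe that for $c \ne a$, the flow trees counted are exactly the flow trees of the trivial cobordism over $\Lambda_- $, which by the analog of Lemma~\ref{l:noRM->id} (or directly, since a trivial product cobordism has only trivial strips as rigid disks/trees) contribute the identity, so $\Phi_{(X_{\rm mi},L_{\rm mi})}(c) = c$.

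The substantive case is $c = a$, the unique Reeb chord of the standard Legendrian unknot $U$. Here I would analyze the flow trees of the Legendrian disk $\tilde L_{\rm mi}^{\Mo}$ lifting the minimum disk: its front in $J^0(\R \times [0,1])$ is the obvious one shown in Figure~\ref{fig:mincob}, with a single cusp edge and one Reeb chord $a$ over $\pa_+ F$, and no Reeb chords at all over $\pa_- F$ (the disk simply closes up). A rigid flow tree with a positive puncture at $a$ would have to have all its negative punctures at Reeb chords of $\Lambda_-$ — but $U$ contributes no chords to $\Lambda_-$, and the disk component is disjoint from $\Lambda_- $, so there are no negative punctures available and $\bb$ would have to be the empty word (i.e.\ $\bb = 1$). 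Thus I must show that $\TT(a; 1)$, the $0$-dimensional moduli space of rigid flow trees from $a$ with no negative punctures, is empty. This I would do by a dimension/Morse-index count using Equation~\eqref{eqn: dim of tree alt}: with $n = 2$, a single positive puncture and no negative punctures, $\dim(\gamma) = -1 + (I(a) - 1) + \sum_{r\in R(\gamma)} \mu(r)$; I would check, using the explicit geometry of the cusp-sphere front, that the only candidate partial trees emanating from $a$ must pass through the cusp edge, and the available Maslov contributions force $\dim(\gamma) \ne 0$ unless the tree is empty, which it cannot be since it has a puncture. Equivalently — and this is perhaps the cleanest route — I would invoke the holomorphic-disk picture directly: by the compactness/gluing results (Corollary~\ref{cor:bdrycob}), $\Phi_{(X_{\rm mi},L_{\rm mi})}$ is a chain map of degree $0$, and a rigid disk in $(X_{\rm mi}, L_{\rm mi})$ from $a$ with no negative punctures would be a genuine holomorphic disk with boundary on the filling disk of $U$; but a standard argument (e.g.\ via the maximum principle applied to the $t$-coordinate, or area/action considerations as in Lemma~\ref{lemma: energy bound}) shows the only such rigid configuration would have $\op{ind} \ne 0$, so it does not contribute.

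An alternative, more self-contained approach that I would likely present: realize the minimum cobordism as (the reverse of) the unique exact Lagrangian filling of the standard unknot $U$, whose uniqueness up to exact Lagrangian isotopy is quoted in the introduction from \cite{EP}. The associated augmentation $\varepsilon = \Phi_{(X_{\rm mi}, L_{\rm mi})}|_{U\text{-part}}$ must then send the generator $a$ — which has degree $|a| = 1$ in the DGA of $U$ (the standard unknot has $tb = -1$, $r = 0$, so its single chord has grading $1$) — to an element of degree $1$ in $\A(\varnothing) = \F$, concentrated in degree $0$; hence $\varepsilon(a) = 0$ for degree reasons. Combined with the product structure (the cobordism map is a DGA morphism that restricts to the identity on the $\Lambda_- = \Lambda_+ - U$ factor and to $\varepsilon$ on the $U$-factor, by the disjointness of $\Pi_\C(U)$ from $\Pi_\C(\Lambda_+ - U)$ and the resulting splitting of flow trees), this gives the stated formula. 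I expect the main obstacle to be the careful verification that no flow tree "leaks" between the two pieces — i.e.\ that rigidity genuinely forces each rigid flow tree to lie entirely in the trivial part or entirely in the disk part — which follows from the projections $\Pi_\C(U)$ and $\Pi_\C(\Lambda_+ - U)$ being contained in disjoint disks, so that a connected flow tree (being connected) cannot meet both, and then checking the $c = a$ count is empty by the index computation above.
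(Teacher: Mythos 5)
Your proposal is correct and, at its core, matches the paper's proof, which is exactly the two observations you make: since no Reeb chords connect $U$ to the other components, any rigid disk (or tree) with positive puncture at $a$ has $\bb=1$ and hence index $|a|-|1|=1$, so it cannot be rigid and $\Phi(a)=0$ for degree reasons, while rigid disks in the trivial part are strips over Reeb chords, giving $\Phi(c)=c$ for $c\neq a$. One small caveat: the maximum principle/action considerations you mention are not what rules out the $c=a$ contribution (such disks with no negative punctures do exist on the filling disk, in a $1$-dimensional family); it is purely the index count, as in your degree-based alternative.
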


\begin{proof}
This is an immediate consequence of $|a|=1$ and the fact that the rigid holomorphic disks in a trivial cobordism are strips over Reeb chords.
\end{proof}

\subsection{Saddle cobordisms} \label{subsub:saddle}

In this subsection we treat {\em saddle cobordisms}.

\subsubsection{Contractibility}

\begin{defn}[Contractible Reeb chord] \label{defn:contractible}
Let $\Lambda\subset\R^{3}$ be a Legendrian knot. A Reeb chord $a\in\CC(\Lambda)$ is \emph{contractible} if there exists a homotopy $\Lambda_{\tau}$, $0\le \tau\le 1$, of Legendrian immersions such that:
\begin{itemize}
\item $\Lambda_0=\Lambda$;
\item $\Lambda_{\tau}$, $\tau\in [0,1]$, is a $\Pi_\C$-simple, i.e., $\Pi_\C (\Lambda_\tau)$ has only transverse double points for all $\tau\in[0,1]$; and
\item $\Lambda_{1}$ has a transverse self-intersection which is obtained by sending $\mathfrak{A}(a_\tau)\to 0$ as $\tau\to 1$, where $a_\tau\in \mathcal{C}(\Lambda_\tau)$ is the Reeb chord corresponding to $a$.
\end{itemize}
\end{defn}

Let $\Lambda_+'\subset\R^{3}$ be a Legendrian link with a contractible Reeb chord $a$. Then, after Legendrian isotopy, we obtain $\Lambda_+\subset \R^3$ with a contractible Reeb chord $a$, whose neighborhood is as shown on the left-hand side of Figure \ref{fig:saddleends}. Let $\Lambda_{-}$ denote the Legendrian link obtained by modifying the front of $\Lambda_{+}$ as shown on the right-hand side of Figure \ref{fig:saddleends}.

\begin{figure}[ht]
\labellist
\small
\pinlabel $a$ at 210 110
\pinlabel $\Lambda_+$ at 190 0
\pinlabel $\Lambda_-$ at 860 0
\endlabellist
\centering
\includegraphics[width=0.7\linewidth]{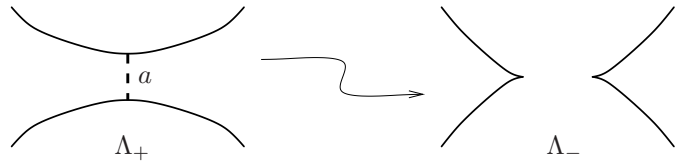}
\caption{Legendrian links at the ends of a saddle cobordism, viewed in the front projection.}
\label{fig:saddleends}
\end{figure}

Now we consider the Lagrangian projection $\Pi_\C$. The modification of $\Lambda_+$ given in Figure~\ref{fig:saddleends} corresponds to a $0$-resolution of $\Pi_\C(\Lambda_+)$ at the crossing $\Pi_\C(a)$.  Also the immersed Legendrian isotopy can be continued so that a crossing of the opposite sign emerges where $\Pi_\C(a)$ used to be (see Figure~\ref{crossingchange}).

\begin{figure}[ht]
\begin{overpic}[width=0.7\linewidth] {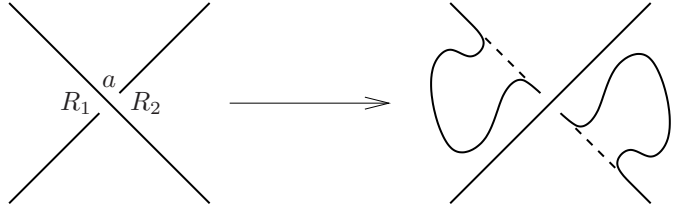}
\put(8,14.4){\small $R_1$}
\put(18.5,14.4){\small $R_2$}
\put(14.3,17.8) {\small $a$}
\end{overpic}
\caption{Crossing change at a contractible Reeb chord in the Lagrangian projection.}\label{crossingchange}
\end{figure}

We state a simple necessary and sufficient condition for contractibility with respect to the Lagrangian projection. We refer to a component of $\C-\Pi_\C(\Lambda_+)$ as a ``region''.

\begin{lemma}[Criterion for contractibility] \label{condition for contractibility}
Let the quadrants with positive Reeb sign at $a\in \mathcal{C}(\Lambda_+)$ belong to the regions $R_1$ and $R_2$. The Reeb chord $a$ is contractible if and only if it is possible to apply a $\Pi_\C$-simple isotopy to $\Lambda_+$ so that the areas of both $R_1$ and $R_2$ exceed the action $\mathfrak{A}(a)$. (If $R_1=R_2$, the condition becomes $A(R_1)>2\cdot\mathfrak{A}(q)$.)
\end{lemma}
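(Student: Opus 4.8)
The plan is to characterize contractibility purely in terms of how the action of the Reeb chord evolves under $\Pi_\C$-simple isotopies, using the correspondence between Reeb chord action and the areas of the two ``capping regions'' at the crossing.

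\medskip
\noindent\textbf{Setup and the key area identity.}
First I would recall that under the Lagrangian projection $\Pi_\C$, a Reeb chord $a$ corresponds to a double point of $\Pi_\C(\Lambda_+)$, and its action $\mathfrak{A}(a)=z(a^+)-z(a^-)$ can be computed as a signed area. Concretely, for a small arc of $\Pi_\C(\Lambda_+)$ passing through the double point, the difference of the $z$-lifts along any closed loop in $\C$ equals the signed area enclosed, since $z$ is recovered from $\Pi_\C(\Lambda_+)$ by integrating $y\,dx$. Applying this to the two local regions $R_1$, $R_2$ adjacent to the positive quadrants at $a$: if one pushes the double point to shrink $\mathfrak{A}(a_\tau)\to 0$, the two positive quadrants must ``open up'' into $R_1$ and $R_2$, so the areas $A(R_1)$, $A(R_2)$ must each remain at least $\mathfrak{A}(a_\tau)$ throughout, and in the limit they are bounded below by the original areas minus whatever was absorbed. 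The first step is to make precise that, for a $\Pi_\C$-simple isotopy, the function $\tau\mapsto\mathfrak{A}(a_\tau)$ can decrease to $0$ only if there is ``enough room'' in the neighboring regions, and conversely that whenever $A(R_1),A(R_2)>\mathfrak{A}(a)$ one can construct a local $\Pi_\C$-simple isotopy realizing the contraction.

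\medskip
\noindent\textbf{Necessity.}
For the forward direction, suppose $a$ is contractible via $\Lambda_\tau$. Since the isotopy is $\Pi_\C$-simple, the combinatorial type of $\Pi_\C(\Lambda_\tau)$ is constant, so $R_1$ and $R_2$ vary by an ambient isotopy of $\C$ and their areas $A(R_i(\tau))$ vary continuously. At each time $\tau$, the positive quadrants at $a_\tau$ are contained in $R_1(\tau)$ and $R_2(\tau)$; a small ``bigon-type'' piece of each region near $a_\tau$ has area at least $\mathfrak{A}(a_\tau)$ by the area identity above. As $\tau\to 1$, one can arrange that $\mathfrak{A}(a_\tau)$ is very small, so at a time $\tau$ close to $1$ the isotopy $\Lambda_0\sim\Lambda_\tau$ is a $\Pi_\C$-simple isotopy after which $A(R_1),A(R_2)>\mathfrak{A}(a)$ (the areas at time $\tau$ are essentially the same as at time $0$ up to a small correction, since the total area swept near $a$ tends to $0$). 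Actually the cleaner statement: the condition ``$\exists$ a $\Pi_\C$-simple isotopy making $A(R_1),A(R_2)>\mathfrak{A}(a)$'' is exactly one of the intermediate stages of the contraction, so necessity is immediate once one verifies that $\mathfrak{A}(a_\tau)<A(R_i(\tau))$ must hold throughout (the chord cannot be longer than the room available), combined with continuity. The case $R_1=R_2$ requires tracking that both positive quadrants push into the same region, doubling the area demand, hence the $2\mathfrak{A}(a)$ bound.

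\medskip
\noindent\textbf{Sufficiency and the main obstacle.}
For the converse, assume after a $\Pi_\C$-simple isotopy that $A(R_1),A(R_2)>\mathfrak{A}(a)$. I would then construct an explicit local Legendrian immersion homotopy supported near $\Pi_\C(a)$ that decreases $\mathfrak{A}(a_\tau)$ to $0$ while keeping $\Pi_\C(\Lambda_\tau)$ combinatorially fixed. The idea is to ``reel in'' the $z$-heights of the two strands toward each other: locally modify the $z$-coordinate (equivalently, push the $y$-coordinate / area balance) so that the enclosed areas of the two capping bigons shrink, which is possible precisely because $R_1$ and $R_2$ have enough area to absorb the displacement without forcing a new double point or a Reidemeister move. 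Here one should be careful to keep the map Legendrian at every time — the homotopy is through Legendrian immersions, not embeddings — which is automatic if one specifies the front and checks the generic front conditions. The main obstacle, and the step I expect to require the most care, is sufficiency: turning the area hypothesis into an honest global $\Pi_\C$-simple isotopy. One must ensure that while shrinking the local bigons no other part of $\Pi_\C(\Lambda_+)$ is forced to cross itself or to undergo a triple point or self-tangency move, and that the global exactness/area constraint (the total signed area of $\Pi_\C(\Lambda_+)$ is fixed) is respected. The hypothesis $A(R_i)>\mathfrak{A}(a)$ is exactly what guarantees that the area removed from the bigons at $a$ can be redistributed into $R_1$ and $R_2$ without collision, so the construction goes through; making this redistribution rigorous (e.g.\ by an explicit family of fronts, or by a Moser-type area argument in the plane) is the heart of the proof.
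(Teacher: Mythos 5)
Your overall strategy is the same as the paper's: for necessity, truncate the contracting family at a late time, since $\mathfrak{A}(a_\tau)\to 0$ while the areas of the complementary regions stay bounded below; for sufficiency, perform the local move of Figure~\ref{crossingchange} that pushes the strand through the crossing, the area hypothesis providing the room (the paper's own proof is only a two-sentence sketch of exactly this). However, your write-up leans on a claim that is false. You assert that a region containing a positive quadrant of $a_\tau$ automatically has area at least $\mathfrak{A}(a_\tau)$ (``a small bigon-type piece of each region near $a_\tau$ has area at least $\mathfrak{A}(a_\tau)$'', and later ``the chord cannot be longer than the room available'', so that $\mathfrak{A}(a_\tau)<A(R_i(\tau))$ ``must hold throughout''). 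The action of a chord is not the area of any local piece of the diagram near the double point: by Stokes' theorem, a bounded complementary region $R$ (a disk with convex corners) satisfies $A(R)=\sum\mathfrak{A}(\text{positive corners})-\sum\mathfrak{A}(\text{negative corners})$, so a region with a positive corner at $a$ can have area smaller than $\mathfrak{A}(a)$ as soon as it has negative corners of large total action; and even without negative corners one only gets $A(R)\ge\mathfrak{A}(a)$ with possible equality (the two lobes of the standard unknot, whose chord is indeed not contractible---which is also why the strict inequality in the lemma cannot be relaxed). Hence the ``cleaner'' version of your necessity argument fails. What does work, and is what the paper does, is the ``eventually'' version you state first, but with the correct justification: the inequality to be achieved compares quantities both measured after the isotopy, and since $\mathfrak{A}(a_\tau)\to 0$ while $A(R_1(\tau)),A(R_2(\tau))$ converge to the positive areas of the limiting diagram $\Pi_\C(\Lambda_1)$, it holds for $\tau$ close to $1$; your parenthetical comparison of the areas at time $\tau$ with those at time $0$ (and implicitly with the original action) is both unnecessary and not true in general.

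For sufficiency you only outline the construction and explicitly defer ``the heart of the proof''; the paper does the same (it simply points to Figure~\ref{crossingchange}), so this is not a gap relative to the paper, but your guiding picture should be corrected. By the corner formula above, contracting $a$ while keeping the rest of the diagram (and all other chord actions) fixed decreases $A(R_1)$ and $A(R_2)$ by exactly $\mathfrak{A}(a)$ and increases the two negative-quadrant regions correspondingly; the hypothesis $A(R_i)>\mathfrak{A}(a)$ is precisely what keeps $R_1$ and $R_2$ nondegenerate throughout the move, so no self-tangency or triple point is forced. There is no ``redistribution of the bigon area into $R_1$ and $R_2$''; the area flows out of those two regions, not into them.
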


\begin{proof}
If $a$ contracts, then its action $\mathfrak{A}(a)$ approaches $0$ and eventually becomes smaller than the area of any region in the diagram. If $R_1$ and $R_2$ have large enough areas, then it is possible to carry out the isotopy shown in Figure~\ref{crossingchange}.
\end{proof}

\subsubsection{Saddle cobordisms and simplicity}

\begin{defn}\label{def:saddlecob}
Let $a$ be a contractible Reeb chord. Then a \emph{saddle cobordism} $L_{\rm sa}^{\rm Mo}$ from $\Lambda_{+}$ to $\Lambda_{-}$ corresponding $a$ is a Morse cobordism for which there exists an open set $V\subset L^{\rm Mo}_{\rm sa}$ such that:
\begin{enumerate}
\item the front of $V$, viewed as a subset of $J^{0}(\R\times[0,1])$, is as in Figure~\ref{fig:saddlecob}; and
\item $L^{\rm Mo}_{\rm sa}-V$ is {\em locally trivial}, i.e., there is an open cover of $L^{\rm Mo}_{\rm sa}-V$ such that each open set of the cover is the restriction of a trivial Morse cobordism.
\end{enumerate}
\end{defn}

\begin{figure}[ht]
\labellist
\small
\pinlabel $a$ at 560 340
\pinlabel $\xi_1$ at -14 20 
\pinlabel $\xi_2$ at 780 60
\pinlabel $z$ at 50 530
\endlabellist
\centering
\includegraphics[width=0.5\linewidth]{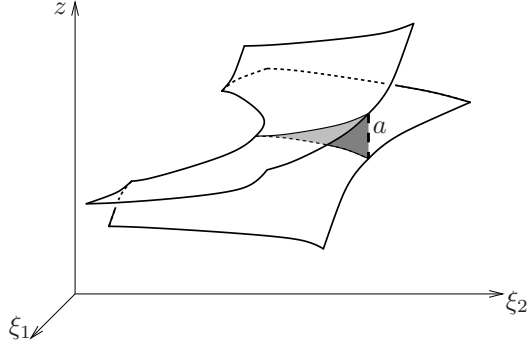} 
\caption{A saddle cobordism and a gradient flow line $\rho_0$ from $a$ to $1$.}
\label{fig:saddlecob}
\end{figure}

If $L_{\rm sa}^{\rm Mo}$ is a saddle cobordism from $\Lambda_+$ to $\Lambda_-$ corresponding to $a$, then we write $(X_{\rm sa},L_{\rm sa})$ for the corresponding exact Lagrangian cobordism with cylindrical ends.

For $c\ne a$, let
$$\MM(c,a^k;\bb)=\MM^{(\R\times \R^3,\R\times\Lambda_+)}(c,a^k;\bb)$$
be the moduli space of holomorphic disks in $\R\times \R^3$ with boundary on $\R\times\Lambda_+$, one positive puncture at $c$, $k$ positive punctures at $a$, and negative punctures at $\bb$.

We make the following simplifying definition:

\begin{defn} \label{defn: simple contractible}
A contractible Reeb chord $a\in \CC(\Lambda_+)$ is \emph{simple} if $\op{ind}(u)\geq k$ for all broken holomorphic disks $u$ in $\R\times \R^3$ with boundary on $\R\times \Lambda_+$, one positive puncture at a chord $c\ne a$, and $k>1$ positive punctures at $a$. A saddle cobordism for a simple contractible Reeb chord is a {\em simple saddle cobordism}.
\end{defn}

\subsubsection{The DGA morphism for a simple saddle cobordism}

In view of the identification $\CC(\Lambda_+)\simeq \CC(\Lambda_{-})\cup\{a\}$, we define an algebra map
$$\Psi_0:\A(\Lambda_{+})\to\A(\Lambda_{-})$$
on the generators by setting $\Psi_0(a)=1$, $\Psi_0(b)=b$ for $b\in\CC(\Lambda_{-})$, and $\Psi_0(A)=A$ for $A\in H_{1}(L_{\rm sa})$. We also define
$$\Psi_1:\mathcal{C}(\Lambda_{+})\to\A(\Lambda_{-})$$
by setting
$$\Psi_1(c)=\sum_{\dim(\MM(c,a;\bb))=1}|\MM(c,a;\bb)/\R|\cdot\Psi_0(\bb),$$
for $c\in\CC(\Lambda_{-})$ and $\Psi_1(a)=0$.

\begin{rmk}
We may assume that the moduli spaces $\MM(c,a;\bb)$ are transversely cut out, since the disks in $\MM(c,a;\bb)$ are not multiply-covered. Moreover, the bijection between flow trees with boundary on $\R\times\Lambda_+$ (resp.\ $\tilde L^{\Mo}_{\rm sa}$) and disks with boundary on $\R\times\Lambda_+$ (resp.\ $\tilde L^{\Mo}_{\rm sa}$) holds also for disks in $\MM(c,a;\bb)$ of index $1$.
\end{rmk}

\begin{prop}\label{prop:saddle}
If $a\in \mathcal{C}(\Lambda_+)$ is a simple contractible Reeb chord, then the corresponding cobordism map $\Phi_{(X_{\rm sa},L_{\rm sa})}\colon\A(\Lambda_+)\to\A(\Lambda_-)$ is given by
$$\Phi_{(X_{\rm sa},L_{\rm sa})}(c)=\Psi_0(c)+\Psi_1(c).$$
\end{prop}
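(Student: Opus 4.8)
The plan is to reduce the computation to a count of rigid Morse flow trees via Theorem~\ref{thm:flowtreecomp} and then enumerate those trees near the saddle region. By Lemma~\ref{back and forth} the pair $(X_{\rm sa},L_{\rm sa})$ is exact Lagrangian isotopic to the conical cobordism $(X_\delta,L_{\delta;\sigma})$ associated to the saddle Morse cobordism $L^{\Mo}_{\rm sa}$, so that Theorem~\ref{thm:flowtreecomp} computes $\Phi_{(X_{\rm sa},L_{\rm sa})}(c)=\sum_{\dim\TT(c;\mathbf{b})=0}|\TT(c;\mathbf{b})|\,\mathbf{b}$ (up to the usual chain homotopy). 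Since $a$ is contractible I would first apply a $\Pi_\C$-simple isotopy shrinking $\mathfrak{A}(a)$ to be arbitrarily small; by Lemma~\ref{l:noRM->id} this changes the DGAs only by the canonical identification it provides, and leaves $\Psi_0$, $\Psi_1$ and the moduli spaces $\MM(c,a;\mathbf{b})$ unchanged, so it costs nothing. We may then take the non-product set $V\subset L^{\Mo}_{\rm sa}$ of Definition~\ref{def:saddlecob} to be supported over an arbitrarily small disk about the double point $\Pi_\C(a)$, with $L^{\Mo}_{\rm sa}$ a trivial cobordism over $\Lambda_+=\Lambda_-$ outside that disk. Transversality of the relevant moduli spaces, including $\MM(c,a;\mathbf{b})$, holds for a generic geometric perturbation because the disks involved carry a positive puncture at a chord $\neq a$ and hence are somewhere injective.

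Next I would enumerate the rigid trees $\gamma\in\TT(c;\mathbf{b})$ according to how $\gamma$ meets $V$. If $\gamma\cap V=\varnothing$, then by local triviality and Lemma~\ref{lem:bdrytrees} $\gamma$ is a product tree --- a single flow line joining the chord $c$ over $\pa_+ F$ to the corresponding chord over $\pa_- F$ --- which exists and is rigid exactly when $c\neq a$ and contributes the monomial $c$; these account for $\Psi_0$ on $\CC(\Lambda_+)\setminus\{a\}$. If $\gamma$ meets $V$, the part of $\gamma$ inside $V$ is controlled by the explicit front of $L^{\Mo}_{\rm sa}$ near $V$ (Figure~\ref{fig:saddlecob}) together with the distinguished gradient trajectory $\rho_0$ from the Reeb chord $a$ to the local minimum $1$; a short local analysis, modeled on the vertex classification of rigid trees in \cite[Section~3]{E1}, shows this part is either (i) the trajectory $\rho_0$ itself, which forces the positive puncture of $\gamma$ to be $a$ and contributes the empty word $1=\Psi_0(a)$; or (ii) a single ``switch at the saddle'', in which case cutting $\gamma$ along $V$ leaves a rigid flow tree of $\Lambda_+$ with one positive puncture at $c$, an extra positive puncture at $a$ produced by the switch, and negative punctures $\mathbf{b}'$, while the $V$-piece caps off the $a$-corner and sends $\mathbf{b}'\in\CC(\Lambda_+)$ to $\mathbf{b}=\Psi_0(\mathbf{b}')\in\A(\Lambda_-)$. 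Via the flow-trees $\leftrightarrow$ holomorphic-disks dictionary of \cite{E1} (Remark~\ref{leg link}, as already invoked in Theorem~\ref{thm:flowtreecomp}) the type-(ii) trees are in bijection with the zero-dimensional spaces $\MM(c,a;\mathbf{b}')/\R$ with $\dim\MM(c,a;\mathbf{b}')=1$, so their total contribution is $\sum_{\dim\MM(c,a;\mathbf{b}')=1}|\MM(c,a;\mathbf{b}')/\R|\,\Psi_0(\mathbf{b}')=\Psi_1(c)$. Combining the cases gives $\Phi_{(X_{\rm sa},L_{\rm sa})}(c)=\Psi_0(c)+\Psi_1(c)$ for $c\neq a$; for $c=a$ the local model near $V$, together with the energy estimates of Lemmas~\ref{lemma: energy bound} and \ref{lemma: monotonicity} (which exclude any tree issuing from the now very short chord $a$ other than $\rho_0$), yields $\Phi_{(X_{\rm sa},L_{\rm sa})}(a)=1=\Psi_0(a)+\Psi_1(a)$.

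The step I expect to be the main obstacle is proving that this list is complete, in particular that no rigid tree meets $V$ more than once; this is exactly where simplicity of $a$ is used. A rigid tree meeting $V$ $k$ times corresponds, under the same dictionary, to a (possibly broken) holomorphic disk $u$ in $\R\times\R^3$ with boundary on $\R\times\Lambda_+$ carrying one positive puncture at $c\neq a$ and $k$ positive punctures at $a$, glued to $k$ rigid copies of the $\rho_0$-piece; since those pieces are rigid, rigidity of the total tree forces $\op{ind}(u)=1$, whereas Definition~\ref{defn: simple contractible} gives $\op{ind}(u)\geq k$, so $k\leq 1$. A bookkeeping check with $|a|=0$, Remark~\ref{rmk:Morsegrading}, and the dimension formulas \eqref{eqn: dim of tree} and \eqref{eqn: fredholm index} confirms that $\Phi_{(X_{\rm sa},L_{\rm sa})}$ has degree $0$ and that the degrees match in each case. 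Finally, the homotopy classes carried by the disks in $\MM(c,a;\mathbf{b}')$ agree with those of the corresponding trees, so the whole identification respects the coefficient ring $\F[H_1(L_{\rm sa};\Z)]$, on which $\Psi_0$ is the identity; this also makes the argument compatible with more general coefficients.
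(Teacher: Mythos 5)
Your overall outline --- reduce to rigid flow trees of the saddle Morse cobordism via Theorem~\ref{thm:flowtreecomp}, show that trees not interacting with the handle region give $\Psi_0$, that trees interacting once give $\Psi_1$, and use simplicity to exclude multiple interactions --- is the same skeleton as the paper's proof. The genuine gap is the claimed bijection obtained by ``cutting $\gamma$ along $V$''. The piece of a rigid tree of $L^{\Mo}_{\rm sa}$ outside $V$ is a partial flow tree of the $2$-dimensional Legendrian $\tilde L^{\Mo}_{\rm sa}$, not a flow tree of the $1$-dimensional link $\Lambda_+$; the tree--disk dictionary of \cite{E1} relates trees and disks for one and the same Legendrian, and does not by itself convert a cobordism tree with a switch into an element of $\MM(c,a;\mathbf{b})/\R$. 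To identify the two one needs (a) a compactness statement saying that, as the chord $a$ is contracted (equivalently, as $L^{\Mo}_{\rm sa}$ degenerates to the immersed product $\Lambda_1\times[-1,1]$), rigid cobordism trees converge to cascades consisting of connectors and a single slice tree over $\xi_2=0$, and (b) a gluing/uniqueness statement producing exactly one rigid cobordism tree near each such limit configuration, with the positive puncture at $a$ turning into a switch. These are precisely Steps 1, 2 and 4 of the paper's proof (the degeneration $L_t\to\Lambda_1\times[-1,1]$, Claim~\ref{claim: seq of flow trees}, and the splitting of $\gamma_t$ at the switch point into partial trees $\gamma_t^{\pm}$ whose incoming and outgoing trajectories are uniquely determined); nothing in your proposal substitutes for them. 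The same issue affects your claim that every rigid tree disjoint from $V$ is a trivial strip: ``local triviality'' plus Lemma~\ref{lem:bdrytrees} alone does not give this, and in the paper it also falls out of the cascade limit analysis.

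There is in addition an index slip in your completeness argument. If a rigid cobordism tree meets $V$ in $k$ switches, the dimension formula \eqref{eqn: dim of tree alt} (the paper's Step 3: each switch contributes $-1$ to the Maslov content, and $I(\hat c)=I(c)+1$) shows that the associated configuration on $\R\times\Lambda_+$ with one positive puncture at $c$ and $k$ positive punctures at $a$ has tree dimension $k-1$, i.e.\ Fredholm index $k$ --- not index $1$ as you assert; your syllogism ``rigidity forces $\op{ind}(u)=1$, while $\op{ind}(u)\ge k$'' therefore does not run as stated. The exclusion of $k>1$ has to be played off against these index-$k$ configurations via Definition~\ref{defn: simple contractible}, exactly as in the paper's Step 3. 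For $k=1$ the count does produce index-$1$ disks, i.e.\ the spaces $\MM(c,a;\mathbf{b})$ with $\dim=1$ entering $\Psi_1$, so your final formula is the right one; but both the exclusion of $k>1$ and the bijection for $k=1$ must go through the degeneration--compactness--gluing analysis described above.
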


We often refer to the map $\Phi_{(X_{\rm sa},L_{\rm sa})}$ as the {\em $0$-resolution map at $a$.}

\begin{proof}
Let $L^{\Mo}_{\rm sa}$ be a saddle cobordism from $\Lambda_{+}$ to $\Lambda_{-}$, let $a$ be the simple contractible chord in $\Lambda_{+}$, and let $\Lambda_1$ be the immersed Legendrian with a double point $a'$ corresponding to the Reeb chord $a$ of length $0$. Without loss of generality suppose that $F=\R\times[-1,1]$. We write $\hat c\in \mathcal{C}(\Lambda_+)$ for the Reeb chord corresponding to $c\in\mathcal{C}(\Lambda_-)$.

First observe that there is a unique flow tree $\rho_0$ of $L^{\Mo}_{\rm sa}$ from $a$ to $1$ --- it is the flow line given in Figure~\ref{fig:saddlecob}. The flow tree $\rho_0$ will be called the {\em basic tree} and the corresponding holomorphic disk the {\em basic disk}.

\s\n {\em Step 1.}
We describe a degeneration $L_t$, $t\in[0,1]$, of $L_0=L^{\Mo}_{\rm sa}$ to the immersed exact Lagrangian $L_1=\Lambda_1\times[-1,1]$.

Let $\Lambda_t$, $t\in [0,1]$, be a regular homotopy of Legendrian immersions with $\Lambda_0=\Lambda_+$, which is guaranteed by Definition~\ref{defn:contractible}. We may assume that:
\begin{itemize}
\item $\Pi_{J^0\R}(\Lambda_t)$, $t\in[0,1]$, is independent of $t$ outside a small rectangle $R\subset J^0\R$;
\item $\Pi_{J^0\R}(\Lambda_t)\cap R$, $t<1$, is as shown on the left-hand side of Figure~\ref{fig:saddleends};
\item $\Pi_{J^0\R}(\Lambda_t)$ limits to $\Pi_{J^0\R}(\Lambda_1)$ as $t\to 1$; in particular, $\mathfrak{A}(a_t)\to 0$ as $t\to 1$, where $a_t$ is the contractible chord corresponding to $a$.
\end{itemize}

Next let $L_t$, $t<1$, be a family of saddle cobordisms from $\Lambda_t$ to $\Lambda_-$ satisfying the following:
\begin{itemize}
\item the $L_t$ converge to $L_1$ as $t\to 1$;
\item if $V_t$, $t<1$, is the open set $V$ for $L_t$ which appears in Definition~\ref{defn:contractible}, then the $V_t$ shrink to a point as $t\to 1$.
\end{itemize}

\s\n {\em Step 2.} Let $\rho_0(t)$, $t<1$, be the basic tree for $L_t$.

\begin{claim} \label{claim: seq of flow trees}
A sequence of rigid flow trees $\gamma_t\not=\rho_0(t)$ of $L_t$, $t<1$, with one positive end $\hat c\not=a_t$ and $k$ negative ends $b_1,\dots,b_k$, has a subsequence which converges to a cascade $\Gamma$ of $L_1$ as $t\to 1$, which consists of the following:
\begin{itemize}
\item a connector from $\hat c$ to $c\times\{0\}$;
\item a slice tree $\Gamma_1\subset \Lambda_1\times\{0\}\subset L_1$; and
\item connectors from $b_i\times\{0\}$ to $b_i$ for $i=1,\dots,k$.
\end{itemize}
\end{claim}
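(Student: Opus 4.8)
The plan is to run a compactness argument for flow trees in the degeneration $L_t\to L_1$, in the spirit of the Gromov compactness theorem for flow trees \cite[Theorem~1.2]{E1} but with the target Lagrangian becoming Morse--Bott degenerate, and then to identify the limit using the cascade bookkeeping from \cite[Sections~3.3--3.4]{EK}. First I would record a priori bounds. Because $\hat c\neq a_t$, the action $\mathfrak{A}(\hat c)$ is bounded away from $0$ and $\infty$ uniformly in $t$, which (just as the action of the positive puncture bounds the energy of a disk in Lemma~\ref{lemma: energy bound}) gives a uniform bound on the total length of $\gamma_t$; and since $\gamma_t$ is rigid, the dimension formula \eqref{eqn: dim of tree alt} together with the nonnegativity of the contributions of the vertices occurring in rigid trees (ends, $Y_0$- and $Y_1$-vertices, switches, $2$-valent interior punctures) bounds the combinatorial size of the source tree. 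These two bounds allow me to pass to a subsequence and extract a limit: reparametrize the edges, take the Hausdorff limit of the images, and insert new punctures at the finitely many points where an edge collapses or the derivative blows up, as in \cite[Sections~5.2--5.4]{E1}. Away from the shrinking saddle region $V_t$ the Lagrangian $L_1=\Lambda_1\times[-1,1]$ is a trivial cobordism over the immersed Legendrian $\Lambda_1$, so the limit is not an ordinary flow tree but a cascade of $L_1$: slice trees contained in slices $\Lambda_1\times\{s\}$, joined by connectors.

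To pin down the structure of the cascade I would use the fact that a rigid flow tree of a trivial Morse cobordism over a Legendrian link, having exactly one positive puncture, is a connector. Indeed, by the dimension count such a tree corresponds to a flow tree of the underlying link of formal dimension $-1$, and for generic data (cf.\ the argument-principle computation in the proof of Lemma~\ref{l:noRM->id}) the only such trees are constant trees at double points, i.e.\ strips over single Reeb chords. Applied over $\{\xi_2>0\}$, where $L_t$ is trivial over $\Lambda_t\simeq\Lambda_+$, and over $\{\xi_2<0\}$, where $L_t$ is trivial over $\Lambda_-$, this forces all the branching of $\gamma_t$ into an arbitrarily small neighborhood of the slice $\{0\}$ carrying the saddle. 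Since $\mathfrak{A}(a_t)\to 0$ and $V_t$ collapses to a point as $t\to 1$, the branching part of $\gamma_t$ converges to a single slice tree $\Gamma_1\subset\Lambda_1\times\{0\}$, which records the collapsed double point $a'$, while $\gamma_t\neq\rho_0(t)$ (automatic here, since $\hat c\neq a_t$) rules out the basic tree. The surviving portion of $\gamma_t$ over $\{\xi_2>0\}$ is then a connector from $\hat c$ to the positive puncture $c\times\{0\}$ of $\Gamma_1$, and the surviving portions over $\{\xi_2<0\}$ are connectors from the negative punctures $b_i\times\{0\}$ of $\Gamma_1$ to the $b_i$, which is the asserted structure.

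The main obstacle is making the compactness airtight in the collapsing region: one must rule out ``ghost'' subtrees carrying no punctures but positive length, rule out nontrivial bubbling at the scale of the vanishing chord $a_t$ (so that the only contribution near the collapsed saddle is the single slice tree $\Gamma_1$), and check that the connectors attach to $\Gamma_1$ in the combinatorial pattern claimed rather than after a permutation. As above, any stray component would itself be a rigid flow tree of a trivial cobordism, hence a connector of the expected type, and the vanishing of $\mathfrak{A}(a_t)$ prevents a nontrivial tree from concentrating away from the collapsed saddle; but carrying this through in the presence of the shrinking region $V_t$, in parallel with the analogous arguments in \cite{EK}, is where the real work lies.
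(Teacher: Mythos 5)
Your overall strategy --- flow-tree Gromov compactness for the degenerating family $L_t$ to extract a limiting cascade of $L_1$, followed by an index/rigidity argument to identify its levels --- is the same as the paper's, which treats the compactness step with the same brevity you do. The genuine gap is in the step that pins the slice tree to the slice $\{\xi_2=0\}$. The fact you invoke, that a rigid flow tree of a trivial Morse cobordism with one positive puncture is a connector, does not apply to the objects you apply it to: the portions of $\gamma_t$ lying over $\{\xi_2>0\}$ or $\{\xi_2<0\}$ are only \emph{partial} flow trees (they have special vertices on the dividing slice and may well contain $Y_0$-vertices or interior punctures inside the product region), and a slice-tree level of the limit cascade at some $\xi_2'\neq 0$ is a flow tree of the link $\Lambda_1$ inside its slice, not a flow tree of a trivial cobordism, so neither object is excluded by that fact. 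Nor does $\mathfrak{A}(a_t)\to 0$ do the job, as you suggest in your final paragraph: nonconstant rigid slice trees of $\Lambda_1$ not involving the degenerate chord exist in general (they are exactly the trees computing the differential of the slice link), so the smallness of the vanishing chord's action is not the relevant mechanism.

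What actually rules out slice-tree levels away from $\{\xi_2=0\}$ --- and what the paper says in a single sentence --- is the rigidity of $\gamma_t$ combined with the product structure: away from the shrinking set $V_t$ the cobordism is a product in the $\xi_2$-direction, so a nonconstant slice tree sitting at $\xi_2'\neq 0$ could be translated in $\xi_2$, sweeping out a family of cascades of dimension at least $1$; by the correspondence between limit cascades and the nearby trees $\gamma_t$ this would give $\dim(\gamma_t)\geq 1$, contradicting rigidity. Only at $\xi_2=0$, where the collapsing region $V_t$ breaks the product structure, can a slice tree be pinned, and this yields the asserted form of $\Gamma$: one slice tree at $\{0\}$, a connector from $\hat c$, and connectors to the $b_i$. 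If you replace your ``trivial cobordism $\Rightarrow$ connector'' step by this translation/rigidity argument, applied to the limit cascade rather than to restrictions of $\gamma_t$, your proof matches the paper's.
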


\begin{proof}
By the flow tree analog of Gromov compactness, there is a subsequence of $\gamma_t$ which converges to a cascade $\Gamma$ of $L_1$.  (Here the boundary condition $L_t$ varies with $t$ and Gromov compactness corresponds to local convergence of flow lines of the vector fields defined by $L_t$ to the flow lines of the limiting vector field as $t\to 1$, provided an energy bound is satisfied.)  If a level of $\Gamma$ contains a slice tree of $\Lambda_1\times\{\xi_2\}$ with $\xi_2\not=0$, then $\gamma_t$ is not rigid.  The claim follows.
\end{proof}

\s\n {\em Step 3.} Consider the slice tree $\Gamma_1$. Since the Reeb chords $a_t$ limit to $a_1$ with $\frak{A}(a_1)=0$, we can view $\Gamma_1$ as a tree with punctures at $a_1$, denoted by $\Gamma_1^\circ$. For each negative (resp.\ positive) puncture at $a_1$, there is a corresponding end (resp.\ switch) of $L_t$, $t<1$; see Figure~\ref{fig:pushdown}.

\begin{figure}[ht]
\labellist
\pinlabel {\small end} at 120 170
\pinlabel {\small switch} at 900 190
\pinlabel {\small $\Pi_F(\Sigma)$} at 175 10
\pinlabel {\small $\Pi_F(\Sigma)$} at 880 10
\endlabellist
\centering
\includegraphics[width=0.6\linewidth]{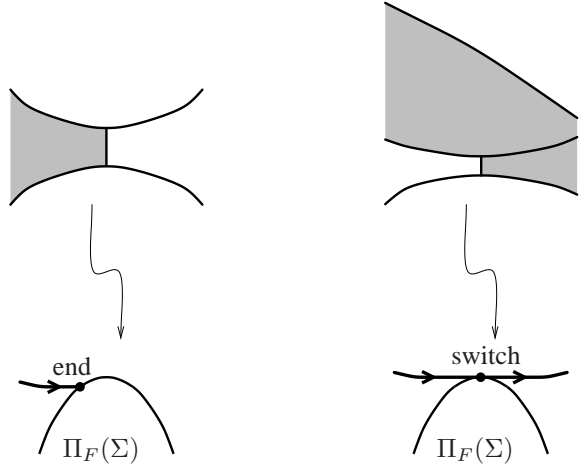}
\caption{Producing rigid trees in a saddle cobordism. Here $\Sigma$ is the singular set of $\tilde L^{\rm Mo}_{\rm sa}$. The arrows in the bottom figures represent the negative gradients of the positive function differences.}
\label{fig:pushdown}
\end{figure}

We now apply Equation~\eqref{eqn: dim of tree alt} to compute $\dim (\Gamma_1^\circ)$ and $\dim (\gamma_t)$: Since $n=1$,
$$\dim (\Gamma_1^\circ) = -2 + I(c) -\sum_{i=1}^k (I(b_i)-1) +\sum_r \mu(r).$$
On the other hand, since $n=2$, $I(\hat c)=I(c)+1$, and each positive end at $a_t$ is converted into a switch which contributes $-1$ to the Maslov content,
$$\dim (\gamma_t) =-1 + I(c)-\sum_{i=1}^k (I(b_i)-1)+\sum_r \mu(r) -k.$$
Since $\dim (\gamma_t)=0$, it follows that $\dim (\Gamma_1^\circ)= k-1$.  By the simplicity of the chord $a_t$, there is no such holomorphic disk when $k>1$.

Note that for $t$ close to $1$, the rigid flow tree $L_t$ must intersect the basic flow tree $\rho_0$ as well as $\Pi_F(\Sigma)$.

\s\n {\em Step 4.}
It remains only to construct a unique rigid tree $\gamma_t$ in $L_t$, $1-\varepsilon<t<1$, corresponding to the cascade $\Gamma$.  When we perturb $\Gamma$ to $\gamma_t$, the unique positive puncture of the slice tree $\Gamma_1^\circ$ at $a$ becomes a switch of $\gamma_t$ as depicted on the right-hand side of Figure~\ref{fig:pushdown}.  The tree $\gamma_t$ can be split into two partial flow trees $\gamma_t^+$ and $\gamma_t^-$ at $d$, where $\gamma_t^+$ has a positive puncture at $\hat c$ and $\gamma_t^-$ has no positive punctures; similarly, $\Gamma$ can be split into partial flow trees $\Gamma^+$ and $\Gamma^-$. At the switching point $d$, the incoming and outgoing gradient trajectories are uniquely determined.  Hence there is a unique $\gamma_t^+$ which is close to $\Gamma^+$ and ends at $d$ and there is a unique $\gamma_t^-$ which is close to $\Gamma^-$ and starts at $d$.  This proves the lemma.
\end{proof}

\begin{rmk}
Consider a rigid tree $\gamma$ in the cobordism $L^{\rm Mo}_{\rm sa}$ with one positive end $c\not=a$. By applying ``boundary gluing'' to $\gamma$ and the basic tree $\rho_0$, we create a tree with two positive ends $c$ and $a$. Here a ``boundary gluing'' of two flow trees is the flow tree analog of a gluing of two disks whose boundaries intersect at a point on the Lagrangian. The end and the switch of this tree cancel as in Figure \ref{fig:bdryglue}, the tree moves upwards, and we eventually obtain a $1$-dimensional family of trees in $\R\times \Lambda_+$ with two positive ends $c$ and $a$.
\end{rmk}

\begin{figure}[ht]
\labellist
\pinlabel $1$ at 41 943
\pinlabel $2$ at 816 943
\pinlabel $3$ at 41 367
\pinlabel $4$ at 816 367
\pinlabel \small{$\Pi_F(\Sigma)$} at 90 -30
\pinlabel \small{$\Pi_F(\Sigma)$} at 90 545
\pinlabel \small{$\Pi_F(\Sigma)$} at 870 -30
\pinlabel \small{$\Pi_F(\Sigma)$} at 870 545
\pinlabel \small{boundary glue} at 400 990
\pinlabel \small{$\rho_0$} at 200 900
\pinlabel \small{$\gamma$} at 50 700
\pinlabel \small{sw} at 150 660
\pinlabel \small{e} at 232 660
\pinlabel \small{sw} at 930 660
\pinlabel \small{e} at 1012 660
\pinlabel \small{collide} at 1020 520
\pinlabel \small{can now move up} at 570 150
\endlabellist
\centering
\includegraphics[width=0.63\linewidth]{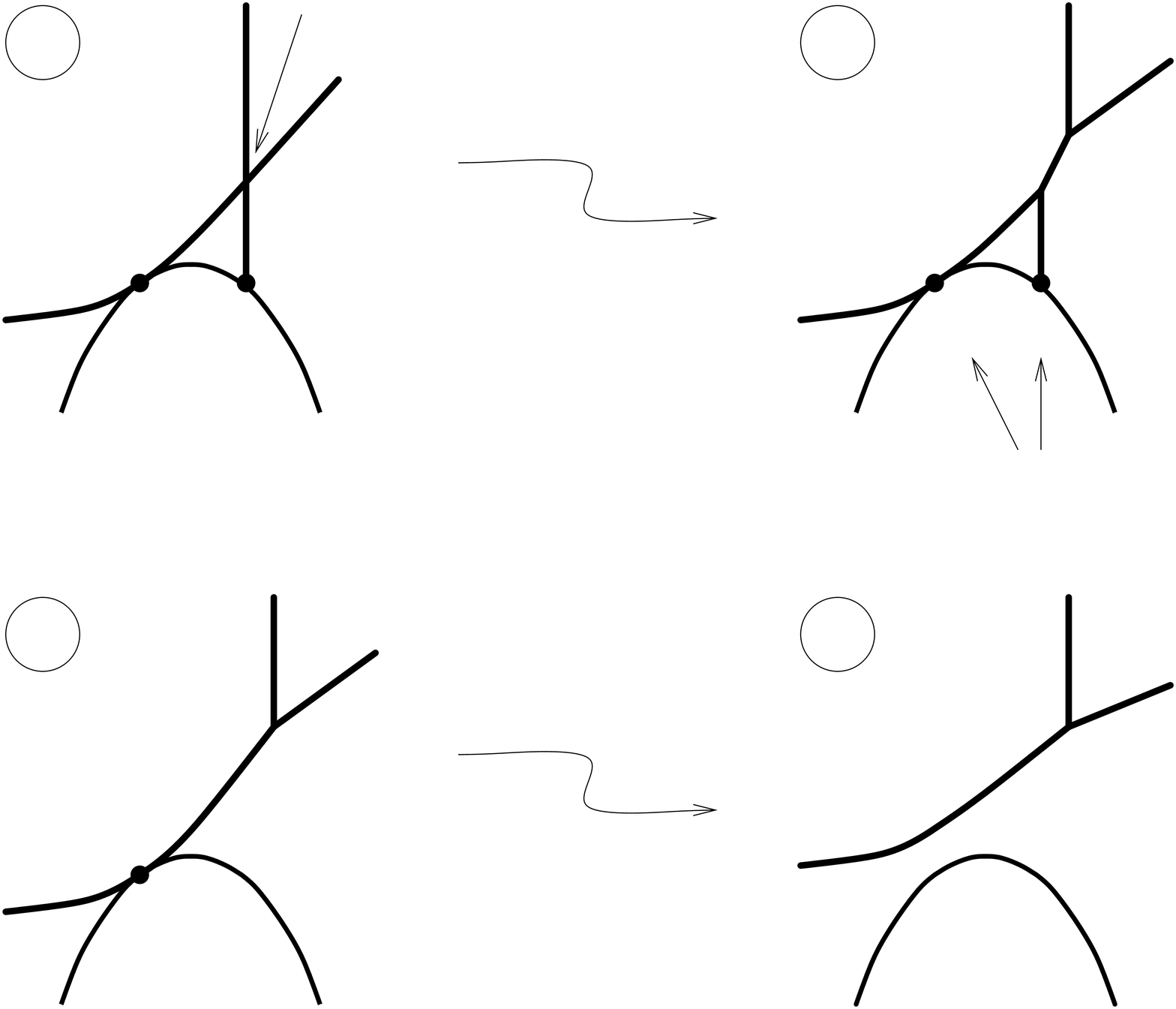}
\s
\caption{Boundary gluing rigid trees in the cobordism. ``sw'' refers to a switch and ``e'' refers to an end.}
\label{fig:bdryglue}
\end{figure}

\begin{cor} \label{surjective}
If $a\in \mathcal{C}(\Lambda_+)$ is a simple contractible Reeb chord, then the corresponding cobordism map $\Phi_{(X_{\rm sa},L_{\rm sa})}\colon\A(\Lambda_+)\to\A(\Lambda_-)$ is surjective.
\end{cor}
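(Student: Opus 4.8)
The plan is to read off surjectivity from the formula $\Phi_{(X_{\rm sa},L_{\rm sa})}(c)=\Psi_0(c)+\Psi_1(c)$ of Proposition~\ref{prop:saddle} by an induction on the $\alpha_0$-action. Since $\Phi_{(X_{\rm sa},L_{\rm sa})}$ is a unital algebra homomorphism, it suffices to show that every generator of $\A(\Lambda_-)$ lies in its image; the homology generators are immediate because $\Phi_{(X_{\rm sa},L_{\rm sa})}$ is the identity on $H_1(L_{\rm sa};\Z)$, so only the Reeb chords $c\in\CC(\Lambda_-)$ need attention. For these, $\Psi_0(\hat c)=c$ gives $\Phi_{(X_{\rm sa},L_{\rm sa})}(\hat c)=c+\Psi_1(c)$, and the whole argument rests on showing that $\Psi_1(c)$ is a polynomial in $H_1(L_{\rm sa};\Z)$ and in the Reeb chords of action strictly less than $\mathfrak{A}(c)$.

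First I would normalize the situation using contractibility. By Lemma~\ref{condition for contractibility} we may apply a $\Pi_\C$-simple Legendrian isotopy to $\Lambda_+$ (together with the induced isotopy of $\Lambda_-$) so that $\mathfrak{A}(a)$ is as small as we like --- smaller than $\mathfrak{A}(c)$ for all $c\in\CC(\Lambda_-)$, smaller than every positive difference $\mathfrak{A}(c)-\mathfrak{A}(c')$ of action values, and small in the quantitative sense needed below. By Lemma~\ref{l:noRM->id} such an isotopy alters $\Phi_{(X_{\rm sa},L_{\rm sa})}$ only by pre- and post-composition with DGA isomorphisms that carry Reeb chords to Reeb chords, and surjectivity is insensitive to this; so we may assume $\mathfrak{A}(a)$ has been made this small.

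Now for the key claim. We have $\Psi_1(c)=\sum|\MM(c,a;\bb)/\R|\,\Psi_0(\bb)$, summed over $\bb$ with $\dim(\MM(c,a;\bb))=1$. For $u\in\MM(c,a;\bb)$ positivity of $\omega_0$-energy (cf.\ Lemma~\ref{lemma: energy bound}) yields the area identity $E_{\omega_0}(u)=\mathfrak{A}(c)+\mathfrak{A}(a)-\mathfrak{A}(\bb)>0$, so $\mathfrak{A}(\Psi_0(\bb))\le\mathfrak{A}(\bb)<\mathfrak{A}(c)+\mathfrak{A}(a)$. If a chord $c'$ with $\mathfrak{A}(c')\ge\mathfrak{A}(c)$ appeared in $\Psi_0(\bb)$, then --- since every chord other than $a$ has action $>\mathfrak{A}(a)$, and each factor $a$ in $\bb$ contributes $\mathfrak{A}(a)$ --- the action bound forces $\bb$ to consist of the single chord $\hat c'$ (possibly decorated by homology classes), with $\mathfrak{A}(c')=\mathfrak{A}(c)$ and hence $E_{\omega_0}(u)=\mathfrak{A}(a)$. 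But such a $u$ has genuine convex corners at the two distinct double points $\Pi_\C(c)$ and $\Pi_\C(a)$ of $\Pi_\C(\Lambda_+)$, so its image has a fixed positive diameter; letting $\mathfrak{A}(a)\to 0$ (i.e., degenerating $\Lambda_+$ toward the immersed $\Lambda_1$), Gromov compactness would force a subsequence of such disks to converge to a broken configuration of zero $\omega_0$-energy, which cannot carry a corner at the nondegenerate crossing $\Pi_\C(c)$ --- equivalently, the monotonicity Lemma~\ref{lemma: monotonicity} is violated. Hence for $\mathfrak{A}(a)$ small no such $c'$ occurs, and the claim holds.

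Granting the claim, I would finish by induction on the (finitely many) action values in $\CC(\Lambda_-)$, ordered as $\mathfrak{A}(c_1)\le\cdots\le\mathfrak{A}(c_N)$: here $\Psi_1(c_1)\in\F[H_1(L_{\rm sa};\Z)]$, so $c_1=\Phi_{(X_{\rm sa},L_{\rm sa})}(\hat c_1)+\Psi_1(c_1)$ is in the image; and if $c_1,\dots,c_{j-1}$ and all homology classes lie in the image of the algebra map $\Phi_{(X_{\rm sa},L_{\rm sa})}$, then so does the polynomial $\Psi_1(c_j)$, hence so does $c_j=\Phi_{(X_{\rm sa},L_{\rm sa})}(\hat c_j)+\Psi_1(c_j)$. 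Thus all generators of $\A(\Lambda_-)$ lie in the image and $\Phi_{(X_{\rm sa},L_{\rm sa})}$ is onto. The main obstacle is the key claim --- in particular, excluding a term of $\Psi_1(c)$ involving a chord of action $\ge\mathfrak{A}(c)$ (most delicately $c$ itself); this is exactly where contractibility of $a$ is essential, through the fact that shrinking $\mathfrak{A}(a)$ drives the $\omega_0$-energy of any offending disk to zero, contradicting the presence of honest corners.
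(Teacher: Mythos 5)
Your proof is correct and follows essentially the same route as the paper, which disposes of the corollary in one line by invoking the observation $\mathfrak{A}(\Psi_0(c))>\mathfrak{A}(\Psi_1(c))$ and leaving the induction on action implicit. What you have added is a justification of that action inequality, which the paper treats as obvious: you reduce to the case $\bb=\hat c'$ with $\mathfrak{A}(c')=\mathfrak{A}(c)$ once $\mathfrak{A}(a)$ has been shrunk below all action gaps, and then rule out the resulting energy-$\mathfrak{A}(a)$ disk by a monotonicity/Gromov-compactness argument. That is a legitimate (and, in my view, needed) elaboration rather than a different method; one could phrase the last step a bit more directly by applying monotonicity in the Lagrangian projection --- a $J_0$-holomorphic disk with convex corners at the two transverse double points $\Pi_\C(c)$ and $\Pi_\C(a)$, which stay a definite distance apart under the $\Pi_\C$-simple shrinking isotopy, has $\omega_0$-energy bounded below by a constant depending only on the geometry near $c$, so once $\mathfrak{A}(a)$ is below that constant no such disk exists --- but your compactness framing is equivalent. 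Two small remarks: in the reduction you should also note that no extra copy of $a$ can appear in $\bb$, since that would already force $\mathfrak{A}(\bb)\geq\mathfrak{A}(c')+\mathfrak{A}(a)\geq\mathfrak{A}(c)+\mathfrak{A}(a)$, contradicting positivity of energy; and the inequality $\mathfrak{A}(c')=\mathfrak{A}(c)$ should really be stated as $\mathfrak{A}(c)\leq\mathfrak{A}(c')<\mathfrak{A}(c)+\mathfrak{A}(a)$, from which equality (and then $c'=c$) follows only after invoking the genericity assumption that distinct chords have distinct actions.
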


\begin{proof}
This follows from Proposition~\ref{prop:saddle}, together with the observation that
$$\mathfrak{A}(\Psi_0(c))> \mathfrak{A}(\Psi_1(c)),\quad c\not=a.$$ Here $\mathfrak{A}(\Psi_1(c))$ is the supremum of $\mathfrak{A}({\bf b})$ for all the nonzero monomial summands ${\bf b}$ of $\Psi_1(c)$.
\end{proof}

\subsubsection{Pushing forward augmentations}

If $\Phi:\mathcal{A}(\Lambda_+)\to \mathcal{A}(\Lambda_-)$ is a DGA morphism and $\varepsilon'$ is an augmentation of $\Lambda_-$, then the pullback $\Phi^*\varepsilon':=\varepsilon'\circ \Phi$ is naturally an augmentation of $\Lambda_+$.

For saddle cobordisms corresponding to a simple contractible Reeb chord, augmentations behave well under {\em pushforward}:

\begin{lemma}\label{newaug}
Let $a\in \mathcal{C}(\Lambda_+)$ be a simple contractible Reeb chord, let $\varepsilon$ be an augmentation of $\Lambda_+$ with $\varepsilon(a)=1$, and let $\Phi=\Phi_{(X_{\rm sa},L_{\rm sa})}$ be the corresponding DGA morphism. Then there is a unique algebra map $\varepsilon':\mathcal{A}(\Lambda_-)\to \F$ such that $\varepsilon=\varepsilon'\circ \Phi$. Furthermore, $\varepsilon'$ is an augmentation of $\Lambda_-$.
\end{lemma}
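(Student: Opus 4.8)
The plan is to read off from Proposition~\ref{prop:saddle} that $\Phi:=\Phi_{(X_{\rm sa},L_{\rm sa})}$ is ``unipotent triangular'' with respect to the action filtration, and to combine this with the surjectivity of $\Phi$ from Corollary~\ref{surjective}. By Proposition~\ref{prop:saddle} we have $\Phi(a)=\Psi_0(a)+\Psi_1(a)=1$ (so, since $\Phi$ has degree $0$, necessarily $|a|=0$), $\Phi(c)=c+\Psi_1(c)$ for every $c\in\CC(\Lambda_-)$, and $\Phi(A)=A$ for $A\in H_1(L_{\rm sa})$; moreover the action estimate established in the proof of Corollary~\ref{surjective} shows that every monomial summand of $\Psi_1(c)$ has action strictly smaller than $\mathfrak{A}(c)$, so $\Psi_1(c)$ is a polynomial in those (finitely many) Reeb chords of $\Lambda_-$ of strictly smaller action, together with homology classes.

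First I would factor $\Phi$ through a quotient. Under the identification $\CC(\Lambda_+)\simeq\CC(\Lambda_-)\cup\{a\}$, the underlying graded algebra of $\A(\Lambda_+)$ is freely generated by $\CC(\Lambda_-)$, the degree-$0$ generator $a$, and $H_1(L_{\rm sa})$, so the quotient by the (homogeneous) two-sided ideal generated by $a-1$ is canonically identified with the underlying graded algebra of $\A(\Lambda_-)$; write $q\colon\A(\Lambda_+)\twoheadrightarrow\A(\Lambda_-)$ for the quotient map. Then $\Phi=\bar\Phi\circ q$, where $\bar\Phi\colon\A(\Lambda_-)\to\A(\Lambda_-)$ is the degree-preserving algebra endomorphism determined by $\bar\Phi(c)=c+\Psi_1(c)$ and $\bar\Phi(A)=A$. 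Since $\CC(\Lambda_-)$ is finite and $\bar\Phi$ adds to each generator $c$ only a polynomial in generators of strictly smaller action, a recursion over $\CC(\Lambda_-)$ ordered by nondecreasing action produces a two-sided inverse $\bar\Phi^{-1}$; hence $\bar\Phi$ is an algebra isomorphism.

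Next, existence and uniqueness of $\varepsilon'$: the hypothesis $\varepsilon(a)=1=\varepsilon(1)$ means $\varepsilon$ annihilates $a-1$, hence (being an algebra map) the whole ideal $\ker q$, so $\varepsilon$ descends to an algebra map $\bar\varepsilon\colon\A(\Lambda_-)\to\F$ with $\varepsilon=\bar\varepsilon\circ q$. Setting $\varepsilon':=\bar\varepsilon\circ\bar\Phi^{-1}$ gives $\varepsilon'\circ\Phi=\bar\varepsilon\circ\bar\Phi^{-1}\circ\bar\Phi\circ q=\bar\varepsilon\circ q=\varepsilon$; and $\varepsilon'$ is the only such map because $\Phi$ is surjective by Corollary~\ref{surjective}. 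Since $q$ and $\bar\Phi^{-1}$ are degree-preserving and $\bar\varepsilon$ annihilates elements of nonzero degree (it descends from $\varepsilon$, which does), $\varepsilon'$ also annihilates elements of nonzero degree, so it is a bona fide morphism of graded algebras to $\F$.

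Finally, that $\varepsilon'$ is an augmentation, i.e.\ $\varepsilon'\circ\partial_-=0$: given $x\in\A(\Lambda_-)$, write $x=\Phi(y)$ using surjectivity of $\Phi$; then, since $\Phi$ is a chain map and $\varepsilon$ is an augmentation,
\[
\varepsilon'(\partial_- x)=\varepsilon'(\partial_-\Phi(y))=\varepsilon'(\Phi(\partial_+ y))=\varepsilon(\partial_+ y)=0 .
\]
Hence $\varepsilon'$ is a DGA morphism $\A(\Lambda_-)\to(\F,0)$, i.e.\ an augmentation of $\Lambda_-$. The only nonformal ingredients are Proposition~\ref{prop:saddle} and the action estimate underlying Corollary~\ref{surjective}; granting those, everything above is purely algebraic, and the single point that deserves care is precisely the invertibility of $\bar\Phi$, which relies on that action estimate holding for the cobordism $L_{\rm sa}$ itself rather than only in the limit $\mathfrak{A}(a)\to 0$.
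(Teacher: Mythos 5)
Your proof is correct, and it reorganizes the paper's argument in a slightly more structural way. The paper constructs $\varepsilon'$ by direct induction on the action: writing $\varepsilon'(\Psi_0(x))=\varepsilon(x)-\varepsilon'(\Psi_1(x))$ and recursing downward, and then proves $\varepsilon'\partial_-=0$ by a second induction on action, using the chain-map identity $\partial_-\Phi=\Phi\partial_+$ applied to each generator. You instead package the same ``unipotent triangular'' structure as an honest graded algebra automorphism: factor $\Phi=\bar\Phi\circ q$ through the quotient by the homogeneous ideal $(a-1)$, observe $\bar\Phi$ is filtration-preserving and identity on associated graded hence invertible by recursion, and set $\varepsilon'=\bar\varepsilon\circ\bar\Phi^{-1}$. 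This makes existence and uniqueness one statement rather than an inductive construction. Your verification that $\varepsilon'\partial_-=0$ is genuinely shorter than the paper's: rather than a second induction, you pull back along a preimage $y$ with $\Phi(y)=x$ (using Corollary~\ref{surjective}), apply the chain-map property once, and invoke $\varepsilon\partial_+=0$. Both proofs rest on exactly the same nonformal input, namely the action inequality $\mathfrak{A}(\Psi_0(c))>\mathfrak{A}(\Psi_1(c))$ observed in Corollary~\ref{surjective}, which you are right to flag as the one point deserving care (the paper also assumes this without elaboration); once granted, both arguments are purely algebraic. Your extra parenthetical that $|a|=0$ follows from $\Phi$ having degree $0$ and $\Phi(a)=1$ is correct but unnecessary, since it is already part of the saddle-cobordism setup.
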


\begin{proof}
The existence and uniqueness of $\varepsilon'$ uses the same key idea as that of Corollary~\ref{surjective}.  Since $\Phi(x)=\Psi_0(x)+\Psi_1(x)$ with $\mathfrak{A}(\Psi_0(x))>\mathfrak{A}(\Psi_1(x))$, we can write $\varepsilon'(\Psi_0(x))=\varepsilon(x) -\varepsilon'(\Psi_1(x))$ and use induction on the action.

To prove that $\varepsilon'\bdry'=0$, we compute
\begin{align*}
(\varepsilon'\bdry')(\Psi_0(x))&=(\varepsilon'\bdry') (\Phi(x)-\Psi_1(x))=(\varepsilon'\Phi\bdry)(x)-(\varepsilon'\bdry')(\Psi_1(x))\\
&= (\varepsilon\bdry)(x)-(\varepsilon'\bdry')(\Psi_1(x))= (\varepsilon'\bdry')(-\Psi_1(x)),
\end{align*}
and use induction on the action.
\end{proof}

\subsubsection{Dipped diagrams}
\label{subsub:dipped}

In this subsection we explain how to modify a Legendrian link $\Lambda$ with a contractible Reeb chord $a$ so that $a$ becomes simple.  The modification is called {\em dipping}, which first appeared in \cite{F} under the name ``splashing'' and was used extensively in \cite{Sa2}.

\begin{lemma} \label{dipping}
Let $\Lambda$ be a Legendrian link with a contractible Reeb chord $a$. Then there exist a Legendrian isotopy $\Lambda_\tau$, $\tau\in[0,1]$, and a $1$-parameter family $a_\tau$, $\tau\in[0,1]$, of contractible Reeb chords such that:
\begin{enumerate}
\item  $\Lambda_0=\Lambda$ and $a_0=a$;
\item $\Pi_\C(\Lambda_\tau)$ is a transverse intersection at $\Pi_\C(a_\tau)$ for all $\tau\in[0,1]$; and
\item $a_1\in \mathcal{C}(\Lambda_1)$ is simple.
\end{enumerate}
\end{lemma}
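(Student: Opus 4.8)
The plan is to obtain $\Lambda_\tau$ by inserting a \emph{dip} (the ``splashing'' of \cite{F}, used systematically in \cite{Sa2}) into the front of $\Lambda$ in a small neighborhood of $a$, and then to check that the resulting chord $a_1$ is simple by a combinatorial analysis — via the combinatorial model of Section~\ref{section: Legendrian contact homology}, or equivalently via Theorem~\ref{thm:flowtreecomp} — of the immersed polygons with more than one positive corner at $a_1$.

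First I would set up the dipping isotopy. Near $a$ the front of $\Lambda$ consists of two strands, an upper sheet and a lower sheet, meeting in the short Reeb chord $a$. A dip is a compactly supported Legendrian isotopy that pushes one strand down past the strands below it and back up, introducing a pair of new cusps and, for each pair of involved sheets, a canceling pair of new short Reeb chords of arbitrarily small action; the Legendrian isotopy type, and the Lagrangian projection away from a small ball, are unchanged. I would place such a dip on each of the two arcs of $\Lambda$ issuing from the endpoints of $a$, on the side of $\Pi_\C(a)$ opposite to where the contraction of Lemma~\ref{condition for contractibility} is performed, chosen so that the two positive--Reeb-sign quadrants at $\Pi_\C(a)$ become small triangular regions, each of whose two outgoing edges lies on one of the newly created dip chords. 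Throughout this isotopy $\Pi_\C(a_\tau)$ stays a transverse double point, and $a_\tau$ remains contractible because the dips can be taken arbitrarily small and hence do not affect the area criterion of Lemma~\ref{condition for contractibility}. Thus conditions (1) and (2) hold by construction, with $\Lambda_0=\Lambda$ and $a_0=a$.

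For condition (3), let $u$ be a broken holomorphic disk in $\R\times\R^3$ with boundary on $\R\times\Lambda_1$, one positive puncture at some $c\ne a_1$, and $k>1$ positive punctures at $a_1$, with negative punctures $\mathbf{b}$. By the combinatorial description of Section~\ref{section: Legendrian contact homology}, each level of $u$ is an immersed polygon with convex corners at double points of $\Pi_\C(\Lambda_1)$; arguing level by level and using additivity of the index under breaking, together with the fact that the dip chords have arbitrarily small action, it suffices to treat an honest immersed polygon with some $j\le k$ positive corners at $\Pi_\C(a_1)$. Following the boundary of such a polygon out of a positive corner at $a_1$, the placement of the dips forces the boundary immediately into a dip region, and convexity of the corners together with the geometry of the dip forces at least one extra negative corner at a dip chord per such excursion. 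Plugging the extra positive punctures at $a_1$ and the obligatory extra negative corners at (low-degree) dip chords into the index formula \eqref{eqn: fredholm index}, one finds that each positive corner at $a_1$ beyond the first raises $\op{ind}(u)$ by at least one, so $\op{ind}(u)\ge k$ — exactly the inequality defining simplicity in Definition~\ref{defn: simple contractible}.

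The main obstacle is this last step: holomorphic disks with several positive punctures at $a_1$ fall outside the reach of the transversality theory of Lemma~\ref{lem:dim+tv} because they may be multiply covered, and the whole point of the dip is to replace analysis by a robust combinatorial count in which every positive corner at $a_1$ beyond the first is ``charged'' one unit of index through a mandatory detour into a dip. Making this precise — pinning down the gradings of the dip chords, the minimum number of negative corners each detour forces, and the compatibility of the estimate with the breaking and gluing of buildings — is where the real work lies, and it is essentially the dipping analysis of \cite{F,Sa2} transported to the present setting; the contractibility hypothesis enters only through Lemma~\ref{condition for contractibility}, which guarantees that the dip can be inserted while keeping $\Pi_\C(a_\tau)$ a transverse double point.
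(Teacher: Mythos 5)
Your overall strategy---dip to isolate the contractible chord and then argue combinatorially that any disk with $k>1$ positive corners at $a_1$ has index at least $k$---is the paper's approach, and your choice of the Lagrangian-projection/immersed-polygon picture rather than flow trees is a legitimate translation (the paper itself remarks that ``the reader might prefer to translate the proof below to the situation of a Lagrangian projection''). However, your description of the dip does not produce the geometry the estimate needs. Placing a dip ``on each of the two arcs of $\Lambda$ issuing from the endpoints of $a$'' is not what the paper does, and it is not enough: the proof first isotopes $\Lambda$ so that, over a small $x$-interval $I'$ around $a$, the front slopes of all sheets above $a$ are negative and those below are positive, both decreasing monotonically with $z$ (condition~(*)); it then dips \emph{every} strand in that slice, on both the left and the right of $a$, with the supports $I_{L,i}$ (resp.\ $I_{R,i}$) disjoint and nested in order of strand height as in Figure~\ref{fig:dippedfront}. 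This full wall of new crossings on both sides is what forces any boundary arc leaving a positive corner at $a_1$ into a completely controlled region; your two local pushes leave unspecified how the other strands near $a$ are arranged, which is exactly what the case analysis needs.

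The more serious gap is in the index count itself. You claim each excursion from a positive corner at $a_1$ forces ``at least one extra negative corner at a dip chord'' and that this raises $\op{ind}(u)$ by at least one. By the index formula~\eqref{eqn: fredholm index}, an extra positive corner at the degree-$0$ chord $a_1$ contributes $+1$ while an extra negative corner at a chord $d$ contributes $-|d|$, so your net change is $1-|d|$, which need not be positive: the gradings of the newly created dip chords depend on the Maslov classes of the strands they connect and are not uniformly bounded, so this accounting does not close. The paper's argument avoids the gradings of the auxiliary chords entirely: using~(*) and the nesting of the dips, it shows that the partial flow tree continuing from each positive $\pi/2$-corner at $a_1$ must terminate at the unique allowed positive puncture $c\ne a_1$, at a $Y_0$-vertex, or at a $2$-valent interior puncture with a $\tfrac{3\pi}{2}$-angle, and each of the latter two increases the formal dimension by exactly $1$ independently of any grading, while they are distinct for distinct positive corners at $a_1$. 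That grading-independence is precisely the observation your sketch is missing, and it is the real content of the lemma.
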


\begin{proof}
We start with a $\Pi_\C$-simple isotopy $\Lambda_\tau$, $\tau\in [0,{1\over 3}]$, with $\Lambda_0=\Lambda$, such that the restriction of the front of $\Lambda_{1/3}$ to $J^0 I$, where $I$ is a small interval around the $x$-coordinate of $a_{1/3}$, is as in Figure \ref{fig:dippedinitial}, with the exception of (*):
\begin{itemize}
\item[(*)] in the front projection, the slopes of the Legendrian arcs above (resp.\ below) $\Pi_{J^0F} (a_{1/3})$ are negative (resp.\ positive) and decrease as the $z$-coordinate increases.
\end{itemize}
\begin{figure}[ht]
\labellist
\pinlabel \small{$a_{1/3}$} at 230 363
\endlabellist
\centering
\includegraphics[width=0.15\linewidth]{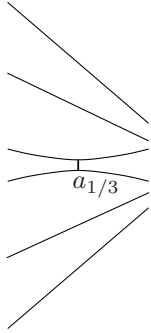}
\caption{Aligning all the local gradients. }
\label{fig:dippedinitial}
\end{figure}
The isotopy is obtained from the $\Pi_\C$-simple isotopy given by Definition~\ref{defn:contractible}, by applying a $C^0$-small perturbation near the Reeb chord $a_{1/3}$. Next we shrink $I$ to $I'\ni x(a_{1/3})$ and apply a $C^0$-small isotopy to $\Pi_{J^0F}(\Lambda_{1/3})\cap J^0 I'$ so that (*) holds. This yields $\Lambda_\tau$, $\tau\in[{1\over 3},{2\over 3}]$.

Finally we apply a $C^0$-small isotopy to the front projection $\Pi_{J^0F}(\Lambda_{2/3})\cap J^0I'$ so that the resulting front $\Pi_{J^0F}(\Lambda_1)\cap J^0I'$ is as in Figure \ref{fig:dippedfront}. The strands are numbered $$-m,\dots,-1,0_-,0_+,1,\dots,n$$ from bottom to top. Let $I_{L,i}\subset I'$ (resp.\ $I_{R,i}\subset I'$) be the support of the perturbation of strand $i$ to the left (resp.\ right) of $x(a_1)$.  Then $I_{L,i}\cap I_{L,j}=\varnothing$ for $i\not=j$ and the $I_{L,i}$ move from left to right as $i$ increases; the same holds for $I_{R,i}$. The resulting Legendrian isotopy will be denoted by $\Lambda_\tau$, $\tau\in[{2\over 3},1]$.
\begin{figure}[ht]
\begin{center}
\psfragscanon
\psfrag{A}{\tiny $n$}
\psfrag{B}{\tiny $n-1$}
\psfrag{C}{\tiny $0_+$}
\psfrag{D}{\tiny $0_-$}
\psfrag{E}{\tiny $-m+1$}
\psfrag{F}{\tiny $-m$}
\includegraphics[width=0.25\linewidth]{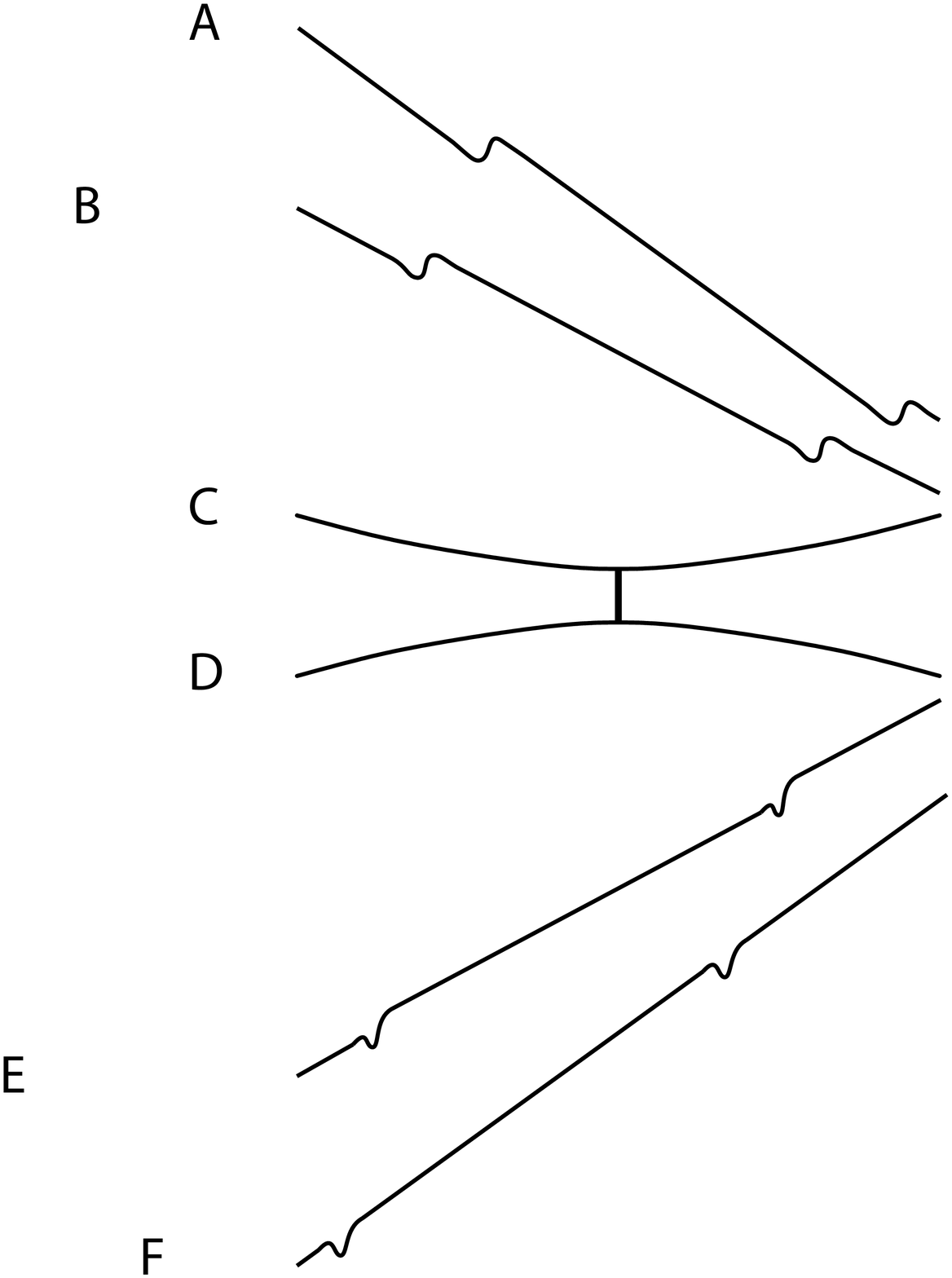}
\end{center}
\caption{Isolating the contractible chord by dipping.}
\label{fig:dippedfront}
\end{figure}
The reason this isotopy $\Lambda_\tau$, $\tau\in[{2\over 3},1]$, is called a {\em dipping} is self-evident when the dipping is viewed in the Lagrangian projection; see Figure~\ref{fig:lagdips}.
\begin{figure}[ht]
\begin{overpic}[width=1.0\linewidth]{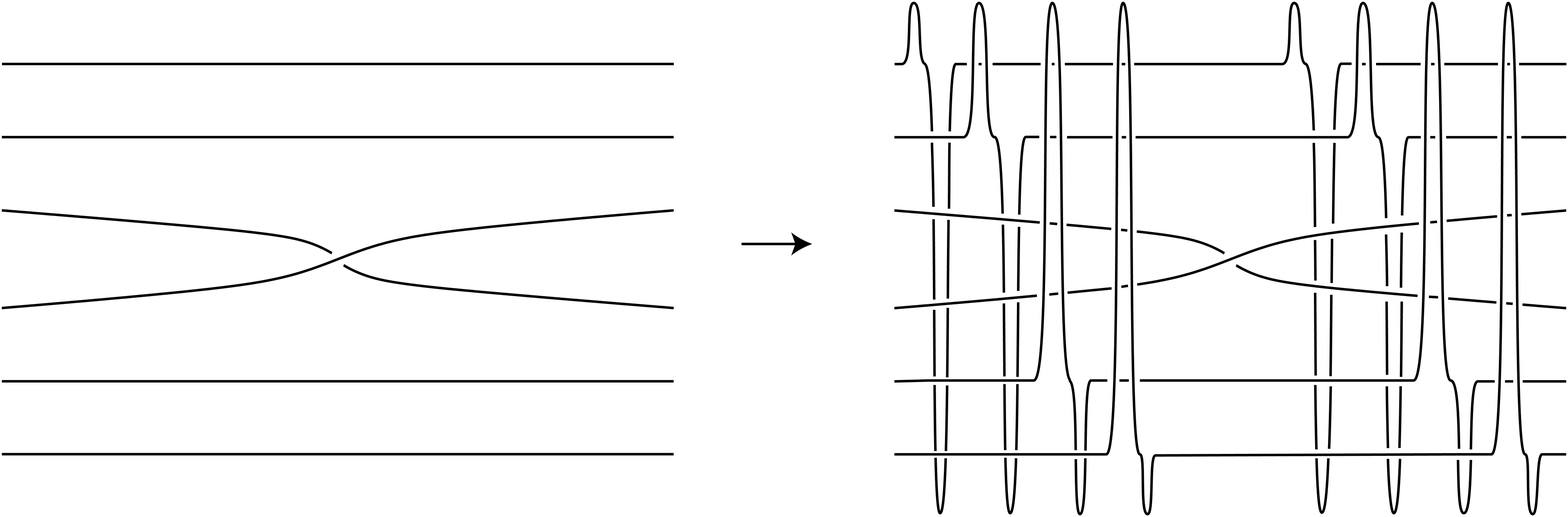}
\end{overpic}
\caption{A dipping in the Lagrangian projection.}
\label{fig:lagdips}
\end{figure}
Conditions (1) and (2) immediately hold by construction.

Next we claim that if a tree $\gamma$ of $\Lambda_1$ has one positive puncture at $c\not=a_1$ and $k$ positive punctures at $a_1$, then its formal dimension is at least $k-1$. Recall that a tree has formal dimension $k-1$ if and only if the Fredholm index of the corresponding disk is $k$. The reader might prefer to translate the proof below to the situation of a Lagrangian projection. For each positive puncture of $\gamma$ at $a_1$ with a ${\pi\over 2}$-angle, there is a partial flow tree of the following type:
\begin{itemize}
\item $\gamma'$, which consists of a partial flow line $(\gamma|_e,\gamma^e_1,\gamma^e_2)$ where $\Pi_{J^0F}$ maps $\gamma^e_2$ to the upper sheet $S_2$ of $a_1$ and $\gamma^e_1$ to a sheet $S_3$ above $a_1$ for $x\leq x(a_1)$ and a partial flow line $(\gamma|_{e'},\gamma^{e'}_1,\gamma^{e'}_2)$ where $\Pi_{J^0F}$ maps $\gamma^{e'}_2$ to the lower sheet $S_1$ of $a_1$ and $\gamma^{e'}_1$ to $S_3$ for $x\geq x(a_1)$. $\gamma'$ is given by the upper right diagram of Figure~\ref{fig:pushdown}.
\item $\gamma''$, which is obtained from $\gamma'$ by reflecting across the $x$-axis.
\end{itemize}
Note that there are no $Y_1$-vertices, switches, and ends on $J^0I'$, since there are no cusps on $J^0I'$. A $Y_0$-vertex and a $2$-valent interior puncture with a ${3\pi\over 2}$-angle both increase the formal dimension by $1$.

Let $\gamma'\subset \gamma$ be a partial flow tree with a positive $2$-valent interior puncture at $a_1$.  We consider the continuation of $\gamma'$ to the left; the case of $\gamma''\subset\gamma$ is similar.  If there is no $Y_0$-vertex and no $2$-valent interior puncture with a ${3\pi\over 2}$-angle, then by a case-by-case analysis there is always a positive puncture $c\not=a_1$ whose corresponding Reeb chord starts at $0_+$.  Hence the continuation of $\gamma'$ to the left has one of the following: (i) a positive puncture $c\not=a_1$, (ii) a $Y_0$-vertex above $0_+$, when viewed in the front projection, or (iii) a $2$-valent interior puncture with a ${3\pi\over 2}$-angle, whose corresponding Reeb chord starts at or above $0_+$. Moreover, for each positive $2$-valent puncture of $\gamma$ at $a_1$, the corresponding (i), (ii) or (iii) is distinct.  Since $\gamma$ has only one positive puncture $c\not=a_1$ and each $Y_0$-vertex or ${3\pi\over 2}$-angle increases the formal dimension by $1$, it follows that $\dim(\gamma)\geq k-1$. This proves the claim and the lemma.
\end{proof}

\subsubsection{The DGA morphism for a general saddle cobordism}

Let $L$ be a general saddle cobordism from $\Lambda_+$ to $\Lambda_-$ with contractible $a\in \mathcal{C}(\Lambda_+)$. The DGA morphism $\Phi$ for $L$ can be computed as follows: Let $\Lambda_{\tau}$ and $a_\tau$, $\tau\in[0,1]$, be the isotopies from the proof of Lemma~\ref{dipping} such that $\Lambda_0=\Lambda_+$, $a_0=a$, and $a_1\in \mathcal{C}(\Lambda_1)$ is simple and contractible. We then resolve $a_1$ to obtain a simple saddle cobordism from $\Lambda_1$ to $\Lambda_1'$.  Finally we ``undo'' the isotopy $\Lambda_\tau$, $\tau\in[0,1]$, to obtain $\Lambda'_\tau$, $\tau\in[0,1]$, such that $\Lambda'_0=\Lambda_-$. This gives a composition of three cobordisms with corresponding DGA morphism
$$\Phi_0\circ \Phi_1\circ \Phi_2\colon \A(\Lambda_{+})\to\A(\Lambda_{-}),$$
where $\Phi_0$ and $\Phi_2$ correspond to Legendrian isotopies and can be computed using Lemmas~\ref{lem:triple},~\ref{lem:death}, and~\ref{lem:birth}, and $\Phi_{\rm sa}$ corresponds to a simple saddle cobordism and can be computed using Proposition~\ref{prop:saddle}. Since the composition of the three cobordisms is isotopic to $L$,  $\Phi$ is chain homotopic to $\Phi_0\circ \Phi_1\circ \Phi_2$ by Lemma \ref{lem:chainhomotopy}.

\section{Exact Lagrangian fillings and augmentations}
\label{section: exact Lagrangian fillings}

In this section we collect some general results on exact Lagrangian fillings and augmentations.

\subsection{$tb$, $r$, and slice genus}

The following is due to Chantraine~\cite[Theorem~1.3]{Cha}:

\begin{thm}[Chantraine]
If the Legendrian knot $\Lambda$ admits an orientable Lagrangian filling $L$, then:
\begin{enumerate}
\item $g(L)=g_s(\Lambda)$; and
\item $tb(\Lambda)=2g_s(\Lambda)-1=2g(L)-1$ and $r(\Lambda)=0$.
\end{enumerate}
Here $g(L)$ is the genus of $L$, $g_s(\Lambda)$ is the slice genus of $\Lambda$, and $tb(\Lambda)$ and $r(\Lambda)$ are the Thurston-Bennequin invariant and rotation number of $\Lambda$.
\end{thm}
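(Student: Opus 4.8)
The plan is to reduce the statement to two identities for the filling --- $tb(\Lambda)=-\chi(L)$ and $r(\Lambda)=0$ --- and then invoke the smooth slice--Bennequin inequality. We may discard closed components of $L$ and assume $L$ is connected with $\bdry L=\Lambda$ a single circle; truncating the cylindrical end $\mathcal{E}_+(L)$ produces a compact surface $L_0\subset\R\times\R^3$ with $\bdry L_0=\{T\}\times\Lambda$, with $\chi(L_0)=\chi(L)=1-2g(L)$, and with $i_*[\bdry L_0]=0$ in $H_1(L_0;\Z)$, since the boundary of a connected surface with one boundary component is null-homologous.

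First I would show $r(\Lambda)=0$. The Lagrangian $L_0$ carries a Maslov class $\mu_{L_0}\in H^1(L_0;\Z)$, defined from the Gauss map of $L_0$ into the Lagrangian Grassmannian of $\C^2$ using a symplectic trivialization of $T(\R\times\R^3)$ (which exists since $\R^4$ is contractible). A computation along the cylindrical end, where $L_0$ is $\R$-invariant and equal to $\R\times\Lambda$, identifies $\mu_{L_0}|_{\bdry L_0}$, paired with the fundamental class of the boundary circle, with $2r(\Lambda)$ --- this is just the description of $r(\Lambda)$ as the winding of $T\Lambda$ in the trivialized contact planes, rewritten as a Maslov index. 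By naturality this pairing equals $\langle\mu_{L_0},i_*[\bdry L_0]\rangle=0$, so $r(\Lambda)=0$.

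Next I would prove $tb(\Lambda)=-\chi(L)$. Recall $tb(\Lambda)=\mathrm{lk}(\Lambda,\Lambda')$, where $\Lambda'$ is the pushoff of $\Lambda$ along the Reeb field $\bdry_z$. I would realize this linking number as a relative self-intersection number of $L_0$: take a section of the normal bundle $\nu(L_0)$ that over $\bdry L_0$ equals the Reeb pushoff, extend it over all of $L_0$ transversely to the zero section, and count zeros with sign. Because $L_0$ is Lagrangian, the symplectic form gives a canonical isomorphism $\nu(L_0)\cong T^*L_0$; under it the Reeb framing along $\bdry L_0$ corresponds to a nonvanishing $1$-form on a collar of the boundary (explicitly, $\bdry_z\lrcorner\, d(e^t\alpha_0)$ restricts to $e^t\,dt$ along the end). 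Hence the signed zero count is the relative Euler number $\langle e(T^*L_0),[L_0,\bdry L_0]\rangle=-\chi(L_0)=-\chi(L)$, i.e.\ $tb(\Lambda)=2g(L)-1$.

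Finally I would combine these. Since $r(\Lambda)=0$, the smooth slice--Bennequin inequality $tb(\Lambda)+|r(\Lambda)|\le 2g_s(\Lambda)-1$ gives $2g(L)-1\le 2g_s(\Lambda)-1$, hence $g(L)\le g_s(\Lambda)$; conversely, compactifying the negative half-symplectization into $B^4$ turns $L_0$ into a smooth, properly embedded surface in $B^4$ with boundary the knot $\Lambda\subset S^3$, so $g_s(\Lambda)\le g(L_0)=g(L)$. Therefore $g(L)=g_s(\Lambda)$, and then $tb(\Lambda)=2g(L)-1=2g_s(\Lambda)-1$. The main obstacle is the adjunction step: lining up the Reeb/Legendrian boundary framing with the cotangent-bundle framing so that $tb(\Lambda)$ is literally the relative Euler number $-\chi(L)$, and checking that the compactification into $B^4$ in the last step preserves the knot type of $\Lambda$; the vanishing of $r$ and the slice--Bennequin input are comparatively routine.
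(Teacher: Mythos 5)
Your proposal is correct and follows essentially the same route as the paper's (sketched) argument: it uses the Lagrangian condition to identify the normal bundle of the filling with (co)tangent data, converts the boundary linking number $tb$ into a relative Euler/intersection count giving $tb(\Lambda)=-\chi(L)$, gets $r(\Lambda)=0$ from the Maslov class evaluated on the null-homologous boundary, and closes with the slice Thurston--Bennequin inequality plus the trivial bound $g_s(\Lambda)\le g(L)$. This is a fleshed-out version of Chantraine's proof as the paper summarizes it, with only routine sign/framing bookkeeping left to check.
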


\begin{proof}[Sketch of proof.]
This is obtained by combining the fact that the tangent bundle of $L$ is isomorphic to its normal bundle, the relation between linking on the boundary and intersections in the interior, and the slice Thurston--Bennequin inequality.
\end{proof}

\subsection{Restriction on linearized Legendrian contact homology}

Suppose $\Lambda$ admits an exact Lagrangian filling $(\R\times\R^3,L)$. Then the DGA morphism $$\varepsilon=\Phi_{(\R\times\R^3,L)}:\mathcal{A}(\Lambda)\to \F$$
is an augmentation of $\Lambda$. The augmentation $\varepsilon$ gives a linearization of $\A(\Lambda)$ and we denote the corresponding $\varepsilon$-linearized Legendrian contact homology group (resp.\ chain complex) by $HC^{\varepsilon}(\Lambda)$ (resp.\ $CC^\varepsilon(\Lambda)$).

The following theorem is essentially due to Seidel.  The proof, given in more detail in \cite{E3}, uses wrapped Floer homology (cf.\ \cite{AS,FSS}).

\begin{thm}[Seidel] \label{Seidel theorem}
$HC^\varepsilon(\Lambda)\simeq H_*(L)$.
\end{thm}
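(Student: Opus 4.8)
The plan is to follow Seidel's argument, which realizes the $\varepsilon$-linearized Legendrian contact homology of $\Lambda$ as a piece of the wrapped Floer homology of the filling $L$, and then deduces the isomorphism from the vanishing of the latter. First I would regard $L$ as a properly embedded exact Lagrangian $\bar L$ in $\C^2$: since the standard contact $\R^3$ is the complement of a point in $(S^3,\xi_{\mathrm{std}})=\pa_\infty\C^2$ and $L$ is cylindrical over $\Lambda$ at its positive end, one caps that end off so that $\bar L$ is exact, properly embedded in $\C^2$, compact in the interior, and has ideal Legendrian boundary $\Lambda\subset S^3$. One then defines the wrapped Floer complex $CW_*(\bar L,\bar L)$ using a Hamiltonian that vanishes on the compact part of $\bar L$ and is linear in the radial coordinate near infinity; its generators are of two kinds: the critical points of a fixed Morse function $f$ on $L$ (the \emph{interior} generators, contributing $CM_*(f;L)$), and the Reeb chords of $\Lambda$ together with their multiple covers (the \emph{end} generators). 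Truncating the wrapping to a length below the shortest nontrivial multiple cover removes the covers, leaving the end generators in bijection with $\CC(\Lambda)$, with a fixed degree shift relative to the grading on $\A(\Lambda)$.

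Next I would analyze the wrapped differential by neck-stretching along a contact hypersurface separating the compact part of $\bar L$ from its cylindrical end, as in \cite{E3} (cf.\ \cite{AS,FSS}). In the limit a rigid Floer strip breaks into rigid punctured holomorphic disks in the symplectization $\R\times\R^3$ with boundary on $\R\times\Lambda$, glued along Reeb chords to rigid punctured holomorphic disks in a neighborhood of $L$ with boundary on $L$ and one positive Reeb-chord puncture. The latter disks are exactly those counted by $\Phi_{(\R\times\R^3,L)}=\varepsilon$, so substituting $\varepsilon$ for these pieces turns the end-to-end part of the wrapped differential into the $\varepsilon$-linearized Legendrian contact homology differential $\pa^\varepsilon$ on $\CC(\Lambda)$, while the interior-to-interior part is the Morse differential of $f$ and there is no interior-to-end part by the action filtration. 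Hence $CM_*(f;L)$ is a subcomplex of $CW_*(\bar L,\bar L)$ with quotient the $\varepsilon$-linearized complex, and the associated long exact sequence reads
\[
\cdots \to H_*(L) \to HW_*(\bar L,\bar L) \to HC^\varepsilon_{*-1}(\Lambda) \xrightarrow{\ \delta\ } H_{*-1}(L) \to \cdots,
\]
with the indexing conventions of \cite{E3}.

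Finally I would invoke $HW_*(\bar L,\bar L)=0$. The completion of the $4$-ball is $\C^2$, whose symplectic cohomology vanishes; since $HW_*(\bar L,\bar L)$ is a unital module over $SH^*(\C^2)=0$ it must vanish as well (equivalently, after sufficient wrapping $\bar L$ can be displaced off itself by a Hamiltonian isotopy). The long exact sequence then forces the connecting map $\delta$ to be an isomorphism, which is the assertion $HC^\varepsilon(\Lambda)\simeq H_*(L)$.

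The main obstacle is the second step: rigorously setting up wrapped Floer homology for exact Lagrangians with cylindrical Legendrian ends in this SFT setting, and carrying out the compactness and gluing so that the neck-stretched complex is \emph{exactly} the $\varepsilon$-linearized complex --- in particular controlling the multiply-covered Reeb chords and keeping track of all Maslov/Conley--Zehnder index shifts (the degree shift from the extra $\R$-direction and any $n$-dependent shift), as well as checking that the interior-to-end part of the differential really does vanish. This is the analytic content supplied by \cite{E3}; by comparison the vanishing of $HW_*(\bar L,\bar L)$ and the assembly of the long exact sequence are formal, though the former must still be justified in the completed, noncompact setting.
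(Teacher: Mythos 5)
Your argument is essentially the paper's: both realize $CC^{\varepsilon}(\Lambda)$ as the high-action part of the wrapped Floer complex of the filling, use the resulting exact triangle with $H_*(L)$, and conclude from the vanishing of $HF^{\rm wr}(L)$ by displaceability. The only cosmetic difference is that you extract the identification of the positive-energy part with $CC^{\varepsilon}$ by neck-stretching the wrapped differential, whereas the paper runs the Bourgeois--Oancea-style chain map $\Psi\colon CC^{\varepsilon}(\Lambda)\to CF^{\rm wr}(L)$ and an action-filtration argument in the other direction; these are the same idea.
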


\begin{proof}[Sketch of proof.]
Let $CF^{\rm wr}(L)$ be the wrapped Floer chain complex of $L$.  In analogy with the Morse--Bott description of symplectic homology from \cite{BO}, there is a chain map
$$\Psi:CC^{\varepsilon}(\Lambda)\to CF^{\rm wr}(L)$$
that counts perturbed holomorphic strips, along which a Hamiltonian term is turned on as we go from the positive end to the negative end. The chain map  $\Psi$ is a quasi-isomorphism from $CC^{\varepsilon}(\Lambda)$ to the positive energy part $CF^{{\rm wr},+}(L)$ of the wrapped Floer complex by an action filtration argument. In particular, the leading term of $\Psi(a)$, $a\in\mathcal{C}(\Lambda)$, in the action filtration is given by the reparametrized trivial strip over $a$.

Now consider the exact triangle
\begin{equation} \label{triangle in wrapped Floer}
\stackrel{\delta_*}\to H_*(L)\to HF^{\rm wr}(L) \to HF^{{\rm wr},+}(L)\simeq HC^{\varepsilon}(\Lambda)\stackrel{\delta_*}\to.
\end{equation}
Since $L$ is displaceable, $HF^{\rm wr}(L)=0$ and the theorem follows.
\end{proof}

\subsection{Nontriviality of augmentations}

We state a related result concerning the nontriviality of augmentations.
Let $(\R\times \R^3,L)$ be an exact Lagrangian filling of $\Lambda$ and let
$$\tilde \varepsilon:\mathcal{A}(\Lambda; \F[H_1(L;\Z)])\to \F[H_1(L;\Z)]$$
be the induced augmentation map with twisted coefficients.

\begin{lemma}
If $H_{1}(L;\Z)\ne 0$, then $\tilde\varepsilon\not\equiv 0$.
\end{lemma}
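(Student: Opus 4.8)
I read the conclusion as saying that $\tilde\varepsilon$ does not vanish on all of $\CC(\Lambda)$, and I would prove this by contradiction. So assume $\tilde\varepsilon(c)=0$ for every $c\in\CC(\Lambda)$; reducing coefficients along the ring homomorphism $\F[H_1(L;\Z)]\to\F$ that sends each element of $H_1(L;\Z)$ to $1$, the induced untwisted geometric augmentation $\varepsilon=\Phi_{(\R\times\R^3,L)}\colon\A(\Lambda;\F)\to\F$ also kills every chord.

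The first point is that, because $\partial L=\Lambda$, the twisted and untwisted theories differ only by a base change. The boundary of any holomorphic disk counted in the differential of $\A(\Lambda)$ lies on $\Lambda$, so each homotopy datum $\tau_s$ attached to such a disk is the image of a loop contained in $\Lambda$ under $H_1(\Lambda;\Z)\to H_1(L;\Z)$; as $\Lambda$ bounds the connected surface $L$ this map is zero (for a multi-component link this is the one place where more care is needed), so every such $\tau_s$ is trivial. Writing $R=\F[H_1(L;\Z)]$ and recalling that the Maslov class of the filling $L$ vanishes (so the generators of $H_1(L;\Z)$ have degree $0$), it follows that $(\A(\Lambda;R),\partial)=(\A(\Lambda;\F),\partial)\otimes_\F R$ as DGAs, and under our standing assumption the $\tilde\varepsilon$-linearized complex is the base change $CC^\varepsilon(\Lambda;\F)\otimes_\F R$ of the (trivially) $\varepsilon$-linearized complex. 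Since $R$ is free over $\F$, Theorem~\ref{Seidel theorem} then yields an isomorphism of graded $R$-modules
\[
HC^{\tilde\varepsilon}_\ast(\Lambda;R)\;\cong\;HC^\varepsilon_\ast(\Lambda;\F)\otimes_\F R\;\cong\;H_\ast(L)\otimes_\F R,
\]
whose degree-$0$ part is the rank-one free $R$-module $R$, because $L$ is connected.

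On the other hand, I would establish the twisted analogue of Theorem~\ref{Seidel theorem}: running the wrapped Floer homology argument sketched in its proof with the tautological local system $\underline{R}$ on $L$ (defined by $\pi_1(L)\to H_1(L;\Z)\subset R^{\times}$) gives $HC^{\tilde\varepsilon}_\ast(\Lambda;R)\cong H_\ast(L;\underline{R})$; this works because $HF^{\rm wr}(L)$ vanishes for the displaceable $L$ with any local coefficient system. In degree $0$, $H_0(L;\underline{R})=R/\mathfrak{m}$, where $\mathfrak{m}=\ker(R\to\F)$ is the augmentation ideal, generated by $\{g-1:g\in H_1(L;\Z)\}$. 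Comparing the two computations gives an isomorphism $R\cong R/\mathfrak{m}$ of $R$-modules; but $H_1(L;\Z)\ne 0$ makes $\mathfrak{m}\ne 0$ (any $g\ne 1$ in $H_1(L;\Z)$ yields $g-1\ne 0$ in $R$), so $R/\mathfrak{m}$ is a nonzero torsion module over the integral domain $R$ while $R$ is torsion-free --- a contradiction. Hence $\tilde\varepsilon$ is nonzero on some Reeb chord. The only substantive new ingredient is the twisted version of Seidel's isomorphism; I expect the main work there to be checking that the chain map to the wrapped Floer complex and the action-filtration quasi-isomorphism argument of Theorem~\ref{Seidel theorem} carry over verbatim in the presence of the $R$-local system.
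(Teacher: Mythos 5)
Your two-computation strategy (base change plus a local-coefficient Seidel isomorphism, then a comparison of $R$-module structures) is a genuinely different route from the paper's, but as written it has two concrete gaps. The first is in the base-change step: you justify the triviality of the homology data $\tau_s$ by asserting that $H_1(\Lambda;\Z)\to H_1(L;\Z)$ is zero ``as $\Lambda$ bounds the connected surface $L$.'' That is true only when $\Lambda$ is a knot, where the only available class is $[\Lambda]=[\pa L]$, which dies in $H_1(L)$. The lemma, however, is stated for links, and there the claim fails: individual boundary components of $L$ need not be null-homologous in $L$ (for an annulus filling of the Hopf link each component generates $H_1(L)\simeq\Z$), and for mixed chords the loops in condition (4) of the moduli space definition traverse the reference paths $\delta_{q_ip_j}$ inside $L$ between different components, so they are not even loops in $\Lambda$. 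Hence $(\A(\Lambda;R),\pa)$ is in general \emph{not} $(\A(\Lambda;\F),\pa)\otimes_\F R$, and your identification of the relevant graded piece of $HC^{\tilde\varepsilon}(\Lambda;R)$ with the free rank-one module $R$ collapses outside the knot case.

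The second gap is that the whole second computation rests on a twisted version of Theorem~\ref{Seidel theorem} with the local system $\underline{R}$, which you do not prove but only assert should carry over verbatim. That is a substantive new ingredient (one must redo the wrapped Floer setup, the comparison chain map, the vanishing of $HF^{\rm wr}$ and the action-filtration argument with coefficients in $\underline R$), and it is exactly what the paper's argument is engineered to avoid: the paper works at chain level with the untwisted map $\delta$ to the Morse complex $C_*(L;\F)$ inducing $\delta_*$ in the exact triangle \eqref{triangle in wrapped Floer}, notes that displaceability makes $\delta_*$ an isomorphism, and observes that if $\tilde\varepsilon\equiv0$ then for each degree-zero chord $x$ the rigid disks with positive end at $x$ come in pairs within each fixed class $\gamma\in H_1(L;\Z)$; since the mod $2$ count of flow lines from the disk boundary to a fixed critical point depends only on $\gamma$, the rigid pairs cancel and $\delta(x)=0$, contradicting $H_1(L;\Z)\ne0$. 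If you restrict to knots and actually establish the local-coefficient isomorphism, your module-theoretic contradiction ($R\cong R/\mathfrak{m}$ is impossible over the domain $R$) does give a valid and rather structural alternative; but in its present form both the link case and the key twisted isomorphism are missing, while the paper's parity argument needs neither.
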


\begin{proof} [Sketch of proof]
Let $C_*(L;\F)$ be the Morse chain complex of $L$ with respect to a Morse function $f$.  Let
$$\delta:CC^{\tilde\varepsilon}(\Lambda;\F)\to C_*(L;\F)$$
be the chain map which induces the map $\delta_*$ in Exact Triangle~\eqref{triangle in wrapped Floer}. Since $L$ is displaceable, $\delta_*$ is an isomorphism. Let $p$ be a critical point of $f$ and let $x\in \mathcal{C}(\Lambda)$. If we write $\delta(x)=\sum_p \langle \delta(x),p\rangle \cdot p$, then $\langle \delta(x),p\rangle$ is the count of rigid pairs $(u,\gamma)$, where $u$ is a holomorphic disk with a positive end at $x$ and boundary on $L$ and $\gamma$ is a gradient flow line of $L$ emanating from the boundary of $u$ and ending at $p$.

Arguing by contradiction, if $\tilde\varepsilon\equiv 0$, then, for each $x\in\mathcal{C}(\Lambda)$ of degree zero and $\gamma\in H_1(L;\Z)$, the number of rigid holomorphic disks with a positive end at $x$ and boundary on $L$ which represent $\gamma$ is even. Hence $\delta(x)=0$, which is a contradiction.
\end{proof}

\subsection{The fundamental class}
\label{subsection: fundamental class}

Let $\varepsilon:\mathcal{A}(\Lambda)\to \F$ be an augmentation of $\Lambda\subset \R^3$.

\begin{defn}
An {\em $\varepsilon$-augmented holomorphic disk} in $(\R\times \R^3,\R\times \Lambda)$ from $a$ to ${\bf b}$ is a holomorphic disk
$$u:(D_{m+1},\bdry D_{m+1})\to (\R\times \R^3,\R\times \Lambda)$$ from $a$ to ${\bf b}'=\tau_0b_1\tau_1\dots\tau_{m-1}b_m\tau_m$, together with a subset ${\frak c}$ of $\{1,\dots,m\}$ such that applying $\varepsilon$ to all the $b_i$, $i\in{\frak c}$, yields ${\bf b}$.

If $(X,L)$ is an exact Lagrangian cobordism from $\Lambda_+$ to $\Lambda_-$ and $\varepsilon_-$ is an augmentation of $\Lambda_-$, then an {\em $\varepsilon_-$-augmented holomorphic disk} in $(X,L)$ is defined similarly.
\end{defn}

Given $y\in \R\times \Lambda$ and $a\in \mathcal{C}(\Lambda)$, let $\langle\bdry_y a,{\bf b}\rangle$ be the count of $\varepsilon$-augmented $\op{ind}=1$ holomorphic disks $u$ from $a$ to ${\bf b}$ that pass through $y$. We then write $\bdry_y a= \sum_{i=0}^\infty \bdry_y^i a$, where $\bdry_y^i$ counts the $\varepsilon$-augmented disks from $a$ to ${\bf b}$ and ${\bf b}$ has $i$ negative ends.

\begin{defn}
An {\em $\varepsilon$-fundamental class} is an element $\eta_\varepsilon$ of $HC^\varepsilon(\Lambda)$ such that $\bdry_y^0x=1$ for any generic $y\in \R\times \Lambda$ and representative $x$ of $\eta_\varepsilon$.
\end{defn}

The following is a theorem of Sabloff~\cite{Sa2} and Ekholm-Etnyre-Sabloff~\cite[Theorem 5.5]{EESa}.

\begin{thm}
For any augmentation $\varepsilon$ of $\Lambda\subset \R^3$, there exists a $\varepsilon$-fundamental class in $HC^\varepsilon(\Lambda)$.
\end{thm}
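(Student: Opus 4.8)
The plan is to reproduce the two arguments cited in the statement, due to Sabloff~\cite{Sa2} and Ekholm--Etnyre--Sabloff~\cite[Theorem~5.5]{EESa}; in both, $\eta_\varepsilon$ is (a linearized incarnation of) the Poincar\'e dual of the point class. Work over $\F=\Z/2$ with a generic $J$ for which the $\varepsilon$-augmented moduli spaces are transversely cut out, as in Section~\ref{section: Legendrian contact homology}. Since $\F=\Z/2$, it suffices to prove that $\bdry^0_y$, \emph{a priori} only a linear functional $CC^\varepsilon(\Lambda)\to\F$, descends to a well-defined and \emph{nonzero} functional on $HC^\varepsilon(\Lambda)$: any class with which it pairs nontrivially can then serve as $\eta_\varepsilon$, and $\bdry^0_yx=1$ for all generic $y$ and all representatives $x$ is immediate. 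That $\bdry^0_y$ vanishes on $\operatorname{im}(\bdry^\varepsilon)$ — hence defines a class in the cohomology of the dual complex $(CC^\varepsilon(\Lambda))^\ast$ — follows from the standard analysis of the boundary of the $1$-dimensional moduli space of $\varepsilon$-augmented disks with a single positive puncture at a chord $a$, all surviving negative ends augmented away, and boundary through $y$: by SFT compactness and gluing its ends are two-level broken configurations; those with $y$ on the lower level are exactly the terms of $(\bdry^0_y\circ\bdry^\varepsilon)(a)$, a breaking with $y$ on the upper level cannot occur since $\bdry^\varepsilon$ has no constant term, and for generic $y$ there are no further boundary strata (in particular $y$ never reaches a Reeb chord endpoint), so $\bdry^0_y\circ\bdry^\varepsilon=0$. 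Running the same argument over a generic path joining two base points shows $\bdry^0_{y_0}$ and $\bdry^0_{y_1}$ are cohomologous, so the functional $[\bdry^0_y]$ is independent of $y$.

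The essential remaining point is nontriviality: $[\bdry^0_y]\neq 0$. One route, following Sabloff, is to first bring $\Lambda$ to a dipped form by Lemma~\ref{dipping} and the technology of Section~\ref{subsub:dipped}. In a dipped Lagrangian diagram the augmented disk counts become essentially combinatorial, the augmentation takes a normal form on the chords introduced by the dip, and one exhibits by direct inspection an explicit cycle $x\in CC^\varepsilon(\Lambda)$, built from those chords, through which exactly one $\varepsilon$-augmented disk with no surviving negative end passes for a generic $y$ placed inside the dip, so $\bdry^0_yx=1$. One then transfers this conclusion back to the original $\Lambda$ using the $y$-independence above together with the DGA maps of the Legendrian isotopy to the dipped form (Section~\ref{Sec:DGAmaps}, especially Lemmas~\ref{lem:triple},~\ref{lem:death}, and~\ref{lem:birth}), which identify the two linearized complexes.

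A second route, following Ekholm--Etnyre--Sabloff, is to assemble the combined complex $CC^\varepsilon(\Lambda)\oplus C_\ast(\Lambda;\F)\oplus(CC^\varepsilon(\Lambda))^\ast$, where $C_\ast(\Lambda;\F)$ is a Morse complex of $\Lambda$, with differential given by $\bdry^\varepsilon$, the Morse differential and its dual, together with mixed maps counting $\varepsilon$-augmented disks through a point (a packaging of $\bdry^0_y$) and $\varepsilon$-augmented disks with a gradient flow line attached to the boundary. Proving this complex acyclic yields the duality long exact sequence relating $HC^\varepsilon(\Lambda)$, the homology of $(CC^\varepsilon(\Lambda))^\ast$, and $H_\ast(\Lambda;\F)$; one then takes $\eta_\varepsilon$ to be the image of a fundamental class of $\Lambda$ under the connecting map, and unwinding that map recovers $\bdry^0_yx=1$, in particular $[\bdry^0_y]\ne 0$.

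The hard part is this last step. The arguments showing that $\bdry^0_y$ is a well-defined, $y$-independent functional are routine compactness and transversality; but ruling out that the mod~$2$ count of augmented disks through $y$ is always even requires genuinely global input. In the first route this is the explicit computation in the dipped normal form together with the isotopy invariance; in the second it is the acyclicity of the combined complex, proved by a deformation (``doubling'') argument as in~\cite{EESa}. A secondary technical matter, as throughout Section~\ref{section: Legendrian contact homology}, is arranging transversality for the $\varepsilon$-augmented moduli spaces — the disks are honest $J$-holomorphic disks with some ends merely capped off formally by the \emph{algebraic} augmentation $\varepsilon$ — which is handled by perturbing $J$ near the positive puncture.
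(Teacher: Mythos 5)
The paper offers no proof of this theorem — it is quoted directly from Sabloff \cite{Sa2} and Ekholm--Etnyre--Sabloff \cite[Theorem~5.5]{EESa} — and your outline faithfully reproduces the structure of exactly those cited arguments (descent and $y$-independence of $\bdry^0_y$ by the standard broken-disk analysis, nontriviality via Sabloff's dipped diagrams or the duality exact sequence), so it matches the paper's approach as closely as is possible. One small imprecision: breakings with $y$ on the upper level can occur geometrically; their total contribution vanishes mod $2$ because the count of fully augmented rigid disks attached below each surviving negative end at a chord $b$ is $\varepsilon(\bdry b)=0$, which is the precise content of ``$\bdry^{\varepsilon}$ has no constant term.''
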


\begin{thm}\label{thm: mapping augmentations}
Let $(X,L)$ be a exact Lagrangian cobordism from $\Lambda_+$ to $\Lambda_-$ with corresponding chain map $\Phi_{(X,L)}:\mathcal{A}(\Lambda_+)\to \mathcal{A}(\Lambda_-)$, let $\varepsilon_-$ be an augmentation of $\Lambda_-$, and let $\varepsilon_+=\Phi_{(X,L)}^*(\varepsilon_-)$ be the induced augmentation of $\Lambda_+$. Assume $L$, $\Lambda_+$, and $\Lambda_-$ are connected. Then the linearization
$$\Phi_{(X,L)}:HC^{\varepsilon_+}(\Lambda_+)\to HC^{\varepsilon_-}(\Lambda_-)$$
maps an $\eta_{\varepsilon_+}$-fundamental class to an $\eta_{\varepsilon_-}$-fundamental class.
\end{thm}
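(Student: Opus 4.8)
The plan is to verify directly that $\Phi_{(X,L)}$ carries a fundamental cycle for $\varepsilon_+$ to a fundamental cycle for $\varepsilon_-$. Fix a cycle $x\in CC^{\varepsilon_+}(\Lambda_+)$ representing $\eta_{\varepsilon_+}$; since $\Phi_{(X,L)}$ is a chain map, $\Phi_{(X,L)}(x)$ is a cycle in $CC^{\varepsilon_-}(\Lambda_-)$, so by the defining property of a fundamental class it is enough to show that $\bdry_y^0\big(\Phi_{(X,L)}(x)\big)=1$ for one (equivalently, every) generic point $y\in\R\times\Lambda_-$. The central object I would introduce is a pointed moduli space inside the cobordism: for a generic point $p\in L$, let $\mathcal{N}_p(x)$ be the set of $\varepsilon_-$-augmented holomorphic disks in $(X,L)$ of Fredholm index $1$, with a single positive puncture at a Reeb chord of $\Lambda_+$ occurring in $x$ (counted with the corresponding coefficient of $x$), all negative punctures augmented by $\varepsilon_-$, and with boundary through $p$. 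Using the transversality and compactification of Lemma~\ref{lem:dim+tv}, the energy bound of Lemma~\ref{lemma: energy bound}, and exactness (which forbids disk bubbling), $\mathcal{N}_p(x)$ is a compact $0$-manifold for $p$ outside a codimension-one subset of $L$, so $\#\mathcal{N}_p(x)\in\F$ is well defined there.

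The proof then has two parts. First, neck stretching identifies $\#\mathcal{N}_p(x)$ with the two quantities to be equated. Pushing $p$ deep into the negative end $\mathcal{E}_-(L)=(-\infty,-T)\times\Lambda_-$ and applying the standard gluing/neck-stretching mechanism (as in Corollary~\ref{cor:stretch} together with Lemma~\ref{lem:dim+tv}, now with a marked point on the stretched-out symplectization level), every element of $\mathcal{N}_p(x)$ is glued from a rigid $\varepsilon_-$-augmented disk of $(X,L)$ from a chord of $x$ to a single Reeb chord $b\in\CC(\Lambda_-)$ and an index-$1$ $\varepsilon_-$-augmented disk over $\Lambda_-$ from $b$ to the empty word with boundary through $p$; summing over $b$ gives $\#\mathcal{N}_p(x)=\bdry_y^0\big(\Phi_{(X,L)}(x)\big)$, with $y$ the image of $p$ in $\R\times\Lambda_-$. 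Symmetrically, pushing $p$ deep into the positive end $\mathcal{E}_+(L)=(T,\infty)\times\Lambda_+$, every element of $\mathcal{N}_p(x)$ is glued from an index-$1$ disk over $\Lambda_+$ from a chord of $x$ to a word of Reeb chords with boundary through $p$, together with rigid $\varepsilon_-$-augmented disks of $(X,L)$ out of each of those chords; since the count of rigid $\varepsilon_-$-augmented disks of $(X,L)$ out of a chord $c$ of $\Lambda_+$ equals $\varepsilon_-\big(\Phi_{(X,L)}(c)\big)=\varepsilon_+(c)$, this yields $\#\mathcal{N}_p(x)=\bdry_y^0 x=1$ because $x$ represents $\eta_{\varepsilon_+}$. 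Second, I would show $\#\mathcal{N}_p(x)$ is independent of the generic point $p$: using connectedness of $L$, choose a generic path $\gamma$ in $L$ from a point deep in $\mathcal{E}_-(L)$ to a point deep in $\mathcal{E}_+(L)$ and form the $1$-dimensional moduli space $\coprod_{t\in[0,1]}\mathcal{N}_{\gamma(t)}(x)$. By the index analysis of Corollary~\ref{cor:bdrycob}, its only codimension-one degenerations away from $t\in\{0,1\}$ are two-level breakings in which the marked point lies on the $(X,L)$-level; these split into breakings off an index-$1$ disk over $\Lambda_+$, whose total contribution is a linear combination of the coefficients of the linearized differential of $x$ and hence vanishes since $x$ is a cycle, and breakings off an index-$1$ disk over $\Lambda_-$, whose contribution at each crossing is $\varepsilon_-$ applied to $\bdry_-$ of a chord of $\Lambda_-$ and hence vanishes since $\varepsilon_-$ is an augmentation. (The degenerations with the marked point on a symplectization level are exactly the ones accounted for by the two neck-stretching identifications.) Therefore $\#\mathcal{N}_{\gamma(0)}(x)=\#\mathcal{N}_{\gamma(1)}(x)$, i.e.\ $\bdry_y^0\big(\Phi_{(X,L)}(x)\big)=1$, as desired. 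Connectedness of $\Lambda_\pm$ enters through the existence and invariance of the fundamental classes $\eta_{\varepsilon_\pm}$.

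The main obstacle is the second part: a careful SFT compactness and gluing analysis of the pointed moduli space $\coprod_{t\in[0,1]}\mathcal{N}_{\gamma(t)}(x)$ in a cobordism with two cylindrical ends — verifying that the codimension-one degenerations are exactly those listed, that moving the marked point across the two necks produces no further boundary strata beyond those captured by the neck-stretching identifications, and that the wall-crossing numbers are precisely what is claimed (here is where the cycle condition on $x$ and the augmentation property of $\varepsilon_-$ must be tracked through the combinatorics). This is the cobordism analogue of the invariance argument for the fundamental class in \cite{Sa2} and \cite{EESa}, now carried out by combining it with the neck-stretching of Corollary~\ref{cor:stretch}; the necessary analytic inputs — transversality and the description of one- and zero-dimensional moduli spaces (Lemma~\ref{lem:dim+tv}, Corollaries~\ref{cor:bdrysympl} and~\ref{cor:bdrycob}), the energy bound (Lemma~\ref{lemma: energy bound}), and the neck-stretching correspondence (Corollary~\ref{cor:stretch}) — are already in place.
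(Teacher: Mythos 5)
Your argument is correct and matches the paper's proof: both construct the one-parameter family of marked-point moduli spaces $\mathcal{M}_{y(t)}(a)$ (your $\mathcal{N}_{\gamma(t)}(x)$), identify the two ends via neck stretching with $\bdry_{y_+}^0 x$ and $\bdry_{y_-}^0\Phi_{(X,L)}(x)$, and kill the interior wall-crossings using that $x$ is a cycle and $\varepsilon_-\circ\bdry_-=0$. The paper phrases the second step as ``the resulting chain homotopy'' for an arbitrary generator before specializing to a cycle, whereas you carry the cycle $x$ through from the start, but the content is the same.
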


\begin{proof}
Theorem~\ref{thm: mapping augmentations} follows from observing that $(X,L)$ induces a chain map from $\mathcal{A}(\Lambda_+)$ to $\mathcal{A}(\Lambda_-)$, where the differentials are ``twisted by point conditions''; the proof is similar to the usual case.

More precisely, we have the following: Let $T>0$ be sufficiently large and let $y_\pm \in \Lambda_\pm$ be generic. Take a generic path $y:\R\to L$ such that $y(t)=(t,y_+)$ for $t\geq T$ and $(t,y_-)$ for $t\leq -T$.  Define $\mathcal{M}_{y(t)}(a)$, $a\in \mathcal{C}(\Lambda_+)$, as the moduli space of $\varepsilon_-$-augmented $\op{ind}=1$ holomorphic disks in $L$ from $a$ to $\varnothing$ that pass through $y(t)$. As $t\to +\infty$, $\mathcal{M}_{y(t)}(a)$ limits to the moduli space of $\varepsilon_+$-augmented $\op{ind}=1$ disks in $\R\times \Lambda_+$ from $a$ to $\varnothing$ that pass through $(0,y_+)$. (Note that the $\varepsilon_+$-augmentation subsumes the level that is contained in $X$.) As $t\to -\infty$, $\mathcal{M}_{y(t)}(a)$ limits to the moduli space of two-level buildings $u_{-1}\cup u_0$, where $u_0$ is an $\varepsilon_-$-augmented $\op{ind}=0$ disk from $a$ to some $b\in \mathcal{C}(\Lambda_-)$ and $u_{-1}$ is an $\varepsilon_-$-augmented $\op{ind}=1$ disk from $b$ to $\varnothing$ that passes through $(0,y_-)$.  The resulting chain homotopy implies that if $x\in \eta_{\varepsilon_+}$ then the mod $2$ count of $\varepsilon_-$-augmented disks from $\Phi_{(X,L)}(x)$ to $\varnothing$ that pass through $(0,y_-)$ is equal to the mod $2$ count of $\varepsilon_+$-augmented disks from $x$ to $\varnothing$ that pass through $(0,y_+)$, which in turn is $1$.  This proves the theorem.
\end{proof}

\section{Applications}
\label{sec:examples}

\subsection{Lagrangian fillings of $\bm{(2,n)}$-torus links}
\label{sec:2n_links}

In this subsection we consider the Legendrian $(2,n)$-torus link $\Lambda_{n}$ whose Lagrangian projection is given by Figure~\ref{fig:2n}.  Each component of $\Lambda_n$ has  Maslov number $0$.  When $n$ is even, $\Lambda_n$ consists of two unknots with Thurston--Bennequin number $tb(\Lambda_n)=-1$. Let $a_1$ and $a_2$ be the two rightmost Reeb chords in Figure~\ref{fig:2n} with grading $|a_j|=1$, $j=1,2$. The remaining Reeb chords, from left to right, are denoted by $b_1,\ldots,b_n$ and satisfy $|b_j|=0$. (If $n$ is even, we choose the reference path $\delta_{12}$ to be one of the Reeb chords connecting the two components of $\Lambda_n$. We then choose the path of lines in the contact planes along $\delta_{12}$ so it makes a $\frac{\pi}{2}$-rotation with respect to the trivialization induced by $\C$.)

\begin{claim} \label{claim: contractibility}
The chords $a_1$ and $a_2$ are non-contractible and the chords $b_j$, $j=1,\dots, n$, are all contractible. Moreover, all pairs $b_j,b_k$, $|k-l|>1$, of non-adjacent degree $0$ Reeb chords of $\Lambda_{n}$ are simultaneously contractible.
\end{claim}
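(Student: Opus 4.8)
\textbf{Proof proposal for Claim~\ref{claim: contractibility}.}

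The plan is to apply the area criterion of Lemma~\ref{condition for contractibility} (contractibility of a Reeb chord $a$ is equivalent to being able to make, via a $\Pi_\C$-simple isotopy, the areas of both regions $R_1,R_2$ adjacent to the positive quadrants of $a$ exceed $\mathfrak A(a)$), working entirely in the Lagrangian projection of Figure~\ref{fig:2n}. First I would set up notation for the regions of $\C - \Pi_\C(\Lambda_n)$: there is one unbounded region, and the bounded regions come in a chain between consecutive crossings, plus the two small ``bigon-like'' regions near the rightmost crossings $a_1,a_2$. For the chords $a_1,a_2$: I would argue that the two positive quadrants of $a_j$ lie in regions that are ``trapped'' — one of them is a bounded region whose boundary consists only of arcs of $\Lambda_n$ and the crossing $a_j$ itself, so shrinking it would force $\mathfrak A(a_j)\to 0$, but the obstruction is that the other positive quadrant borders a region whose area cannot be made arbitrarily large while keeping the first region's area above $\mathfrak A(a_j)$; more cleanly, I would use the front-projection picture where $a_1,a_2$ are the Reeb chords of the ``unknot part'' with $tb=-1$, and invoke the fact (as for the standard unknot, cf.\ the discussion after Theorem~\ref{thm:appl}) that such a chord with $|a_j|=1$ supporting the fundamental class cannot be contracted: contracting it would produce an exact Lagrangian cobordism decreasing the number of components incompatibly with the $tb$/genus constraints of Chantraine's theorem. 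I expect the cleanest route is the direct area obstruction: compute that for $a_1$ (resp.\ $a_2$), one of $R_1,R_2$ is a region bounded entirely by the two strands forming the crossing, so its area is $\le \mathfrak A(a_j)$ and stays so under any $\Pi_\C$-simple isotopy near that crossing.

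For the $b_j$: each $b_j$ is a crossing of the $2$-braid closure where both positive quadrants lie in ``long'' regions running along the braid. I would exhibit an explicit $\Pi_\C$-simple isotopy — a finger move pushing one strand along the braid axis — that enlarges the two relevant regions past $\mathfrak A(b_j)$ without creating or destroying crossings, hence verifying Lemma~\ref{condition for contractibility}. Concretely, because $\Lambda_n$ is the closure of a positive braid on two strands, each $b_j$ is ``exposed'' to the unbounded region on one side after such a finger move, which immediately gives unbounded area for one of $R_1,R_2$, and a symmetric move gives the other; one must check the two moves can be done simultaneously for a single chord, which is routine since they act near opposite ends of the strand segment bounding $b_j$.

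For the simultaneous contractibility of non-adjacent pairs $b_j,b_k$ with $|k-j|>1$: the point is that the supports of the finger moves used to contract $b_j$ and $b_k$ can be taken disjoint. Since $b_j$ and $b_k$ are separated by at least one intermediate crossing $b_{j+1}$ (or $b_{k-1}$), the strand segments adjacent to $b_j$ and to $b_k$ that need to be pushed are disjoint arcs of $\Lambda_n$, and the regions $R_1(b_j),R_2(b_j),R_1(b_k),R_2(b_k)$ can all be simultaneously enlarged past $\max(\mathfrak A(b_j),\mathfrak A(b_k))$ by a single $\Pi_\C$-simple isotopy that is a product of the two disjoint finger moves. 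I would verify disjointness by a local picture near $b_{j+1}$ showing the two enlargement moves stay on opposite sides of that crossing, so no new tangency or triple point is introduced.

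\textbf{Main obstacle.} The genuinely delicate part is the non-contractibility of $a_1$ and $a_2$: one must show there is \emph{no} $\Pi_\C$-simple isotopy making both adjacent regions large, i.e.\ rule out \emph{all} such isotopies rather than exhibit one. I expect to handle this either by the area bookkeeping argument above (identifying a region whose area is forced to be $\le \mathfrak A(a_j)$ by the combinatorics of the diagram, since its boundary consists only of the two strands at that crossing) or, failing a clean combinatorial argument, by the topological obstruction: a contraction of $a_j$ would yield a decomposable exact Lagrangian cobordism reducing the Euler characteristic in a way forbidden by Chantraine's theorem (the $tb$ and genus constraints) for $\Lambda_n$, which has $tb(\Lambda_n)=2g_s(\Lambda_n)-1$ already realized by the maximal fillings. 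The area argument is preferable as it is self-contained within Lemma~\ref{condition for contractibility}.
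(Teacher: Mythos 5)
Your proposal follows essentially the same route as the paper, whose proof of this claim is simply an appeal to the area criterion of Lemma~\ref{condition for contractibility}; your finger-move constructions for the $b_j$ (with disjoint supports for non-adjacent pairs) and your area obstruction for $a_1,a_2$ are exactly the intended content. For the non-contractibility of $a_1,a_2$, the cleanest justification of your key assertion is Stokes' theorem: each $a_i$ has a positive quadrant lying in a one-cornered region of the diagram (this is the disk giving the constant term in $\pa a_i$), and such a region has area \emph{equal} to $\mathfrak{A}(a_i)$ in any diagram obtained by a $\Pi_\C$-simple isotopy, so the criterion can never be met.
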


\begin{proof}
Follows from Lemma~\ref{condition for contractibility}.
\end{proof}

By Claim~\ref{claim: contractibility} there is a sequence of saddle cobordisms that resolves the crossings of $\Lambda_{n}$ in each of the $n!$ possible orders. We denote a permutation $\sigma\in S_n=\mbox{Aut}(\{1,\dots,n\})$ by $(i_1,\dots,i_n)$ if $\sigma(j)=i_j$.  For each permutation $\sigma=(i_{1},\dots, i_n)$, let $L_{\sigma}$ be a Lagrangian cobordism in $\R\times\R^{3}$ which is deformation-equivalent to the composition of the saddle cobordisms that resolve the degree $0$ crossings in the order $b_{i_1},\dots,b_{i_n}$ from top to bottom.

Let $\sigma=(i_1,\dots,i_n)$ be a permutation and $i_j, i_{j+1}$ be adjacent entries such that there is some $i_p$, $p>j+1$, with $i_j< i_p< i_{j+1}$ or $i_j>i_p> i_{j+1}$. If
$$\sigma'=(i_1,\dots,i_{j-1},i_{j+1},i_j,i_{j+2},\dots,i_n),$$
then $L_{\sigma}$ and $L_{\sigma'}$ are exact Lagrangian isotopic. Two permutations $\sigma$ and $\sigma'$ are {\em isotopy equivalent} if they are related by a sequence of transpositions of the above type.

\begin{rmk}\label{rem:smoothisotopy}
If we drop the Lagrangian condition on the cobordism, then any two chords become simultaneously contractible. Thus all of our Lagrangian fillings $\Lambda_\sigma$ are smoothly isotopic.
\end{rmk}

\begin{lemma}
The number of isotopy equivalence classes of permutations is the {\em Catalan number} $C_n={2n\choose n}/(n+1)$.
\end{lemma}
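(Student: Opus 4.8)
The plan is to show, by induction on $n$, that the number $c_n$ of isotopy equivalence classes of permutations of $\{1,\dots,n\}$ satisfies the Catalan recursion $c_0=1$, $c_n=\sum_{m=1}^{n}c_{m-1}c_{n-m}$. Since the Catalan numbers $C_n=\binom{2n}{n}/(n+1)$ are characterized by exactly this recursion (with $C_0=1$), this gives $c_n=C_n$.

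First I would record the basic observation that the \emph{last entry} $i_n$ of $\sigma=(i_1,\dots,i_n)$ is an invariant of its isotopy class. A permissible transposition acts on two \emph{adjacent} positions $j,j+1$ and is allowed only when some value strictly between $i_j$ and $i_{j+1}$ occurs at a position $p>j+1$; for $j=n-1$ there is no such position, so position $n$ is never moved. Hence the classes are partitioned according to the value $m:=i_n\in\{1,\dots,n\}$, and it suffices to prove that there are exactly $c_{m-1}c_{n-m}$ classes with $i_n=m$.

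For the inductive step, fix $m$, and for $\sigma$ with $i_n=m$ let $\mathrm{sm}(\sigma)$ be the subword of entries $<m$ (a permutation of $\{1,\dots,m-1\}$) and $\mathrm{lg}(\sigma)$ the subword of entries $>m$ (order-isomorphic to a permutation of $\{1,\dots,n-m\}$). I would prove that $\sigma\mapsto(\mathrm{sm}(\sigma),\mathrm{lg}(\sigma))$ induces a bijection from the classes with last entry $m$ onto (classes of $\{1,\dots,m-1\}$)$\times$(classes of $\{1,\dots,n-m\}$). Two facts are needed. (i) \emph{$\mathrm{sm}$ and $\mathrm{lg}$ are constant on each class}: it suffices to examine a single allowed transposition of $\sigma$ at positions $j,j+1$, and there are three cases — both entries $<m$ (then $\mathrm{lg}$ is untouched, the two entries are consecutive in $\mathrm{sm}(\sigma)$, and the witness strictly between them is again $<m$ and still occurs to the right, so the move becomes an allowed transposition inside $\{1,\dots,m-1\}$); both entries $>m$ (symmetric); one entry $<m$ and one $>m$ (then swapping them alters neither the relative order of the entries $<m$ nor that of the entries $>m$, so $\mathrm{sm}$ and $\mathrm{lg}$ are unchanged). (ii) \emph{Every class with $i_n=m$ contains a block representative} $(\text{perm.\ of }\{1,\dots,m-1\})(\text{perm.\ of }\{m+1,\dots,n\})(m)$: any adjacent pair at positions $\le n-1$ consisting of a value $>m$ immediately followed by a value $<m$ may be transposed, since $m$ — sitting at position $n$ — is a witness strictly between them; iterating these moves bubbles all entries $<m$ to the front of $i_1\dots i_{n-1}$ without disturbing the order among the small entries or among the large entries. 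From (i) and (ii) one constructs mutually inverse maps between classes with last entry $m$ and pairs of classes (in the injectivity argument one passes to block representatives and lifts the connecting transpositions of $\mathrm{sm}$, resp.\ $\mathrm{lg}$, into the small, resp.\ large, block, using (i) to see they remain allowed), which yields $c_n=\sum_{m=1}^{n}c_{m-1}c_{n-m}$ and hence $c_n=C_n$.

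I would close with the remark that this recursion is precisely the one generating binary search trees: the class of $\sigma$ is determined by the tree with root $m=i_n$, left subtree the tree of $\mathrm{sm}(\sigma)$, and right subtree the tree of $\mathrm{lg}(\sigma)$, so the isotopy classes are in natural bijection with the $C_n$ binary trees on $n$ nodes. The main obstacle is the bookkeeping in fact (i) — checking that passing to the subwords of small and of large entries is compatible with the equivalence relation — and the crux there is simply that a value strictly between two entries $<m$ is itself $<m$, so a witness for a move among the small entries automatically serves as a witness in the restricted problem, and symmetrically for the large entries.
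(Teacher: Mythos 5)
Your proof is correct and follows essentially the same route as the paper: you both observe that the last entry is an isotopy invariant, partition by its value $m$, show every class has a block (``normal form'') representative with all small entries preceding all large ones by bubbling with $m$ as witness, and derive the Catalan recurrence $c_n=\sum_m c_{m-1}c_{n-m}$. Your write-up is somewhat more careful than the paper's about why the map $\sigma\mapsto(\mathrm{sm}(\sigma),\mathrm{lg}(\sigma))$ is both well-defined on classes and injective on classes — the paper asserts the ``if and only if'' between normal forms and pairs of classes without the three-case check — but the underlying argument is the same.
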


\begin{proof}
It is well-known that the Catalan numbers are determined by the initial value $C_0=1$ and the recurrence relation
\begin{equation} \label{eqn: recurrence}
C_{n+1}=\sum_{k=0}^nC_kC_{n-k},
\end{equation}
for $n\geq 0$.
We prove that the number $a_n$ of isotopy equivalence classes of permutations satisfies the same recurrence relation.

First note that $a_0=1$. If two permutations $(i_1,\dots,i_{n+1}), (i'_1,\dots,i'_{n+1})\in S_{n+1}$ are equivalent, then $i_{n+1}=i'_{n+1}$ since the last entry can never be part of an isotopy move. Let $S_n^{k+1}$ be the set of permutations such that $i_{n+1}=k+1$ and let $\sigma=(i_1,\dots, i_{n})\in S_n^{k+1}$. If $i_{j}>k+1$ and $i_{j+1}<k+1$, then $i_j$ and $i_{j+1}$ can be swapped by an isotopy equivalence. Hence any $\sigma\in S_n^{k+1}$ is equivalent to a permutation in normal form $\lambda=(\lambda_1,\lambda_2,k+1)$, where $\lambda_1\in \mbox{Aut}(\{1,\dots,k\})$ and $\lambda_2\in\mbox{Aut}(\{k+2,\dots,n+1\})$. Now, two permutations in $S_n^{k+1}$ in normal form are isotopy equivalent if and only if the corresponding permutations~$\lambda_1$ and $\lambda_2$ are isotopy equivalent. Hence there are $a_{k}a_{n-k}$ isotopy equivalence classes in $S_{n+1}$ with $i_{n+1}=k+1$. Summing over $k$ gives the recurrence relation \eqref{eqn: recurrence}.
\end{proof}

\begin{example}
When $n=3$, we have $C_3=5$ and the five sequences of resolutions produce the five different augmentations of the Legendrian right-handed trefoil. Hence the corresponding exact Lagrangian fillings are pairwise not exact Lagrangian isotopic. The only two isotopy equivalent permutations in this case are $(1,3,2)$ and $(3,1,2)$, and they both give rise to the augmentation $\varepsilon(b_1)=\varepsilon(b_2)=\varepsilon(b_3)=1$. In Figure \ref{fig:szetesik} we illustrated the sequence of resolutions and hence the Lagrangian cap corresponding to $(2,1,3)$. It yields the augmentation $\varepsilon(b_1)=0$, $\varepsilon(b_2)=\varepsilon(b_3)=1$.
\end{example}

\begin{figure}[ht]
\labellist
\pinlabel $b_1$ at 1200 1200
\pinlabel $b_2$ at 1500 1200
\pinlabel $b_3$ at 1800 1200
\pinlabel $b_1+1$ at 1200 870
\pinlabel $1$ at 1500 870
\pinlabel $b_3+1$ at 1800 870
\pinlabel $1+1$ at 1200 500
\pinlabel $1$ at 1500 500
\pinlabel $b_3+1+1$ at 1800 500
\pinlabel $0$ at 1200 130
\pinlabel $1$ at 1500 130
\pinlabel $1+1+1=1$ at 1850 130
\endlabellist
   \raggedright
   \includegraphics[height=3in]{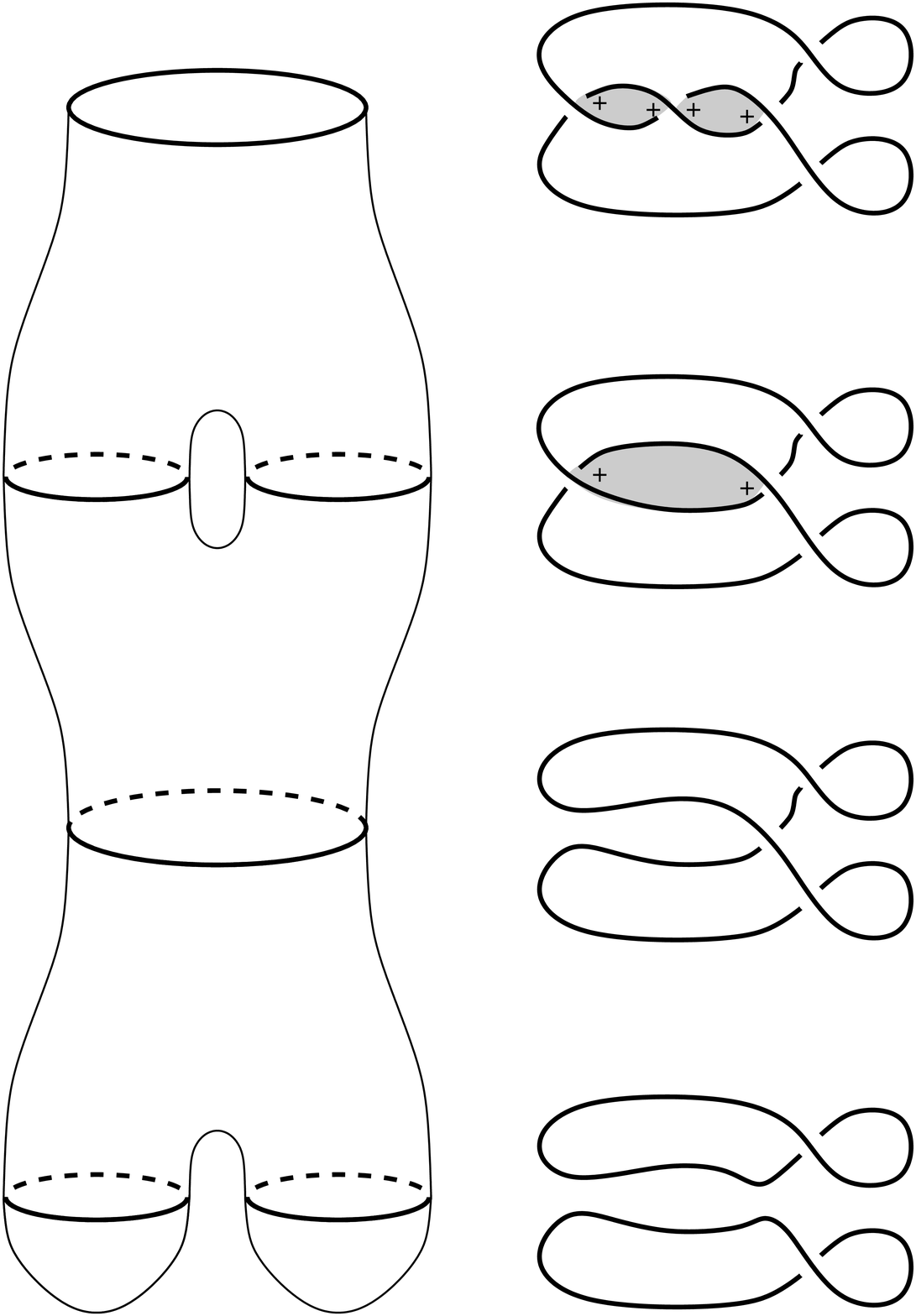}
   \caption{Constructing a Lagrangian cap and computing its induced augmentation.}\label{fig:szetesik}
\end{figure}

For larger $n$, however, the pattern is less clear. The total number of augmentations is $A_n=(2^{n+1}-(-1)^{n+1})/3$ by \cite[Proposition 7.1]{Ka2}, which is much smaller than $C_n$. When $n=5$, we have $C_5=42$ and $A_5=21$. Computing the augmentations belonging to the $42$ sequences of resolutions, we find that all $21$ possibilities occur at least once.

We can prove by a relatively easy construction that this is the case for general $\Lambda_n$. More precisely, we have:

\begin{prop} \label{prop: torus link}
Any augmentation of the $(2,2k+1)$-torus knot is induced by a sequence of crossing resolutions of the standard diagram, and so are all nonzero augmentations of the $(2,2k)$-torus link.
\end{prop}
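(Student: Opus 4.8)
The plan is to make the correspondence "sequence of crossing resolutions $\leftrightarrow$ augmentation" completely explicit for the standard diagram of $\Lambda_n$, and to show that every augmentation in the target list is hit. First I would recall the combinatorial structure of $\A(\Lambda_n)$: by the discussion around Figure~\ref{fig:2n} and Claim~\ref{claim: contractibility}, the degree $1$ chords $a_1,a_2$ are noncontractible and the degree $0$ chords $b_1,\dots,b_n$ are contractible, with the differential $\bdry$ on $a_1,a_2$ expressible in terms of the $b_j$ in the well-known ``zig-zag'' pattern for $(2,n)$-torus links. An augmentation $\varepsilon$ is then precisely an assignment $\varepsilon(b_j)\in\{0,1\}$ satisfying the equations $\varepsilon(\bdry a_1)=\varepsilon(\bdry a_2)=0$; enumerating the solutions recovers $A_n=(2^{n+1}-(-1)^{n+1})/3$ (resp.\ its nonzero count in the even case) via the recursion already implicit in \cite[Proposition 7.1]{Ka2}. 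Each resolution sequence $\sigma=(i_1,\dots,i_n)$ produces, by Proposition~\ref{prop:saddle} together with the dipping procedure of Lemma~\ref{dipping} and the computation in Section~\ref{subsub:dipped}, a filling $L_\sigma$ and hence a geometric augmentation $\varepsilon_\sigma=\Phi_{(\R\times\R^3,L_\sigma)}$. The key bookkeeping step, already illustrated in Figure~\ref{fig:szetesik}, is that resolving $b_{i_1},\dots,b_{i_n}$ from top to bottom is computed by composing the $0$-resolution maps $\Phi_{(X_{\rm sa},L_{\rm sa})}$ of Proposition~\ref{prop:saddle}, and that at each stage the chord being resolved is sent to $1$ while its ``neighbors'' in the current diagram get their values updated by the lower-order term $\Psi_1$.

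Next I would carry out the induction on $n$ (equivalently on $k$) that mirrors the Catalan recursion but targets $A_n$ instead of $C_n$. The idea is: resolving the \emph{last} crossing $b_n$ first (or, in the even case, an appropriate crossing) splits the link $\Lambda_n$ into a connect-sum-type configuration, and by the locality of the saddle cobordism (Definition~\ref{def:saddlecob}(2), local triviality away from $V$) the induced augmentation on the two sides behaves multiplicatively. More precisely, after one resolution $\Lambda_n$ becomes (up to Legendrian isotopy handled by Lemmas~\ref{lem:triple},~\ref{lem:death},~\ref{lem:birth}) a union of two shorter $(2,m)$- and $(2,n-1-m)$-torus tangles whose augmentations are, by the inductive hypothesis, all realized by resolution sequences. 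One then checks that the constraint imposed on $\varepsilon(b_n)$ by $\varepsilon(\bdry a_1)=\varepsilon(\bdry a_2)=0$ matches exactly the gluing constraint between the two sub-augmentations, so that the realizable set on $\Lambda_n$ is the full set of augmentations. The case analysis splits according to whether $\varepsilon(b_n)=0$ or $1$, paralleling the two summands in the recursion for $A_n$ (which satisfies $A_{n+1}=A_n+2A_{n-1}$, reflecting exactly these two cases); for the $(2,2k)$-link one additionally verifies that the zero augmentation is the unique one excluded because it fails the linking-number compatibility forced by the reference path $\delta_{12}$.

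For the base cases I would simply exhibit the resolution sequences directly: for $n=1$ the trefoil-precursor Hopf link has its (unique nonzero, resp.\ the appropriate) augmentations realized by the one or two resolution orders, and the $n=3$ and $n=5$ computations already recorded in the excerpt serve as a sanity check that the recursion is set up with the correct indexing. Throughout, the computations of the maps $\Phi_0\circ\Phi_1\circ\Phi_2$ for a general saddle cobordism (Section~\ref{subsub:dipped}, last paragraph) reduce everything to Proposition~\ref{prop:saddle} and the Reidemeister-move formulas, so no new holomorphic-curve analysis is needed.

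The main obstacle I expect is the bookkeeping in the inductive step: showing rigorously that resolving one crossing of $\Lambda_n$ reduces, after an explicit but possibly lengthy Legendrian isotopy, to a \emph{disjoint-union-with-identified-reference-chord} picture on which the augmentation constraints factor, and that this factorization is compatible with the $\Psi_1$-corrections coming from lower-action disks in Proposition~\ref{prop:saddle}. In particular one must rule out the possibility that the $\Psi_1$ terms destroy surjectivity onto the augmentation set at some intermediate stage; this is controlled by the action inequality $\mathfrak{A}(\Psi_0(c))>\mathfrak{A}(\Psi_1(c))$ from Corollary~\ref{surjective} and Lemma~\ref{newaug}, which guarantees that the pushforward of an augmentation along a resolution is again an augmentation and is determined by an action-increasing induction — so the only real work is matching the combinatorics to the closed-form count $A_n$.
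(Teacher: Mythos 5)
There is a genuine gap: the central inductive step of your proposal rests on a false geometric claim. You assert that resolving one crossing of the standard diagram ``splits $\Lambda_n$ into a connect-sum-type configuration\dots a union of two shorter $(2,m)$- and $(2,n-1-m)$-torus tangles'' whose augmentations then factor multiplicatively. That is not what happens: the standard diagram is the closure of the $2$-braid $\sigma^n$, and a $0$-resolution at any single crossing just deletes one letter of the braid word, producing the standard diagram of $\Lambda_{n-1}$ --- one shorter torus link, not a split union or connected sum --- so there are no two sub-augmentations to glue, and the Catalan-style recursion you invoke has no geometric counterpart in this problem (the recursion $C_{n+1}=\sum_k C_kC_{n-k}$ in the paper counts isotopy classes of resolution \emph{orders}, not augmentations; the numerically correct $A_{n+1}=A_n+2A_{n-1}$ does not arise from such a splitting). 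Your side claim that for $(2,2k)$ the zero augmentation is excluded ``because it fails the linking-number compatibility forced by $\delta_{12}$'' is also unfounded: the zero map \emph{is} an augmentation of $\Lambda_{2k}$ (cf.\ the Hopf link remark following the proposition); it is simply never induced by resolutions, because the first saddle map sends the resolved crossing to $1$.

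That last observation is exactly the constraint your induction never confronts, and it is where the real work lies. Since $\Phi_{(X_{\rm sa},L_{\rm sa})}(q)=1$, a target augmentation $\varepsilon$ can only be realized by a sequence whose first resolved crossing $q$ satisfies $\varepsilon(q)=1$; the pushforward $\varepsilon'$ to $\Lambda_{n-1}$ (which exists and is unique by Lemma~\ref{newaug}) flips the $\varepsilon$-values of the one or two adjacent degree-zero crossings, because $\Psi_1$ contributes exactly $+1$ there. One must then (i) check that $\varepsilon'$ is again an augmentation of the standard diagram of $\Lambda_{n-1}$ --- the paper does this with the matrix criterion of \cite[Theorem 5.3]{Ka2} and the Boolean identity for products of the matrices $\left[\begin{smallmatrix} b&1\\1&0\end{smallmatrix}\right]$ --- and (ii) choose $q$ so that $\varepsilon'$ stays nonzero at every stage (a value-$1$ crossing adjacent to a value-$0$ crossing if one exists, otherwise an extreme crossing), since the all-ones assignment on the Hopf link is not an augmentation and the zero augmentation of an even link is a dead end. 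Your appeal to Corollary~\ref{surjective} and Lemma~\ref{newaug} controls pushforwards in general but does not select the crossing, does not identify $\varepsilon'$ combinatorially, and does not rule out landing on a non-realizable augmentation mid-induction; replacing the splitting argument by this one-crossing-at-a-time descent is what the proof actually requires.
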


An augmentation $\varepsilon: \mathcal{A}(\Lambda)\to \F$ is the {\em zero augmentation} (written as $\varepsilon\equiv 0$) if $\varepsilon|_{\mathcal{C}(\Lambda)}=0$.

\begin{proof}
View $\Lambda_n$, $n=2k$ or $2k+1$, as the closure of a positive braid on $2$ strands which is given by $\sigma^n$. Here $\sigma$ is a positive generator of the braid group on $2$ strands.  Define the matrix
$$B_n=\begin{bmatrix} b_1 & 1\\ 1& 0\end{bmatrix} \begin{bmatrix} b_2 & 1\\ 1& 0\end{bmatrix} \dots \begin{bmatrix} b_n & 1\\ 1& 0\end{bmatrix},$$
where the indeterminates $b_1,\dots,b_n$ are the Reeb chord generators of $\mathcal{A}(\Lambda_n)$. Let $\varepsilon:\mathcal{A}(\Lambda_n)\to \F$ be an algebra map defined by sending each degree $0$ generator $b_i$ to $0$ or $1$ and each degree $\not=0$ generator to $0$.  According to  \cite[Theorem~5.3]{Ka2}, the algebra map $\varepsilon$ is an augmentation of $\Lambda_n$ if and only if $\varepsilon(B_n)$ is of the form $\begin{bmatrix} 1 & * \\ * & * \end{bmatrix}$.

Given an augmentation $\varepsilon$ for $\Lambda_n$, resolving a crossing $q$ with $\varepsilon(q)=1$ and changing the $\varepsilon$-value (from $1$ to $0$ or from $0$ to $1$) associated to the one or two adjacent degree zero crossings yields $\Lambda_{n-1}$ with an algebra map $\varepsilon':\mathcal{A}(\Lambda_{n-1})\to \F$. We claim that $\varepsilon'$ is an augmentation. Indeed, the Boolean algebra identity
\[\begin{bmatrix}x&1\\1&0\end{bmatrix}\cdot\begin{bmatrix}1&1\\1&0\end{bmatrix}\cdot\begin{bmatrix}y&1\\1&0\end{bmatrix}=\begin{bmatrix} x+1&1\\1&0\end{bmatrix}\cdot\begin{bmatrix} y+1&1\\1&0\end{bmatrix}\]
implies that $\varepsilon'(B_{n-1}^j)$ is of the form $\begin{bmatrix} 1 & * \\ * & * \end{bmatrix}$,
where
$$B_{n-1}^j=\begin{bmatrix} b_1 & 1\\ 1& 0\end{bmatrix}\dots \begin{bmatrix} b_{j-1}+1 & 1\\ 1& 0\end{bmatrix} \begin{bmatrix} b_{j+1}+1 & 1\\ 1& 0\end{bmatrix}\dots \begin{bmatrix} b_n & 1\\ 1& 0\end{bmatrix}.$$

Next we claim that $\varepsilon'\not\equiv 0$ by a suitable choice of $q$: If there are index $0$ crossings $r$ with $\varepsilon(r)=0$, then choose $q$ with $\varepsilon(q)=1$ so that it has a neighbor $r$ with $\varepsilon(r)=0$. If $\varepsilon\equiv 1$ on the index $0$ crossings, then let $q$ be the leftmost or rightmost crossing. We have $\varepsilon'\not\equiv 0$ as long as $n>2$. Finally observe that $\varepsilon(b_1)=\varepsilon(b_2)=1$ is not an augmentation for the Hopf link $\Lambda_2$.

The proof then proceeds by induction.
\end{proof}

\begin{rmk}
Although $\varepsilon(b_1)=\varepsilon(b_2)=0$ is an augmentation of the Hopf link $\Lambda_2$, $\varepsilon$ cannot come from an embedded Lagrangian by Theorem~\ref{Seidel theorem}.
\end{rmk}

Not much is known about exact Lagrangian fillings of Legendrian links.  We conclude this subsection with some open questions.

\begin{q}
If $\sigma_1,\sigma_2\in S_n$ are not isotopy equivalent but correspond to exact Lagrangian fillings $L_{\sigma_1},L_{\sigma_2}$ of $\Lambda_n$ which induce the same augmentation, then are $L_{\sigma_1}$ and $L_{\sigma_2}$ isotopic through exact Lagrangians?
\end{q}

\begin{q}
Is any exact Lagrangian filling of $\Lambda_n$ which induces a nonzero augmentation exact Lagrangian isotopic to one coming from a sequence of resolutions of the ``standard model'' of $\Lambda_n$ given by Figure~\ref{fig:2n}? This is not known even for $n=3$.
\end{q}

\begin{q}
If $\Lambda$ is a Legendrian knot and $g_s$ is its slice genus, then is every augmentation $\varepsilon$ of $\Lambda$ geometric, provided $HC^\varepsilon(\Lambda)\simeq H_*(\Sigma_{g_s})$, where $\Sigma_{g_s}$ is the once-punctured oriented surface of genus $g_s$? In other words, is Theorem~\ref{Seidel theorem} the only obstruction to the existence of an exact Lagrangian filling that induces $\varepsilon$?
\end{q}

\begin{q}
Is every exact Lagrangian filling of a Legendrian knot $\Lambda$ decomposable?
\end{q}

In the case of a concave filling, there is an example of Sauvaget~\cite{Sau}, which after some modification, yields an exact Lagrangian cobordism of genus $2$ from $\varnothing$ to a stabilized unknot.  A concave filling cannot be decomposable.

\subsection{Connected sums}

\subsubsection{Comultiplication}

We now introduce the exact Lagrangian cobordism and the accompanying comultiplication map induced by (the inverse of) a connected sum.  Let $\Lambda_1,\Lambda_2\subset (\R^3,\xi_0)$ be Legendrian knots and let $\Lambda_1\#\Lambda_2$ be their connected sum, schematically given in Figure~\ref{connectedsum}.

\begin{figure}[ht]
\s
\begin{center}
\psfragscanon
\psfrag{A}{$\Lambda_1$}
\psfrag{B}{$\Lambda_2$}
\includegraphics[width=0.3\linewidth]{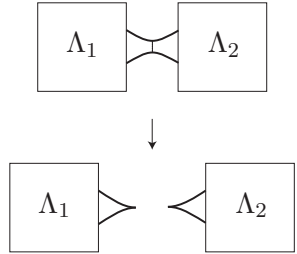}
\end{center}
\s
\caption{The inverse of a connected sum of $\Lambda_1$ and $\Lambda_2$ in the front projection.}
\label{connectedsum}
\end{figure}

Suppose without loss of generality that $\Lambda_1\sqcup \Lambda_2$ is obtained from $\Lambda_1\#\Lambda_2$ by resolving a simple contractible Reeb chord $a$.  Then
$$\mathcal{C}(\Lambda_1\#\Lambda_2)\simeq \mathcal{C}(\Lambda_1) \cup \mathcal{C}(\Lambda_2)\cup\{a\}$$
and the corresponding simple saddle cobordism $L$ induces the coproduct map
\begin{equation} \label{coprod map}
\Phi_L: \mathcal{A}(\Lambda_1\#\Lambda_2)\to \mathcal{A}(\Lambda_1\sqcup \Lambda_2)
\end{equation}
which maps $x\mapsto x$ if $x\in \Lambda_1$ or $\Lambda_2$ and $a\mapsto 1$.  This is due to the fact that there are no holomorphic disks in $\Lambda_1\#\Lambda_2$ that have $a$ at the positive end.

\begin{example}
Let $\Lambda_1$ and $\Lambda_2$ both be standard $tb=-1$, $r=0$ Legendrian unknots $U$ and let $x_i$, $i=1,2$, be the generators of $\mathcal{A}(\Lambda_i)$. Then $\Lambda_1\#\Lambda_2$ is isotopic to $U$ with generator $x$.  Let $\F\{S\}$ be the free $\F$-algebra generated by the set $S$.  On the level of homology the coproduct map is:
$$\F\{x\}\to \F\{x_1,x_2\},\quad x\mapsto x_1+x_2.$$
If we abelianize the DGA morphism $\mathcal{A}(\Lambda_1\#\Lambda_2) \to \mathcal{A}(\Lambda_1\sqcup\Lambda_2)$, then on the level of homology the coproduct map is:
$$\F[x]\to \F[x_1]\otimes \F[x_2], \quad x\mapsto x_1\otimes 1 + 1\otimes x_2,$$
which is the coproduct on the $S^1$-equivariant cohomology of a point.
\end{example}

\subsubsection{Multiplication}

Next we introduce the exact symplectic cobordism induced by a stabilized connected sum $S_+S_-(\Lambda_1\#\Lambda_2)$, where $S_+$ (resp.\ $S_-$) denotes a single positive (resp.\ negative) stabilization. This is given by Figure~\ref{connectedsum2}. Unfortunately, the multiplication map
$$\mathcal{A}(\Lambda_1)\otimes \mathcal{A}(\Lambda_2)\to \mathcal{A}(S_+S_-(\Lambda_1\# \Lambda_2))$$ is the zero map and Legendrian contact homology gives no information.

\begin{figure}[ht]
\begin{center}
\psfragscanon
\psfrag{A}{$\Lambda_1$}
\psfrag{B}{$\Lambda_2$}
\includegraphics[width=0.3\linewidth]{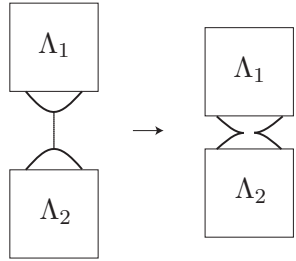}
\end{center}
\caption{The stabilized connected sum of $\Lambda_1$ and $\Lambda_2$ in the front projection.}
\label{connectedsum2}
\end{figure}

The above operation nonetheless can be used to prove the following theorem:

\begin{thm}
Given any Legendrian link $\Lambda\subset \R^3$, there is an exact Lagrangian cobordism from $\Lambda$ to some stabilized unknot.
\end{thm}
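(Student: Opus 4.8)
The plan is to build the desired exact Lagrangian cobordism from $\Lambda$ to a stabilized unknot out of the elementary pieces developed in Section~\ref{Sec:DGAmaps}, together with the connected-sum operations just introduced. First I would reduce to the case where $\Lambda$ is connected: if $\Lambda=\Lambda_1\sqcup\dots\sqcup\Lambda_k$ has more than one component, attach $k-1$ saddle cobordisms (of the type in Section~\ref{subsub:saddle}, realized as the inverse of a connected sum as in \eqref{coprod map}, after first arranging by a Legendrian isotopy that the relevant Reeb chords are simple and contractible via the dipping construction of Lemma~\ref{dipping}) to pass from $\Lambda$ to the single connected Legendrian knot $\Lambda_1\#\dots\#\Lambda_k$. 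Concatenating with these, it suffices to produce an exact Lagrangian cobordism from a connected Legendrian knot to a stabilized unknot.

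Next, for a connected Legendrian knot $\Lambda$, the idea is to use a sequence of saddle cobordisms to simplify the front (or Lagrangian) diagram. Concretely, I would perform $0$-resolutions at a carefully chosen sequence of contractible Reeb chords — after dipping to make each one simple, so that Proposition~\ref{prop:saddle} applies at each stage — in order to reduce the number of crossings of $\Pi_\C(\Lambda)$, much as in the $(2,n)$-torus link discussion of Section~\ref{sec:2n_links}. Each such resolution is a genuine exact Lagrangian cobordism with cylindrical ends, and concatenation of exact Lagrangian cobordisms is again one; the diagram eventually simplifies to an unknot, possibly with extra disjoint unknot components, which can be capped off by the minimum cobordisms of Section~\ref{subsub:minimum}, or alternatively merged in by further saddle cobordisms. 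The key point is that these moves, being local modifications supported in a small region with trivial isotopy outside, automatically satisfy condition~(i) of Definition~\ref{defn: exact Lagrangian with cylindrical ends}, as remarked in Section~\ref{sec:movescob}.

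The one genuinely nontrivial end-of-chain issue is the Thurston--Bennequin/rotation-number bookkeeping: a saddle cobordism raises the Euler characteristic by $1$ and hence changes $tb$ (see Chantraine's theorem), and a Legendrian knot of given $tb$ and $r$ cannot be cobordant to the standard $tb=-1$, $r=0$ unknot without a discrepancy — this is exactly why the target must be a \emph{stabilized} unknot rather than the standard one. Here the stabilized connected sum operation $S_+S_-(\Lambda_1\#\Lambda_2)$ of Figure~\ref{connectedsum2}, and more generally the birth cobordisms of Section~\ref{sec:movescob} together with the isotopy-induced cobordisms of Section~\ref{sec:nomovescob}, give the needed flexibility: I would insert birth (and, if orientation/rotation-number parity requires, stabilizing) cobordisms at the start to arrive at a Legendrian knot whose invariants match a suitably stabilized unknot, and then run the resolution/capping argument above. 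The main obstacle, and the step requiring the most care, is verifying that one can always choose the sequence of Reeb chords so that (a) each chord is contractible and can be made simple by dipping, and (b) the procedure terminates at an (iterated) unknot; this is where I expect to invoke the criterion of Lemma~\ref{condition for contractibility} and an induction on crossing number, checking that there is always an available contractible chord whose resolution strictly decreases the relevant complexity.
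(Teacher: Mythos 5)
There are two concrete problems with your proposal, and the paper's own argument sidesteps both.

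First, the reduction to the connected case is backwards as stated. The cobordism realizing the ``inverse of a connected sum'' in \eqref{coprod map} has $\Lambda_1\#\Lambda_2$ at the \emph{positive} ($t\gg 0$) end and $\Lambda_1\sqcup\Lambda_2$ at the \emph{negative} end; it disconnects as you flow downward. You want a cobordism with a disconnected $\Lambda$ at the top and something connected at the bottom, which is the opposite direction, and for two components with disjoint Lagrangian projections there is simply no crossing to $0$-resolve. The operation that genuinely merges in the downward direction is the \emph{stabilized} connected sum of Figure~\ref{connectedsum2}, and the paper uses it --- but at the \emph{end} of the chain, after everything has already been reduced to a disjoint union of unknots, not at the beginning. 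Second, the heart of your argument relies on always being able to find, at each stage, a contractible Reeb chord among the \emph{existing} crossings (after dipping); you flag this as ``the step requiring the most care'' and ``expect to invoke'' Lemma~\ref{condition for contractibility}, but no argument is given, and it is not clear that such a chord always exists --- nor that the remaining diagram after repeatedly $0$-resolving stays in a class where contractibility keeps being available.

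The paper avoids both difficulties by a different and more concrete construction: slice the front projection into thin vertical strips $\{x_i < x < x_{i+1}\}$ each containing at most one crossing, and then repeatedly pinch \emph{adjacent strands pointing in opposite directions} (Figure~\ref{iteratedpinching}). Pinching two nearby parallel strands creates, by a small isotopy, a short Reeb chord whose contractibility is automatic from Lemma~\ref{condition for contractibility} (small action, surrounded by regions of fixed area), so no search among pre-existing crossings is needed. The result is a disjoint union of possibly stabilized unknots, which are then merged one by one using the stabilized connected sum cobordism. Note also that your concern about Thurston--Bennequin and rotation-number bookkeeping is largely moot: the theorem asks for a cobordism to \emph{some} stabilized unknot, so there is nothing to match --- whatever stabilizations accumulate from the pinches and merges is acceptable.
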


\begin{proof}
Consider the image of $\Lambda$ under the front projection $\Pi_{J^0\R}: \R^3\to\R^2$, $(x,y,z)\mapsto (x,z)$. We slice $\Pi_{J^0\R}(\Lambda)$ along $x=x_i$ for $x_1<\dots< x_n$ such that there is at most one crossing on each $\{x_i<x<x_{i+1}\}$ and there are no cusps or crossings on each $x=x_i$.  (Here we are assuming that $\Lambda$ is sufficiently generic.) We then repeatedly apply the pinching procedure of Figure~\ref{fig:saddleends}, making sure that at each step we pinch strands that are pointing in opposite directions. The result of applying the procedure to the slice $\{x_i<x<x_{i+1}\}$ is given in Figure~\ref{iteratedpinching},
\begin{figure}[ht]
\begin{center}
\includegraphics[width=0.45\linewidth]{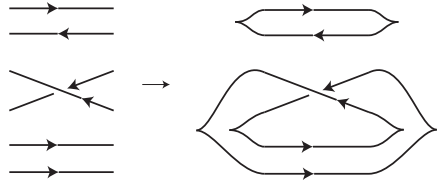}
\end{center}
\caption{Iterated pinching.}
\label{iteratedpinching}
\end{figure}
and is a disjoint union of possibly stabilized unknots.  Finally we apply the stabilized connected sum operation to obtain a stabilized unknot.
\end{proof}

\section{Khovanov homology}
\label{sec:Khovanov}

In this section we assume that the reader is familiar with basic facts about Khovanov homology \cite{Kh}.  We refer the reader to \cite{Ja} for a discussion of the map on Khovanov homology induced by a cobordism of links.

The Khovanov homology of a link $\Lambda\subset \R^3$ will be denoted by $Kh(\Lambda)=\oplus_{i,j} K^{i,j} (\Lambda)$, where $i$ is the homological grading and $j$ is the $q$-grading. (Strictly speaking, when $\Lambda$ is Legendrian, we consider the Lagrangian projection $\pi_\C(\Lambda)$.) The following is due to Jacobsson~\cite[Section 4.2]{Ja}.

\begin{thm}[Jacobsson] \label{thm: jacobsson}
If $\Lambda_+,\Lambda_-\subset \R^3$ are links and $L\subset [-1,1]\times \R^3$ is a smooth cobordism from $\{1\}\times \Lambda_+$ to $\{-1\}\times \Lambda_-$, then it induces a map
$$\Psi_L\colon Kh(\Lambda_-)\rightarrow Kh(\Lambda_+)$$
that is well-defined up to $\pm 1$ and is invariant under smooth isotopies relative to $\bdry L$.
\end{thm}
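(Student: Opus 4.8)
The plan is to build $\Psi_L$ by decomposing $L$ into elementary cobordisms, assigning to each a chain map on Khovanov complexes, composing in the order dictated by the height function, and then proving that the result is independent of the decomposition via the movie-move calculus of Carter--Saito.

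First I would isotope $L$ into generic position with respect to both the height function $h\colon[-1,1]\times\R^3\to[-1,1]$ and a generic projection $p\colon[-1,1]\times\R^3\to[-1,1]\times\R^2$. Then the slices $D_t=p(L\cap h^{-1}(t))$ form a generic one-parameter family of link diagrams --- a \emph{movie} --- which is a regular diagram for all but finitely many $t$, and whose exceptional frames are either (i) one of the three Reidemeister moves of $D_t$, or (ii) a Morse critical point of $h|_L$, i.e.\ a birth of a disjoint unknotted circle, a death of such a circle, or an oriented saddle (band move). To each elementary frame I attach a chain map: to the Reidemeister frames the chain-homotopy equivalences that prove invariance of the Khovanov complex under R1, R2, R3; to a birth the unit $\F\to A$ of the Frobenius algebra $A=\F[X]/(X^2)$ inserted in the new tensor factor; to a death the counit $A\to\F$; and to a saddle the multiplication $A\otimes A\to A$ (when two circles merge) or the comultiplication $A\to A\otimes A$ (when one circle splits), acting on the tensor factors at the band's ends. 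Composing along the movie yields a chain map between the Khovanov complexes of the two boundary links; passing to homology gives $\Psi_L\colon Kh(\Lambda_-)\to Kh(\Lambda_+)$, the contravariance being the usual convention (a merge read upward corresponds to comultiplication read downward, and so on).

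Next I would establish well-definedness. By the Carter--Saito movie-move theorem, any two generic movies presenting smoothly isotopic cobordisms rel $\bdry L$ differ by a finite sequence of (a) interchanges of events localized in disjoint regions and (b) the fifteen local moves MM1--MM15. For (a) the induced maps commute since the corresponding chain maps act on disjoint sets of circles and crossings. For (b) one checks frame by frame that the two composite chain maps agree up to chain homotopy and an overall sign: the moves with no Reidemeister frame reduce to the (co)associativity, Frobenius, and (co)unit identities in $A$, while the mixed moves follow from naturality of the Reidemeister chain-homotopy equivalences together with explicit homotopies for the moves that combine an R3 frame with a saddle. Since the generators appearing in the relevant resolutions span one-dimensional $\F$-spaces, the residual ambiguity is exactly an overall sign, which is the source of the $\pm1$ indeterminacy; invariance under isotopies rel $\bdry L$ is then the content of the movie-move theorem itself.

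The main obstacle is the movie-move verification together with the sign bookkeeping: it is a finite but lengthy case analysis, and tracking the signs built into the Khovanov differential, the orientations of the saddles, and the ordering of tensor factors is where errors are most likely. A streamlined alternative is to pass to Bar-Natan's local reformulation, in which a movie between diagrams is a morphism in a category of dotted cobordism surfaces modulo the neck-cutting, sphere, and $4\mathrm{Tu}$ relations; there the movie moves hold essentially tautologically up to sign, and one only needs to match the elementary surfaces above with the elementary chain maps. I would present the construction as above and reduce the full well-definedness statement, including the sign analysis, to \cite{Ja}.
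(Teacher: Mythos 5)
This theorem is not proved in the paper at all: it is imported as Jacobsson's result with a citation to \cite{Ja}, and your outline (generic movie presentation, elementary chain maps from the Frobenius algebra and the Reidemeister equivalences, well-definedness via the Carter--Saito movie moves with the residual $\pm 1$ ambiguity, and Bar-Natan's cobordism category as a streamlined alternative) is precisely the argument of that cited reference. So your proposal is correct and coincides with the proof the paper relies on; deferring the movie-move case analysis and sign bookkeeping to \cite{Ja} is exactly what the paper itself does.
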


The isotopy is required to fix $\Lambda_+$ and $\Lambda_-$ pointwise to avoid monodromy issues. Note that any isotopy invariant derived from $\Psi_L$ will have this $\pm 1$ ambiguity as well.

Let $\overline{K}$ denote the mirror of the link $K$. Given a Legendrian link $\Lambda\subset \R^3$ and an exact Lagrangian cobordism $L$ from $\Lambda$ to $\varnothing$, Theorem~\ref{thm: jacobsson} gives a map
$$\Psi_{\overline{L}}: Kh(\varnothing)\simeq \Z\to Kh(\overline{\Lambda}).$$

Let $\Lambda\subset\R^{3}$ be a Legendrian link. Let $\mathcal{S}$ (resp.\ $\mathcal{S}^{\rm dec}$) be the set of isotopy classes of exact (resp.\ decomposable exact) Lagrangian cobordisms $L$ from $\Lambda$ to $\varnothing$.  Then we define
$$ \mathcal{L}_\Lambda=\bigsqcup_{L\in\mathcal{S}^{\rm dec}}\{\Psi_{\overline{L}}(\pm 1)\}\subset Kh(\overline{\Lambda}),\quad
\mathcal{L}'_\Lambda=\bigsqcup_{L\in\mathcal{S}}\{\Psi_{\overline{L}}(\pm1)\}\subset Kh(\overline{\Lambda}).$$

By definition, $\mathcal{L}_\Lambda$ and $\mathcal{L}'_\Lambda$ are invariants of $\Lambda$, although not particularly computable.

Let $L$ be a decomposable exact Lagrangian cobordism from $\Lambda_+$ to $\Lambda_-$ which is the composition of elementary cobordisms $L_i$ from $\Lambda_i$ to $\Lambda_{i+1}$, $i=1,\dots,n-1$, where $\Lambda_+=\Lambda_1$ and $\Lambda_-=\Lambda_n$. Then we define
\[
\Psi'_{\overline{L}_i}\colon Kh(\overline{\Lambda}_{i+1})\rightarrow Kh(\overline{\Lambda}_i)
\]
and $\Psi'_{\overline{L}}$ as the composition of the $\Psi'_{\overline{L}_i}$ as follows:
If $K_1$ is the $1$-resolution of $K$, then there exists a map $Kh(K_1)\rightarrow Kh(K)$.  In our case, mirroring transforms a $0$-resolution to a $1$-resolution and we have a map $Kh(\overline{\Lambda}_{i+1})\rightarrow Kh(\overline{\Lambda}_i)$ if $\Lambda_{i+1}$ is a $0$-resolution of $\Lambda_i$. If $\Lambda_{i+1}$ is obtained from $\Lambda_i$ by a Legendrian isotopy, then the isotopy induces the map $Kh(\overline{\Lambda}_{i+1})\rightarrow Kh(\overline{\Lambda}_i)$.

\begin{lemma}
The map $\Psi'_{\overline{L}}\colon Kh(\overline{\Lambda}_n)\rightarrow Kh(\overline{\Lambda}_1)$ induced by the decomposable exact Lagrangian cobordism $L$ agrees with $\Psi_{\overline{L}}$, where we view $\overline{L}$ as a smooth surface from $\overline{\Lambda}_n$ to $\overline{\Lambda}_1$.
\end{lemma}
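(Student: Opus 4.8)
The plan is to reduce the statement to an elementary-piece-by-elementary-piece comparison and then invoke the isotopy-invariance and concatenation-functoriality of Jacobsson's construction. First I would use that $L$ is decomposable: by definition it is exact Lagrangian isotopic, rel $\Lambda_+\cup\Lambda_-$, to a concatenation of elementary exact Lagrangian cobordisms (the $L_i$), each of which is a $\Pi_\C$-simple Legendrian isotopy, a Legendrian Reidemeister-move cobordism, a minimum cobordism, or a saddle ($0$-resolution) cobordism. Passing to mirrors, $\overline L$ is smoothly isotopic rel boundary to the vertical concatenation of the mirrored elementary smooth surfaces $\overline{L}_i$. Since $\Psi_{\overline L}$ is, by Theorem~\ref{thm: jacobsson}, invariant under smooth isotopy rel boundary, and since the Jacobsson (equivalently Bar-Natan) cobordism map is computed from any movie presentation by composing the elementary maps and hence is functorial under vertical concatenation, we get $\Psi_{\overline L}=\pm\,\Psi_{\overline{L}_1}\circ\cdots\circ\Psi_{\overline{L}_{n-1}}$ up to the global $\pm1$. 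On the other side, $\Psi'_{\overline L}$ is, tautologically, the composite $\Psi'_{\overline{L}_1}\circ\cdots\circ\Psi'_{\overline{L}_{n-1}}$ of the maps the paper assigns to the same pieces. Thus it suffices to show, for each type of $L_i$, that the map used in the definition of $\Psi'_{\overline{L}_i}$ coincides up to sign with the Jacobsson map of $\overline{L}_i$.

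Next I would dispatch the ``isotopy'' pieces, which are the ones explicitly used in the definition of $\Psi'$ together with the saddles. If $L_i$ is a $\Pi_\C$-simple Legendrian isotopy or a Legendrian Reidemeister move, then $\overline{L}_i$ is (smoothly, rel boundary) the trace of an ambient isotopy of link diagrams, so its Jacobsson map is precisely the canonical Khovanov isomorphism $Kh(\overline{\Lambda}_{i+1})\to Kh(\overline{\Lambda}_i)$ induced by that isotopy --- which is exactly the map named in the definition of $\Psi'_{\overline{L}_i}$. (If a minimum cobordism appears among the $L_i$, the same remark applies: $\overline{L}_i$ is an elementary birth of a small unknot and both constructions assign, up to sign, the Frobenius unit map, by the standard computation of the Khovanov TQFT on a minimal disk.) In all of these cases the only discrepancy is the overall $\pm1$, which $\mathcal L_\Lambda$ and $\mathcal L'_\Lambda$ already absorb by definition.

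The heart of the argument is the saddle pieces. If $L_i$ is the saddle cobordism given by a $0$-resolution of a contractible Reeb chord $a$, then $\overline{L}_i$ is a single unoriented saddle move between $\overline{\Lambda}_i$ and $\overline{\Lambda}_{i+1}$, performed at the crossing which, after mirroring, exhibits $\overline{\Lambda}_{i+1}$ as the $1$-resolution $K_1$ of $K=\overline{\Lambda}_i$ at that crossing. The map that the definition of $\Psi'$ attaches is the inclusion of the subcomplex $C(K_1)$ into the mapping cone $C(K)$, pushed to homology, i.e.\ the corresponding map in the skein long exact sequence. What must be checked is that this agrees up to sign with the Bar-Natan/Jacobsson cobordism map of the elementary saddle --- concretely, that the per-crossing edge in the Khovanov cube really is the saddle (merge or split) map of the Frobenius algebra, with the merge-versus-split alternative dictated consistently by whether $\Pi_\C(a)$ separates components of $\overline{\Lambda}_{i+1}$, and with the homological and quantum degree shifts of the mapping-cone description matching those read off from the movie presentation of the saddle. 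This matching is essentially Bar-Natan's description of the Khovanov differential in the dotted-cobordism category, so the remaining work is bookkeeping. \textbf{The main obstacle} is exactly this bookkeeping: keeping the grading shifts, the orientation conventions (so that the mirror genuinely interchanges $0$- and $1$-resolutions of each crossing), and the sign conventions coherent across the two formalisms, and making sure the contractibility data enters only through which saddle move is performed and not through any auxiliary choice. Once this is in place, composing over $i$ and using the concatenation-functoriality of both $\Psi$ and $\Psi'$ yields $\Psi'_{\overline L}=\pm\,\Psi_{\overline L}$, which is the assertion of the lemma.
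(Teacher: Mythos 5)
Your global strategy --- decompose $L$ into elementary pieces, use isotopy invariance and composition of Jacobsson's maps, and compare piece by piece --- is the same as the paper's, and your handling of the Reidemeister/isotopy and minimum pieces is fine; like the paper, you correctly locate all the content in the saddle pieces. The gap is in how you close that step. The mirrored saddle piece is \emph{not} ``a single unoriented saddle move'' in the sense in which Jacobsson's construction assigns maps: the elementary Morse saddle move relates two crossingless local configurations ($\asymp$ and $)($), whereas here one of the two frames, $\overline{\Lambda}_i$, still has a crossing at the site. Since Jacobsson's map is defined only through movies of Reidemeister and Morse moves, there is no elementary map attached to ``resolving a crossing,'' and the comparison you need cannot be read off from the cube structure. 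Relatedly, the verification you propose --- that the per-crossing edge in the Khovanov cube is the Frobenius merge/split map --- is essentially definitional and is not the statement at issue; what must be shown is that the Jacobsson map of the band-attachment surface from $K_1$ to $K$ agrees with the map induced by the inclusion of the subcomplex (the $1$-resolution map used in the definition of $\Psi'$).

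The paper supplies exactly the missing idea: since the $1$-resolution map is not among Jacobsson's elementary maps, one factors the resolution cobordism, as a movie, into a Reidemeister~I move $K_1\to K'$ that creates the crossing, followed by a Morse saddle move $K'\to K$, and then checks that the composite $Kh(K_1)\to Kh(K')\to Kh(K)$ of the two elementary maps is the $1$-resolution map. Your appeal to ``Bar-Natan's description of the Khovanov differential plus bookkeeping'' does not substitute for this computation (even in the cobordism-category formalism one must still identify the map assigned to that specific movie), so as written the key step is asserted rather than proved.
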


\begin{proof}
In \cite{Ja}, the $1$-resolution map $Kh(K_1)\rightarrow Kh(K)$ is not used in the definition of the cobordism map $\Psi_{L}$. Instead, the $1$-resolution $K_1\rightarrow K$ must be factored into the composition of a Reidemeister~I move~$K_1 \rightarrow K'$ followed by a Morse saddle move $K'\rightarrow K$.  One easily verifies that the composition $Kh(K_1)\rightarrow Kh(K')\rightarrow Kh(K)$ agrees with the $1$-resolution map.
\end{proof}

Let $L$ be an exact Lagrangian filling of $\Lambda$.  We view $L$ as a composition of a cobordism $L_1$ from $\Lambda$ to the unknot $U$, followed by a minimal cobordism $L_2$. Let $\varepsilon_-$ and $\varepsilon_+$ be the augmentations induced by $L_2$ and $L$, respectively. The augmentation $\varepsilon_+$ gives rise to an automorphism of $\mathcal{A}(\Lambda)$ which takes $a\in\mathcal{C}(\Lambda)$ to $\overline{a}=a+\varepsilon_+(a)$, so that the differential respects the filtration by word length in the $\overline{a}$'s. Let $E^1(\mathcal{A}(\Lambda),\varepsilon_+)$ be the $E^1$-term of the associated spectral sequence; one easily sees that $E^1(\mathcal{A}(\Lambda),\varepsilon_+)$ is the unital tensor algebra generated by $HC^{\varepsilon_+}(\Lambda)$. The we can define a map:
$$\Theta: E^1(\mathcal{A}(\Lambda),\varepsilon_+)\to Kh(\overline{\Lambda}),$$
obtained by composing the maps
$$\Phi_{L_1}: E^1(\mathcal{A}(\Lambda),\varepsilon_+)\to E^1(\mathcal{A}(U),\varepsilon_-),$$
$$\Psi_{\overline{L}_1}: Kh(\overline{U})\to Kh(\overline{\Lambda}_+),$$
in Legendrian contact homology and Khovanov homology, together with
$$E^1(\mathcal{A}(U),\varepsilon_-)\simeq \F[x]\to Kh(\overline{U})\simeq V,$$
where $1\mapsto \mathbf{v}_+$, $x\mapsto \mathbf{v}_-$, and $V$ is a $2$-dimensional $\F$-vector space generated by $\mathbf{v}_+$ of degree $1$ and $\mathbf{v}_-$ of degree $-1$.

We close this subsection with some questions and a remark.

\begin{q}
Is the map $\Theta$ good for anything?
\end{q}

\begin{q}
Can the set $\mathcal{L}_\Lambda$ and $\mathcal{L}'_\Lambda$ have more than two elements? Are $\mathcal{L}_\Lambda$ and $\mathcal{L}'_\Lambda$ finite?
\end{q}

Observe that $\mathcal{L}_\Lambda$ and $\mathcal{L}'_\Lambda$ are empty if there are no exact Lagrangian cobordisms from $\Lambda$ to $\varnothing$. By the $\pm 1$ ambiguity, the cardinalities of $\mathcal{L}_\Lambda$ and $\mathcal{L}_\Lambda'$ are even.

\begin{q}
Are there only finitely many exact Lagrangian isotopy classes of exact Lagrangians which bound a given Legendrian knot $\Lambda$?
\end{q}

Since there are no local Lagrangian $2$-knots in $4$-space \cite{EP}, one might expect that the answer is yes.

\begin{example}
Let $\Lambda_3$ be the Legendrian right-handed trefoil knot with $tb=1$ from Section~\ref{sec:2n_links}.  For each of the 5 exact Lagrangian cobordisms $L_1,\dots,L_5$ constructed in Section~\ref{sec:2n_links}, we can assign an element of $Kh(\overline{\Lambda})$.  For each $L_i$, the element $\Psi_{\overline{L}_i}(1)$ coincides with Plamenevskaya's transverse knot invariant \cite{Pl}, and also lies on Ng's \cite{Ng} line $j-i=C$, where
\[
C=\{\mbox{max}(j-i)~|~ Kh^{i,j}(\overline{L})\not=0\}.
\]
By Remark~\ref{rem:smoothisotopy}, the exact Lagrangians $L_i$ are all smoothly isotopic.
\end{example}

\begin{q}
Clarify the relationship among $\mathcal{L}_\Lambda$ and $\mathcal{L}_\Lambda'$, Plamenevskaya's transverse knot invariant, and Ng's line.
\end{q}

\s\n {\em Acknowledgements.} We thank Baptiste Chantraine, Klaus Mohnke, and Paul Seidel for useful discussions. KH also thanks Stanford University and the Simons Center for Geometry and Physics for their hospitality.

\end{document}